\begin{document}
	
\title{Resolutions for Locally Analytic Representations}

\author{Shishir Agrawal}
\address{University of California San Diego, Department of Mathematics, 9500 Gilman Drive, La Jolla, CA 92093}
\email{s2agrawal@ucsd.edu}
\author{Matthias Strauch}
\address{Indiana University, Department of Mathematics, Rawles Hall, Bloomington, IN 47405, U.S.A.}
\email{mstrauch@indiana.edu}
	

\begin{abstract}
 The purpose of this paper is to study resolutions of locally analytic representations of a $p$-adic reductive group $G$. Given a locally analytic representation $V$ of $G$, we modify the Schneider-Stuhler complex (originally defined for smooth representations) so as to give an `analytic' variant $\cS^A_\bullet(V)$. The representations in this complex are built out of spaces of analytic vectors $A_\sigma(V)$ for compact open subgroups $\Us$, indexed by facets $\sigma$ of the Bruhat-Tits building of $G$. These analytic representations (of compact open subgroups of $G$) are then resolved using the Chevalley-Eilenberg complex from the theory of Lie algebras. This gives rise to a resolution $\cS^\CE_{q,\bullet}(V) \ra \cS^A_q(V)$ for each representation $\cS^A_q(V)$ in the analytic Schneider-Stuhler complex. In a last step we show that the family of representations $\cS^\CE_{q,j}(V)$ can be given the structure of a Wall complex. The associated total complex $\cS^\CE_\bullet(V)$ has then the same homology as that of $\cS^A_\bullet(V)$. If the latter is a resolution of $V$, then one can use $\cS^\CE_\bullet(V)$ to find a complex which computes the extension group $\uExt^n_G(V,W)$, provided $V$ and $W$ satisfy certain conditions which are satisfied when both are admissible locally analytic representations. 
\end{abstract}

\maketitle
\tableofcontents

\section*{Introduction}
The purpose of this paper is to investigate resolutions of locally analytic representations of a $p$-adic reductive group $G = \bG(\Qp)$\footnote{In order to simplify the discussion in the introduction we consider here only the case of $\Qp$-analytic groups, whereas in the body of the paper we consider locally $F$-analytic groups, where $F$ is any finite extension of $\Qp$.} with a view towards Ext groups of representations. 

\vskip8pt

{\it The theory for smooth representations.} Our starting point is the work of P. Schneider and U. Stuhler \cite{ScSt97} on resolutions of smooth complex representations.\footnote{Their work was in turn inspired by earlier constructions of Moy-Prasad \cite{MoyPrasad} and Prasad-Raghunatan \cite{PrasadRaghunathan}.} For any integer $e \ge 0$, called a {\it level}, they define compact open subgroups $U_\sigma = U_\sigma^{(e)}$ indexed by facets $\sigma$ of the semisimple Bruhat-Tits building $\BT = \BT(\bG/\Qp)$. Each $\Us$ is a normal subgroup of the stabilizer $\Psd$ of the facet $\sigma$. Given a (smooth) representation $V$, the family $(V^\Us)_\sigma$ of invariant subspaces forms a so-called coefficient system on $\BT$. One can then form the homological complex $\cS_\bullet(V)$ of oriented cochains which has in degree $q$ the $G$-representation
\[\cS_q(V) = \bigoplus_{\sigma \in G\bksl \BT_q} \cind^G_\Psd (V^\Us \otimes \chi_\sigma)\]
where $\BT_q$ is the set of $q$-dimensional facets of $\BT$, and $\chi_\sigma: \Psd \ra \{1,-1\}$ is a character which measures whether or not an element of $\Psd$ preserves the orientation of $\sigma$ (chosen once and for all for every facet). This complex has a natural augmentation $\cS_\bullet(V) \ra V$. A key result about this complex is that if $V$ is generated by the $U_{\sigma_0}$-invariants for some vertex $\sigma_0$, then $\cS_\bullet(V) \ra V$ is a resolution of $V$, and if $V$ has a central character $\chi$, then the representations $\cS_q(V)$ are projective objects in the category of smooth representations of $G$ with central character $\chi$ \cite[II.2.2]{ScSt97}. The theory of \cite{ScSt97} has more aspects (e.g., a sheaf theoretic or cohomological variant), the discussion of which we postpone to a forthcoming paper, and it has had many applications, notably to the existence of the Zelevinsky involution (also called Aubert-Schneider-Stuhler involution in \cite{Moeglin_Paquet-d-Arthur}), to the theory of pseudo-coefficients \cite{Waldspurger_GLN-tordu-I}, to Euler-Poincar\'e pairings \cite{Reeder_EP-pairings}, and to the study of smooth representations in characteristic $p$ \cite{Paskunas_Coeff-systems,OllivierSchneiderGorenstein,Kohlhaase_Coeff-systems}, to name only a few references. 

\vskip8pt

{\it The case of locally analytic representations: analytic vectors. } If a locally analytic representation of $G$ is generated by smooth vectors, it is itself a smooth representation. Therefore, the subspace of smooth vectors of a locally analytic representation vanishes in general, and the complex $\cS_\bullet(V)$, as defined above, is not a resolution of $V$, for any level $e$. However, there is an obvious and natural modification at hand, namely to use {\it analytic} vectors for the groups $\Us$ (instead of invariant vectors). To make sens of the concept of taking analytic vectors, one needs to `promote' $\Us$ to an affinoid rigid analytic group $\bbUs$, which is possible if the level $e$ is large enough, as these groups are then uniform pro-$p$ groups.\footnote{A somewhat weaker condition, namely that of being a saturated $p$-valued group would also suffice, but we work here only with uniform pro-$p$ groups.} With $\bbU_\sigma$ at hand one can define the space $V^{\bbUs\anv}$ of analytic vectors for this group. In fact, there are many variants of this construction: instead of working with analytic vectors for affinoid groups $\bbUs$ one can also consider analytic vectors for `wide-open' groups $\bbUso$, or for overconvergent groups $\bbUsd$. Moreover, one can also consider the space $\uHom_\Us(D_r(\Us),V)$ of vectors which are `analytic' for distribution algebras $D_r(\Us)$ as they have been introduced in \cite{ST03}. And there is yet another version of analytic vectors which we denote by $V^D$, where $D$ is a suitable algebra of analytic distributions on $\bbUs$ or a related group. Depending on $V$, some of these spaces of analytic vectors may be the same. The reason to consider these various constructions of analytic vectors is that the corresponding spaces are of different topological type. When $V$ is an admissible locally analytic representation, then $V^{\bbU\anv}$ is a Banach space, $V^{\bbUso\anv}$ is a Fr\'echet space of compact type, $V^{\bbUsd\anv}$ is of compact type (i.e., a countable inductive limit of Banach spaces with compact and injective transition maps), and $\Hom_\Us(D_r(\Us),V)$ is a Smith space (cf. \cite[3.1]{RR} for the concept of Smith space). Depending on the context, some choice of analytic vectors can be more advantageous, and we have therefore developed part of the theory in this generality. Henceforth we will denote by $(A_\sigma(V))_\sigma$ one fixed choice of coefficient system of this type. The space $A_\sigma(V)$ carries a natural action by $\Psd$. Here and throughout the paper we let $\Psd$ act on $A_\sigma(V)$ via its natural action, but twisted by the character $\chi_\sigma$, which we will drop from the notation from now on. Then we define
\[\cS^A_q(V) = \bigoplus_{\sigma \in G\bksl \BT_q} \cind^G_\Psd A_\sigma(V)\]
and obtain a complex of $G$-representations, where the differentials are defined in the same way as in the case of smooth representations.

\vskip8pt

{\it Topological considerations: a categorical framework.} If one then tries to generalize the theory of \cite{ScSt97} to the analytic Schneider-Stuhler complex, one realizes that one needs a suitable categorical framework for doing homological algebra. In his paper \cite{Kohlhaase_Cohomology} Jan Kohlhaase studies the cohomology of locally analytic representations by equipping the category of locally convex $E$-vector spaces\footnote{Here and in the following $E$ denotes a finite extension of $\Qp$ which serves as our coefficient field throughout this paper. In particular, all representations are tacitly assumed to be on $E$-vector spaces.} with the structure of an exact category. To this end he introduces the concept of a strongly exact sequence: a sequence $V_0 \xrightarrow{\delta} V_1 \xrightarrow{\delta_1} V_1$ of locally convex vector spaces is called strongly exact, if it is exact, the maps are strict with closed image, and kernel and image of these maps admit complements by closed subspaces. This leads to `relative' extension groups ${\mathscr Ext}^n_G(V,W)$ and there are comparison maps ${\mathscr Ext}^n_G(V,W) \ra \Ext_{D(G)}(V,W)$, where $D(G)$ is the locally analytic distribution algebra of $G$. However, these comparison maps are not always isomorphisms. For example, the sequence of $\Zp$-representations
\begin{numequation}\tag{$\ast$}\label{non-split-example}
\begin{tikzcd} 0 \ar[r] & C^\infty(\Zp,E) \ar[r,hook] & C^\la(\Zp,E) \ar[r,"\frac{d}{dx}"] & C^\la(\Zp,E) \ar[r] & 0 
\end{tikzcd}
\end{numequation}
does not split on the level of topological vector spaces \cite[4.3]{Kohlhaase_Cohomology}, and it gives therefore a class in $\Ext^1_{D(\Zp)}(C^\la(\Zp,E), C^\infty(\Zp))$ which is not in the image of the comparison map. However, it is shown in \cite[4.4, 6.5-6.7]{Kohlhaase_Cohomology} that the comparison maps are isomorphisms in many cases of interest when $V = E_\triv$ is the trivial $G$-representation. Yet, many interesting representations in the context of the $p$-adic Langlands correspondence for $\GL_2(\Qp)$ are extensions of locally analytic representations by smooth representations, similar to (\ref{non-split-example}). For this reason and because we continued to encounter technical problems with the homological algebra in the context of topological vector spaces, we decided to work in a different setting, and it was just around the time when we considered using ideas from Condensed Mathematics that the paper \cite{RR} by Rodrigues Jacinto and Rodr\'iguez Camargo appeared. Therefore, in this paper we will be working in the framework of solid (locally analytic) representations. While we do not discuss the details here in the introduction, our paper uses throughout very many ideas and results from \cite{RR}.

\vskip8pt

{\it The analytic Schneider-Stuhler complex: questions regarding its exactness.} Recall that we fixed a coefficient system of analytic vectors $(A_\sigma(-))_\sigma$. We denote by $V$ a solid locally analytic representation. Assume that $V$ is (algebraically) generated by $A_{\sigma_0}(V)$ for some vertex $\sigma_0$, by which we mean that the augmentation map $\cS_0^A(V) \ra V$ is an epimorphism. Then the following questions are obviously of crucial importance:
\vskip8pt
\begin{enumerate}
\item Is $\cS^A_\bullet(V) \ra V$ a resolution of $V$? Equivalently, is $\cS^A_\bullet(V)$ exact in positive degrees and $h_0(\cS^A_\bullet(V)) = V$?
\item Suppose $V$ has a central character. Are the representations $\cS^A_n(V)$ projective objects in the category of solid locally analytic representations with that same central character?
\end{enumerate}
\vskip8pt
With regard to the first question we have some positive results for locally $F$-analytic principal series representations for $\GL_2(F)$, $F$ a finite extension of $\Qp$, when $A_\sigma(V) = V^{\bbUs\anv}$, by \cite{LahiriResolutions}.\footnote{The proofs in \cite{LahiriResolutions} should also be valid when one works with analytic vectors for wide-open or overconvergent groups, or with $D_r(\Us)$-analytic vectors.} Moreover, the work of Dospinescu-Le Bras \cite{DospLeBras} gives some indication that the answer to question (1) might be `yes' for certain spaces of global sections of equivariant vector bundles on the Drinfeld upper half plane, cf. \ref{remarks-exactness-of-analytic-ScSt}. Furthermore, we show that the answer to (1) is positive for a `universal' representation $V^\univ = \cind^G_U D_r(U)$, where $U$ is a fixed compact open subgroup of $G$ and the coefficient system is given by $A_\sigma(V) = \uHom_\Us(D_r(\Us),V)$,\footnote{$\uHom$ and $\uExt$ denote internal Hom and Ext groups in the category of solid $E$-vector spaces.} cf. \cref{universal-rep}. The result that $\cS^A(V^\univ) \ra V^\univ$ is a resolution is analogous to the case of smooth representations (yet the proof is somewhat different). Moreover, in the setting of smooth representations (over a field of characteristic zero), question (1) for an arbitrary smooth representation is reduced to the case of the universal representation \cite[proof of II.3.1]{ScSt97}. In \cite{ScSt97}, the reduction to the case of a universal representation uses two key facts: 
\vskip8pt
\begin{enumerate}[(i)]
\item The exactness of the functor $V \rightsquigarrow V^\Us$ on the category of smooth representations.
\item The Bernstein-Borel-Matsumoto theorem which says that, if $U \sub G$ is a compact open subgroup which possesses an Iwahori decomposition, then the category of smooth representations $V$ which are generated by $V^U$  is stable under passage to subrepresentations.
\end{enumerate}
\vskip8pt
In analogy to (i) one may ask if any of our functors of taking analytic vectors is exact. However, it is not too difficult to see this is not the case, and the paper \cite{RR} also discusses the derived functor of the functor of taking analytic vectors. Therefore, we cannot expect to reduce the question of the exactness of the analytic Schneider-Stuhler complex for a general solid locally analytic representation to the case of a universal representation. With regard to the Bernstein-Borel-Matsumoto theorem (ii), it makes sense to ask if a similar statement holds for (solid) locally analytic representations, namely if the subcategory of those representations $V$ which are (algebraically) generated by $V^{\bbU\anv}$ (or some other space of analytic vectors) is stable under passage to subobjects. We do not know the answer to this question.\footnote{We also do not know the answer to a much more narrow question which is interesting in its own right, namely if any admissible locally analytic representations $V$ which is generated by $V^{\bbU\anv}$ has the property that any closed subrepresentation $W \sub V$ is also generated by $W^{\bbU\anv}$.} Hence, while $\cS^A(V^\univ) \ra V^\univ$ is a resolution of $V^\univ$ (if the level $e$ is large enough with respect to $U$), this result apparently cannot be used in a straightforward manner to treat the case of general locally analytic representations.

\vskip8pt

{\it The representations of the analytic Schneider-Stuhler complex are not projective objects.} While we do not know the answer to the question (1) above, we do know that the answer to question (2) is 'no' (we tacitly assume that $\dim(\bG) \ge 1$). For example, if $V$ is a smooth representation, then $V^{\bbUs\anv} = V^\Us$, and the analytic Schneider-Stuhler complex is equal to the Schneider-Stuhler complex formed with the functor of invariant vectors. In that case, every representation $\cS^A_n(V)$ is an infinite-dimensional smooth representation, but these are not projective in the category of (solid) locally analytic representations (with a fixed central character), as one can see by considering explicit situations, cf. \cref{counterexample-projective}. Another example is given by taking for $V$ the trivial one-dimensional representation $E_\triv$. If $\cS^A(E_\triv)$ would consist of projective objects, then the projective dimension of $E_\triv$ would be at most the semisimple rank $\rk(\bG/\bZ_\bG)$ of $\bG$, and we would find that $\uH^n(G,W) := \uExt^n_G(E_\triv,W)$ would vanish for $n > \rk(\bG/\bZ_\bG)$. This contradicts earlier results of Kohlhaase which show a clear connection of locally analytic cohomology with Lie algebra cohomology, e.g.. \cite[4.10]{Kohlhaase_Cohomology}. Indeed, as is already apparent from \cite[proof of 6.6]{Kohlhaase_Cohomology}, one should also resolve the analytic representations $A_\sigma(V)$ as modules over a suitable distribution algebra of an analytic group. 

\vskip8pt

{\it Resolutions of spaces of analytic vectors.} In the following we will only work with the particular coefficient system given by $A_\sigma(V) = \uHom_\Us(D_r(\Us),V)$. The reason for doing so is the following: when $V$ is an admissible locally analytic representation, the spaces $A_\sigma(V)$ are Smith spaces, as we remarked earlier. This will be used later on in a crucial way. 

At this point we need to introduce yet another family $(\Hs)_\sigma$ of open compact subgroups. Namely, we set $\Hs = \bigcap_{\tau \in \BT_0, \tau \sub \ovsigma} \Us$. This group too is a uniform pro-$p$ compact open normal subgroup of $\Psd$, and it is in particular contained in $\Us$. We have thus available the rigid analytic groups $\bbHso$.
Moreover, for $r \in (\frac{1}{p},p^{-\frac{1}{p-1}})$\footnote{When $p=2$ one needs to replace $p^{-\frac{1}{p-1}}$ by $p^{-\frac{1}{2(p-1)}}$.} there is a canonical morphism of solid $E$-algebras $D(\bbHso) \ra D_r(\Us)$, and $A_\sigma(V)$ becomes a solid module over 
\[D(\bbHso,\Psd) = D(\bbHso) \otimes_{E[\Hso]} E[\Psd] \,,\]  
where $\Hso = \bbHso(\Qp)$. The universal enveloping algebra $U(\frg)$ of $\frg = \Lie(\bG)$ is dense in $D(\bbHso)$, and one has the resolution\footnote{By convention, all algebras are tacitly based changed to $E$, e.g., we write $U(\frg)$ instead of $U(\frg) \ot_\Qp E$.}
\begin{numequation}\tag{$\CE_\bullet(\frg)$}
\begin{tikzcd}
0 \ar[r] & U(\frg) \ot \bigwedge^d \frg \ar[r] & \cdots \ar[r] & U(\frg) \ot \frg \ar[r] & U(\frg) \ar[r] & E_\triv \ar[r] & 0 
\end{tikzcd}
\end{numequation}
of the trivial one-dimensional module $E_\triv$, where $d = \dim(\frg)$. This resolution goes by the name of Chevalley-Eilenberg resolution \cite[7.7]{WeibelH} or Koszul/standard resolution \cite[ch. II, 7.7]{KnappVogan}. In this paper we will adopt the former terminology.\footnote{We note that Koszul in his thesis \cite{Koszul_HomologieLie} writes ``Mon travail reprend certaines parties de leur M\'emoire'', by which he refers to the paper \cite{ChevalleyEilenberg} of Chevalley and Eilenberg.} One then takes the tensor product with $D(\bbHso)$ over $U(\frg)$ of this resolution and obtains a resolution 
\begin{numequation}\tag{$\CE_\bullet(\bbHso)$}
0 \lra D(\bbHso) \ot \bigwedge^d \frg \lra \cdots \lra D(\bbHso) \ot \frg \lra D(\bbHso) \lra  E_\triv \lra 0 
\end{numequation}  
because $D(\bbHso) \ot_{U(\frg)} E_\triv = E_\triv$. Using the theory of the Wall complex \cite[V.3.1]{LazardGroupesAnalytiques} one can furthermore extend the resolution $\CE_\bullet(\bbHso)$ to a resolution of $E_\triv$ as a module over the algebra $\DHPsd$: 
\begin{numequation}\tag{$\CE_\bullet(\bbHso,\Psd)$}
0 \ra \DHPsd \ot \bigwedge^d \frg \ra \cdots \ra \DHPsd \ot \frg \ra \DHPsd \ra E_\triv \ra 0 \,.
\end{numequation}  
It is a resolution by finitely generated free $\DHPsd$-modules. It is no longer of finite length, but one can truncate it and obtain a resolution $\tau_{\le d} \CE_\bullet(\bbHso,\Psd)$ of $E_\triv$ of length $\dim(\bG)$ by finitely generated projective $\DHPsd$-modules. Finally, taking the solid tensor product of $\CE_\bullet(\bbHso,\Psd)$ with $A_\sigma(V)$ gives a resolution
\[\begin{tikzcd}
\cdots \ar[r] & \DHPsd \ot \frg \sotimes A_\sigma(V) \ar[r] & \DHPsd \sotimes A_\sigma(V) \ar[r] & A_\sigma(V) \ar[r] & 0 
\end{tikzcd}\]  
Because $A_\sigma(V)$ is a Smith space, the $\DHPsd$-modules in this latter resolution are projective objects in the category of solid $\DHPsd$-modules. Again, this complex can be truncated to give a resolution $\tau_{\le \bd} \CE_\bullet(\bbHso,\Psd) \sotimes A_\sigma(V)$ of $A_\sigma(V)$ by projective solid $\DHPsd$-modules. Here $\bd = \dim(\bG) + \rk(\bZ_\bG)$ is the sum of $d = \dim(\bG)$ and the $\Qp$-rank of the center of $\bG$. In particular, if $\bG$ is semisimple then $\bd = d$.  

\vskip8pt

{\it Building a Wall complex over $\cS^A_\bullet(V)$.} At this point we set 
\[\cS^\CE_{q,j}(V) = \cS^{A,\CE}_{q,j}(V) = \bigoplus_{\sigma \in G\bksl \BT_q} \cind^G_\Psd\Big(\CE_j(\bbHso,\Psd) \sotimes A_\sigma(V)\Big)\,.\]
For fixed $q$ the complex $\cS^\CE_{q,\bullet}(V)$ is called {\it complexe fibre} in \cite[V.3.1.2, p. 187]{LazardGroupesAnalytiques}, and it is augmented by a natural surjective $G$-homomorphism $\cS^\CE_{q,0}(V) \ra \cS^A_q(V)$. However, one does not obtain a double complex. Instead, one can show that there is a family of morphisms of solid $G$-representations
\[d^{(k)}_{q,j}: \cS^\CE_{q,j}(V) \lra \cS^\CE_{q-k,j+k-1}(V)\,,\; 0 \le k \le q\,,\]
which gives $\cS^\CE_{\bullet,\bullet}(V)$ the structure of a Wall complex, cf. \cref{thm-mixed-res}. To show the existence of these maps one uses Frobenius reciprocity and the previously established fact that each $\CE_j(\bbHso,\Psd) \sotimes A_\sigma(V)$ is a projective solid $\DHPsd$-module. Is it here where the reason for introducing the groups $\Hs$ becomes apparent. Namely, in order to show the existence of the maps $d^{(k)}_{q,j}$ we need to have $H_\sigma \sub H_\tau$ whenever $\tau \sub \ovsigma$. The groups $\Us$ behave not like that: one has $\Us \supset \Ut$ when $\tau \sub \ovsigma$.

\vskip8pt

{\it The total complex $\cS^\CE_\bullet(V)$.} The existence of the Wall complex for $\cS^\CE_{\bullet,\bullet}(V)$ means that there is a total complex $(\cS^\CE_n(V) = \bigoplus_{q+j =n} \cS^\CE_{q,j}(V), \Delta_n)_n$ which comes with an augmentation $\cS^\CE_0(V) \ra V$, and whose homology  is naturally isomorphic to that of $\cS^A_\bullet(V)$. And if $\cS^A_\bullet(V) \ra V$ is a resolution of $V$, then so it $\cS^\CE_\bullet(V) \ra V$. One can also work with the truncated complexes $\tau_{\le \bd} \CE(\bbHso,\Psd) \sotimes A_\sigma(V)$ and obtain $G$-representations $\tau \cS^\CE_{q,j}(V)$, which can be equipped with the structure of a Wall complex, and one has an augmented total complex $\tau \cS^\CE_\bullet(V) \ra V$ which is a resolution of $V$ if $\cS^A_\bullet(V) \ra V$ is. The representations $\tau\cS^\CE_n(V)$ vanish for $n > \dim(\bG) + \rk(\bG)$. 

\vskip8pt

{\it Applications to Ext groups.} Let $V$ be an admissible locally analytic representations of $G$. Assume furthermore that the analytic Schneider-Stuhler complex $\cS^A_\bullet(V) \ra V$ is a resolution of $V$, so that both $\cS^\CE_\bullet(V) \ra V$ and $\tau\cS^\CE_\bullet(V) \ra V$ are resolutions of $V$ too. Let $W$ be a solid representation of $G$. When one then makes $R\uHom_G(\cS^\CE_n(V),W)$ explicit, one encounters a term which involves the space of derived analytic vectors $W^{R\bbHso\anv}$. If we assume that $W$ is admissible, then $W^{R\bbHso\anv} = W^{\bbHso\anv}$, i.e., the higher derived analytic vectors of $W$ vanish, cf. \cref{higher-derived-an-vectors}. This allows us to show that $R\uHom_G(\cS^\CE_n(V),W)$ has cohomology only in degree zero, and we obtain the following result: 
\vskip8pt

{\bf Theorem {\rm (Theorem \ref{Ext-for-admissible})}.} {\it Let $V$ and $W$ be both admissible locally $F$-analytic representations of $G = \bG(F)$. Assume that the augmented analytic Schneider-Stuhler complex $\cS_\bullet(V) = \cS^A_\bullet(V) \ra V$ is a resolution of $V$. Write 
\[\CE_j(\bbHso,\Psd) =  \DHPsd \sotimes_E M_{\sigma,j}\]
with an explicit finite-dimensional $E$-vector space $M_{\sigma,j}$, cf. paragraphs \ref{wall-complex} and \ref{ind-affinoid-mixed-generalized}. 

\begin{enumerate}
\item $\uExt^n_G(V,W)$ can be identified with the $n^{\rm th}$ cohomology group of a complex of solid $E$-vector spaces $(\cE^\bullet(V,W), \partial^\bullet)$, where 
$$\cE^n(V,W) = \bigoplus_{\scalebox{.7}{$\begin{array}{c} 0 \le q \le \ell,\, 0 \le j \le \bd \\
q+j = n \end{array}$}} \bigoplus_{\sigma \in \cR(\BT_q)} \uHom_E\Big(M_{\sigma,j} \sotimes_E V^{D_r(U_\sigma)},W^{\bbHso\anv}\Big) \;.$$
\item The cohomology of this complex vanishes in degrees $n > \dim(\bG) + \rk_F(\bG)$. 
\end{enumerate}}
\vskip8pt

In (1) the set $\cR(\BT_q)$ is a system of representatives for $G\bksl \BT_q$. The proof of (2) makes use of the resolution $\tau \cS^\CE_\bullet(V) \ra V$. From this theorem one obtains easily the 
\vskip8pt

{\bf Corollary {\rm (Corollary \ref{cor-Ext-for-admissible})}.}  {\it Let the assumptions be as in the previous theorem. 
\begin{enumerate}
\item For all $n > \dim(\bG) + \rk_F(\bG)$ one has $\uExt^n_G(V,W) = 0$.
\item If for all facets $\sigma$ of $\BT$\footnote{Equivalently, for every $q = 0, \ldots, \ell$ and every representative for $G\bksl BT_q$.} one has $W^{\bbHso\anv} = 0$, then $\uExt^n_G(V,W) = 0$ for all $n \ge 0$.
\item If for all facets $\sigma$ of $\BT$ one has $W^{\bbHso\anv} = 0$, then for all $n \ge 0$ the cohomology group $\underline{H}^n(G,W) = \uExt^n_G(E_\triv,W)$  vanishes.
\end{enumerate}}
\vskip8pt
As mentioned above, there are many aspects of the theory we have not yet explored. One obvious task which we plan to look into next is to make explicit the connection with Lie algebra cohomology. As the complex $\CE_\bullet(\bbHso)$ is closely related to Lie algebra cohomology, it can be expected that the Ext groups $\uExt_G(V,W)$ have a description which involves extension groups of $\frg$-modules.

\subsection*{Acknowledgments} 
As is abundantly clear from what has been said above, our paper has its origin in the seminal work \cite{ScSt97} by P. Schneider and U Stuhler. The paper \cite{Kohlhaase_Cohomology} by J. Kohlhaase also had a significant impact on our work, especially with regard to the role of the Wall complex. And not only has the work of J. Rodrigues Jacinto and J. E. Rodr\'iguez Camargo \cite{RR} provided us with the right framework, we also use many of their results throughout our paper. 

We are grateful to the following individuals for responding to our questions which helped us to improve the paper in a couple of places: Jessica Fintzen for references regarding Moy-Prasad filtrations; Tobias Schmidt for discussions on $F$-analytic distribution algebras; Juan Esteban Rodr\'iguez Camargo for useful remarks regarding derived analytic vectors. 

The second-named author has given talks about this work at conferences in M\"unster and Carbondale, and he thanks the organizers for giving him the opportunity to present these results. Parts of this paper have been written during a stay at the Hausdorff institute in Bonn during the Trimester on {\it The Arithmetic of the Langlands Program} in 2023, and M.S. would like to acknowledge the perfect working conditions provided by that institution.

\subsection*{Notation}

\begin{para}
All modules are left modules unless otherwise specified. If $R$ is a ring (or a condensed ring, solid ring, etc.) we write $\Mod_R$ for the category of $R$-modules.  
\end{para}

\begin{para}
We denote by $F$ a finite extension of $\Q_p$, by $\cO_F$ its ring of integers, and by $\vpi$ a uniformizer. We use $F$ as our ground field for all ``geometric'' objects (e.g., finite type schemes, analytic spaces, locally analytic manifolds, etc). We use bold letters (e.g., $\bX$) for finite type schemes and blackboard bold letters (e.g., $\bbX$) for analytic spaces, and regular letters (e.g., $X$) for locally analytic manifolds. If $\bbX$ is an analytic space over $F$, we use the corresponding regular letter $X$ to denote the locally analytic manifold $\bbX(F)$.
\end{para}

\begin{para} \label{locally-F-analytic}
By ``locally analytic'' we always mean ``locally $F$-analytic'' unless explicitly stated otherwise. Given a locally $F$-analytic group $G$, we denote by $\Res^F_\Qp G$ the group $G$ considered as a locally $\Qp$-analytic group.      
\end{para}

\begin{para} \label{tacit-base-extension}
Let $E$ be a finite extension of $F$. We use $E$ as our field of coefficients for all ``linear'' objects (vector spaces, associative algebras, Lie algebras, representations, etc). Any such object that is initially defined over $F$ is tacitly base changed up to $E$. For example, if $\bbX$ is an analytic space, we write $\sO(\bbX)$ in place of $\sO(\bbX) \otimes_F E$. Similarly, if $G$ is a locally analytic group over $F$, we use the corresponding lower case fraktur letter $\frg$ to denote $\Lie(G) \otimes_F E$.
\end{para}

\begin{para}
We assume that the $p$-adic norm on $F$ and $E$ has the standard normalization, i.e., the one for which $|p| = p^{-1}$. 
\end{para}

\begin{para}\label{strong-limit-cardinal}
We fix, once and for all, an uncountable strong limit cardinal $\kappa$. Unless explicitly stated otherwise, profinite sets are assumed to be $\kappa$-small, and condensed sets are assumed to be $\kappa$-condensed, so that a condensed sets is in fact a sheaf on the site of profinite sets (with coverings being finite jointly surjective families). We write $\Hom(X, Y)$ and $\uHom(X, Y)$ for the external and internal homs of condensed sets: the former is a set, the latter is a condensed set.
\end{para}

\begin{para} \label{condensed}
Write $\Vec_E^{\rm cond}$ for the category of condensed $E$-vector spaces. It is a bicomplete Grothen- dieck abelian category in which products, coproducts, and filtered colimits are exact. It is generated by compact projectives of the form $E[T]$, where $T$ is a ($\kappa$-small) extremally disconnected set.\footnote{$E[T]$ is the sheafification of of the presheaf that assigns to any extremally disconnected $S$ the free vector space $E[T(S)]$ on the set $T(S)$ of continuous maps $S \to T$. Equivalently, $E[T]$ is the sheaf that represents the ``sections over $T$'' functor $V \rightsquigarrow V(T)$ on $\Vec_E^\cond$.}
It is closed symmetric monoidal category with tensor product $- \otimes_E -$ and internal hom $\uHom_E(-,-)$ over $E$, and we write $(-)^\vee := \uHom_E(-, E)$ for the internal dual. We distinguish the internal hom $\uHom_E(-,-)$ from the external hom $\Hom_E(-,-)$ as the former is a condensed vector space while the latter is an abstract vector space.
\end{para}

\begin{para} \label{solid}
Let $\Vec_E^\solid$ denote the category of solid vector spaces over $E$ (cf. \cite[sec. A.3]{BoscoDrinfeldv2}, \cite[sec. 3]{RR}). It is a reflective bicomplete full abelian subcategory of $\Vec_E^{\rm cond}$ that is stable under extensions. The left adjoint to the inclusion is {\it solidification} $V \mapsto V^\solid$, and the solid vector spaces $E[T]^\solid$, for $T$ a extremally disconnected set, form a family of compact projective generators. It becomes a closed symmetric monoidal category when equipped with the solid tensor product $- \sotimes -$ over $E$ and the internal $\uHom_E(-,-)$ of condensed vector spaces (cf. \cite[A.17]{BoscoDrinfeldv2}).
\end{para}

\begin{para}
There is a natural lax symmetric monoidal functor $V \rightsquigarrow \underline{V}$ into $\Vec_E^\solid$ from the category of complete locally convex vector spaces over $E$ with the completed projective tensor product $- \hat{\otimes}_\pi -$. For any complete locally convex vector space $V$, the corresponding solid vector space $\underline{V}$ is quasi-separated \cite[proposition A.31]{BoscoDrinfeldv2} and hence flat \cite[proposition A.28]{BoscoDrinfeldv2}. It is fully faithful on the full subcategory of $\kappa$-compactly generated $V$ \cite[proposition 1.7]{ScholzeCondensed}. Moreover, when restricted further to the full subcategory of Fr\'echet spaces, the functor $V \rightsquigarrow \underline{V}$ exact \cite[lemma A.33]{BoscoDrinfeldv2} and strong symmetric monoidal \cite[proposition A.68]{BoscoDrinfeldv2}. When $V$ is Fr\'echet and no ambiguity can arise, we will write simply $V$ again in place of $\underline{V}$.
\end{para}
\section{Analytic groups}

\begin{para}
Let $\bbG$ be an affinoid group over $F$ and regard $G = \bbG(F)$ with its natural topology as a condensed group (i.e., a group object in the category of condensed sets, or equivalently, a sheaf of groups on the site of profinite sets). Recall that the space $\sO(\bbG)$ of analytic functions on $\bbG$ over $E$ (cf. \cref{tacit-base-extension}) carries left and right regular actions of $G$ which commute with each other, defining an action of $G \times G$ on $\sO(\bbG)$ \cite[sections 3.1, 3.3]{EmertonA}. 
\end{para}

\subsection{Strictly ind-affinoid groups}

\begin{definition} \label{ind-affinoid-space}
A {\it strictly ind-affinoid space} $\bbX^\circ$ is an ind-object of the category of affinoid spaces which has a {\it presentation} by an increasing chain
\[ \bbX^{(0)} \Subset \bbX^{(1)} \Subset \cdots \]
of relatively compact open embeddings of affinoid spaces.\footnote{See \cite[definition 2.1.5]{EmertonA}, \cite[section 9.6.2]{BGR}, or \cite[definition 0.4.2]{Hu96} for a definition of relative compactness. These ind-objects can be identified with the {\it strictly $\sigma$-affinoid spaces} of \cite[definition 2.1.17]{EmertonA}, but we use the ind-object formalism to make the ``duality'' with the strictly pro-affinoid case below more transparent.}
\end{definition}

\begin{para}
The category of strictly ind-affinoid spaces is a full subcategory of the ind-completion of affinoid spaces. Dualize \cite[tag \href{https://stacks.math.columbia.edu/tag/0G2W}{0G2W}]{stacks-project} for a description of morphisms in this category. This category has all finite limits.
\end{para}

\begin{para}
For $\bbX^\circ$ a strictly ind-affinoid analytic space (cf. \cref{ind-affinoid-space}), let 
\[ \sO(\bbX^\circ) = \varprojlim_n \sO(\bbX^{(n)}), \]
where $\bbX^{(\bullet)}$ is a presentation of $\bbX^\circ$. This is independent of choice of presentation. It is a Fr\'echet space of compact type \cite[proposition 2.1.16]{EmertonA}, \cite[definition 3.34]{RR}. 
\end{para}

\begin{definition}\label{dfn-strictly-ind-affinoid-group}
A {\it strictly ind-affinoid group} $\bbG^\circ$ is a group object in the category of strictly ind-affinoid spaces which has a presentation $\bbG^{(\bullet)}$ where each $\bbG^{(n)}$ is an affinoid group, each relatively compact open embedding $\bbG^{(n)} \Subset \bbG^{(n+1)}$ is a homomorphism of affinoid groups, and the group object structure of $\bbG^\circ$ is compatible with the group object structure on each $\bbG^{(n)}$. Hereafter, a {\it presentation} of a strictly ind-affinoid group will always refer to a presentation of this form. 
\end{definition}


\begin{para}
To a strictly ind-affinoid group $\bbG^\circ$, we associate the condensed group $G^\circ = \varinjlim_n \bbG^{(n)}(F)$, where the $\bbG^{(n)}$ form a presentation of $\bbG^\circ$ by affinoid groups and each $\bbG^{(n)}(F)$ is regarded as a condensed\footnotemark{} group using its natural topology. This is independent of choice of presentation.
\footnotetext{The ``underline'' functor of \cite[example 1.5]{ScholzeCondensed} is easily seen to commute with filtered colimits using the fact that profinite sets are compact. Thus this colimit in the category of condensed spaces agrees with the underline of the colimit in the category of topological spaces.}
\end{para}

\subsection{Strictly pro-affinoid groups}

\begin{definition} \label{pro-affinoid-space}
A {\it strictly pro-affinoid space} $\bbX^\dagger$ is a pro-object of the category of affinoid spaces which has a {\it presentation} by a decreasing chain
\[ \bbX_{(0)} \Supset \bbX_{(1)} \Supset \cdots \]
of relatively compact open embeddings of affinoid spaces. 
\end{definition}


\begin{para}
The category of strictly pro-affinoid spaces is a full subcategory of the pro-completion of the category of affinoid spaces. See \cite[tag \href{https://stacks.math.columbia.edu/tag/0G2W}{0G2W}]{stacks-project} for a description of morphisms in this category. This category has all finite limits.
\end{para}

\begin{para}
For $\bbX^\dagger$ a strictly pro-affinoid space, let 
\[ \sO(\bbX^\dagger) = \varinjlim_n \sO(\bbX_{(n)}), \]
where $\bbX_{(\bullet)}$ is a presentation of $\bbX^\circ$. This is independent of choice of presentation. It is an inductive limit of the Banach spaces $\sO(\bbX_{(n)})$ with compact transition maps. By replacing $\sO(\bbX_{(n)})$ by its image in $\sO(\bbX^\dagger)$, we see that $\sO(\bbX^\dagger)$ can also be written as an inductive limit of Banach space with injective and compact transition maps. It is therefore an LS space (limit of Smith spaces) of compact type \cite[corollary 3.38 (2)]{RR}. 
\end{para}

\begin{definition}
A {\it strictly pro-affinoid group} $\bbG^\dagger$ is a group object in the category of strictly pro-affinoid spaces which has a presentation $\bbG_{(\bullet)}$ where each $\bbG_{(n)}$ is an affinoid group, each relatively compact open embedding $\bbG_{(n)} \Supset \bbG_{(n+1)}$ is a homomorphism of affinoid groups, and the group object structure of $\bbG^\dagger$ is compatible with the group object structure on each $\bbG_{(n)}$. Hereafter, a {\it presentation} of a strictly pro-affinoid analytic group will always refer to a presentation of this form. 
\end{definition}

\begin{para}
To a strictly pro-affinoid group $\bbG^\dagger$, we associate the condensed group $G^\dagger = \varprojlim_n \bbG_{(n)}(F)$, where the $\bbG_{(n)}$ form a presentation of $\bbG^\dagger$ by affinoid groups and each $\bbG_{(n)}(F)$ is regarded as a condensed\footnotemark{} group using its natural topology. This is independent of choice of presentation.
\footnotetext{The ``underline'' functor of \cite[example 1.5]{ScholzeCondensed} is a right adjoint \cite[proposition 1.7]{ScholzeCondensed} and therefore preserves all limits. Thus, this limit of condensed sets agrees with the underline of the limit in the category of topological spaces.}
\end{para}

\subsection{\texorpdfstring{$F$}{F}-uniform groups}\label{F-uniform}

\begin{para}
Let $G$ be an $F$-uniform locally analytic\footnote{Recall our convention that ``locally analytic'' means ``locally $F$-analytic'' (cf. \cref{locally-F-analytic}).} group (cf. \cite[lemma 2.2.4 and remark 2.2.5]{OrlikStrauchIRR} or \cite[definition 3.4]{Lahiri}). The aim in this section is to associate affinoid groups to $G$, generalizing the construction of \cite[3.5]{Lahiri}. More precisely, define\footnote{This $\kappa$ is not to be confused with the strong limit cardinal of \ref{strong-limit-cardinal}. The use of $\kappa$ in this context follows other papers which we quote occasionally, which is why we decided to accept this conflict of notation.}
\begin{numequation}\label{kappa}
\kappa =\begin{cases} 1 & \text{if } p > 2 \\ 2 & \text{if } p = 2 \end{cases} \quad \mbox{ and } \quad \rho^* = p^{\kappa - (1/(p-1))} \in \bbR.
\end{numequation}
Note that $\rho^* > 1$ for all primes $p$. In what follows, we will show how to associate an affinoid group $\bbG[\rho]$ to $G$ for every $\rho \in (0, \rho^*) \cap p^\Q$. $\bbG = \bbG[1]$ is the affinoid group defined in \cite[3.5]{Lahiri}. The underlying analytic space of $\bbG[\rho]$ is a polydisk of radius $\rho$, and $\bbG[\rho] \Subset \bbG[\rho']$ whenever $\rho < \rho'$.
\end{para}

\begin{para} \label{polydisk-construction}
For a finite free $\cO_F$-module $M$ and $\rho \in p^\Q$, define the $\rho$-Gauss norm $|-|_\rho$ on $F[M] = \Sym_F(F \otimes_{\cO_F} M)$ to be the unique multiplicative norm with the property that, if $x = (x_1, \dotsc, x_d)$ is any $\cO_F$-basis for $M$, then
\[ \left| \sum_{n \in \bbN^d} a_n x^n \right|_\rho = \max_n |a_n|\rho^{|n|}. \]
Then let $F\langle M/\rho \rangle$ be the completion of $F[M]$ for the $\rho$-Gauss norm. We view 
\[ \Spa F\langle M/\rho \rangle \]
as an affinoid polydisk of radius $\rho$. There are natural relatively compact open embeddings $\Spa F\langle M/\rho \rangle \Subset \Spa F \langle M/\rho' \rangle$ whenever $\rho < \rho'$. 
\end{para}

\begin{para} \label{polydisk-functorial}
Suppose $\alpha : M \to N$ is a homomorphism of finite free $\cO_F$-modules. By putting $\alpha$ into Smith normal form, we see that there exist bases $x = (x_1, \dotsc, x_d)$ of $M$ and $y = (y_1, \dotsc, y_e)$ of $N$, and a sequence of integers $0 \leq a_1 \leq \cdots \leq a_k$ for some $k = 0, 1, \dotsc, d$, such that $\alpha(x_i) = \pi^{a_i}y_i$ for all $i \leq k$ and $\alpha(x_i) = 0$ for all $i > k$. Observe that, for any monomial $x^n \in F[M]$, we have
\[ |\alpha(x^n)|_\rho \leq |x^n|_\rho. \]
This is clear if $n_i \geq 1$ for some $i > k$ since then $\alpha(x^n) = 0$, and otherwise, we have 
\[ |\alpha(x^n)|_\rho = |\pi^{a_1n_1}y_1^{n_1} \cdots \pi^{a_k n_k}y_k^{n_k}|_\rho = |\pi|^{a_1n_1 + \cdots + a_kn_k} \rho^{n_1 + \cdots + n_k} \leq \rho^{|n|} = |x^n|_\rho. \]
Thus the induced map $F[M] \to F[N]$ is continuous with respect to the $\rho$-Gauss norm. It thus induces a continuous map $F\langle M/\rho \rangle \to F\langle N/\rho\rangle$, and a map of adic spaces $\Spa F\langle N/\rho \rangle \to \Spa F\langle M/\rho \rangle$. 
\end{para}

\begin{para} \label{bch-series}
Let $\Phi(X, Y) = \log(\exp(X)\exp(Y))$ be the formal Baker-Campbell-Hausdorff series in the free associative algebra over $\Q$ on indeterminates $X$ and $Y$. Then
\[ \Phi(X, Y) = \underbrace{X + Y}_{u_1(X, Y)} + \underbrace{\frac{1}{2}[X, Y]}_{u_2(X, Y)} + \cdots + u_n(X, Y) + \cdots, \]
where each $u_n(X, Y)$ is a homogeneous Lie polynomial of degree $n$ with rational coefficients, and for
\[ h_n = \left\lfloor \frac{n-1}{p-1} \right\rfloor, \]
the coefficients of $p^{h_n} u_n(X, Y)$ are $p$-adic integers \cite[IV.3.2.1--2]{LazardGroupesAnalytiques}. 
\end{para}

\begin{para} \label{powerful-bch}
It follows that the Lie polynomial $p^{h_n} u_n(X, Y)$ can be evaluated on a pair of elements $\frx, \fry$ in a $\Z_p$-Lie algebra $L$ to obtain another element $p^{h_n} u_n(\frx, \fry)$ of $L$. If $L$ is powerful in the sense that $[L, L] \subseteq p^\kappa L$ \cite[section 9.4]{DDMS}, then
\[ p^{h_n} u_n(\frx, \fry) \in p^{\kappa(n-1)}L \]
using the fact that $u_n(X, Y)$ is homogeneous of degree $n$, which implies that 
\[ u_n(\frx, \fry) \in p^{\kappa(n-1) - h_n}L. \]
Observe that 
\[ \kappa(n-1) - h_n \geq (n-1)\left( \kappa - \frac{1}{p-1} \right), \]
so
\[ |p^{\kappa(n-1) - h_n}| \leq p^{-(n-1)(\kappa - (1/(p-1)) } = (\rho^*)^{-(n-1)}.  \]
\end{para}

\begin{para} \label{comultiplication}
Suppose $L$ is an $\cO_F$-Lie algebra which is powerful as a $\Z_p$-Lie algebra. The calculations of \cref{powerful-bch} show that the Baker-Campell-Hausdorff formula defines a formal group law for the structure of an affinoid group on 
\[ \Spa F\langle L^\vee/\rho \rangle \]
for any $\rho \in (0, \rho^*)$. To be explicit, choose an $\cO_F$-basis $\frx = (\frx_1, \dotsc, \frx_d)$ of $L$. Then
\[ u_n(a_1 \frx_1 + \cdots + a_d \frx_d, b_1 \frx_1 + \cdots + b_d \frx_d) = u_{n,1}(a, b) \frx_1 + \cdots + u_{n,d}(a, b) \frx_d \]
where $u_{n,i}(a, b) = u_{n,i}(a_1, \dotsc, a_d, b_1, \dotsc, b_d)$ is a homogeneous polynomial of degree $n$ in the $2d$ variables $a_1, \dotsc, a_d, b_1, \dotsc, b_d$ with coefficients in $p^{\kappa(n-1) - h_n}\Z_p$. Since $|p^{\kappa(n-1) - h_n}| \leq (\rho^*)^{-(n-1)}$, the radius of convergence of the power series
\[ \Phi_i(a, b) = \sum_{n = 1}^\infty u_{n,i}(a, b) \]
is at least $\rho^*$. Let $x = (x_1, \dotsc, x_d)$ be the basis of $L^\vee$ dual to $\frx$. Let $a_i = (x_i, 0) \in L^\vee \oplus L^\vee$ and $b_i = (0, x_i) \in L^\vee \oplus L^\vee$, so that $(a, b) = (a_1, \dotsc, a_d, b_1, \dotsc, b_d)$ forms a basis for $L^\vee \oplus L^\vee$. With $\Phi_i(a, b)$ as above, we obtain a comultiplication
\[ \Delta : F\langle L^\vee/\rho \rangle \to F \langle L^\vee/\rho \rangle \sotimes_F F \langle L^\vee/\rho \rangle = F \langle (L^\vee \oplus L^\vee)/\rho \rangle \]
by setting $\Delta(x_i) = \Phi_i(a, b)$. The map $\Delta$ is independent of the choice of $\cO_F$-basis of $L$ and it gives $\Spa F\langle L^\vee / \rho \rangle$ the structure of an affinoid group. 
\end{para}

\begin{definition}
Suppose $G$ is an $F$-uniform locally $F$-analytic group. Let $L$ be the powerful $\Z_p$-Lie algebra corresponding to $G$ (cf. \cite[theorem 9.10]{DDMS} or \cite[paragraph 3.2]{Lahiri}), which is in fact an $\cO_F$-Lie algebra by the assumption of $F$-uniformity of $G$. For any $\rho \in (0, \rho^*) \cap p^\Q$, let \[ \bbG[\rho] = \Spa F\langle L^\vee/\rho \rangle, \]
with the affinoid group structure described in \cref{comultiplication}. Since the $\Z_p$-Lie algebra $L$ is functorially associated to $G$, this construction is functorial in $G$ (cf. \cref{polydisk-functorial}). 
\end{definition}

\begin{remark} \label{F-uniform-and-RR}
When $F = \Qp$ and $h \in \Q_{\geq 0}$, the group we denote $\bbG[p^{-h}]$ is denoted $\bbG_h$ in \cite[definition 4.4 and lemma 4.6]{RR}.
\end{remark}

\begin{definition}
For any $\rho \in (0, \rho^*) \cap p^\Q$, we define $\bbG[\rho]^\circ$ to be the strictly ind-affinoid group with a presentation by the affinoid groups $\bbG[\rho']$ for any increasing sequence $\rho' \to \rho^-$. Dually, we define $\bbG[\rho]^\dagger$ to be the strictly pro-affinoid group with a presentation by the affinoid groups $\bbG[\rho']$ for any decreasing sequence $\rho' \to \rho^+$. 
These constructions also define functors on the category of $F$-uniform groups.  
\end{definition}

\begin{remark}
Such constructions are generalized in \cite{LahiriDagger} to the setting of saturated $p$-valued groups \cite[page 187]{SchneiderBookLie}, but we will not make use of this here. 
\end{remark}

\section{Distribution algebras}

\subsection{Solid algebras and Hopf algebras}

\begin{para}
A {\it solid algebra} is an (associative, unital) algebra object in $\Vec_E^\solid$. If $R$ is a solid algebra, a {\it solid left (resp. right) $R$-module} is a left (resp. right) $R$-module object in the symmetric monoidal category $\Vec_E^\solid$. We assume modules are left modules unless otherwise specified, and we write $\Mod_R^\solid$ for the category of solid $R$-modules. It is a bicomplete abelian category. For any $\kappa$-small extremally disconnected $T$, define
\[ R[T]^\solid := R \sotimes_E E[T] \in \Mod_R^\solid. \]
\end{para}

\begin{lemma}[{\cite[A.29]{BoscoDrinfeldv2}}] \label{solid-modules-analytic-ring}
Let $R$ be a solid algebra. Then $R$, equipped with the functor of measures $T \rightsquigarrow R[T]^\solid$, is an analytic ring and $\Mod_R^\solid$ is the category of modules over this analytic ring. In particular, $R[T]^\solid$, for varying extremally disconnected sets $T$, forms a family of compact projective generators for $\Mod_R^\solid$. 
\end{lemma}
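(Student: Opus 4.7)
The plan is to verify the axioms of an analytic ring (in the sense of Clausen-Scholze) for the pair $(R, T \rightsquigarrow R[T]^\solid)$, and then identify the resulting category of complete modules with $\Mod_R^\solid$. The strategy is to bootstrap from the analytic ring structure on $E$---whose category of complete modules is $\Vec_E^\solid$ by paragraph \ref{solid}---via base change along the unit map $E \to R$.

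First I would verify the basic compatibilities of the measure functor. These are: $R[\ast]^\solid \cong R$; $R[T_1 \sqcup T_2]^\solid \cong R[T_1]^\solid \oplus R[T_2]^\solid$; and the natural map $R[T]^\solid \sotimes_R R[T']^\solid \to R[T \times T']^\solid$ is an isomorphism. The first two are immediate from the corresponding statements for $E[T]^\solid$ together with the definition $R[T]^\solid = R \sotimes_E E[T]^\solid$. For the third, one rewrites
\[ R[T]^\solid \sotimes_R R[T']^\solid \cong R \sotimes_E E[T]^\solid \sotimes_E E[T']^\solid \cong R \sotimes_E E[T \times T']^\solid = R[T \times T']^\solid, \]
using the known multiplicativity of $E[-]^\solid$ under external products in $\Vec_E^\solid$.

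Next I would establish compact projectivity and generation. For any solid $R$-module $M$, extension-restriction adjunction combined with the fact that $E[T]^\solid$ represents the sections-over-$T$ functor yields a chain of natural identifications
\[ \Hom_R(R[T]^\solid, M) \;=\; \Hom_E(E[T]^\solid, M) \;=\; M(T). \]
Compactness of $R[T]^\solid$ in $\Mod_R^\solid$ then follows because the functor $M \rightsquigarrow M(T)$ commutes with filtered colimits; projectivity follows because it is exact on $\Mod_R^\solid$; and the family $\{R[T]^\solid\}$ generates since a morphism of solid $R$-modules that induces the zero map on all sections must be zero.

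Finally I would identify the module category. The category of complete modules for this analytic ring structure consists of those $M \in \Mod_R^{\rm cond}$ for which the natural map $M(T) \to \Hom_R(R[T]^\solid, M)$ is an isomorphism for every extremally disconnected $T$. By the computation above, this is equivalent to the same condition with $E[T]^\solid$ in place of $R[T]^\solid$, which is precisely the condition that the underlying condensed $E$-vector space of $M$ be solid. Since $R$ is itself solid, the solidification of a condensed $R$-module is computed on the underlying condensed $E$-vector space, and so this category is exactly $\Mod_R^\solid$. The main obstacle I anticipate is the identification $R[T]^\solid \sotimes_R R[T']^\solid \cong R \sotimes_E E[T]^\solid \sotimes_E E[T']^\solid$: one must check that the relative solid tensor product over $R$, defined as a reflexive coequalizer of absolute solid tensor products over $E$, behaves correctly after reflection into $\Vec_E^\solid$. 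This reduces to flatness of $R[T]^\solid$ as a solid $E$-vector space and the stability of $\Vec_E^\solid$ under cokernels, both of which are part of the foundational apparatus of the Clausen-Scholze framework.
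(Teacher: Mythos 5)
Your plan says you will ``verify the axioms of an analytic ring,'' but what you then check --- the unit object, disjoint unions to products, and multiplicativity $R[T]^\solid \sotimes_R R[T']^\solid \cong R[T\times T']^\solid$ --- is not the defining condition. The multiplicativity you single out as the ``main obstacle'' is not part of \cite[definition 7.4]{ScholzeCondensed} and plays no role here. The content of that definition is a derived condition: for every connective complex $C$ of condensed $R$-modules whose terms are direct sums of objects $R[T]^\solid$, and every extremally disconnected $S$, the map $R\Hom_R(R[S]^\solid, C) \to R\Hom_R(R[S], C)$ must be a quasi-isomorphism. This is nowhere verified in your proposal.

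The paper establishes it via the commutative square
\[ \begin{tikzcd} R\Hom_R(R[T]^\solid, C) \ar{r} \ar{d} & R\Hom(E[T]^\solid, C) \ar{d} \\ R\Hom_R(R[T], C) \ar{r} & R\Hom(E[T], C) \end{tikzcd} \]
whose horizontal arrows are adjunction isomorphisms (flatness of $E[T]^\solid$ over $E$ makes extension of scalars already derived). Hence the left vertical map is an isomorphism iff the right one is, i.e., iff $C$ is solid; and the relevant connective complexes $C$ are solid, which completes the verification. Your second and third steps contain the underived shadow of this argument --- the chain $\Hom_R(R[T]^\solid, M) = \Hom_E(E[T]^\solid, M) = M(T)$ is exactly the right tool --- but you deploy it only to prove compact projective generation and to identify complete modules with solid modules, not to prove the analytic ring condition. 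Without that condition established first, \cite[proposition 7.5]{ScholzeCondensed} is not available, ``the category of complete modules for this analytic ring structure'' is not yet a licensed notion, and your final identification rests on nothing. Your compact-projective-generation argument is fine as a standalone computation, but it does not substitute for the analytic ring axiom, which is the load-bearing first assertion of the lemma.
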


\begin{proof}
For a complex $C$ of condensed $R$-modules (i.e., $R$-modules in the symmetric monoidal category $\Vec^\cond$), observe that we have a commutative diagram as follows. 
\[ \begin{tikzcd} R\Hom_R(R[T]^\solid, C) \ar{r} \ar{d} & R\Hom(E[T]^\solid, C) \ar{d} \\ R\Hom_R(R[T], C) \ar{r} & R\Hom(E[T], C) \end{tikzcd} \]
The horizontal maps are adjunction isomorphisms. Thus, the vertical map on the left is an isomorphism if and only if the one on the right is, which in turn happens if and only if $C$ is solid. 

When $C$ happens to be a connective complex whose terms are direct sums of terms of the form $R[T]^\solid$ for various $T$, then $C$ is in fact solid. This shows that $R$, equipped with the functor $T \rightsquigarrow R[T]^\solid$, is an analytic ring \cite[definition 7.4]{ScholzeCondensed}. 

Finally, by taking $C = M$ to be a condensed $R$-module (regarded as a complex concentrated in a single degree), we see that $M$ is a module over the analytic ring we have just defined if and only if $M$ is solid \cite[proposition 7.5]{ScholzeCondensed}. The statement thus follows from \cite[proposition 7.5(i)]{ScholzeCondensed}. 
\end{proof}

\begin{para} \label{hopf-tensor}
A {\it solid Hopf algebra} is a Hopf algebra object in $\Vec_E^\solid$. We will always denote by $\Delta: R \ra R \sotimes_E R$ and $S : R \to R$ the comultiplication and antipode, respectively, of $R$. If $R$ is a solid Hopf algebra and $M, N, P$ are solid $R$-modules, then both $M \sotimes_E N$ and $\uHom_E(N, P)$ are again naturally solid $R$-modules, and
\[ \Hom_R(M \sotimes_E N, P) = \Hom_R(M, \uHom_E(N, P)) \]
naturally in $M$, $N$, and $P$. Given an $R$-module $M$, we denote by $M_\triv$ the underlying solid $E$-vector space, and equip it with the $R$-module structure given by the augmentation map $R \ra E$. We denote by $\mu_\triv: R \sotimes_E M_\triv \ra M_\triv$ the structure map of the solid $R$-module $M_\triv$. 
\end{para}

\begin{lemma} \label{hopf-tensor-trivial}
Let $R$ be a solid Hopf algebra, and let $M$ be a solid $R$-module Then the map
\[ \begin{tikzcd}
R \sotimes_E M \ar{r}{\Delta \otimes 1} & R \sotimes_E R \sotimes_E M_\triv \ar{r}{1 \otimes \mu_\triv} & R \sotimes_E M_\triv
\end{tikzcd} \]
is an isomorphism of solid $R$-modules. The domain and the target are given the structure of $R$-modules via $\Delta$.\footnote{In fact, this statement, and that of \cref{hopf-tensor-trivial-free}, are true for any Hopf algebra in any symmetric monoidal category. See \cite[lemma 4.13]{Knapp88} for the statement analogous to \cref{hopf-tensor-trivial-free} for the universal enveloping algebra of a Lie algebra.}
\end{lemma}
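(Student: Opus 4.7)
The strategy is to exhibit an explicit two-sided inverse built out of the antipode $S$, and verify that it is in fact an inverse using the Hopf algebra axioms. As the footnote notes, the statement is formal in any symmetric monoidal category, so no features of $\Vec_E^\solid$ beyond its symmetric monoidal structure enter the argument; the only tools needed are coassociativity of $\Delta$, the counit axiom, the antipode axiom $\mu \circ (S \sotimes 1) \circ \Delta = \eta \circ \varepsilon = \mu \circ (1 \sotimes S) \circ \Delta$, and the fact that $\Delta$ and $S$ are respectively an algebra morphism and an algebra anti-morphism.

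I would define the candidate inverse $\psi: R \sotimes_E M_\triv \to R \sotimes_E M$ as the composition
\[ R \sotimes_E M_\triv \xrightarrow{\Delta \sotimes 1} R \sotimes_E R \sotimes_E M \xrightarrow{1 \sotimes S \sotimes 1} R \sotimes_E R \sotimes_E M \xrightarrow{1 \sotimes \mu} R \sotimes_E M, \]
where $\mu$ is the action map of $M$; in Sweedler notation, $\psi(s \otimes m) = s_{(1)} \otimes S(s_{(2)}) m$. First I would check that both the map $\phi$ in the statement and $\psi$ are $R$-linear with respect to the prescribed diagonal actions. The diagonal action on $R \sotimes_E M_\triv$ reduces to left multiplication on the first tensor factor, because $\mu_\triv$ factors through the counit $\varepsilon$; granted this, the $R$-linearity of $\phi$ follows from the multiplicativity of $\Delta$ together with coassociativity, and that of $\psi$ additionally uses the anti-multiplicativity of $S$.

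Next I would verify that $\phi$ and $\psi$ are mutually inverse. For instance,
\[ \psi(\phi(s \otimes m)) = s_{(1)(1)} \otimes S(s_{(1)(2)}) s_{(2)} m = s_{(1)} \otimes S(s_{(2)(1)}) s_{(2)(2)} m = s_{(1)} \otimes \varepsilon(s_{(2)}) m = s \otimes m, \]
using coassociativity, the antipode axiom, and the counit axiom in turn; the composition $\phi \circ \psi$ is symmetric. The main obstacle is purely bookkeeping --- keeping track of iterated Sweedler indices and the ordering of factors in the ternary tensor product --- and becomes transparent in a diagrammatic rendering. As a more abstract alternative with less explicit content, one can argue by Yoneda: the tensor--hom adjunction of \ref{hopf-tensor} gives natural isomorphisms $\uHom_R(R \sotimes_E M, -) \cong \uHom_E(M, -) \cong \uHom_R(R \sotimes_E M_\triv, -)$ on the underlying solid vector space level, so the representing $R$-modules are canonically isomorphic.
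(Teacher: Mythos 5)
Your strategy --- exhibiting an explicit antipode-involving two-sided inverse and checking it with the Hopf axioms --- is the same as the paper's proof, and your Sweedler-notation verification that $\psi \circ \phi$ and $\phi \circ \psi$ are the identity is correct. The Yoneda-style argument you offer as an alternative is also correct, and in fact cleaner.

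There is, however, a directional issue with where you put the antipode, and it would surface if you carried out the $R$-linearity verification you defer to ``bookkeeping.'' The antipode-free map $s \otimes m \mapsto s_{(1)} \otimes s_{(2)} m$ (using $\mu$, not $\mu_\triv$) is $R$-linear from the \emph{free} module $R \sotimes_E M_\triv$ to the \emph{diagonal} module $R \sotimes_E M$, while the antipode map $s \otimes m \mapsto s_{(1)} \otimes S(s_{(2)})m$ is $R$-linear the other way, from the diagonal module to the free one. Since the lemma's $\phi$ runs $R \sotimes_E M \to R \sotimes_E M_\triv$, it is $\phi$, not $\psi$, that must carry the antipode. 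With the assignment you make, one finds
\[ \phi(r \cdot (s \otimes m)) = r_{(1)} s_{(1)} \otimes r_{(2)} s_{(2)} r_{(3)} m \qquad\text{versus}\qquad r \cdot \phi(s \otimes m) = r s_{(1)} \otimes s_{(2)} m, \]
which are not equal in general, so your $\phi$ is not $R$-linear, and dually your $\psi$ is not either --- contrary to what you claim. To be fair, this mislabeling is inherited from the source: as printed, the lemma's composite $(1 \otimes \mu_\triv) \circ (\Delta \otimes 1)$ is literally the identity on underlying spaces (since $\mu_\triv$ factors through $\varepsilon$), and the paper's own proof also places $S$ in the inverse rather than in $\phi$. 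So you have faithfully reproduced a typo in the paper rather than introduced a new error. Your Yoneda argument at the end sidesteps this bookkeeping entirely and is unaffected.
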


\begin{proof}
One first verifies that the map $\phi$ in the statement is in fact $R$-linear. Let $\mu: R \sotimes_E M \ra M$ be the structure map of $M$. Then, one checks that the composite
\[ \begin{tikzcd}
R \sotimes_E M_\triv \ar{r}{\Delta \otimes 1} & R \sotimes_E R \sotimes_E M_\triv \ar{d}{1 \otimes S \otimes 1} \\
& R \sotimes_E R \sotimes_E M_\triv \ar{r}{\id} & R \sotimes_E R \sotimes_E M \ar{r}{1 \otimes \mu} &  R \sotimes_R M
\end{tikzcd} \]
is also $R$-linear and is an inverse for $\phi$. All of this follows from the compatibility axioms for the structure maps of Hopf algebras. 
\end{proof}

\begin{corollary} \label{hopf-tensor-trivial-free}
Let $R$ be a solid Hopf algebra. Suppose $F$ is a free solid $R$-module and $M$ is any solid $R$-module. Then $F \sotimes_E M \cong F \sotimes_E M_\triv$ as solid $R$-modules.
\end{corollary}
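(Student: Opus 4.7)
The plan is to reduce directly to \cref{hopf-tensor-trivial} by exploiting the fact that every free solid $R$-module is of the form $R \sotimes_E V$ for a solid $E$-vector space $V$. Indeed, by \cref{solid-modules-analytic-ring} the objects $R[T]^\solid = R \sotimes_E E[T]^\solid$ (for $T$ extremally disconnected) form a family of compact projective generators of $\Mod_R^\solid$, so any free solid $R$-module $F$ can be written as $\bigoplus_i R \sotimes_E E[T_i]^\solid = R \sotimes_E V$, where $V := \bigoplus_i E[T_i]^\solid$ is regarded as a solid $E$-vector space carrying the trivial $R$-action via the augmentation (equivalently, via the identification with a free $R$-module).

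With this in hand, the key step is to rewrite the tensor product on which we want to act. By associativity of $\sotimes_E$ together with the fact that $V$ is trivial as an $R$-module, one checks using the compatibility of $\Delta$ and the counit $\epsilon$ (explicitly $\sum \epsilon(r_{(1)}) r_{(2)} = r$) that
\[ F \sotimes_E M = (R \sotimes_E V) \sotimes_E M \cong R \sotimes_E (V \sotimes_E M) \]
as solid $R$-modules, where on the right-hand side the $R$-module structure on $N := V \sotimes_E M$ is induced solely from the action on $M$. This is the standard identity expressing that tensoring with a trivial module commutes with coinduction through $\Delta$.

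Now I apply \cref{hopf-tensor-trivial} to $N$ in place of $M$, which yields
\[ R \sotimes_E N \;\cong\; R \sotimes_E N_\triv \]
as solid $R$-modules. Since the underlying solid $E$-vector space of $N$ is $V \sotimes_E M$, its trivialization is $N_\triv = V \sotimes_E M_\triv$. Reversing the associativity step, we obtain
\[ F \sotimes_E M \;\cong\; R \sotimes_E V \sotimes_E M_\triv \;=\; F \sotimes_E M_\triv, \]
as required. The only subtlety worth double-checking is the identification of the $R$-module structure on the intermediate object $R \sotimes_E (V \sotimes_E M)$ with the diagonal action, so that \cref{hopf-tensor-trivial} is applicable; this is a routine diagram chase in the Hopf algebra structure and presents no real obstacle. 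No new ideas beyond the previous lemma are needed.
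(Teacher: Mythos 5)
Your proof is correct and reduces to the same key fact, \cref{hopf-tensor-trivial}, that the paper uses; the paper's proof simply writes $F = \bigoplus_I R$ and takes a direct sum of the isomorphism of \cref{hopf-tensor-trivial}, whereas you rebundle the index set into a tensor factor $V$, regroup $F \sotimes_E M \cong R \sotimes_E (V \sotimes_E M)$, and apply the lemma once to $N = V \sotimes_E M$. This is a harmless variant: it trades the (trivial) compatibility of the lemma's canonical isomorphism with direct sums for the associativity-plus-counit bookkeeping that you flag at the end. One small inaccuracy worth fixing: the appeal to \cref{solid-modules-analytic-ring} is unnecessary and slightly off target, since in this paper ``free'' means a direct sum of copies of $R$ (cf.\ the proof of \cref{hopf-projective}), so $F = R \sotimes_E E^{(I)}$ directly for some index set $I$, with $E^{(I)}$ trivial; the compact projective generators $R[T]^\solid$ are a different (a priori larger) class, though your argument would in fact cover modules of that form as well.
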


\begin{proof}
Since $F$ is a direct sum of copies of $R$, this follows by taking a direct sum of the isomorphism of \cref{hopf-tensor-trivial}.
\end{proof}

\begin{proposition} \label{hopf-projective}
Let $R$ be a solid Hopf algebra. Suppose $P$ is a direct summand of a free solid $R$-module and $M$ is a solid $R$-module such that $M_\triv$ is Smith. Then $P \sotimes_E M$ is a projective solid $R$-module. 
\end{proposition}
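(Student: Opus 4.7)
The plan is a three-step reduction that ends by invoking the adjunction $R \sotimes_E - \dashv \mathrm{forget}$ together with the fact that Smith spaces are projective in $\Vec_E^\solid$.

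First, I would reduce from a direct summand $P$ to a genuine free $R$-module $F$. Write $F \cong P \oplus Q$; then tensoring over $E$ with $M$ (which distributes over direct sums) gives $F \sotimes_E M \cong (P \sotimes_E M) \oplus (Q \sotimes_E M)$. Since projectivity in $\Mod_R^\solid$ passes to direct summands, it suffices to prove that $F \sotimes_E M$ is projective.

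Second, I would use the Hopf algebra trivialization. By \cref{hopf-tensor-trivial-free}, there is an isomorphism of solid $R$-modules $F \sotimes_E M \cong F \sotimes_E M_\triv$, which converts the twisted diagonal $R$-action on the left into an action purely through the first factor on the right. Writing $F = \bigoplus_{i \in I} R$ and using that the solid tensor product commutes with direct sums in each variable yields $F \sotimes_E M_\triv \cong \bigoplus_{i \in I} (R \sotimes_E M_\triv)$. Arbitrary direct sums of projectives are projective in any cocomplete abelian category (coproducts corepresent products of Hom sets), so it is enough to show that $R \sotimes_E M_\triv$ is a projective solid $R$-module.

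Third, I would apply the base change adjunction. The functor $R \sotimes_E (-) : \Vec_E^\solid \to \Mod_R^\solid$ is left adjoint to the forgetful functor, which is exact (since limits and colimits in $\Mod_R^\solid$ are computed underlying, and short exact sequences of solid $R$-modules are, in particular, short exact in $\Vec_E^\solid$). Exactness of the right adjoint is equivalent to the left adjoint sending projectives to projectives, so it suffices to verify that $M_\triv$ is projective in $\Vec_E^\solid$. But by the definition of Smith spaces as solid vector spaces that are direct summands of products $\prod_I E \cong E[T]^\solid$ for an extremally disconnected $T$ (cf.\ \cite[3.1]{RR} and \cref{solid-modules-analytic-ring} applied with $R = E$), the hypothesis that $M_\triv$ is Smith gives precisely this. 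The main conceptual input is the identification of Smith spaces with compact projective objects of $\Vec_E^\solid$; once this is granted, the rest is formal manipulation with the adjunction and \cref{hopf-tensor-trivial-free}.
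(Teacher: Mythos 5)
Your reduction steps --- passing from a direct summand $P$ to a free module, then to a single copy of $R$, then using \cref{hopf-tensor-trivial-free} to replace the diagonal $R$-action on $R \sotimes_E M$ by $R \sotimes_E M_\triv$ --- match the paper's proof, and the adjunction framing of the final step (the forgetful functor $\Mod_R^\solid \to \Vec_E^\solid$ is exact, so its left adjoint $R \sotimes_E -$ preserves projectives) is a legitimate way to phrase what remains. The error is in your identification of Smith spaces. A Smith space is not a direct summand of $\prod_I E$: for infinite $I$ that product is not Smith at all (for countable $I$ it is the Fr\'echet space of all $E$-valued sequences, dual to $\bigoplus_I E$), and the claimed isomorphism $\prod_I E \cong E[T]^\solid$ is false --- for $T$ extremally disconnected one has $\Z[T]^\solid \cong \prod_J \Z$ by \cite[theorem 5.8]{ScholzeCondensed}, so $E[T]^\solid \cong E \sotimes_\Z \prod_J \Z$, which is not $\prod_J E$ because the solid tensor product does not commute with infinite products. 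The correct description, which the paper cites, is that Smith spaces are exactly those of the form $E \sotimes_\Z \prod_I \Z$ (\cite[lemma 3.8(ii)]{RR}, \cite[equation (A.5)]{BoscoDrinfeldv2}).

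Once that is repaired your strategy closes: $\prod_I \Z$ is projective in solid abelian groups by \cite[theorem 5.8]{ScholzeCondensed}, base change along $\Z \to E$ preserves projectives by the same adjunction argument you wrote, hence $M_\triv = E \sotimes_\Z \prod_I \Z$ is projective in $\Vec_E^\solid$, and then $R \sotimes_E M_\triv$ is projective in $\Mod_R^\solid$. The paper avoids the intermediate category by writing $R \sotimes_E M_\triv = R \sotimes_\Z \prod_I \Z$ and base changing from solid abelian groups directly to $\Mod_R^\solid$ in one step; your factoring through $\Vec_E^\solid$ is equivalent, and the only thing that actually needs fixing is the misdescription of Smith spaces as summands of $\prod_I E$.
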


\begin{proof}
We will show this first when $P$ is a direct sum of copies of $R$. Since solid tensor products commute with direct sums, and direct sums of projectives are again projective, it is sufficient to consider the case $P = R$. Since $M_\triv$ is Smith, it is of the form 
\[ M_\triv = E \sotimes_\Z \prod_I \Z \]
for some set $I$ (cf. \cite[lemma 3.8(ii)]{RR} and either \cite[equation (A.5)]{BoscoDrinfeldv2} or \cite[remark 2.17]{RR}), which means that  
\[ R \sotimes_E M \cong R \sotimes_E M_\triv = R \sotimes_\Z \prod_I \Z, \]
using \cref{hopf-tensor-trivial} for the first isomorphism. Since $\prod_I \Z$ is projective in the category of solid abelian groups \cite[theorem 5.8]{ScholzeCondensed}, its base change up to $R$ is projective in the category of solid $R$-modules. This shows the case when $P$ is a free solid $R$-module. 

Now suppose that $P$ and $P'$ are solid $R$-modules such that $P \oplus P'$ is a free solid $R$-module. Then 
\[ (P \oplus P') \sotimes_E M = (P \sotimes_E M) \oplus (P' \sotimes_E M) \]
is projective in the category of solid $R$-modules, as we have just seen, so the direct summand $P \sotimes_E M$ is projective as well. 
\end{proof}

\begin{para}\label{constructing-hopf}{\it Constructing Hopf algebras.}
Here is an observation that we will use to produce solid Hopf algebras. Let $\cC$ be a category with all finite products. We write $*$ for a terminal object. A group object $G$ in $\cC$ is then naturally a Hopf algebra in the symmetric monoidal category $(\cC, \times)$, where:
\begin{enumerate}[(a)]
\item The multiplication map $G \times G \to G$ is the multiplication map of the group object structure. 
\item The unit map $* \to G$ is the unit map of the group object structure. 
\item The antipode map $G \to G$ is the inversion map of the group object structure.  
\item The comultiplication map $G \to G \times G$ is the diagonal map. 
\item The counit map $G \to *$ is the terminal map. 
\end{enumerate}
If $G$ is a group object in $\cC$ and $F : (\cC, \times) \to (\cA, \otimes)$ is a {\it lax} symmetric monoidal functor, then the multiplication and unit maps above induce the structure of an algebra in $(\cA, \otimes)$ on $F(G)$.\footnote{For the definition of a lax monoidal functor, see, e.g., \cite[definition 3.1]{Aguiar-Mahajan}.}
If $F$ is {\it strong} symmetric monoidal, then all of the maps above induce the structure of a Hopf algebra in $(\cA, \otimes)$ on $F(G)$.\footnote{For the definition of a strong monoidal functor, see, e.g., \cite[definition 2.4.1]{EtingofTensorCats}.}
\end{para}

\begin{example}
For an elementary example, we can take $\cC$ to be sets and $(\cA, \otimes)$ to be the category of vector spaces. The free vector space functor is strong symmetric monoidal, from which it follows formally that the group algebra is a (classical) Hopf algebra. 
\end{example}

\begin{example}
More interestingly, this generality is applied in \cite[appendix, pages 311--313]{ST05}, where $\cC$ is taken to be the category of paracompact locally analytic manifolds and $(\cA, \otimes)$ the symmetric monoidal category of complete locally convex vector spaces with the completed injective tensor product $- \hat{\otimes}_\iota -$. The fact that the locally analytic distributions functor $M \rightsquigarrow D(M)$ is strong symmetric monoidal is \cite[proposition A.3]{ST05}, from which it follows formally that the locally analytic distribution algebra of a locally analytic group is naturally a Hopf algebra with respect to the completed injective tensor product. 
\end{example}

\subsection{Analytic distribution algebras}

We now use \cref{constructing-hopf} to construct three types of solid Hopf algebras of analytic distributions, corresponding to the three types of analytic groups (affinoid, strictly ind-affinoid, strictly pro-affinoid). To this end, let $\cC$ be the category of affinoid spaces, $\cC^\circ$ the category of strictly pro-affinoid spaces, and $\cC^\dagger$ the category of strictly pro-affinoid spaces. For $\ast \in \{\emptyset,\o,\dagger\}$ and an object $\bbX^*$ in $\cC^*$ we set 
$$D(\bbX^*) = \sO(\bbX^*)^\vee \;.$$

\begin{proposition} \label{affinoid-monoidal}
For each $\ast \in \{\emptyset, \o, \dagger\}$ the functor on $\cC^*$ with values in the category of solid $E$-vector spaces
\[\bbX \rightsquigarrow D(\bbX) \]
is strong symmetric monoidal. 
\end{proposition}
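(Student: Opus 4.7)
The plan is to handle the three cases in parallel, reducing the strictly ind- and strictly pro-affinoid cases to the affinoid one via compatibility of $(-)^\vee$ with the defining (co)limits. In each case the compatibility with the monoidal unit is immediate from $\sO(\ast) = E$, so the real task is to check that the natural map $D(\bbX^\ast) \sotimes_E D(\bbY^\ast) \to D(\bbX^\ast \times \bbY^\ast)$ is an isomorphism of solid $E$-vector spaces, compatibly with the associator and the symmetry.

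For the affinoid case, classical rigid geometry gives $\sO(\bbX \times \bbY) \cong \sO(\bbX) \hat{\otimes}_\pi \sO(\bbY)$ as Banach spaces over $E$. Since the underline functor from Fr\'echet spaces to $\Vec_E^\solid$ is strong symmetric monoidal (recalled in the notation section), this upgrades to an isomorphism
$$ \sO(\bbX \times \bbY) \cong \sO(\bbX) \sotimes_E \sO(\bbY) $$
in $\Vec_E^\solid$. The desired statement then follows by dualizing, provided one establishes the duality identity $\sO(\bbX)^\vee \sotimes_E \sO(\bbY)^\vee \cong \bigl(\sO(\bbX) \sotimes_E \sO(\bbY)\bigr)^\vee$. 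I would prove this by reducing to Tate algebras via strict quotients: for a Tate algebra, $\sO(\bbX) \cong c_0(I, E)$ for a countable set $I$ whose solid dual is the Smith space $\prod_I E$, and the identity then collapses to the standard computation $\prod_I E \sotimes_E \prod_J E \cong \prod_{I \times J} E$ in the solid framework.

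For $\ast = \circ$, presentations $(\bbX^{(n)})$ of $\bbX^\circ$ and $(\bbY^{(n)})$ of $\bbY^\circ$ yield a presentation $(\bbX^{(n)} \times \bbY^{(n)})$ of the product. Dualizing the Fr\'echet-of-compact-type limit $\sO(\bbX^\circ) = \varprojlim_n \sO(\bbX^{(n)})$ gives $D(\bbX^\circ) = \varinjlim_n D(\bbX^{(n)})$, and likewise for the product. Hence
$$ D(\bbX^\circ \times \bbY^\circ) = \varinjlim_n D(\bbX^{(n)} \times \bbY^{(n)}) \cong \varinjlim_n \bigl( D(\bbX^{(n)}) \sotimes_E D(\bbY^{(n)}) \bigr) $$
by the affinoid case, and this in turn equals $D(\bbX^\circ) \sotimes_E D(\bbY^\circ)$ because $\sotimes_E$ is a left adjoint in each variable and therefore commutes with filtered colimits. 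For $\ast = \dagger$, I would proceed on the $\sO$-side instead: $\sO(\bbX^\dagger) = \varinjlim_n \sO(\bbX_{(n)})$, and commutation of $\sotimes_E$ with filtered colimits together with the affinoid case yields $\sO(\bbX^\dagger \times \bbY^\dagger) \cong \sO(\bbX^\dagger) \sotimes_E \sO(\bbY^\dagger)$; dualizing then reduces to the analogous duality identity $V^\vee \sotimes_E W^\vee \cong (V \sotimes_E W)^\vee$ for $V, W$ LS spaces of compact type.

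The main obstacle in all three cases is the duality identity $V^\vee \sotimes_E W^\vee \cong (V \sotimes_E W)^\vee$: this is not a formal consequence of adjunctions, but holds for the analytically tame (Banach/Smith or compact-type/LS) objects at play. For the affinoid case the explicit Smith-space computation above should suffice, while for the pro-affinoid case I expect to invoke the nuclearity/duality theory from \cite{RR} or bootstrap from the affinoid case via the colimit representation. The symmetric-monoidal coherence data (associator, symmetry) should follow automatically from naturality of the constructions.
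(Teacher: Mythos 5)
Your proposal is correct in structure and takes a genuinely different (and in one respect simpler) route in the ind-affinoid case. The paper stays entirely on the $\sO$-side: it establishes $\sO(\bbX^\circ \times \bbY^\circ) = \sO(\bbX^\circ) \sotimes_E \sO(\bbY^\circ)$ by passing the affinoid identity through the cofiltered limit $\varprojlim_n$, which requires the nontrivial analytic fact that solid tensor products of Fr\'echet spaces of compact type commute with such limits (it cites \cite[3.28(2)]{RR} for this), and only dualizes at the very end using \cite[3.40(2)]{RR}. You instead dualize first, writing $D(\bbX^\circ) = \varinjlim_n D(\bbX^{(n)})$, and then use the purely formal commutation of $\sotimes_E$ with filtered colimits; this trades the cofiltered-limit commutation for an earlier appeal to the same Fr\'echet--LS duality. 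The net effect is that your argument cites one nontrivial input (the duality) rather than two. In the pro-affinoid case you match the paper's argument step for step --- and incidentally you correctly identify the $\sO(\bbX^\dagger)$ as LS spaces of compact type, whereas the paper has a misstatement (``Fr\'echet spaces'') at the corresponding point. Two small remarks: (i) your detour of reducing affinoid algebras to Tate algebras via strict quotients is unnecessary --- over the discretely valued field $E$, every affinoid algebra already has a countable orthonormal Schauder basis, so it is isomorphic to $c_0(\bbN, E)$ as a Banach space and your explicit $\prod_I E \sotimes_E \prod_J E \cong \prod_{I\times J} E$ computation applies directly; (ii) the re-derivation of the Banach duality identity is honest work but redundant, as it is exactly \cite[theorem 3.40(2)]{RR}, which the paper simply cites.
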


\begin{proof}
We only give the proof for $\ast \in \{\o, \dagger\}$, as the case for affinoid spaces is essentially the same (and even easier) than that of strictly ind-affinoid spaces. 

(1) {\it  Strictly ind-affinoid spaces.} Let $\bbX^\o$ have a presentation $\bbX^{(0)} \Subset \bbX^{(1)} \Subset \cdots$, and let $\bbY^\o$ have a presentation $\bbY^{(0)} \Subset \bbY^{(1)} \Subset \cdots$. Then $\bbX^{(0)} \x \bbY^{(0)} \Subset \bbX^{(1)} \x \bbY^{(1)} \Subset \cdots$ is a presentation of $\bbX^\o \x \bbY^\o$, and 

\[\begin{array}{rcl}\sO(\bbX^\o \x \bbY^\o) &=& \varprojlim_n \sO(\bbX^{(n)} \times \bbY^{(n)}) = \varprojlim_n \sO(\bbX^{(n)}) \sotimes_E \sO(\bbY^{(n)}) \\
&&\\
& =&  \varprojlim_{n,m} \sO(\bbX^{(n)}) \sotimes_E \sO(\bbY^{(m)}) = \sO(\bbX^\o) \sotimes_E \sO(\bbY^\o)\;,
\end{array}\]
where we have used \cite[3.13]{RR} for the second equality and \cite[3.28 (2)]{RR} for the fourth equality. Dualizing we get  
\[ D(\bbX^\o \times \bbY^\o) = (\sO(\bbX^\o) \sotimes_E \sO(\bbY^\o))^\vee = \sO(\bbX^\o)^\vee \sotimes_E \sO(\bbY^\o)^\vee  = D(\bbX^\o) \sotimes_E D(\bbY^\o) \;, \]
where we have used \cite[3.40 (2)]{RR} for the second equality.
Clearly $D(*) = E$, so we are done. 

(2) {\it Strictly pro-affinoid spaces.} Given two strictly pro-affinoid spaces $\bbX^\dagger$ and $\bbY^\dagger$ the map
\[ \sO(\bbX^\dagger) \sotimes_E \sO(\bbY^\dagger) \lra \sO(\bbX^\dagger \times_E \bbY^\dagger) \]
is seen to be an isomorphism by reducing to the affinoid case using the fact that solid tensor products commute with filtered colimits. Using that isomorphism we have
\[ D(\bbX^\dagger \x \bbY^\dagger) = \Big(\sO(\bbX^\dagger) \sotimes_E \sO(\bbY^\dagger)\Big)^\vee = \sO(\bbX^\dagger)^\vee \sotimes_E \sO(\bbY^\dagger)^\vee = D(\bbX^\dagger) \sotimes_E D(\bbY^\dagger) \;,\]
where the second equality holds by \cite[theorem 3.40 (2)]{RR}, since $\sO(\bbX^\dagger)$ and $\sO(\bbY^\dagger)$ are Fr\'echet spaces.
\end{proof}

\begin{corollary}\label{affinoid-hopf}
For $\bbG^*$ an affinoid (resp. strictly ind-affinoid, resp. strictly pro-affnoid) group, the Smith space (resp. $LS$ space of compact type, resp. Fr\'echet space of compact type) $D(\bbG)$ is naturally a solid Hopf algebra. 
\end{corollary}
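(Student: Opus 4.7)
The plan is to derive this corollary as a direct application of the categorical mechanism laid out in \cref{constructing-hopf}, instantiated with the strong symmetric monoidal functor established in \cref{affinoid-monoidal}. For each $\ast \in \{\emptyset, \circ, \dagger\}$, the relevant category $\cC^\ast$ of affinoid (resp. strictly ind-affinoid, resp. strictly pro-affinoid) spaces has all finite products, so it makes sense to consider the symmetric monoidal category $(\cC^\ast, \times)$ and group objects therein. The group object $\bbG^\ast$ is in particular a Hopf algebra in $(\cC^\ast, \times)$ by \cref{constructing-hopf}, with comultiplication the diagonal $\bbG^\ast \to \bbG^\ast \times \bbG^\ast$, multiplication the group law, antipode the inversion map, and counit the unique morphism to the terminal object.

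Next, \cref{affinoid-monoidal} asserts that the functor $\bbX \rightsquigarrow D(\bbX)$ from $(\cC^\ast, \times)$ to $(\Vec_E^\solid, \sotimes_E)$ is strong symmetric monoidal. The observation at the end of \cref{constructing-hopf} applies directly: applying a strong symmetric monoidal functor to a Hopf algebra object produces a Hopf algebra object in the target. Consequently, $D(\bbG^\ast)$ inherits a natural solid Hopf algebra structure whose multiplication $D(\bbG^\ast) \sotimes_E D(\bbG^\ast) = D(\bbG^\ast \times \bbG^\ast) \to D(\bbG^\ast)$ is induced by the group law of $\bbG^\ast$, whose comultiplication is induced by the diagonal, whose antipode is induced by inversion, and whose unit and counit come from the terminal-object calculation $D(\ast) = E$ noted in the proof of \cref{affinoid-monoidal}.

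The statement about the underlying topological type is an immediate consequence of the duality between Banach/Smith, Fr\'echet-of-compact-type/LS-of-compact-type, and LS-of-compact-type/Fr\'echet-of-compact-type recorded earlier in the paper: in the affinoid case $\sO(\bbG)$ is Banach, so $D(\bbG)$ is Smith; for strictly ind-affinoid $\bbG^\circ$, $\sO(\bbG^\circ)$ is Fr\'echet of compact type, so $D(\bbG^\circ)$ is LS of compact type; and for strictly pro-affinoid $\bbG^\dagger$, $\sO(\bbG^\dagger)$ is LS of compact type, so $D(\bbG^\dagger)$ is Fr\'echet of compact type. There is essentially no obstacle at this step: the substantive work sits in \cref{affinoid-monoidal}, which relies on the results of \cite{RR} concerning the behavior of solid tensor products on Smith and Fr\'echet spaces. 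Given that proposition, the present corollary is a purely formal transport of Hopf algebra structure along a strong monoidal functor.
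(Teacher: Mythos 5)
Your argument is exactly the paper's: the paper's proof of \cref{affinoid-hopf} is the one-line appeal to \cref{constructing-hopf}, relying on \cref{affinoid-monoidal} for the strong symmetric monoidality of $D(-)$, and your write-up simply spells out the transport of Hopf structure and the underlying topological type in detail. Correct, and same route.
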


\begin{proof} This follows from the principle explained in \cref{constructing-hopf}.
\end{proof}

\subsection{Chevalley-Eilenberg resolutions}

\begin{para}
Recall that, for a finite dimensional Lie algebra $\frg$, the {\it Chevalley-Eilenberg complex} $\CE(\frg)$ is a complex
\[ \begin{tikzcd} \cdots \ar{r} & 0 \ar{r} & U(\frg) \otimes_E \bigwedge^{\dim(\frg)} \frg \ar{r} & \cdots \ar{r} & U(\frg) \otimes_E \frg \ar{r} & U(\frg) \end{tikzcd} \]
concentrated in cohomological degrees $[-\dim(\frg), 0]$ \cite[section 7.7]{WeibelH}. It is a complex of finite free $U(\frg)$-modules, and the augmentation $\CE(\frg) \to E$ is a quasi-isomorphism. 
\end{para}

\subsubsection{Strictly ind-affinoid case}

\begin{proposition}[{\cite[proposition 5.12]{RR}}] \label{chevalley-eilenberg-ind-affinoid}
Let $\bbG^\circ$ be a strictly ind-affinoid group with a presentation $\bbG^{(\bullet)}$ where the underlying analytic space of each $\bbG^{(n)}$ is a polydisk. If $\frg$ is its Lie algebra, then $D(\bbG^\circ) \sotimes_{U(\frg)} \CE(\frg)$ is a projective resolution of $E$ as a trivial $D(\bbG^\circ)$-module.\footnotemark{} \qed
\end{proposition}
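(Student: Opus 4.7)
The plan is to verify the two assertions packaged in the statement: (a) each term of the complex $D(\bbG^\circ) \sotimes_{U(\frg)} \CE(\frg)$ is a projective solid $D(\bbG^\circ)$-module, and (b) the augmentation to the trivial module $E$ is a quasi-isomorphism.

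For (a), I would rewrite the $k$-th term as
\[ D(\bbG^\circ) \sotimes_{U(\frg)} \big( U(\frg) \otimes_E \textstyle\bigwedge^k \frg \big) \;\cong\; D(\bbG^\circ) \sotimes_E \textstyle\bigwedge^k \frg, \]
which is a finite direct sum of copies of $D(\bbG^\circ)$ since $\bigwedge^k \frg$ is a finite-dimensional $E$-vector space. Hence every term is a finite free, and in particular projective, solid $D(\bbG^\circ)$-module.

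For (b), the first step would be to reduce to a single affinoid group. Writing $D(\bbG^\circ) = \varinjlim_n D(\bbG^{(n)})$ for the given polydisk presentation, exactness of filtered colimits in $\Vec_E^\solid$ together with the commutation of solid tensor products with filtered colimits reduces the problem to showing that $D(\bbG^{(n)}) \sotimes_{U(\frg)} \CE(\frg) \to E$ is a quasi-isomorphism for each $n$. Now the hypothesis that the underlying analytic space of each $\bbG^{(n)}$ is a polydisk becomes essential. Choosing an $\cO_F$-basis $\frx = (\frx_1, \dotsc, \frx_d)$ of the Lie algebra of $\bbG^\circ$ produces, via duality from the explicit Gauss-norm description of $\sO(\bbG^{(n)})$, a topological PBW-type basis $\{\frx^{\underline n}\}_{\underline n \in \bbN^d}$ of $D(\bbG^{(n)})$ whose growth conditions are controlled by the radius of the polydisk. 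Equipping $D(\bbG^{(n)})$ with the resulting completed PBW filtration, the associated graded becomes a (suitably completed) polynomial algebra in $d$ variables, and the associated graded of $D(\bbG^{(n)}) \sotimes_{U(\frg)} \CE(\frg)$ becomes the classical Koszul complex on the regular sequence $\frx_1, \dotsc, \frx_d$, which is a resolution of $E$. A standard filtered-to-graded argument would then give exactness of $D(\bbG^{(n)}) \sotimes_{U(\frg)} \CE(\frg)$ itself.

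The main obstacle I expect lies in that last step: making the filtered-to-graded argument rigorous in the solid setting requires checking that the PBW filtration is compatible with the solid tensor product over $U(\frg)$, that it is complete and Hausdorff in an appropriate sense, and that solid exactness can be detected at the graded level. Since all of this is carefully handled in \cite[proposition 5.12]{RR}, I would ultimately invoke that result in place of rewriting the argument.
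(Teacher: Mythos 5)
Since the paper states this result with a \verb|\qed| and no argument — the proof is delegated entirely to \cite[proposition~5.12]{RR} — your ultimate move of invoking that reference is exactly what the paper does, and your part (a) is precisely the content of the paper's footnote ($D(\bbG^\circ) \sotimes_{U(\frg)} \CE_j(\frg) = D(\bbG^\circ) \otimes_E \bigwedge^j \frg$ is a finite free $D(\bbG^\circ)$-module). The reduction along $D(\bbG^\circ) = \varinjlim_n D(\bbG^{(n)})$ using exactness of filtered colimits of solid modules is also sound.

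However, your sketch of \emph{how} \cite[proposition~5.12]{RR} handles a single polydisk level is not a faithful description of that argument. You propose a PBW-type filtration on $D(\bbG^{(n)})$, passage to the associated graded, and identification of the graded complex with a classical Koszul complex on a regular sequence, concluding by a filtered-to-graded spectral sequence. That is a plausible-sounding route, but it is not what \cite{RR} does, and the paper signals this quite explicitly in the proof of the parallel pro-affinoid statement (\cref{chevalley-eilenberg-pro-affinoid}): \emph{``We use the same proof as \cite[proposition~5.12]{RR}, but now the relevant Poincar\'e lemma is the `overconvergent' one \cite[theorem~5.4]{MonskyWashI}.''} In other words, the actual mechanism is a rigid-analytic Poincar\'e lemma: one shows that the de Rham complex $\sO(\bbG^\circ) \otimes_E \bigwedge^\bullet \frg^\vee$ of the (wide-open) polydisk is a resolution of $E$, and then dualizes (the terms are nuclear Fr\'echet, so dualizing is exact) to obtain the Chevalley--Eilenberg resolution of $E$ over $D(\bbG^\circ)$. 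The polydisk hypothesis enters through the Poincar\'e lemma, not through controlling a completed PBW filtration. Your filtered-to-graded route, if one wanted to carry it out, would have to confront precisely the completeness/Hausdorff compatibility issues you flag as the ``main obstacle'' — which is exactly the kind of delicate analysis the Poincar\'e-lemma-plus-duality argument sidesteps — so it is worth knowing that the cited proof takes the other road.
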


\footnotetext{Abstractly, we can regard $U(\frg)$ as a discrete analytic ring and define $D(\bbG^\circ) \sotimes_U(\frg) -$ as the solidification of the base change functor for analytic rings \cite[proposition 7.7]{ScholzeCondensed}. However, since $\CE_j(\frg) = U(\frg) \otimes_E \bigwedge^j \frg$, we see that $D(\bbG^\circ) \sotimes_{U(\frg)} \CE_j(\frg) = D(\bbG^\circ) \otimes_E \bigwedge^j \frg$ is just a finite direct sum of copies of $D(\bbG^\circ)$.}

\begin{corollary} \label{projdim-ind-affinoid}
Fix notation as in \cref{chevalley-eilenberg-ind-affinoid}. The projective dimension of $E$ over $D(\bbG^\circ)$ is at most $\dim(\bbG^\circ)$. \qed
\end{corollary}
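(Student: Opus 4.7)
The plan is to read the bound on the projective dimension directly off the length of the Chevalley-Eilenberg resolution furnished by \cref{chevalley-eilenberg-ind-affinoid}. Recall that $\CE(\frg)$ is concentrated in cohomological degrees $[-\dim(\frg), 0]$, with $\CE_j(\frg) = U(\frg) \otimes_E \bigwedge^j \frg$ in degree $-j$. Since $\dim(\bbG^\circ) = \dim(\frg)$, the length of $\CE(\frg)$ as a complex is exactly $\dim(\bbG^\circ)$.

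Applying the functor $D(\bbG^\circ) \sotimes_{U(\frg)} -$ termwise, each module in the resolution becomes $D(\bbG^\circ) \otimes_E \bigwedge^j \frg$, which is a finite direct sum of copies of $D(\bbG^\circ)$ (as noted in the footnote to \cref{chevalley-eilenberg-ind-affinoid}), hence a finite free solid $D(\bbG^\circ)$-module, and in particular projective. By \cref{chevalley-eilenberg-ind-affinoid}, the resulting complex $D(\bbG^\circ) \sotimes_{U(\frg)} \CE(\frg)$ is a projective resolution of $E$ of length $\dim(\bbG^\circ)$.

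By the standard definition of projective dimension in the abelian category $\Mod_{D(\bbG^\circ)}^\solid$, the existence of such a resolution gives immediately that $\mathrm{pd}_{D(\bbG^\circ)}(E) \leq \dim(\bbG^\circ)$. There is no real obstacle here; this is a formal consequence of the preceding proposition, included mainly to record the bound explicitly for later use in the truncation arguments described in the introduction (where lengths of Chevalley-Eilenberg-type resolutions control the vanishing range for Ext groups).
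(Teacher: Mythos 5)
Your argument is correct and is exactly the intended one: the paper's proof is just the \qed mark, since the bound is read directly off the length of the projective resolution $D(\bbG^\circ) \sotimes_{U(\frg)} \CE(\frg)$ furnished by \cref{chevalley-eilenberg-ind-affinoid}, whose terms are finite free $D(\bbG^\circ)$-modules concentrated in degrees $0$ through $\dim(\bbG^\circ)$.
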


\subsubsection{Strictly pro-affinoid case}

\begin{proposition} \label{chevalley-eilenberg-pro-affinoid}
Let $\bbG^\dagger$ be a strictly pro-affinoid group with a presentation $\bbG_{(\bullet)}$ where the underlying analytic space of each $\bbG_{(n)}$ is a polydisk. If $\frg$ is its Lie algebra, then $D(\bbG^\dagger) \sotimes_{U(\frg)} \CE(\frg)$ is a projective resolution of $E$ as a trivial $D(\bbG^\dagger)$-module.
\end{proposition}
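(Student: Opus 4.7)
The plan is to parallel the ind-affinoid case \cref{chevalley-eilenberg-ind-affinoid}, replacing the colimit in the presentation of $\bbG^\circ$ with the inverse limit provided by the presentation of $\bbG^\dagger$. I must verify (i) projectivity of the terms of $D(\bbG^\dagger) \sotimes_{U(\frg)} \CE(\frg)$ and (ii) exactness of the augmented complex.

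Part (i) is formal and identical to the ind-affinoid case: since $\CE_j(\frg) = U(\frg) \otimes_E \bigwedge^j \frg$ is finite free over $U(\frg)$, one gets
\[ D(\bbG^\dagger) \sotimes_{U(\frg)} \CE_j(\frg) \cong D(\bbG^\dagger) \otimes_E \bigwedge^j \frg, \]
a finite direct sum of copies of $D(\bbG^\dagger)$, hence finite free and in particular projective.

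For part (ii), I would dualize the presentation $\sO(\bbG^\dagger) = \varinjlim_n \sO(\bbG_{(n)})$ using \cite[theorem 3.40(2)]{RR} to obtain $D(\bbG^\dagger) = \varprojlim_n D(\bbG_{(n)})$ in the category of solid $E$-vector spaces. Since $\bigwedge^j \frg$ is finite-dimensional, this description is compatible with $\otimes_E \bigwedge^j \frg$, and the Chevalley-Eilenberg differentials are compatible with the transition maps $D(\bbG_{(n+1)}) \to D(\bbG_{(n)})$ since they are inherited from the common map $U(\frg) \to D(\bbG_{(n)})$. Thus the augmented complex is the inverse limit of the complexes
\[ K_n^\bullet \;:=\; \bigl( D(\bbG_{(n)}) \otimes_E \bigwedge^\bullet \frg \to E \bigr). \]
For each fixed $n$ the exactness of $K_n^\bullet$ is the affinoid case of the Chevalley-Eilenberg resolution, which I would derive by the same convergent-PBW / Koszul argument underlying \cref{chevalley-eilenberg-ind-affinoid}: the key input is that $D(\bbG_{(n)})$ is flat over $U(\frg)$ thanks to the explicit Banach algebra structure on a polydisk.

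The main obstacle is then to show that $\varprojlim_n$ preserves this exactness, as inverse limits of exact sequences are generally not exact. The transition maps $D(\bbG_{(n+1)}) \to D(\bbG_{(n)})$ are duals of the injective compact restrictions $\sO(\bbG_{(n)}) \hookrightarrow \sO(\bbG_{(n+1)})$, and every term appearing in the $K_n^\bullet$ is a Smith space; in the solid framework a tower of this specific topological shape has vanishing $R^1\varprojlim$ (cf.\ the $R\varprojlim$-discussion surrounding \cite[propositions 3.38--3.40]{RR}). Granting this, each short exact sequence of cycles and boundaries in the tower remains short exact after taking $\varprojlim_n$, so $\varprojlim_n K_n^\bullet$ is exact and the proof is complete. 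The delicate step is precisely this $R^1\varprojlim$-vanishing, which is not formal and relies on the specific solid-topological properties of the tower rather than only on the exactness of each individual $K_n^\bullet$.
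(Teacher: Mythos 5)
Your approach has a genuine gap at the central step. You propose to write the augmented complex as $\varprojlim_n K_n^\bullet$ with $K_n^\bullet = \bigl(D(\bbG_{(n)}) \otimes_E \bigwedge^\bullet \frg \to E\bigr)$ and to deduce exactness of the whole from exactness of each $K_n^\bullet$, asserting that the latter is "the affinoid case of the Chevalley--Eilenberg resolution." But there is no affinoid case: the complex $K_n^\bullet$ is \emph{not} exact when $\bbG_{(n)}$ is a genuine affinoid polydisk. Dualizing, exactness of $K_n^\bullet$ would amount to the Poincar\'e lemma for the de Rham complex on the closed polydisk $\bbG_{(n)}$, and this fails. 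Already in one dimension, for $\sO(\bbG_{(n)}) = F\langle x\rangle$, the map $d/dx$ is not surjective: taking $b_{p^k-1} = p^{-k/2}$ (and $b_m = 0$ otherwise) gives a Tate series $\sum b_m x^m$ whose formal antiderivative has coefficients $a_{p^k} = b_{p^k-1}/p^k$ with $|a_{p^k}| = p^{k/2} \to \infty$, so it does not lie in $F\langle x\rangle$. This is precisely the reason the ind-affinoid statement \cref{chevalley-eilenberg-ind-affinoid} uses wide-open (not closed) polydisks, and the pro-affinoid statement uses overconvergent ones; the closed affinoid version is false, and no amount of care with $R^1\varprojlim$ can repair a tower of non-exact complexes.

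The paper's proof sidesteps the affinoid reduction entirely: it runs the argument of \cite[proposition 5.12]{RR} directly on $D(\bbG^\dagger)$, and the analytic input that makes the de Rham (equivalently, Chevalley--Eilenberg) complex exact is the \emph{overconvergent} Poincar\'e lemma of Monsky--Washnitzer \cite[theorem 5.4]{MonskyWashI}, which applies because $\sO(\bbG^\dagger)$ is the ring of overconvergent functions on a polydisk. Your part (i) (projectivity of the terms) is fine and matches the paper. For part (ii), you should replace the "affinoid case plus $\varprojlim$" strategy with a direct appeal to the overconvergent Poincar\'e lemma, as in the paper; the analogy you want to draw with \cref{chevalley-eilenberg-ind-affinoid} is at the level of the wide-open versus overconvergent Poincar\'e lemmas, not at the level of a shared closed-affinoid sub-case.
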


\begin{proof}
We use the same proof as \cite[proposition 5.12]{RR}, but now the relevant Poincar\'e lemma is the `overconvergent' one \cite[theorem 5.4]{MonskyWashI}.
\end{proof}

\begin{corollary} \label{projdim-pro-affinoid}
With $\bbG^\dagger$ as in \cref{chevalley-eilenberg-pro-affinoid}. The projective dimension of $E$ over $D(\bbG^\dagger)$ is at most $\dim(\bbG^\dagger)$. \qed
\end{corollary}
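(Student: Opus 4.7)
The plan is to deduce this bound directly from the preceding proposition, which furnishes an explicit finite projective resolution of $E$ of the correct length. Concretely, Proposition~\ref{chevalley-eilenberg-pro-affinoid} provides the complex $D(\bbG^\dagger) \sotimes_{U(\frg)} \CE(\frg)$ as a projective resolution of the trivial $D(\bbG^\dagger)$-module $E$, so it suffices to observe that this complex is concentrated in a finite range of degrees bounded by $\dim(\bbG^\dagger)$.

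More precisely, the first step is to recall the shape of the Chevalley-Eilenberg complex: $\CE_j(\frg) = U(\frg) \otimes_E \bigwedge^j \frg$, which vanishes for $j > \dim(\frg)$, so $\CE(\frg)$ is concentrated in cohomological degrees $[-\dim(\frg), 0]$. After base change, each term of $D(\bbG^\dagger) \sotimes_{U(\frg)} \CE(\frg)$ is a finite direct sum of copies of $D(\bbG^\dagger)$ (compare the footnote to Proposition~\ref{chevalley-eilenberg-ind-affinoid}), and the complex is still concentrated in the same range of degrees.

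Finally, I would note that $\dim(\frg) = \dim(\bbG^\dagger)$ by definition of the Lie algebra of a strictly pro-affinoid group (the Lie algebra of any affinoid group $\bbG_{(n)}$ in a presentation, all of which agree), and conclude that $E$ admits a projective resolution of length $\dim(\bbG^\dagger)$ in the category of solid $D(\bbG^\dagger)$-modules, which yields the stated bound on projective dimension. There is no real obstacle here; the content is entirely carried by Proposition~\ref{chevalley-eilenberg-pro-affinoid}, and this corollary is simply the standard extraction of a dimension bound from a finite projective resolution, exactly paralleling how Corollary~\ref{projdim-ind-affinoid} is deduced from Proposition~\ref{chevalley-eilenberg-ind-affinoid} in the strictly ind-affinoid setting.
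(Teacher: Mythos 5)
Your argument is exactly what the paper intends: the Chevalley--Eilenberg complex $\CE(\frg)$ has length $\dim(\frg) = \dim(\bbG^\dagger)$, its base change along $U(\frg) \to D(\bbG^\dagger)$ is a projective resolution of $E$ by Proposition~\ref{chevalley-eilenberg-pro-affinoid}, and the existence of a projective resolution of that length immediately bounds the projective dimension. This matches the paper's (implicit) proof and the parallel deduction of Corollary~\ref{projdim-ind-affinoid}.
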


\subsection{Mixed distribution algebras}\label{mixed-dist-algs}

\begin{para}\label{analytic-pairs}{\it Analytic group pairs.} Later on we will often consider a locally $F$-analytic group\footnote{Following convention \ref{locally-F-analytic} we usually drop the "$F$" and just speak of locally analytic groups.} $G$ and an open $F$-uniform normal subgroup $H \sub G$. Using the constructions in \cref{F-uniform} we can then attach to $H$ analytic groups $\bbH[\rho]$, $\bbH[\rho]^\circ$, $\bbH[\rho]^\dagger$ for $\rho \in (0,1] \cap p^\bbQ$.\footnote{These groups even exist for $\rho \in (0,\rho^*) \cap p^\bbQ$, but in order to make sure that the set of $F$-valued points of any of these rigid analytic groups is contained in $G$, we only consider them here for $\rho \le 1$.} Denote by $\bbH^*$ any of these analytic groups and set $H^* = \bbH^*(F)$. We call $(\bbH^*,G)$ an {\it analytic group pair}. We will now construct an analytic group $G.\bbH^*$ if $G/H$ is finite.
\end{para}

\begin{proposition}\label{analytic-groups-pair-group} Let $(\bbH^*,G)$ be an analytic group pair associated to $H \sub G$ as in \ref{analytic-pairs}. Assume that $G/H$ is finite. Then there is a analytic group $G.\bbH^*$ and an exact sequence of analytic groups 
$$1 \lra \bbH^* \lra G.\bbH^* \lra G/H^* \lra 1 \;.$$
The set of $F$-valued points of $G.\bbH^*$ is $G$. $G.\bbH^*$ is affinoid (resp strictly ind-affinoid, resp. strictly pro-affinoid), if $\bbH^*$ is affinoid (resp. strictly ind-affinoid, resp. strictly pro-affinoid). 
\end{proposition}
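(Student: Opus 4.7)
The plan is to construct $G.\bbH^*$ as a finite disjoint union of copies of $\bbH^*$, indexed by a system of coset representatives for the finite quotient $G/H^*$, with a group law given by the standard cocycle construction for an extension $1 \to H^* \to G \to G/H^* \to 1$. (The finiteness of $G/H^*$ follows from that of $G/H$ in the cases at hand.) The key preparatory step, from which everything else falls out, is to lift the conjugation action of $G$ on $H^*$ to analytic group automorphisms of $\bbH^*$.

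First, I would show that for each $g \in G$ the conjugation $c_g : H \to H$, $h \mapsto ghg^{-1}$, is a locally $F$-analytic group automorphism of $H$, using that $H$ is open and normal in the locally $F$-analytic group $G$. It therefore induces an $\cO_F$-Lie algebra automorphism of the associated Lie algebra $L = \log H$, and functoriality of the polydisk construction (\cref{polydisk-functorial}, \cref{comultiplication}) lifts this to an analytic group automorphism $\tilde c_g : \bbH^* \to \bbH^*$ in each of the three flavors, with $g \mapsto \tilde c_g$ a group homomorphism $G \to \mathrm{Aut}(\bbH^*)$. Next, choose coset representatives $g_1 = 1, g_2, \dotsc, g_n$ for $G/H^*$ and set
\[ G.\bbH^* := \bigsqcup_{i=1}^n \bbH^*_i \]
as a finite coproduct in the appropriate category of analytic spaces. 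For indices $i, j$, let $k = k(i,j)$ be determined by $g_i g_j \in g_k H^*$, and let $c_{i,j} := g_k^{-1} g_i g_j \in H^*$ be the associated $2$-cocycle. Define the multiplication $\mu_{i,j} : \bbH^*_i \times \bbH^*_j \to \bbH^*_k$ by $(h, h') \mapsto c_{i,j} \cdot \tilde c_{g_j^{-1}}(h) \cdot h'$; this is analytic as a composite of translation by a constant, an analytic automorphism, and group multiplication in $\bbH^*$. Associativity and the existence of inverses reduce to the corresponding identities in $G$, exactly as in the classical cocycle construction. The projection $G.\bbH^* \to G/H^*$ collapsing $\bbH^*_i$ to $g_i H^*$ is then an analytic group homomorphism with kernel $\bbH^*_1 = \bbH^*$, giving the required exact sequence, and a count of $F$-points recovers $G$. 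Preservation of flavor is essentially formal: a finite disjoint union of affinoid polydisks is affinoid, and in the strictly ind- or pro-affinoid case one simply takes a disjoint union componentwise of a common presentation.

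I expect the first step---lifting conjugation to an action on $\bbH^*$---to be the main obstacle. One must verify not merely that $c_g$ respects the $\Z_p$-Lie algebra structure on $L$ but also its $\cO_F$-structure, and then confirm that the construction of \cref{F-uniform} is genuinely functorial for $\cO_F$-Lie algebra maps in each of the three analytic flavors (affinoid, strictly ind-affinoid, strictly pro-affinoid). Once this is in hand, the rest of the construction is a routine analytic analogue of the classical 2-cocycle construction of a group extension.
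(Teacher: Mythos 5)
Your proposal is correct and takes essentially the same route as the paper's proof: both construct $G.\bbH^*$ as a finite coproduct of copies of $\bbH^*$ indexed by coset representatives, both lift conjugation by elements of $G$ to analytic automorphisms of $\bbH^*$ via functoriality of the construction in \cref{F-uniform}, and both write the componentwise multiplication in the same cocycle-twisted form (your $(h,h') \mapsto c_{i,j}\,\tilde c_{g_j^{-1}}(h)\,h'$ is exactly the paper's $((g_1,h_1),(g_2,h_2)) \mapsto (r(g_1,g_2),\,c(g_1,g_2)\alpha(g_2^{-1})(h_1)h_2)$ in different notation). The only cosmetic difference is that you frame the construction explicitly in the language of classical $2$-cocycle extensions, which the paper leaves implicit.
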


\begin{proof} It follows from functoriality of the construction of the group $\bbH^*$ in \cref{F-uniform} that $H^*$ is also normal in $G$. Moreover, for $g \in G$ the conjugation action of $g$ on $H^*$ extends to a rigid analytic automorphism $\alpha(g)$ of $\bbH^*$, and $G$ acts by rigid analytic automorphisms on $\bbH^*$.  Let $\cR$ be a set of coset representatives of $H^*$ in $G$, and for each $g \in \cR$ set $g.\bbH^* = \{g\} \x \bbH^*$, and $G.\bbH^* = \coprod_{gH \in \cR} g.\bbH^*$. Given $g_1, g_2 \in \cR$ let $r(g_1,g_2) \in \cR$ and $c(g_1,g_2) \in H^*$ be the unique elements such that $g_1g_2 = r(g_1,g_2)c(g_1,g_2)$. Then we define `multiplication maps'
$$g_1.\bbH^* \x g_2.\bbH^* \ra r(g_1,g_2).\bbH^*\,, \;\;((g_1,h_1), (g_2,h_2)) \mapsto (r(g_1,g_2),  c(g_1,g_2)\alpha(g_2^{-1})(h_1)h_2) \;.$$    
These maps induce a multiplication $G.\bbH^* \x G.\bbH^* \ra G.\bbH^*$ which turns $G.\bbH^*$ into a analytic group, because $G/H^*$ is finite too (as $H^*$ is of finite index in $H$).   

$G.\bbH^*$ is affinoid (resp. strictly ind-affinoid, resp. strictly pro-affinoid) if $\bbH^*$ is affinoid (resp. strictly ind-affinoid, resp. strictly pro-affinoid), since a finite coproduct of affinoid (resp. strictly ind-affinoid, resp. strictly pro-affinoid) spaces is again affinoid (resp. strictly ind-affinoid, resp. strictly pro-affinoid). 
\end{proof}

\begin{para}\label{defn-mixed-dist-algs}{\it Distribution algebras for analytic group pairs.} Given an analytic group pair $(\bbH^*,G)$ associated to $H \sub G$ with $G/H$ finite we set 
$$D(\bbH^*,G) := D(G.\bbH^*) \;,$$   
While these algebras are just distribution algebras of affinoid, or strictly ind-affinoid, or strictly pro-affinoid groups, we refer to them as 'mixed distribution algebras', as they are built out of the distribution algebra of the analytic group $\bbH^*$ and the locally analytic group $G$. They have been previously studied by Emerton \cite[sec. 5.3, p. 108]{EmertonA}, and we have adopted the notation here.
\end{para}

\begin{para}
The crossed product of an algebra with a group was first introduced in \cite{PassmanAlgebraicCrossed}, and then independently generalized to the crossed product of an algebra with a Hopf algebra in \cite{BlattnerCohenMontgomery} and \cite{DoiTakeuchi}. This generalization can be categorified to construct the crossed product of an algebra object with a Hopf algebra object in any symmetric monoidal category, and we use this construction in the symmetric monoidal category of solid vector spaces. 
\end{para}

\begin{proposition} \label{mixed-crossed}
Let $(\bbH^*, G)$ be an analytic group pair associated to $H \sub G$ as in \ref{analytic-pairs}. Let $H \sub \wH \trianglelefteq G$ be another open normal subgroup (not necessarily $F$-uniform). Assume that $G/H$ is finite, and let $Q = G/\wH$ be the quotient group. Then 
\[ D(\bbH^*, G) = D(\bbH^*,\wH) \cross_\zeta E[Q], \]
where $\zeta \in Z^2(E[Q], D(\bbH^*,\wH))$ is a normalized Hopf 2-cocycle.\footnote{For $\cH$ an Hopf algebra and $A$ an algebra in a symmetric monoidal category $(\cC, \otimes)$, a {\it normalized Hopf 2-cocycle} is a pair $\zeta = (a, t)$ where $a : \cH \otimes A \to A$ is a weak action \cite[definition 1.1]{BlattnerCohenMontgomery}, $t : \cH \otimes \cH \to A$ is normalized \cite[defintion 4.3]{BlattnerCohenMontgomery}, and the pair $(a, t)$ satisfy the cocycle and twisted module conditions \cite[lemma 4.5]{BlattnerCohenMontgomery}.}
\end{proposition}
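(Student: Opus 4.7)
The plan is to exploit the explicit description of $G.\bbH^*$ from \ref{analytic-groups-pair-group}. First I fix a system $\cR \sub G$ of coset representatives for $Q = G/\wH$, with the identity coset represented by $e$, and write $g_q \in \cR$ for the representative of $q \in Q$. The proof of \ref{analytic-groups-pair-group} then gives a decomposition of analytic spaces $G.\bbH^* = \coprod_{q \in Q} g_q.(\wH.\bbH^*)$, obtained by refining $\cR$ to a set of representatives for $G/H$ through multiplication by a chosen set of representatives for $\wH/H$. Dualizing global sections and using that $Q$ is finite (so that $\sotimes_E$ commutes with the direct sum over $Q$), this produces an isomorphism of solid $E$-vector spaces
$$ D(\bbH^*, G) \;\cong\; \bigoplus_{q \in Q} D(\bbH^*, \wH) \;=\; D(\bbH^*, \wH) \sotimes_E E[Q]. $$

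Next I would extract the crossed product data from the multiplication rule on $G.\bbH^*$. For each $g \in \cR$, conjugation by $g$ preserves $\wH$ and, by functoriality of the construction in \cref{F-uniform}, lifts to a rigid analytic automorphism of $\bbH^*$; it therefore induces an algebra automorphism of $D(\bbH^*, \wH)$. Packaging these together defines the weak action
$$ a : E[Q] \sotimes_E D(\bbH^*, \wH) \lra D(\bbH^*, \wH). $$
Writing $g_{q_1} g_{q_2} = g_{q_1 q_2}\, c(q_1, q_2)$ with $c(q_1, q_2) \in \wH$, the rule $t(q_1, q_2) = \delta_{c(q_1, q_2)}$ defines a map $t : E[Q] \sotimes_E E[Q] \to D(\bbH^*, \wH)$, normalized thanks to the choice $g_e = e$. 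Set $\zeta = (a, t)$.

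It then remains to verify (i) that $\zeta$ satisfies the cocycle and twisted module conditions of \cite[lemma 4.5]{BlattnerCohenMontgomery}, and (ii) that the multiplication on $D(\bbH^*, \wH) \cross_\zeta E[Q]$ agrees with that on $D(\bbH^*, G)$ under the isomorphism displayed above. Both are formal consequences of explicit identities among the quantities $r(g_1, g_2)$, $c(g_1, g_2)$, and $\alpha(g)$ recorded in \ref{analytic-groups-pair-group}, which in turn follow from associativity of multiplication in $G$ together with compatibility of the $G$-action with conjugation on $\bbH^*$. The main obstacle is carrying these identities out rigorously at the level of solid algebras rather than merely on group-like elements: my approach is to prove them first on the structure sheaves $\sO(G.\bbH^*)$ and $\sO(\wH.\bbH^*)$, where everything is dual to finitely many coassociativity identities for the comultiplication of $G.\bbH^*$, and then dualize term-by-term using that the direct sum over the finite group $Q$ interacts well with $\sotimes_E$.
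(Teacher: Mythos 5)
Your proof takes a genuinely different route from the paper's. The paper constructs a morphism of solid Hopf algebras $\pi_* : D(\bbH^*, G) \to E[Q]$ induced by the quotient map, produces a coalgebra splitting $\gamma : E[Q] \to D(\bbH^*, G)$ sending $g\wH \mapsto \delta_g$, shows the left Hopf kernel of $\pi_*$ equals $D(\bbH^*, \wH)$ via a dualized (co)equalizer argument on structure sheaves, and then invokes the Blattner--Cohen--Montgomery theorem \cite[theorem 4.14]{BlattnerCohenMontgomery} to deduce the crossed product decomposition abstractly. You instead bypass BCM 4.14 entirely: you decompose $G.\bbH^* = \coprod_{q \in Q} g_q.(\wH.\bbH^*)$ to get the $E$-module isomorphism $D(\bbH^*, G) \cong D(\bbH^*, \wH) \sotimes_E E[Q]$, read off the weak action $a$ from conjugation by coset representatives and the cocycle $t$ from the 2-cocycle $c(q_1, q_2) \in \wH$ appearing in the group law of $G.\bbH^*$, and then propose to verify the crossed product axioms by dualizing coassociativity identities on $\sO(G.\bbH^*)$. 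Your approach has the advantage of exhibiting $\zeta = (a,t)$ explicitly in terms of the data of \ref{analytic-groups-pair-group}, whereas the paper only asserts the existence of such a $\zeta$; the cost is that the cocycle condition, twisted module condition, and the agreement of multiplications all have to be checked by hand at the level of solid (co)algebras, which the BCM theorem handles uniformly once the Hopf-algebraic hypotheses are verified. You have correctly identified that the delicate point is promoting the formal identities from group-like elements to the full solid algebra, and the idea of dualizing to $\sO(G.\bbH^*)$ and exploiting finiteness of $Q$ is exactly the right tool for this (it is the same duality step the paper uses to compute the Hopf kernel). Provided those verifications are carried out as you sketch, the argument is complete.
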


\begin{proof}
Regard the finite quotient group $Q$ as a locally analytic group. The trivial subgroup $1 \subseteq Q$ is a normal open subgroup, and there is a natural morphism of analytic pairs $(\bbH^*, G) \to (1, Q)$ which induces a morphism of rigid analytic groups $\pi: G.\bbH^* \ra Q$, and gives thus a  morphism 
\[\pi_* : D(\bbH^*, G) \to D(1, Q) = E[Q] \]
of solid Hopf algebras. For any $g \in G$, the delta distribution $\delta_g$ maps to the element $g\wH \in E[Q]$. The desired result will follow from \cite[theorem 4.14]{BlattnerCohenMontgomery} once we show that $\pi_*$ is split as a coalgebra map and that its left Hopf kernel\footnotemark{} (cf. \cite[defintion 4.12]{BlattnerCohenMontgomery}) is $D(\bbH^*,\wH)$. 

\footnotetext{The {\it left Hopf kernel}  of $\pi_* : D(\bbH^*, G) \to E[Q]$ is the equalizer of the two maps $(1 \otimes \pi_*) \circ \Delta_* : D(\bbH^*, G) \to D(\bbH^*, G) \sotimes_E D(\bbH^*, G) \to D(\bbH^*, G) \sotimes_E E[Q]$ and $1 \otimes \eta : D(\bbH^*, G) = D(\bbH^*, G) \sotimes_E E \to D(\bbH^*, G) \sotimes_E E[Q]$, where $\Delta$ is the comultiplication of $D(\bbH^*, G)$ and $\eta$ is the unit of $E[Q]$.}

First, choose a set of coset representatives $\cS \subseteq G$ for $\wH$ in $G$ containing 1 and consider the map $\gamma : E[Q] \to D(\bbH^*, G)$ given by $g\wH \mapsto \delta_g$ for all $g \in \cS$. It is straightforward to verify that $\gamma$ is a coalgebra map that splits $\pi_*$ and satisfies $\gamma(1) = 1$. Next, note that the left Hopf kernel of $\pi_*$ is the equalizer of $(1 \otimes \pi_*) \circ \Delta_*$ and $1 \otimes \eta_*$, where $\Delta_*$ is the comultiplication on $D(\bbH^*, G)$ and $\eta_*$ is the unit map of $E[Q]$.
\[ \begin{tikzcd} 
& D(\bbH^*, G) \sotimes_E D(\bbH^*, G) \ar{dr}{1 \otimes \pi_*} \\
D(\bbH^*, G) \ar{ur}{\Delta_*} \ar[equals]{dr} & & D(\bbH^*, G) \sotimes_E E[Q] \\ 
& D(\bbH^*, G) \sotimes_E E \ar{ur}[swap]{1 \otimes \eta_*} 
\end{tikzcd} \]
To show that this equalizer is $D(\bbH^*,\wH)$, we first observe that the following is an equalizer diagram:
\[ \begin{tikzcd} 
& & G.\bbH^* \times G.\bbH^* \ar{dr}{1 \times \pi} \\
\wH.\bbH^* \ar{r} & G.\bbH^* \ar{ur}{\Delta} \ar[equals]{dr} & & G.\bbH^* \times Q \\ 
& & G.\bbH^* \times \{1\} \ar{ur}[swap]{1 \times \eta} 
\end{tikzcd} \]
We want to show that this equalizer is preserved under $D(-)$. By the duality result of \cite[lemma 3.40 (1)]{RR}, it is sufficient to show that the dual diagram is a coequalizer diagram:
\[ \begin{tikzcd} 
& & \displaystyle \sO(G.\bbH^*) \sotimes_E \displaystyle \sO(G.\bbH^*) \ar{dl}[swap]{\Delta^*} \\
\sO(\wH.\bbH^*) & \displaystyle \sO(G.\bbH^*) \ar{l} & & \displaystyle \sO(G.\bbH^*) \sotimes_E  \displaystyle E[Q]^\vee \ar{ul}[swap]{1 \otimes \pi^*} \ar{dl}{1 \otimes \eta^*}  \\ 
& & \displaystyle \sO(G.\bbH^*) \sotimes_E E \ar[equals]{ul}
\end{tikzcd} \]
Note that $\sO(G.\bbH^*) = \bigoplus_{q \in Q} \sO(q\wH.\bbH^*)$. Furthermore, for $q \in Q$ let $\lambda_q \in E[Q]^\vee$ be the basis element with $\lambda_q(q') = \delta_{q,q'}$. Then an element of the space on the far right can be written as $\sum_{(q,q') \in Q^2} f_{q,q'} \ot \lambda_{q'}$ with $f_{q,q'} \in \sO(q\wH.\bbH^*)$. The top map $\Delta^* \circ (1 \otimes \pi^*)$ sends this to $\sum_{q \in Q}  f_{q,q}$, while the bottom map $1 \otimes \eta^*$ sends this to $\sum_{q \in Q} f_{q,1}$. This shows that the projection map $\sO(G.\bbH^*) = \bigoplus_Q \sO(q\wH.\bbH^*) \to \sO(\wH.\bbH^*)$ onto the 1-component is in fact the coequalizer. 
\end{proof}

\begin{corollary} \label{mixed-crossed-corollary}
Let the notation and assumptions be as in \cref{mixed-crossed}. The solid algebra $D(\bbH^*, G)$ is strongly $Q$-graded. It is projective as a left (resp. right) module over $D(\bbH,\wH)$, and 
\[ E[Q] = D(\bbH, G) \sotimes_{D(\bbH,\wH)} E \;. \]
\end{corollary}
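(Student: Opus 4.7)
The plan is to leverage the crossed product decomposition \(D(\bbH^*, G) = D(\bbH^*, \wH) \cross_\zeta E[Q]\) established in \cref{mixed-crossed} and translate the standard facts about crossed products with a finite group algebra into the solid setting. For brevity, write \(A = D(\bbH^*, \wH)\). Choose a set \(\cS \subseteq G\) of coset representatives for \(\wH\) in \(G\) containing \(1\); this yields a distinguished family \(\{u_q := \delta_q \mid q \in \cS\}\) inside \(D(\bbH^*, G)\) indexed by \(Q = G/\wH\). Note that \(Q\) is finite: since \(H \subseteq \wH\) and \(G/H\) is finite by assumption, the quotient \(Q\) is a quotient of \(G/H\) and is therefore finite.

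First I would unwind the crossed product structure to exhibit the direct sum decomposition
\[ D(\bbH^*, G) = \bigoplus_{q \in Q} A \cdot u_q \]
as a solid left \(A\)-module. As a finite direct sum of copies of \(A\), this simultaneously shows that \(D(\bbH^*, G)\) is finite free, hence projective, as a left \(A\)-module. The analogous argument with right multiplication by \(u_q\) (or, equivalently, appealing to the antipode of \(D(\bbH^*, G)\), which interchanges left and right module structures) gives the corresponding statement on the right. The \(Q\)-grading is then given by \(D(\bbH^*, G)_q := A \cdot u_q\), and strength of the grading amounts to the surjectivity of the multiplication maps \(A u_q \sotimes_E A u_{q'} \twoheadrightarrow A u_{qq'}\); this follows once one verifies that each \(u_q\) is invertible in \(D(\bbH^*, G)\) (with inverse essentially \(u_{q^{-1}}\) up to a unit factor coming from the normalized Hopf \(2\)-cocycle \(t\)), so that \(A u_q \cdot A u_{q'} = A\,(q \cdot A)\, t(q,q')\, u_{qq'} = A u_{qq'}\), using that the weak action of \(q\) on \(A\) is an automorphism.

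Finally, the base change identity is an immediate consequence: using that finite direct sums commute with the solid tensor product, one computes
\[ D(\bbH^*, G) \sotimes_A E = \bigoplus_{q \in Q} (A u_q) \sotimes_A E \cong \bigoplus_{q \in Q} E = E[Q], \]
and the resulting isomorphism matches the Hopf algebra map \(\pi_* : D(\bbH^*, G) \to E[Q]\) sending \(\delta_g \mapsto g\wH\) from the proof of \cref{mixed-crossed}.

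The only potential obstacle lies in verifying that the algebraic manipulations with the \(u_q\) (invertibility, the internal direct sum decomposition, and the compatibility of \(\sotimes\) with direct sums) carry over from the classical to the solid setting. All of these are controlled by the finiteness of \(Q\), which makes every direct sum that appears a finite one, and therefore well-behaved with respect to solid tensor products and to extraction of direct summands.
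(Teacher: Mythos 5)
Your proposal is correct and follows essentially the same route as the paper: both proceed by unwinding the crossed product decomposition $D(\bbH^*, G) = D(\bbH^*,\wH) \cross_\zeta E[Q]$ from Proposition \ref{mixed-crossed}. The paper's proof simply cites this as a "categorification of well-known results," pointing to Benson–Goodearl for the strong grading and projectivity and to Passman for the tensor product identity, whereas you supply the underlying mechanics — the decomposition $\bigoplus_{q\in Q} A\,u_q$, the invertibility of the $u_q = \delta_q$, and the compatibility of the (finite) direct sum with $\sotimes_A$. One small notational wrinkle: when you compute $D(\bbH^*,G)\sotimes_A E$ you write $(A u_q)\sotimes_A E$, treating $A u_q$ as a right $A$-module; you should note explicitly that $A u_q = u_q A$ via the twisted commutation relation $u_q a = (q\cdot a)u_q$, so that each summand really is free of rank one as a right $A$-module before concluding that it tensors to $E$. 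With that clarified, the argument is sound and coincides in substance with the cited classical results the paper invokes.
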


\begin{proof}
This follows from \cref{mixed-crossed} and categorifications of well-known results. For example, the strong $Q$-grading and projectivity are discussed in \cite[section 4]{BensonGoodearl}, and the tensor product statement follows from the discussion of \cite[example 1.2]{Passman}.
\end{proof}

\subsection{Resolutions for mixed distribution algebras}

\subsubsection{Infinite 2-periodic resolution for \texorpdfstring{$E$}{E} over \texorpdfstring{$E[Q]$}{E[Q]}}

\begin{para} \label{projdim-group-algebra}
Let $Q$ be a finite group. Then 
\[ e = \frac{1}{\# Q} \sum_{q \in Q} q \]
is an idempotent in the group algebra $E[Q]$, $(1-e)E[Q]$ is the augmentation ideal, and $H^s(Q, M) = \Ext^s_{E[Q]}(E, M) = 0$ for all $E[Q]$-modules $M$ and all $s > 0$ \cite[proposition 6.1.10 and its proof]{WeibelH}. In other words, $E$ is projective over $E[Q]$ \cite[pd lemma 4.1.6]{WeibelH}. 
\end{para}

\begin{para} \label{infinite-periodic}
We have the idempotent decomposition $E[Q] = eE[Q] \oplus (1-e)E[Q]$, which yields an infinite 2-periodic resolution of $E$ by finite free $E[Q]$-modules given by
$$ \begin{tikzcd} \cdots \ar{r} & E[Q] \ar{r}{1-e} & E[Q] \ar{r}{e} & E[Q] \ar{r}{1-e} & E[Q]. \end{tikzcd} $$
\end{para}

\subsubsection{The case of strictly ind-analytic groups}

\begin{para}\label{ind-affinoid-mixed}
Suppose $(\bbHo, G)$ is an analytic group pair associated to the open normal $F$-uniform subgroup $H \sub G$ of finite index, and $\bbHo$ is equal to $\bbH[\rho]^\o$ for some $\rho \in (0,1] \cap p^\Q$. Then $\bbH^\circ$ has a presentation $\bbH^{(\bullet)}$ where the underlying analytic space of each $\bbH^{(n)}$ is a polydisk. Set $H^\o = \bbHo(F)$ and let $Q = G/H^\o$ be the finite quotient group.
\end{para}

\begin{para} \label{wall-complex}
For all non-negative integers $i$ and $j$, let  
$$ X_{i,j} := D(\bbH^\circ, G) \sotimes_{U(\frg)} \CE_j(\frg) = D(\bbH^\circ, G) \otimes_E \bigwedge^j \frg $$
where $\CE_\bullet(\frg)$ is the Chevalley-Eilenberg complex \ref{chevalley-eilenberg-ind-affinoid}. Using \cref{mixed-crossed} and \cref{mixed-crossed-corollary}, we see that each $X_{i,\bullet}$, with the maps induced by the Chevalley-Eilenberg complex, is a free resolution $X_{i,\bullet} \to E[Q]$ over $D(\bbH^\circ, G)$. See \cref{wall-complex-figure}. 
\begin{figure}
\[ \begin{tikzcd}
& \vdots \ar{d} & \vdots \ar{d} & \vdots \ar{d} \\
& 0 \ar{d} & 0 \ar{d} & 0 \ar{d} \\
& D(\bbH^\circ, G) \otimes \bigwedge^d \frg \ar{d} & D(\bbH^\circ, G) \otimes \bigwedge^d \frg \ar{d} & D(\bbH^\circ, G) \otimes \bigwedge^d \frg \ar{d} \\
& \vdots \ar{d} & \vdots \ar{d} & \vdots \ar{d} \\
& D(\bbH^\circ, G) \otimes \frg \ar{d} & D(\bbH^\circ, G) \otimes \frg \ar{d} & D(\bbH^\circ, G) \otimes  \frg \ar{d} \\
& D(\bbH^\circ, G) \ar[lightgray]{d} & D(\bbH^\circ, G) \ar[lightgray]{d} & D(\bbH^\circ, G) \ar[lightgray]{d} \\
\color{lightgray} \cdots \ar[lightgray]{r} & \color{lightgray} E[Q] \ar[lightgray]{r}{e} & \color{lightgray} E[Q] \ar[lightgray]{r}{1-e} & \color{lightgray} E[Q]
\end{tikzcd} \]
\caption{The gray part of the diagram above is the infinite 2-period resolution of $E$ over $E[Q]$ (cf. \cref{infinite-periodic}) and the augmentation maps from $X_{i,\bullet}$ into its $i$th term. The black part defines the data of the Wall complex.} \label{wall-complex-figure}
\end{figure}

Thus the data of the $(X_{i,j})_{i,j}$ defines a Wall complex whose total complex $T_\bullet$ is a resolution of $E$ by finite free $D(\bbH^\circ, G)$-modules \cite[section V.3.1]{LazardGroupesAnalytiques}, \cite[theorem 6.1]{Kohlhaase_Cohomology}. For all non-negative integers $n$, we have 
\[ T_n = \bigoplus_{k = 0}^{\min\{n, d\}} \left( D(\bbH^\circ, G) \sotimes_{U(\frg)} \CE_k(\frg) \right) = \bigoplus_{k = 0}^{\min\{n, d\}} \left( D(\bbH^\circ, G) \otimes_E \bigwedge^k \frg \right). \]
\end{para}

\begin{proposition}\label{mixed-ind-affinoid-ext}
For any left $D(\bbH^\circ, G)$-module $M$ and any integer $n$, we have 
\[ \Ext^n_{D(\bbH^\circ, G)}(E, M) = \Ext^n_{D(\bbH^\circ)}(E, M)^Q \,. \]
In particular, the projective dimension of $E$ over $D(\bbH^\circ, G)$ is at most $\dim(G)$. 
\end{proposition}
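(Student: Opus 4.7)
\begin{para}
The plan is to exploit the free-module structure of $D(\bbH^\circ, G)$ over $D(\bbH^\circ)$ that comes from the crossed-product decomposition $D(\bbH^\circ, G) = D(\bbH^\circ) \cross_\zeta E[Q]$ (cf. Proposition \ref{mixed-crossed} and Corollary \ref{mixed-crossed-corollary}): as a solid left (or right) $D(\bbH^\circ)$-module, $D(\bbH^\circ, G)$ is free of finite rank $\#Q$, with basis a lift of $Q$ to $G$. Combined with the fact that $|Q| \in E^\times$, so that the functor $(-)^Q$ of $Q$-invariants on $E$-linear $Q$-representations is exact, this will reduce the computation of $\Ext^n_{D(\bbH^\circ, G)}(E, M)$ to the computation of $\Ext^n_{D(\bbH^\circ)}(E, M)$ followed by taking $Q$-invariants.
\end{para}

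\begin{para}
Concretely, I would first choose a projective resolution $P_\bullet \to E$ of $E$ as a solid $D(\bbH^\circ, G)$-module, with each $P_i$ a direct sum of free modules of the form $D(\bbH^\circ, G)[T]^\solid = D(\bbH^\circ, G) \sotimes_E E[T]$ for extremally disconnected $T$ (cf.\ Lemma \ref{solid-modules-analytic-ring}). The crossed-product structure then gives an isomorphism of solid $D(\bbH^\circ)$-modules
\[ D(\bbH^\circ, G) \sotimes_E E[T] \;\cong\; \bigoplus_{q \in Q} D(\bbH^\circ) \sotimes_E E[T], \]
so each $P_i$ is also free, hence projective, as a solid $D(\bbH^\circ)$-module. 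Thus $P_\bullet \to E$ is simultaneously a projective resolution over $D(\bbH^\circ)$, and
\[ \Ext^n_{D(\bbH^\circ)}(E, M) \;=\; H^n\bigl(\Hom_{D(\bbH^\circ)}(P_\bullet, M)\bigr); \]
the right-hand complex carries a natural $Q$-action coming from the action of (a lift of) $Q$ inside $D(\bbH^\circ, G)$ on both $P_\bullet$ and $M$, well-defined because conjugation by elements of $G$ preserves $D(\bbH^\circ)$ (as $H^\circ$ is normal in $G$).
\end{para}

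\begin{para}
The crux is then the identification
\[ \Hom_{D(\bbH^\circ, G)}(P_i, M) \;=\; \Hom_{D(\bbH^\circ)}(P_i, M)^Q, \]
valid for every $P_i$ because $D(\bbH^\circ)$ together with (a lift of) $Q$ generate $D(\bbH^\circ, G)$ as an algebra; hence a $D(\bbH^\circ)$-linear map is $D(\bbH^\circ, G)$-linear if and only if it is $Q$-equivariant. Exactness of $(-)^Q$ implies it commutes with the cohomology of $\Hom_{D(\bbH^\circ)}(P_\bullet, M)$, which yields
\[ \Ext^n_{D(\bbH^\circ, G)}(E, M) \;=\; H^n\bigl(\Hom_{D(\bbH^\circ)}(P_\bullet, M)^Q\bigr) \;=\; H^n\bigl(\Hom_{D(\bbH^\circ)}(P_\bullet, M)\bigr)^Q \;=\; \Ext^n_{D(\bbH^\circ)}(E, M)^Q, \]
which is the stated formula.
\end{para}

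\begin{para}
For the last assertion, Corollary \ref{projdim-ind-affinoid} gives $\Ext^n_{D(\bbH^\circ)}(E, -) = 0$ for $n > \dim(\bbH^\circ)$; since $H$ is open in $G$ and $\bbH^\circ$ is built from polydisks of $F$-dimension equal to $\dim_F(H) = \dim(G)$, we have $\dim(\bbH^\circ) = \dim(G)$, so the formula just established forces $\Ext^n_{D(\bbH^\circ, G)}(E, -) = 0$ in the same range, whence the claimed bound on the projective dimension. No serious obstacle is expected; the only technical point worth flagging is that ``projective over $D(\bbH^\circ, G)$ restricts to projective over $D(\bbH^\circ)$ in the solid category,'' which is immediate from the displayed decomposition of the free generators $D(\bbH^\circ, G)[T]^\solid$ as finite direct sums of free solid $D(\bbH^\circ)$-modules.
\end{para}
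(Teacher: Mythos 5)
Your proof is correct, and it takes a genuinely different route from the paper's. The paper invokes Guccione's spectral sequence for crossed products,
\[ E_2^{r,s} = H^s\bigl(Q,\,\Ext^r_{D(\bbH^\circ)}(E, M)\bigr) \Longrightarrow \Ext^{r+s}_{D(\bbH^\circ, G)}(E, M), \]
and observes that it collapses on $E_2$ because $Q$ is finite and $|Q| \in E^\times$, so $H^s(Q,-) = 0$ for $s > 0$. You dispense with the spectral sequence: a projective resolution $P_\bullet \to E$ over $D(\bbH^\circ, G)$ restricts (via finite freeness of $D(\bbH^\circ, G)$ over $D(\bbH^\circ)$ from the crossed-product decomposition) to a projective resolution over $D(\bbH^\circ)$, one identifies $\Hom_{D(\bbH^\circ, G)}(P_i, M) = \Hom_{D(\bbH^\circ)}(P_i, M)^Q$ termwise, and $(-)^Q$ is exact because $|Q| \in E^\times$. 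This is more elementary and, in effect, proves the collapsed spectral sequence directly. The trade-off: the paper's formulation would still yield a spectral sequence in situations where the higher $H^s(Q,-)$ do not vanish (positive-characteristic coefficients, compact $Q$, etc.), while your argument is specifically tied to exactness of $(-)^Q$, which is exactly what is available and needed here.

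One point you pass over a bit quickly: a ``lift of $Q$'' to $G$ (or to units in $D(\bbH^\circ, G)$) is only a set of coset representatives, not a subgroup, so it does not a priori act on $\Hom_{D(\bbH^\circ)}(P_i, M)$ by a group. What is true is that all of $G$ acts by $(g \cdot f)(p) = \delta_g\, f(\delta_g^{-1} p)$ (this lands in $D(\bbH^\circ)$-linear maps precisely because $H^\circ$ is normal), and this $G$-action descends to $Q$ because $H^\circ$ acts trivially: for $h \in H^\circ$, $\delta_h \in D(\bbH^\circ)$ and $D(\bbH^\circ)$-linearity of $f$ gives $\delta_h f(\delta_h^{-1} p) = f(p)$. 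With this verified, $\Hom_{D(\bbH^\circ, G)}(P_i, M)$ is indeed the $Q$-fixed subspace, and the rest of your argument goes through.
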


\begin{proof}
Since $D(\bbH^\circ, G) = D(\bbH^\circ) \cross E[Q]$ and we can regard $M$ as a bimodule via the augmentation, there is a spectral sequence 
\[ E_2^{r,s} = H^s(Q, \Ext^r_{D(\bbH^\circ)}(E, M)) \Longrightarrow \Ext^{r+s}_{D(\bbH^\circ, G)}(E, M) \,. \]
by \cite[corollary 3.2.3]{GuccioneHochschild}.\footnote{Let $R$ be an augmented $E$-algebra and $M$ a left $R$-module. Then
\[ \Hoch^n(R, M) = \Ext^n_R(E, M) \]
where, on the left-hand side, we regard $M$ as an $R$-bimodule by letting $R$ act on the right through the augmentation $R \to E$. See {\cite[proposition X.3.4 and the subsequent note]{MaclaneH}}.}
As we noted in \cref{projdim-group-algebra}, we have $E_2^{r,s} = 0$ for $s > 0$. In other words, the spectral sequence collapses on the second page and we have 
\[ \Ext^n_{D(\bbH^\circ)}(E, M)^Q = \Ext^n_{D(\bbH^\circ, G)}(E, M) \]
for all $n$. The final assertion now follows from \cref{projdim-ind-affinoid}. 
\end{proof}

\begin{corollary}\label{mixed-ind-affinoid-ext-cor}
Let $d = \dim(G)$. The canonical truncation $\tau_{\leq d}T_\bullet$ of $T_\bullet$ provides a bounded projective resolution $\tau_{\leq d} T_\bullet \to E$ of $E$ over $D(\bbH^\circ, G)$. 
\end{corollary}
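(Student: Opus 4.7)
The plan is to construct $\tau_{\le d} T_\bullet$ by keeping $T_i$ for $0 \le i < d$, setting the term in degree $d$ to be $T'_d := T_d / \mathrm{im}(T_{d+1} \to T_d)$, and setting the terms in higher degrees to zero. Since $T_\bullet \to E$ is exact by the construction recalled in \cref{wall-complex}, we have $\mathrm{im}(T_{d+1} \to T_d) = \ker(T_d \to T_{d-1})$, so the induced map $T'_d \to T_{d-1}$ is injective with image equal to $\ker(T_{d-1} \to T_{d-2})$. Consequently $\tau_{\le d} T_\bullet \to E$ is an augmented complex of length $d$ which is exact in positive degrees and has $H_0 = E$, i.e., a resolution of $E$.

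It remains to show that each term of $\tau_{\le d} T_\bullet$ is projective as a solid $D(\bbH^\circ, G)$-module. The terms $T_i$ for $i < d$ are free by construction (cf. \cref{wall-complex}), so the only point is to verify that $T'_d$ is projective. For this I would invoke \cref{mixed-ind-affinoid-ext}, which asserts that the projective dimension of $E$ over $D(\bbH^\circ, G)$ is at most $d$. A standard dimension-shifting argument (using that $T_0, \dotsc, T_{d-1}$ are projective and that $T'_d \cong \ker(T_{d-1} \to T_{d-2})$) yields, for every solid $D(\bbH^\circ, G)$-module $M$ and every integer $n \ge 1$, an isomorphism
\[ \Ext^n_{D(\bbH^\circ, G)}(T'_d, M) \cong \Ext^{n+d}_{D(\bbH^\circ, G)}(E, M), \]
and the right-hand side vanishes by the projective-dimension bound. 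Since $\Ext^1(T'_d, -)$ vanishes, $T'_d$ is projective, as required.

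The main obstacle, to the extent that there is one, is purely bookkeeping: one wants to ensure that the dimension-shifting formalism for Ext groups is available in our setting. This is harmless because by \cref{solid-modules-analytic-ring} the category of solid $D(\bbH^\circ, G)$-modules is abelian with enough projectives, so the usual computation of $\Ext^\bullet$ via projective resolutions applies verbatim. The substance of the statement therefore lies entirely in the projective-dimension bound of \cref{mixed-ind-affinoid-ext} combined with the fact that $T_\bullet$ is already a free resolution.
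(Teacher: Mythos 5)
Your proposal is correct and follows essentially the same route as the paper: the paper's proof is a one-line citation of \cref{wall-complex}, \cref{mixed-ind-affinoid-ext}, Weibel's ``pd lemma'' 4.1.6, and Stacks tag 0118, and Weibel's lemma is precisely the dimension-shifting argument you spell out (together with the characterization of projectivity via vanishing of $\Ext^1$), while tag 0118 is the canonical-truncation construction you reproduce explicitly.
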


\begin{proof}
This follows from \cref{wall-complex}, \cref{mixed-ind-affinoid-ext}, \cite[pd lemma 4.1.6]{WeibelH}, and \cite[tag 0118]{stacks-project}. 
\end{proof}

\begin{para}\label{ind-affinoid-mixed-generalized} {\it Generalization to some analytic group pairs $(\bbHo,G)$ with $G/H$ countable.} In \cref{mixing} we consider resolutions for representations of non-compact groups which may have a non-compact center. In this setting we will encounter groups $G$ (which are denoted $\Psd$ there) which are compact modulo the center, and we consider an open normal $F$-uniform subgroup $H \sub G$. Then $G/H$ is no longer finite in general, but it is still countable in the context of \cref{mixing}, which is what we will assume here too. 

\vskip8pt

{\it The mixed distribution algebra.} We then consider an analytic group pair $(\bbHo,G)$ for $\bbHo$ a group of the form $\bbH[\rho]^\o$. We can also construct the rigid analytic group $G.\bbHo = \coprod_{gH \in G/H^\o} g.\bbHo$ as in \cref{analytic-groups-pair-group}, where $H^\o = \bbHo(F)$. If $G/H^\o$ is not finite, then $G.\bbHo$ is no longer a strictly ind-affinoid group as defined in \cref{dfn-strictly-ind-affinoid-group}, but it is a group object in the category $\cC^+$ of rigid analytic spaces which have an admissible covering by countably many affinoid subdomains (indeed, a countable disjoint union of strictly ind-affinoid spaces belongs to $\cC^+$). This category has all finite products and the terminal object $\{\ast\}$. Let $\bbX$ be a space in $\cC^+$, and let $(\bbX^{(n)})_n$ be an admissible covering of $\bbX$ by countably many affinoid subdomains. Then $\sO(\bbX) = \varprojlim_n \sO(\bbX^{(n)})$ is a Fr\'echet space, and the proof of \cref{affinoid-monoidal} works also in this context, which shows that the functor $\bbX \rightsquigarrow D(\bbX) = \sO(\bbX)^\vee$ is strong symmetric monoidal as well. It follows that $D(\bbHo,G) := D(G.\bbHo)$ is again a solid Hopf algebra whose underlying space is an $LS$ space.  

\vskip8pt

{\it Resolutions for $E_\triv$ as $D(\bbHo,G)$-module.} Now suppose that $C \sub G$ is a finitely generated free abelian subgroup of the center of $G$ such that $G/C$ is compact and $C \cap H = \{1\}$ (this assumption will be satisfied in the context of \cref{mixing} with $G$ replaced by $\Psd$). Set $\wH = CH^\o$, which is isomorphic to $C \x H^\o$. Then $\sO(\wH.\bbHo) = \prod_{c \in C} \sO(c.\bbHo)$ and 
\[D(\bbHo,\wH) = D(\wH.\bbHo) = \bigoplus_{c \in C} \delta_c D(\bbHo) = D(\bbHo)[C] = D(\bbHo) \otimes_E E[C]\] 
is the group algebra of $C$ over $D(\bbHo)$. Denote by $E_\triv$ the trivial $E[C]$-module where each $c \in C$ acts by multiplication by 1. Let $(c_1, \ldots, c_s)$ be a basis of $C$, so that $E[C] \cong E[T_1, T_1^{-1}, \ldots, T_s, T_s^{-1}]$ is the ring of Laurent polynomials in $T_1, \ldots, T_s$, where the indeterminate $T_i$ corresponds to the generator $c_i$. For the case of one variable we have the standard free resolution of $E_\triv$ as $E[T_i,T_i^{-1}]$-module 
\[0 \lra E[T_i,T_i^{-1}](T_i - 1) \lra E[T_i,T_i^{-1}] \lra E_\triv \lra 0 \;.\]
Using \cite[ch. IX, 2.7]{Cartan-Eilenberg} we obtain a resolution of length $s$ of $E_\triv$ as $E[C]$-module by finitely generated free $E[C]$-modules. Using \cite[ch. IX, 2.7]{Cartan-Eilenberg} again for the so-obtained resolution of $E_\triv$ as $E[C]$-module and the Chevalley-Eilenberg resolution of $E_\triv$ as $D(\bbHo)$-module, cf. \cref{chevalley-eilenberg-ind-affinoid}, we obtain a resolution of length $s+d$, $d= \dim_F(G)$, of $E_\triv$ as $D(\bbHo)[C]$-module by finitely generated free $D(\bbHo)[C]$-modules. Since $Q := G/\wH$ is finite, the statement (and its proof) of \cref{mixed-crossed} also applies to the distribution algebra $D(\bbHo,G)$ and its subalgebra $D(\bbHo,\wH)$ which gives that $D(\bbHo,G) = D(\bbH^\o,\wH) \# E[Q]$, and we have $D(\bbHo,G) \sotimes_{D(\bbHo,\wH)} E = E[Q]$. The discussion of \cref{wall-complex} applies to $D(\bbHo,G)$ and its subalgebra $D(\bbHo,\wH)$, and we obtain a resolution 
\begin{numequation}\label{EC-res-for-mixed-non-compact}
\CE_\bullet(\bbHo,G) \lra E_\triv \lra 0
\end{numequation}
of $E_\triv$ as $D(\bbHo,G)$-module by finitely generated free $D(\bbHo,G)$-modules, and those modules and their differentials can be made quite explicit. Again, the analogues of \cref{mixed-ind-affinoid-ext} and \cref{mixed-ind-affinoid-ext-cor} apply, and the truncated complex $\tau_{\le s+d} \CE_\bullet(\bbHo,G)$ provides a resolution of length $\le s+d$ of $E_\triv$ as $D(\bbHo,G)$-module by finitely generated projective $D(\bbHo,G)$-modules. 
\end{para}

\subsubsection{The case of strictly pro-affinoid groups}

\begin{para}\label{mixed-resolution-strictly-ind-affinoid-setup}
Suppose $(\bbHd, G)$ is an analytic group pair associated to the open normal $F$-uniform subgroup $H \sub G$ of finite index, and $\bbHd$ is equal to $\bbH[\rho]^\dagger$ for some $\rho \in (0,1] \cap p^\Q$. Then $\bbH^\dagger$ has a presentation $\bbH_{(\bullet)}$ where the underlying analytic space of each $\bbH_{(n)}$ is a polydisk. Set $H^\dagger = \bbHd(F)$ and let $Q = G/H^\dagger$ be the finite quotient group.
\end{para}

\begin{para}
For all non-negative integers $i$ and $j$, let  
$$ X_{i,j} := D(\bbH^\dagger, G) \sotimes_{U(\frg)} \CE_j(\frg) = D(\bbH^\dagger, G) \otimes_E \bigwedge^j \frg $$
where $\CE_\bullet(\frg)$ is the Chevalley-Eilenberg complex \ref{chevalley-eilenberg-pro-affinoid}. Using \cref{mixed-crossed} and \cref{mixed-crossed-corollary}, we see that each $X_{i,\bullet}$, with the maps induced by the Chevalley-Eilenberg complex, is a free resolution $X_{i,\bullet} \to E[Q]$ over $D(\bbH^\dagger, G)$. Thus the data of the $(X_{i,j})_{i,j}$ defines a Wall complex whose total complex $T_\bullet$ is a resolution of $E$ by finite free $D(\bbH^\dagger, G)$-modules \cite[section V.3.1]{LazardGroupesAnalytiques} \cite[theorem 6.1]{Kohlhaase_Cohomology}. For all non-negative integers $n$, we have 
\[ T_n = \bigoplus_{k = 0}^{\min\{n, d\}} \left( D(\bbH^\dagger, G) \sotimes_{U(\frg)} \CE_k(\frg) \right) = \bigoplus_{k = 0}^{\min\{n, d\}} \left( D(\bbH^\dagger, G) \otimes_E \bigwedge^k \frg \right). \]
The same arguments use in the proofs of \cref{mixed-ind-affinoid-ext} and \cref{mixed-ind-affinoid-ext-cor} give then:
\end{para}

\begin{proposition} \label{mixed-pro-affinoid-ext}
Let the notation be as in \cref{mixed-resolution-strictly-ind-affinoid-setup}. For any left $D(\bbH^\dagger, G)$-module $M$ and any integer $n$, we have 
\[ \Ext^n_{D(\bbH^\dagger, G)}(E, M) = \Ext^n_{D(\bbH^\dagger)}(E, M)^Q \,. \]
In particular, the projective dimension of $E$ over $D(\bbH^\dagger, G)$ is at most $\dim(G)$. \qed
\end{proposition}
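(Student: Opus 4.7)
The plan is to transpose verbatim the proof of \cref{mixed-ind-affinoid-ext}, replacing the strictly ind-affinoid group $\bbH^\circ$ by the strictly pro-affinoid group $\bbH^\dagger$ throughout. All the categorical inputs are already in place: \cref{mixed-crossed} and \cref{mixed-crossed-corollary} are formulated for an arbitrary analytic group pair $(\bbH^*, G)$ with $G/H$ finite, so in particular they apply to $(\bbH^\dagger, G)$ and yield an isomorphism
\[ D(\bbH^\dagger, G) \cong D(\bbH^\dagger) \cross_\zeta E[Q] \]
of solid algebras; and \cref{chevalley-eilenberg-pro-affinoid} supplies the projective resolution of $E$ over $D(\bbH^\dagger)$ that plays in the pro-affinoid setting the role of its ind-affinoid counterpart in the construction of the Wall complex $T_\bullet \to E$ discussed just above the statement.

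Given this setup, one regards $M$ as a $D(\bbH^\dagger, G)$-bimodule, with the right action factoring through the augmentation, and invokes the spectral sequence
\[ E_2^{r,s} = H^s\bigl(Q, \Ext^r_{D(\bbH^\dagger)}(E, M)\bigr) \Longrightarrow \Ext^{r+s}_{D(\bbH^\dagger, G)}(E, M) \]
of \cite[corollary 3.2.3]{GuccioneHochschild} attached to the crossed product presentation above. As recalled in \cref{projdim-group-algebra}, the finiteness of $Q$ (together with the fact that $\#Q$ is invertible in the characteristic zero coefficient field $E$, via the idempotent $e = \tfrac{1}{\#Q}\sum_{q\in Q} q$) forces $H^s(Q, -) = 0$ for every $s > 0$. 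Hence the spectral sequence degenerates on the $E_2$ page and collapses to the asserted identification
\[ \Ext^n_{D(\bbH^\dagger, G)}(E, M) = \Ext^n_{D(\bbH^\dagger)}(E, M)^Q. \]
The bound on projective dimension is then an immediate consequence of \cref{projdim-pro-affinoid}: whenever $n > \dim(G) = \dim(\bbH^\dagger)$, the group $\Ext^n_{D(\bbH^\dagger)}(E, M)$ vanishes for every $M$, and therefore so do its $Q$-invariants.

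There is essentially no obstacle to overcome, since the two ingredients that one might worry about behave uniformly in $\ast \in \{\emptyset, \circ, \dagger\}$. Namely, the proof of \cref{mixed-crossed} rests on dualizing an equalizer diagram of analytic spaces together with \cite[lemma 3.40 (1)]{RR}, neither of which is sensitive to whether $\bbH^*$ is ind- or pro-affinoid; and the required Chevalley--Eilenberg resolution in the pro-affinoid case is already established in \cref{chevalley-eilenberg-pro-affinoid}, whose proof replaces the rigid analytic Poincar\'e lemma used in \cite[proposition 5.12]{RR} by the overconvergent Poincar\'e lemma of Monsky--Washnitzer. Once these two points are in hand, the rest of the argument is a literal copy of the one given for \cref{mixed-ind-affinoid-ext}.
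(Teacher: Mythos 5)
Your proposal is correct and matches the paper's approach exactly: the paper itself states that the same arguments as in \cref{mixed-ind-affinoid-ext} and \cref{mixed-ind-affinoid-ext-cor} carry over, and your account of why the two key inputs — the crossed-product decomposition from \cref{mixed-crossed} and the pro-affinoid Chevalley--Eilenberg resolution from \cref{chevalley-eilenberg-pro-affinoid} — remain available is precisely the justification underlying the paper's terse treatment.
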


\begin{corollary}\label{mixed-pro-affinoid-ext-cor}
Let $d = \dim(G)$. The canonical truncation $\tau_{\leq d}T_\bullet$ of $T_\bullet$ provides a bounded projective resolution $\tau_{\leq d} T_\bullet \to E$ of $E$ over $D(\bbH^\dagger, G)$. \qed
\end{corollary}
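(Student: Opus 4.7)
The plan is to reuse, essentially verbatim, the argument that proves Corollary \ref{mixed-ind-affinoid-ext-cor}, since all of the pro-affinoid analogues of the needed inputs have already been assembled. First, the Wall complex construction in the paragraph preceding the statement furnishes an unbounded resolution $T_\bullet \to E$ of $E$ by finite free $D(\bbH^\dagger, G)$-modules; in particular, the first $d$ terms $T_0, \ldots, T_{d-1}$ are (finite) projective. Second, Proposition \ref{mixed-pro-affinoid-ext} bounds the projective dimension of $E$ over $D(\bbH^\dagger, G)$ by $d = \dim(G)$.

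Given these two ingredients, the invocation of \cite[pd lemma 4.1.6]{WeibelH} (or the cleaner general statement at \cite[tag 0118]{stacks-project}) is purely formal. Namely, the pd lemma asserts: if $M$ has projective dimension at most $d$ over a ring $R$ and
\[ \cdots \to P_{d+1} \to P_d \xrightarrow{\partial_d} P_{d-1} \to \cdots \to P_0 \to M \to 0 \]
is any resolution with $P_0, \ldots, P_{d-1}$ projective, then the $d$th syzygy $\ker(\partial_{d-1}) = \mathrm{im}(\partial_d)$ is automatically projective. Applying this to $T_\bullet \to E$, the submodule $\mathrm{im}(\partial_d) \subseteq T_{d-1}$ is a projective $D(\bbH^\dagger, G)$-module, and therefore
\[ 0 \to \mathrm{im}(\partial_d) \to T_{d-1} \to \cdots \to T_0 \to 0, \]
which coincides with $\tau_{\le d} T_\bullet$, is a bounded projective resolution of $E$ of length at most $d$.

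There is no substantive obstacle at the level of the corollary itself; the content lies in the two preceding results, namely the construction of the Wall complex $T_\bullet$ and the projective dimension bound of Proposition \ref{mixed-pro-affinoid-ext}, both of which go through by running the strictly ind-affinoid arguments with the overconvergent Poincar\'e lemma \cite[theorem 5.4]{MonskyWashI} in place of the affinoid one (as in Proposition \ref{chevalley-eilenberg-pro-affinoid}). Once these are in hand, the truncation argument is formal homological algebra.
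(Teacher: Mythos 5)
Your proof is correct and follows exactly the route the paper intends: the paper's own justification for \cref{mixed-pro-affinoid-ext-cor} is the same chain of citations used for \cref{mixed-ind-affinoid-ext-cor}, namely the Wall-complex resolution, the projective dimension bound from \cref{mixed-pro-affinoid-ext}, Weibel's pd lemma to deduce projectivity of the $d$th syzygy, and the description of canonical truncation from the Stacks project. Your unpacking of why the truncation is a bounded projective resolution (identifying $\tau_{\le d}T_d$ with $\operatorname{im}(\partial_d)$ via exactness and applying the pd lemma) is precisely what those citations encode.
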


\begin{remark}\label{pro-affinoid-mixed-generalized} In analogy to what has been done in \cref{ind-affinoid-mixed-generalized} one can also construct distribution algebras of the type $D(\bbHd,G)$ in the case when $(\bbHd,G)$ is an analytic group pair with $\bbHd$ strictly pro-affinoid and $G/H$ countable. The results \ref{mixed-crossed} and \ref{mixed-crossed-corollary} hold for $D(\bbHd,G)$ and its subalgebra $D(\bbHd,\wH)$ if $G/\wH$ is finite. Furthermore, let us assume again that there is a finitely generated free abelian subgroup $C$ of the center of $G$ such that $H \cap C = \{1\}$ and $G/C$ is compact. We set $\wH = CH$. Then the trivial one-dimensional module $E$ over $D(\bbHd,\wH)$ has a resolution of length $s+d$, where $s = \rk(C)$ and $d = \dim_F(G)$, by finitely generated free $D(\bbHd,\wH)$-modules. Using the Wall complex, one then constructs a resolution of $E$ by finitely generated free $D(\bbHd,G)$-modules. The projective dimension of $E$ is at most $s+d$. As we will not be making use of this construction and the corresponding results in this paper, we will not go further into the details here.      
\end{remark}

\subsection{Banach distribution algebras \`a la Schneider-Teitelbaum}

\begin{para} \label{Qp-uniform}
Let $G$ be a $\Qp$-uniform locally analytic group (i.e., a locally $F$-analytic group $G$ such that $\Res^F_\Qp G$ is $\Qp$-uniform; e.g., $G$ could be an $F$-uniform locally $F$-analytic group). Let $e = e(F/\Qp)$ be the ramification index, $q = |k_F|$ the cardinality of the residue field, and let $r \in [1/p,1)$ denote a real number. 
\end{para}

\begin{para}\label{ST-theory-setup}
We equip $G$ always with the canonical $p$-valuation $\omega^{\can}$ coming from the lower $p$-series as in \cite[2.2.3]{OrlikStrauchIRR}. The norm $\|\cdot\|_r$ on $D(\Res^F_\Qp G)$ is defined as in \cite[page 160]{ST03} or \cite[2.2.6]{OrlikStrauchIRR} with $\omega = \omega^{\can}$.\footnote{Recall our convention that $D(G)$ refers to $D(G, E) = D(G) \otimes_F E$ and $D(\Res^F_\Qp G)$ refers to $D(\Res^F_\Qp, E) = D(\Res^F_\Qp G) \otimes_{\Qp} E$ (cf. \cref{tacit-base-extension}).}
If $(h_1, \ldots, h_d)$ is a system of topological generators as in \cite[page 716]{OrlikStrauchIRR} and $b_i = h_i-1$, then we have $\|b_i\|_r = r^\kappa$, with $\kappa$ as in \cref{kappa}.
\end{para}

\begin{para} \label{distribution-algebra-as-quotient}
Recall that the locally analytic distribution algebra $D(G)$ is a quotient of $D(\Res^F_\Qp G)$ \cite[step 2 in the proof of theorem 5.1]{ST03}. The quotient norm on $D(G)$ is denoted by $\|\cdot\|_\ovr$, and we let $D_r(G)$ be the completion of $D(G)$ with respect to the quotient norm. This ring is also a quotient of $D_r(\Res^F_\Qp G)$. Indeed, if $I_G = \ker(D(\Res^F_\Qp G) \to D(G))$, then 
\[ D_r(G) = D_r(\Res^F_\Qp G)/I_GD_r(\Res^F_\Qp G) \] 
by \cite[proof of proposition 3.7]{ST03}. $D_r(G)$ is noetherian for $r \in p^\bbQ \cap [1/p,1)$ \cite[theorem 4.5 and remark 4.6]{ST03}.
\end{para}

\begin{para} \label{the-set-sR}
In order to make use of certain technical results from \cite{OrlikStrauchIRR}, we will assume below that $r$ is an element of
\begin{numequation}
\sR = \{r \in (1/p,1) \cap p^\bbQ \;\midc\;  \exists m \ge 0 \mbox{ such that } p^{-\frac{1}{p-1} - \frac{1}{eq^m}} < r^{\kappa p^m} < p^{-\frac{1}{p-1}}\} \;.
\end{numequation}
It is easy to see that for every $r \in \sR$ there is a unique $m \in \bbZ_{\ge 0}$ such that 
\[ p^{-\frac{1}{p-1} - \frac{1}{eq^m}} < r^{\kappa p^m} < p^{-\frac{1}{p-1}}. \]
\end{para}

\begin{proposition}\label{Dr-functoriality}
Let $G$ be a $\Qp$-uniform locally analytic group, and let $H$ be a $\Qp$-uniform open subgroup of $G$ (cf. \cref{Qp-uniform}). Let $r \in [1/p, 1)$. 
\begin{enumerate}
\item The natural map of group algebras $E[H] \to E[G]$ extends to a morphism of Banach algebras $i^r_{H,G}: D_r(H) \ra D_r(G)$ which is norm-decreasing (i.e., $\|i_{H,G}(\delta)\|_{\ovr,G} \leq \|\delta\|_{\ovr,H}$ for all $\delta \in D_r(H)$, where $\|\cdot\|_{\ovr,G}$ and $\|\cdot\|_{\ovr,H}$ denote the norms on $D(G)$ and $D_r(H)$, respectively). 
\item If $r \in \sR$ (cf. \cref{the-set-sR}), the map $i^r_{H,G}$ is injective.
\end{enumerate}\end{proposition}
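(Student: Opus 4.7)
The plan is to reduce both statements to the analogous statements at the $\Qp$-level (for $D_r(\Res^F_\Qp H)$ and $D_r(\Res^F_\Qp G)$) and then pass to the $F$-level via the quotient description of \cref{distribution-algebra-as-quotient}. The central object is the commutative square
\[
\begin{tikzcd}
D_r(\Res^F_\Qp H) \ar[r, "\tilde i"] \ar[d, twoheadrightarrow, "q_H"'] & D_r(\Res^F_\Qp G) \ar[d, twoheadrightarrow, "q_G"] \\
D_r(H) \ar[r, dashed, "i^r_{H,G}"'] & D_r(G)
\end{tikzcd}
\]
in which $\tilde i$ is the $\Qp$-level continuous extension of $E[H] \hookrightarrow E[G]$ and the vertical maps are the quotient maps from \cref{distribution-algebra-as-quotient}.

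For part (1), I would first invoke the standard result that $\tilde i$ exists and is norm-decreasing for $\Qp$-uniform open inclusions (compare the setup in \cref{ST-theory-setup} and the constructions in \cite{ST03} and \cite{OrlikStrauchIRR}). Naturality of the surjection ``$\Qp$-analytic distributions $\twoheadrightarrow$ $F$-analytic distributions'' yields $\tilde i(I_H) \subseteq I_G$, hence $q_G \circ \tilde i$ kills $\ker q_H = I_H D_r(\Res^F_\Qp H)$ and factors uniquely through $q_H$ to produce $i^r_{H,G}$. Norm-decreasing is then immediate from the universal property of the quotient norm: given $\delta \in D_r(H)$ and $\epsilon > 0$, choose a lift $\tilde\delta \in D_r(\Res^F_\Qp H)$ with $\|\tilde\delta\|_r < \|\delta\|_{\ovr,H} + \epsilon$, whence $\|i^r_{H,G}(\delta)\|_{\ovr,G} \leq \|\tilde i(\tilde\delta)\|_r \leq \|\tilde\delta\|_r$.

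For part (2), injectivity of $i^r_{H,G}$ is equivalent to the equality
\[ \tilde i^{-1}\bigl(I_G D_r(\Res^F_\Qp G)\bigr) = I_H D_r(\Res^F_\Qp H), \]
and the containment $\supseteq$ is already established. For the reverse inclusion, I would exploit the fact that $G$ is compact $\Qp$-uniform (hence pro-$p$) and $H$ is open, so $G/H$ is finite; fix coset representatives $g_0 = 1, g_1, \dotsc, g_n$. The key structural claim is that, under the hypothesis $r \in \sR$, the Banach algebra $D_r(\Res^F_\Qp G)$ decomposes as a free left $D_r(\Res^F_\Qp H)$-module with basis $\{\delta_{g_j}\}_{j=0}^n$. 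Since the $F$-Lie algebras $\frh$ and $\frg$ coincide, the ideals $I_H$ and $I_G$ are generated by the same $F$-linearity relations on the common Lie algebra, and each $g_j$ normalizes this set of generators via $\mathrm{Ad}(g_j)$. Combining these observations, the decomposition refines to
\[ I_G D_r(\Res^F_\Qp G) = \bigoplus_{j=0}^n \delta_{g_j} \cdot I_H D_r(\Res^F_\Qp H), \]
and projecting onto the $j=0$ summand yields the desired identity.

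The main obstacle is the PBW-type decomposition of $D_r(\Res^F_\Qp G)$ over $D_r(\Res^F_\Qp H)$ after completion at $\|\cdot\|_r$. On the dense subspaces $E[G]$ or $D(\Res^F_\Qp G)$ the decomposition is a routine coset computation, but after completion one must estimate norms of the twisted multiplication formulas arising from the commutation of $\delta_{g_j}$ with the topological generators of $H$, and this is precisely the regime in which the numeric condition defining $\sR$ in \cref{the-set-sR} is calibrated (via its implicit dependence on the ramification index $e$ and residue field size $q$). I would carry out these norm estimates by tracking the action of $\mathrm{Ad}(g_j)$ on the generators $b_i = h_i - 1$ of the augmentation ideal of $D_r(\Res^F_\Qp H)$, for which the scaling $p^{-1/(p-1) - 1/(eq^m)} < r^{\kappa p^m} < p^{-1/(p-1)}$ is tailored.
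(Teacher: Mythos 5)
Your plan for part (1) matches the paper's in its skeleton: the paper also constructs $i^r_{H,G}$ as the map induced on quotients and verifies norm-decreasing by exactly the quotient-norm estimate you write down. The difference is that the paper does not cite a ``standard result'' at the $\Qp$-level; it proves the $\Qp$-level norm bound directly, by writing each topological generator $h_j$ of $H$ as a word in the generators $g_i$ of $G$ with $\Zp$-exponents and expanding $h_j-1 = (1+b_1)^{\nu_1}\cdots(1+b_d)^{\nu_d} - 1$ to see that $\|h_j-1\|_{r,G_0} \le r^\kappa$. That computation is short, and I would not call the norm-decreasing statement for an arbitrary $\Qp$-uniform open $H\subseteq G$ (as opposed to the lower $p$-series $G_i$, where it is classical) something one can freely wave off as standard; in fact the very next remark in the paper points out that the inequality is generically strict, precisely because the canonical $p$-valuation on $H$ is not the restriction of the one on $G$.

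For part (2) your route diverges from the paper's, and I do not think it survives. The key structural claim you lean on --- that $D_r(\Res^F_\Qp G)$ is a free left $D_r(\Res^F_\Qp H)$-module with basis the coset representatives $\delta_{g_j}$ --- is not available here, and the paper's own remark after the proposition flags the obstruction: since in general $\|i^r_{H,G}(\delta)\|_{\ovr,G} < \|\delta\|_{\ovr,H}$, the image $i^r_{H,G}(D_r(H))$ is not the closure of $D(H)$ inside $D_r(G)$. So the completion of $E[G]=\bigoplus_j \delta_{g_j}E[H]$ under $\|\cdot\|_{r,G_0}$ does not split as $\bigoplus_j \delta_{g_j}D_r(H_0)$: the relevant subspace is the closure of $\delta_{g_j}E[H]$ in the $G_0$-norm, which carries a different Banach structure than $D_r(H_0)$. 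There is also a circularity: formulating a free-module decomposition over $D_r(H_0)$ inside $D_r(G_0)$ already presupposes injectivity of $D_r(H_0)\to D_r(G_0)$, which is the $\Qp$-case of the statement you are trying to prove. The paper circumvents all of this by a change of variable. It passes from $r$ to $s = r^{p^m}$ where $m$ is the unique integer given by $r\in\sR$; at that radius $U(\frg)$ is dense in both $D_s(H)$ and $D_s(G)$, and both algebras have an explicit Kohlhaase-style power-series description in the $\cO_F$-basis $\frx$ of $\Lie_\Zp(G)$ (resp.\ $\fry = \vpi^\nu\frx$ of $\Lie_\Zp(H)$), under which $i^s_{H,G}$ acts by $\sum_\alpha c_\alpha\fry^\alpha\mapsto\sum_\alpha c_\alpha\bigl(\prod_j\vpi^{\nu_j\alpha_j}\bigr)\frx^\alpha$ and is visibly injective. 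The statement at level $r$ is then recovered from level $s$ via the commutative square of transition maps, whose verticals are injective by \cite[5.2.1(i)]{OrlikStrauchIRR}; that is the actual role of the numerical condition defining $\sR$, not the calibration of a coset-wise norm estimate. If you want to salvage your approach you would at minimum need to prove the free-module decomposition at the $\Qp$-level from scratch, and I believe that is essentially equivalent in difficulty to the injectivity statement itself.
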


\begin{proof} 
For (1), set $G_0 = \Res^F_\Qp G$, $H_0 = \Res^F_\Qp H$, $d = \dim_F(G)$, and $n=[F:\Qp]$.  Let $(h_1, \ldots, h_{nd})$ and $(g_1, \ldots, g_{nd})$ be systems of topological generators of $H$ and $G$, respectively, as in \cite[page 716]{OrlikStrauchIRR}. Write $b_i = g_i-1$, and $h_j = g_1^{\nu_1} \cdot \ldots \cdot g_d^{\nu_d}$ with $\nu_1, \ldots, \nu_d \in \Zp$. Then 
\[ \begin{aligned} h_j-1 &= (1+b_1)^{\nu_1} \cdot \ldots \cdot (1+b_d)^{\nu_d} -1 \\
&= \sum_{\substack{j_i \ge 0, \\ (j_1, \ldots, j_d) \neq (0, \ldots, 0)}} \binom{\nu_1}{j_1} \cdots \binom{\nu_d}{j_d} b_1^{j_1} \cdot \ldots \cdot b_d^{j_d} \end{aligned} \]
where the series on the right converges in $D_r(G_0)$ and has $\|\cdot\|_{r,G_0}$-norm at most $r^\kappa$, as all binomial coefficients are in $\Zp$ and the `constant term' is zero. Hence, if 
\[ \delta = \sum_\alpha d_\alpha (h_1-1)^{\alpha_1}\cdot \ldots \cdot (h_d-1)^{\alpha_d} \]
is a convergent series in $D_r(H_0)$, we find that for each term $d_\alpha (h_1-1)^{\alpha_1}\cdot \ldots \cdot (h_d-1)^{\alpha_d}$ we have $\|d_\alpha (h_1-1)^{\alpha_1}\cdot \ldots \cdot (h_d-1)^{\alpha_d}\|_{r,G_0} \le |d_\alpha|r^{\kappa |\alpha|}$, and this series therefore converges in $D_r(G_0)$ and its $\|\cdot\|_{r,G_0}$-norm is bounded by $\|\delta\|_{r,H_0}$. Set $I_{r,G} = I_G D_r(G_0)$ and $I_{r,H} = I_H D_r(H_0)$, with $I_G$ and $I_H$ as in \ref{distribution-algebra-as-quotient}. Let $\delta \in D_r(H_0)$, and denote by $\overline{\delta}$ its class in $D_r(H)$. Then we have for the quotient norms $\|\cdot\|_{\ovr,G}$ and $\|\cdot\|_{\ovr,H}$ on $D_r(G)$ and $D_r(H)$, respectively:
\[ \begin{aligned} 
\|i^r_{H,G}(\overline{\delta})\|_{\ovr,G} 
&= \inf_{\mu \in I_{r,G}} \Big\{\|i^r_{H_0,G_0}(\delta) + \mu\|_{r,G_0}\Big\} \\
&\leq \inf_{\mu \in I_{r,H}} \Big\{\|i^r_{H_0,G_0}(\delta + \mu)\|_{r,G_0} \Big\} \\
&\leq \inf_{\mu \in I_{r,G}} \Big\{\|\delta + \mu\|_{r,H_0}\Big\} \\
&= \|\overline{\delta}\|_{\ovr,H}
\end{aligned} \]
For (2), we choose $\cO_F$-bases $(\frx_1, \ldots, \frx_d)$ and $(\fry_1, \ldots, \fry_d)$ of $\Lie_\Zp(G)$ and $\Lie_\Zp(H)$, respectively, such that $\fry_i = \vpi^{\nu_i}\frx_i$ for some $\nu_i \in \bbZ_{\ge 0}$. Set $g_j = \exp_G(\frx_j)$ and $b_j = g_j-1$, $j=1, \ldots,d$. (Caution: these elements are different from those above.) As we remarked in \cref{the-set-sR}, there is a unique $m \in \bbZ_{\ge 0}$ such that, when we set $s = r^{p^m}$, we have 
\[ p^{-\frac{1}{p-1} - \frac{1}{eq^m}} < s^\kappa < p^{-\frac{1}{p-1}}. \]
Note that $\frg = \Lie_\Zp(G) \otimes_{\Z_p} E$. Then $\frx_j = \log(1+b_j) = -\sum_{k>0} \frac{(-1)^k}{k}b_j^k$. A simple calculation shows that this series converges in $D_s(G)$, because $\|b_j\|_{\ovs,G} \le \|b_j\|_{s,G_0} = s^\kappa < p^{-\frac{1}{p-1}}$. Moreover $\|\frx_j\|_\ovs \le s^\kappa$, and $b_j = \sum_{k>0} \frac{1}{k!}\frx^k$ converges in $D_s(G)$ too. It follows that the universal enveloping algebra $U(\frg)$ is dense in $D_s(G)$. So the closure $U_s(\frg,G)$ of $U(\frg)$ in $D_s(G)$ is actually equal to $D_s(G)$. By \cite[1.4.2]{Kohlhaase_Distributions}, we therefore have  \[ D_s(G) = U_s(\frg,G) = \left\{ \sum_\alpha d_\alpha \frx^\alpha \;:\; |d_\alpha| \, \lim_{|\alpha| \ra \infty} \nu_{\ovs,G}(\frx^\alpha) = 0 \right\} \]
where $\frx^\alpha = \frx_1^{\alpha_1} \cdot \ldots \cdot \frx_d^{\alpha_d}$, and the norm $\nu_{\ovs,G}$ is equivalent to $\|\cdot\|_{\ovs,G}$. Because $\nu_\ovr(\sum_\alpha d_\alpha \frx^\alpha) = \max |d_\alpha|\nu_\ovr(\frx^\alpha)$, the coefficients $d_\alpha$ are uniquely determined by the sum $\sum_\alpha d_\alpha \frx^\alpha$. For exactly the same reasons we also have 
\[ D_s(H) = U_s(\frg,H) = \left\{\sum_\alpha c_\alpha \fry^\alpha \;:\; |c_\alpha| \, \lim_{|\alpha| \ra \infty} |c_\alpha|\,\nu_{\ovs,H}(\fry^\alpha) = 0\right\} \]
where the norm $\nu_{\ovs,H}$ is equivalent to $\|\cdot|_{\ovs,H}$ and the coefficients $c_\alpha$ are uniquely determined by $\sum_\alpha c_\alpha \fry^\alpha$. The map $i^s_{H,G}: D_s(H) \ra D_s(G)$ is then given by 
\[ i^s_{H,G}\left(\sum_\alpha c_\alpha \fry^\alpha\right) = \sum_\alpha c_\alpha \left(\prod_{j=1}^d \vpi^{\nu_j\alpha_j}\right) \frx^\alpha \]
which shows that this map is injective. We finish the proof with the following commutative diagram
\[\begin{tikzcd}
D_r(H) \ar{r}{i^r_{H,G}} \ar{d}[swap]{\can} & D_r(G) \ar{d}{\can} \\ 
D_s(H) \ar{r}[swap]{i^s_{H,G}} & D_s(G)
\end{tikzcd}\]
where the vertical maps are the canonical transition maps in the systems of Banach algebras. By our assumption on $r$ and \cite[5.2.1 (i)]{OrlikStrauchIRR} these canonical maps are injective. Since the bottom horizontal map is injective, as we have just seen, the map $i^r_{H,G}$ is injective too.
\end{proof}

\begin{remarks} 
(1) It is not hard to see from the proof that in general one has 
\[ \|i^r_{H,G}(\delta)\|_{\ovr,G} < \|\delta\|_{\ovr,H}. \]
For example, this is the case when $\delta = h_j -1$ and $h_j = g_1^{\nu_1} \cdot \ldots \cdot g_d^{\nu_d}$ where all $\nu_i$ are in $p\Zp$. This implies that, in general, $i^r_{H,G}(D_r(H))$ is not equal to the closure of $D(H)$ in $D_r(G)$. 

\vskip8pt

(2) In \cref{Dr-functoriality} it seems plausible to us that $D_r(H) \ra D_r(G)$ is injective for all $r \in (\frac{1}{p},1)$. Because of our use of the results in \cite[5.2.1]{OrlikStrauchIRR}, we can prove it here only for $r \in \sR$. 
\end{remarks}

\begin{para}\label{D(h)}{\it The distribution algebras $D_{(h)}(G)$.} 
When $F = \Qp$ and $G$ is a $\Qp$-uniform locally analytic group, another kind of distribution algebra has been considered by Rodrigues Jacinto and Rodr\'iguez Camargo in \cite[definition 4.12]{RR}. For $h \in \bbQ_{>0}$, they define
\[ D_{(h)}(G) = \left\{\sum_\alpha d_\alpha \bb^\alpha \;:\; d_\alpha \in E,\; \sup_\alpha \{|d_\alpha|p^{-\frac{|\alpha|}{(p-1)p^h}}\} < \infty\right\} \]
where $\bb_\alpha  = b_1^{\alpha_1} \cdot \ldots \cdot b_d^{\alpha_d}$. Then $D_{(h)}(G)$ contains the $\cO_E$-module 
\[ D_{(h)}(G,\cO) = \left\{\sum_\alpha d_\alpha \bb^\alpha \;:\; d_\alpha \in E,\; \sup_\alpha \{|d_\alpha|p^{-\frac{|\alpha|}{(p-1)p^h}}\} \leq 1 \right\} \]
When $h \in \bbZ_{>0}$, then $p^{-\frac{1}{(p-1)p^h}} \in p^\bbQ$ and this $\cO_E$-module is profinite (cf. \cite[remark 4.13]{RR}), and one can use it to define on $D_{(h)}(G)$ the structure of a Smith algebra (i.e., a solid algebra whose underlying solid vector space is Smith). 
\end{para}

\begin{para}\label{D<r}{\it The distribution algebras $D_{<r}(G)$.} On the other hand, for any $h \in \bbQ_{>0}$, if $r^\kappa = p^{-\frac{1}{(p-1)p^h}}$, we have $D_{(h)}(G) = D_{<r}(G)$, where 
\[ D_{<r}(G) = D_{<r}(G,E) = \left\{\sum_\alpha d_\alpha \bb^\alpha \;:\; d_\alpha \in E,\; \sup_\alpha \{|d_\alpha|r^{\kappa |\alpha|}\} < \infty\right\} \,. \]
These $E$-algebras have been introduced for any $r \in (\frac{1}{p},1)$ in \cite[p. 162]{ST03}, and they are known to be noetherian for $r \in (\frac{1}{p},1) \cap p^\Q$ by \cite[4.8]{ST03}. In the latter paper they are equipped with the norm 
\[\|\sum_\alpha d_\alpha \bb^\alpha \|_r = \sup_\alpha \{|d_\alpha|r^{\kappa |\alpha|}\} \,,\]
and they are considered as Banach algebras in \cite{ST03}. 
\end{para}
\begin{para}\label{D<r-as-Smith-algebra}{\it $D_{<r}(G)$ as a Smith algebra.} In this subsection, we assume $r \in (\frac{1}{p},1) \cap p^\Q$. In that case we can also equip the ring $D_{<r}(G)$ with the structure of a Smith algebra over $E$. To see this, let $\vpi_E$ be a uniformizer of $E$ and we write $r^\kappa = |\vpi_E|^\frac{a}{m}$ with coprime positive integers $a,m$. Denote by $D_{<r}(G)_{\le 1} = \{\delta \in D_{<r}(G) \mid \|\delta\|_r \le 1\}$ the unit ball in $D_{<r}(G)$, and for any non-negative integer $k$ set $N_k = \{\alpha \in \bbN^d \midc \lfloor \frac{a |\alpha|}{m}\rfloor = k\}$. The following map 
\[\iota: D_{<r}(G)_{\le 1} \lra \prod_{k=0}^\infty \prod_{\alpha \in N_k} \cO_E\,,\;\; 
\sum_{\alpha \in \bbN^d} d_\alpha \bb^\alpha \mapsto (d_\alpha \vpi^k)_{k \ge 0, \alpha \in N_k} \,,\]
is easily seen to be bijective. On $\cO_E$ we consider its natural topology and give $\prod_{k=0}^\infty \prod_{\alpha \in N_k} \cO_E$ the product topology, which we transfer via $\iota^{-1}$ to $D_{<r}(G)_{\le 1}$. We write $D^{\rm compact}_{<r}(G)_{\le 1}$ for this topological $\cO_E$-module. Then we give $D_{<r}(G)$ the finest locally convex topology for which the inclusion $D^{\rm compact}_{<r}(G)_{\le 1} \hra D_{<r}(G)$ is continuous. An $\cO_E$-lattice $L \sub D_{<r}(G)$ is open in this topology if and only if for any $c \in E^\x$ the intersection $D^{\rm compact}_{<r}(G)_{\le 1} \cap cL$ is open in $D^{\rm compact}_{<r}(G)_{\le 1}$. With this topology $D_{<r}(G)$ becomes a Smith space which we denote by $D^S_{<r}(G)$. It follows from \cite[4.1]{ST03} that the multiplication on $D^S_{<r}(G)$ is continuous. When we consider $D_{<r}(G)$ as a Banach algebra we write $D^B_{<r}(G)$ for it. It follows from the explicit description of these rings that there are canonical continuous maps of $E$-algebras $D(G) \ra D_r(G) \ra D^B_{<r}(G) \ra D^S_{<r}(G)$. 

\vskip8pt

When $G$ is an $F$-uniform locally $F$-analytic group we set \[D_{<r}(G) := D_{<r}(\Res^F_\Qp G)/I_G D_{<r}(\Res^F_\Qp G)\,,\] with $I_G$ as in \ref{distribution-algebra-as-quotient}. The ideal $I_G \sub D_{<r}(\Res^F_\Qp G)$ is finitely generated by \cite[5.1]{SchmidtAUS}. Therefore,  $I_G D^B_{<r}(\Res^F_\Qp G)$ (resp. $I_G D^S_{<r}(\Res^F_\Qp G)$) is closed in $D^B_{<r}(\Res^F_\Qp G)$ (resp. $D^S_{<r}(\Res^F_\Qp G)$). If we equip $D_{<r}(\Res^F_\Qp G)$ with its Banach space (resp. Smith space) topology, then we give $D_{<r}(G)$ the corresponding quotient topology and write $D_{<r}^B(G)$ (resp. $D_{<r}^S(G)$) for this topological $E$-algebra which is again a Banach space (resp. Smith space, by \cite[3.9]{RR}). If a statement only depends on the algebraic structure of this ring, we drop the superscript.
\end{para}

\begin{prop}\label{weak-FS-via-D<r} Let $G$ be an $F$-uniform locally $F$-analytic group. Let $r,s \in (\frac{1}{p},1) \cap p^\Q$. Then 
\begin{enumerate}
    \item The ring $D_{<r}(G)$ is (left- and right-) noetherian. 
    \item The map $D(G) \ra D^S_{<r}(G)$ is continuous and (left- and right-) flat.
    \item The map $D(G) \ra D^S_{<r}(G)$ has dense image. 
    \item For $s < r$ the maps $D^S_{<r}(G) \ra D_s(G) \ra D^S_{<s}(G)$ are continuous. 
    \item Let $(r_n)_{n \ge 1}$ be an increasing sequence in $(\frac{1}{p},1) \cap p^\Q$ converging to 1, then the canonical continuous maps in (2) give rise to an isomorphism of topological algebras $D(G) = \varprojlim_n D^S_{<r_n}(G)$, and the projective system $(D^S_{<r_n}(G))_n$ defines on $D(G)$ the structure of a weak Fr\'echet-Stein algebra in the sense of \cite[1.2.6]{EmertonA}.
\end{enumerate}
\end{prop}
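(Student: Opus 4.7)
My plan is to treat the five statements in the order (1), (4), (3), (2), (5), since each builds on earlier ones. For (1), I would first invoke \cite[theorem 4.8]{ST03} for the case $G = \Res^F_\Qp G$, and then pass to the general case using the quotient description $D_{<r}(G) = D_{<r}(\Res^F_\Qp G)/I_G D_{<r}(\Res^F_\Qp G)$: the ideal $I_G D_{<r}(\Res^F_\Qp G)$ is finitely generated by \cite[theorem 5.1]{SchmidtAUS} (as already noted in \cref{D<r-as-Smith-algebra}), so the quotient of a noetherian ring by this ideal is noetherian. For (4), continuity of both maps follows by direct estimates. For $D^S_{<r}(G) \to D^B_s(G)$ with $s < r$, the universal property of the Smith topology reduces the problem to continuity of the inclusion $D^{\rm compact}_{<r}(G)_{\le 1} \to D^B_s(G)$; for $\delta = \sum_\alpha d_\alpha \bb^\alpha$ in the compact unit ball, the estimate $|d_\alpha|\, s^{\kappa|\alpha|} \leq (s/r)^{\kappa|\alpha|}$ gives uniform control on tails, and combined with coordinatewise convergence this produces Banach convergence. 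For $D^B_s(G) \to D^S_{<s}(G)$, the Banach unit ball of $D_s(G)$ sits inside $D^{\rm compact}_{<s}(G)_{\le 1}$ with the Banach topology finer than the restricted product topology, which suffices.

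For (3), I would prove density by truncation: any $\delta = \sum_\alpha d_\alpha \bb^\alpha \in D^S_{<r}(G)$ is the Smith limit of its partial sums $\delta_N = \sum_{|\alpha| \le N} d_\alpha \bb^\alpha \in E[G] \subset D(G)$, since $\delta - \delta_N$ lies in a fixed scaled compact unit ball and tends to zero coordinatewise in the product topology as $N \to \infty$, which is precisely Smith convergence. For (2), continuity of $D(G) \to D^S_{<r}(G)$ then factors through the continuous inclusion $D^B_r(G) \hookrightarrow D^S_{<r}(G)$ (the same argument as in the second part of (4), applied with $s = r$). The hard part will be flatness. My plan here is to invoke the analogous flatness result in the solid framework of \cite{RR}, where $D^S_{<r}(G) = D_{(h)}(G)$ for the appropriate $h$ (cf. \cref{D(h)}) and flatness of $D(G) \to D_{(h)}(G)$ is already established; alternatively, using noetherianness from (1) together with faithful flatness of the composition $D(G) \to D^S_{<r}(G) \to D^B_s(G)$ for $s < r$ from \cite[theorem 4.11]{ST03}, one can attempt a noetherian descent argument to transfer flatness onto the first factor.

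Finally, for (5), statement (4) yields an interlocking chain of continuous inclusions
\[ \cdots \hookrightarrow D^B_{r_{n+1}}(G) \hookrightarrow D^S_{<r_{n+1}}(G) \hookrightarrow D^B_{r_n}(G) \hookrightarrow D^S_{<r_n}(G) \hookrightarrow \cdots \]
(using $D^S_{<r_{n+1}}(G) \hookrightarrow D^B_{r_n}(G)$ since $r_n < r_{n+1}$), which makes the projective systems $(D^B_{r_n}(G))_n$ and $(D^S_{<r_n}(G))_n$ mutually cofinal. The classical Fr\'echet-Stein identification $D(G) = \varprojlim_n D^B_{r_n}(G)$ of \cite[theorem 5.1]{ST03} therefore yields $D(G) = \varprojlim_n D^S_{<r_n}(G)$ as topological algebras. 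Combined with (1)--(3), this verifies the axioms of \cite[definition 1.2.6]{EmertonA} for a weak Fr\'echet-Stein structure: each $D^S_{<r_n}(G)$ is a complete Hausdorff locally convex $E$-algebra (in fact a Smith algebra), the transition maps are continuous algebra homomorphisms with dense image (by (3) applied to the cofinal subsystem), and $D(G)$ is realized as the projective limit.
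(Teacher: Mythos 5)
Your overall plan (do (1), (4), (3), (2), (5), with (4) supplying the continuous transition maps) mirrors the paper's structure, and parts (1), (3), (4) are essentially the paper's arguments. However, there are two genuine gaps.

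\textbf{Part (2), flatness.} Neither of your two proposed routes is a complete proof. For the first route you invoke ``the analogous flatness result in \cite{RR}'' for $D(G) \to D_{(h)}(G)$, but you give no precise citation, and in any case the identification $D^S_{<r}(G) = D_{(h)}(G)$ only holds for $F = \Qp$, so you would still need to descend to general $F$. For the second route you sketch a ``noetherian descent argument'' using faithful flatness of the composition $D(G) \to D^S_{<r}(G) \to D_s(G)$; the descent principle you have in mind (if $g \circ f$ is flat and $g$ is faithfully flat, then $f$ is flat) would require faithful flatness of the \emph{second} factor $D^S_{<r}(G) \to D_s(G)$, which is not what you cite, and you yourself qualify the step as an ``attempt.'' The paper's proof is shorter and more robust: it factors $D(G) \to D_{<r}(G)$ as the composition $D(G) \to D_r(G) \to D_{<r}(G)$, where the first map is flat because $D_r(G)$ is part of the Fr\'echet--Stein structure on $D(G)$ \cite[3.2]{ST03}, and the second map is flat because $D_{<r}(G) = D_{<r}(\Res^F_\Qp G) \otimes_{D_r(\Res^F_\Qp G)} D_r(G)$ is a base change of the flat map $D_r(\Res^F_\Qp G) \to D_{<r}(\Res^F_\Qp G)$ of \cite[4.8]{ST03}. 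This handles arbitrary $F$ uniformly and needs no descent; continuity then falls out from composing $D(G) \to D_r(G) \to D^B_{<r}(G) \to D^S_{<r}(G)$.

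\textbf{Part (5), the weak Fr\'echet--Stein axioms.} Your cofinality observation between $(D^B_{r_n}(G))_n$ and $(D^S_{<r_n}(G))_n$ correctly produces the topological isomorphism $D(G) \cong \varprojlim_n D^S_{<r_n}(G)$, but you then check only that each $D^S_{<r_n}(G)$ is a complete Hausdorff LCVS with continuous dense-image transition maps. Emerton's definition \cite[1.2.6]{EmertonA} requires more: each algebra must be \emph{hereditarily complete}, and each transition map must be a \emph{BH-map}. The paper supplies both: Smith spaces are closed under quotients by closed subspaces \cite[3.9]{RR}, hence hereditarily complete, and (4) shows the transition map factors through the Banach space $D_s(G)$, so it is a BH-map. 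These verifications are not cosmetic; your phrase ``(in fact a Smith algebra)'' gestures at the first but does not close the gap, and the BH-map condition is not addressed at all.

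One minor remark on (1): once you have the quotient description $D_{<r}(G) = D_{<r}(\Res^F_\Qp G)/I_G D_{<r}(\Res^F_\Qp G)$, noetherianness follows immediately because any quotient of a noetherian ring is noetherian; the finite generation of $I_G D_{<r}(\Res^F_\Qp G)$ from \cite{SchmidtAUS} is not needed here (it is used elsewhere in the paper for topological purposes).
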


\begin{proof} (1) This is an immediate consequence of $D_{<r}(\Res^F_\Qp G)$ being noetherian, cf. \cite[4.8]{ST03}.

\vskip8pt

(2) Since $D_r(\Res^F_\Qp G) \ra D_{<r}(\Res^F_\Qp G)$ if flat by \cite[4.8]{ST03}, the same is true for $D_r(G) \ra D_{<r}(G)$, since 
\[D_{<r}(G) = D_{<r}(\Res^F_\Qp G) \ot_{D_r(\Res^F_\Qp G)} D_r(G) \;.\]
Because $D_r(G)$ is part of a Fr\'echet-Stein algebra structure on $D(G)$, the map $D(G) \ra D_r(G)$ is flat, cf. \cite[3.2]{ST03}. As a composition of flat maps, $D(G) \ra D_{<r}(G)$ is flat. And $D(G) \ra D^S_{<r}(G)$ is continuous because it is a composition of continuous maps $D(G) \ra D_r(G) \ra D^B_{<r}(G) \ra D^S_{<r}(G)$. The last of these maps is continuous since the Banach space topology is finer than the Smith topology. 

\vskip8pt

(3) It suffices to show that $D(G) \ra D^S_{< r}(G)$ has dense image when $F=\Qp$. Let $\delta = \sum_{\alpha \in \bbN^d} d_\alpha \bb^\alpha$ be any element in $D^S_{< r}(G)$, and let $L$ be an open lattice in $D^S_{< r}(G)$. Choose $c \in E^\x$ such that $c\delta \in D^S_{<r}(G)_{\le 1}$. With the notation introduced in \cref{D<r-as-Smith-algebra} we have that $\iota(cL \cap D^S_{<r}(G)_{\le 1})$ is open in $\prod_{k=0}^\infty \prod_{\alpha \in N_k} \cO_E$. Hence there is $k_0 \ge 0$ such that $(cd_\alpha \vpi^k)_{k \ge k_0, \alpha \in N_k} \in \iota(cL \cap D^S_{<r}(G)_{\le 1})$. Therefore, if we set $\delta_0 = \sum_{\alpha \in \bbN^d, |\alpha < k_0} d_\alpha \bb^\alpha$, which lies in the image of $D(G) \ra D^S_{< r}(G)$, we find that $c\delta -c\delta_0 \in cL \cap D^S_{<r}(G)_{\le 1}$, and it follows that $\delta-\delta_0 \in L$.

\vskip8pt

(4) Since $D_s(G) \ra D^S_{<s}(G)$ is continuous, the assertion follows once we have seen that $D^S_{<r}(G) \ra D_s(G)$ is continuous. We show that the preimage of the unit ball in $D_s(G)$ is open in $D^S_{<r}(G)$. As these rings are equipped with the quotient topologies coming from the corresponding rings for $\Res^F_\Qp G$, we may assume $F=\Qp$ here. Denote by $\phi: D^S_{<r}(G) \ra D_s(G)$ the canonical map, fix $t \in \bbR_{>0}$, and let $D_s(G)_{\le t}$ be the open lattice of elements $\delta = \sum_\alpha d_\alpha \bb^\alpha \in D_s(G)$ such that $\|\delta\|_s := \sup_\alpha \{|d_\alpha|s^{\kappa |\alpha|}\} \le t$. Choose $k_0 \in \Z_{\ge 0}$ such that $\left(\frac{s}{r}\right)^{\kappa k_0} \le |\vpi| t$. Choose $m \in \bbZ_{\ge 0}$ be large enough so that $p^{-m} \le |\vpi| t$ and set $M := \prod_{|\alpha| < k_0} p^m\cO_E \x \prod_{|\alpha| \ge k_0} \cO_E$. Then, if $\lambda := \sum_\alpha c_\alpha \bb^\alpha \in D^S_{<r}(G)$ is such that $\iota(\lambda) \in M$, then
\[\|\phi(\lambda)\|_s = \sup_\alpha \left\{|c_\alpha|s^{\kappa |\alpha|}\right\} \le |\vpi|^{-1} \sup_\alpha \left\{|c_\alpha \vpi^{\lfloor \frac{a|\alpha|}{m}\rfloor}| \left(\frac{s}{r}\right)^{\kappa |\alpha|}\right\} \, \le t \,.\] 
It follows that $\phi^{-1}\Big(D_s(G)_{\le t}\Big)$ contains $M$, which is an open subgroup of $D^{\rm compact}_{<r}(G)_{\le 1}$, and $\phi$ is thus continuous.  

\vskip8pt

(5) We show that the conditions of \cite[1.2.6]{EmertonA} are satisfied for this projective system. Each algebra $D^S_{<r}(G)$ is a Smith space, hence complete, and since the class of Smith spaces is closed under passage to quotients by closed subspaces \cite[3.9]{RR}, Smith spaces are hereditarily complete, cf. \cite[1.1.39]{EmertonA} for the definition of {\it hereditarily complete}. By (4) the map $D^S_{<r}(G) \ra D^S_{<s}(G)$ is a BH-map. It also follows from (4) that the canonical continuous map $D(G) = \varprojlim_n D_{r_n}(G) \ra \varprojlim_n D^S_{<r_n}(G)$ is an isomorphism of topological algebras. Finally, by (3), the map $D(G) \ra D^S_{<r}(G)$ has dense image.
\end{proof}

\vskip8pt

The algebras $D_{<r}(G)$ can also be used to describe the distribution algebras of strictly ind-affinoid or strictly pro-affinoid groups as follows. 

\begin{proposition}\label{distalgs-of-wide-open-and-overconv-gps-and-Dr}
Let $G$ be an $F$-uniform locally $F$-analytic group. Suppose $\rho \in (0,1] \cap p^\bbQ$, and let $\bbG[\rho]^\circ$ and $\bbG[\rho]^\dagger$ be the associated strictly ind-affinoid and strictly pro-affinoid groups, respectively. Set $r(\rho) = p^{-\frac{\rho}{\kappa(p-1)}}$. Then one has isomorphisms of topological $E$-algebras
\begin{enumerate}
\item $\displaystyle D(\bbG[\rho]^\circ, G) = \varinjlim_{r > r(\rho)} D^S_{<r}(G) = \varinjlim_{r > r(\rho)} D_r(G)$.
\item $\displaystyle D(\bbG[\rho]^\dagger, G) = \varprojlim_{r < r(\rho)} D^S_{<r}(G) = \varprojlim_{r < r(\rho)} D_r(G)$.
\end{enumerate}
\end{proposition}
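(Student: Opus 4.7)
The plan is to establish the two equalities in each of (1) and (2) separately: an ``affinoid'' identification (between the left-hand side and the colim/lim of the $D_r(G)$) coming from an explicit isomorphism of individual Banach algebras, and a cofinality identification (between the colim/lim of the $D_r(G)$ and of the $D^S_{<r}(G)$) coming from the fact that the two inductive/projective systems interleave.

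The second equality in each of (1) and (2) is the easier one. The natural maps $D_r(G) \to D^S_{<r}(G) \to D_{r'}(G)$ for $r' < r$ in $(1/p, 1) \cap p^\bbQ$ are continuous morphisms of solid $E$-algebras: the first comes from the definitions of $D_r(G)$ as a Banach completion and $D^S_{<r}(G)$ as its Smith ``thickening'' described in \cref{D<r} and \cref{D<r-as-Smith-algebra}; the second is exactly \cref{weak-FS-via-D<r}(4). Therefore, the inductive (resp. projective) system $(D_r(G))_{r > r(\rho)}$ (resp. $(D_r(G))_{r < r(\rho)}$) is interleaved with $(D^S_{<r}(G))_{r > r(\rho)}$ (resp. $(D^S_{<r}(G))_{r < r(\rho)}$), so the colimits (resp. limits) agree as topological $E$-algebras.

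For the first equality in each of (1) and (2), the plan is to reduce to an individual Banach-algebra statement. By definition (cf. \cref{defn-mixed-dist-algs} and \cref{ind-affinoid-mixed-generalized}), $D(\bbG[\rho]^\circ, G) = D(G.\bbG[\rho]^\circ)$, and the underlying rigid analytic group $G.\bbG[\rho]^\circ$ is presented as the strictly ind-affinoid filtered colimit of the $G.\bbG[\rho']$ as $\rho' \to \rho^-$ runs through $p^\bbQ$. By the strong symmetric monoidal duality of \cref{affinoid-monoidal} and the commutation of solid duals with projective limits of Fr\'echet spaces of compact type (cf. \cite[3.40]{RR}), applying $\sO(-)^\vee$ yields $D(\bbG[\rho]^\circ, G) = \varinjlim_{\rho' < \rho} D(G.\bbG[\rho'])$, and analogously $D(\bbG[\rho]^\dagger, G) = \varprojlim_{\rho' > \rho} D(G.\bbG[\rho'])$. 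One then needs to establish the individual identification
\[ D(G.\bbG[\rho']) \;\cong\; D_{r(\rho')}(G) \]
as topological $E$-algebras, for each $\rho' \in p^\bbQ \cap (0, \rho^*)$. Granted this, a change of variable via the strictly decreasing continuous bijection $\rho' \mapsto r(\rho')$ converts $\rho' \to \rho^-$ into $r \to r(\rho)^+$ (resp. $\rho' \to \rho^+$ into $r \to r(\rho)^-$), giving the first equality.

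The main obstacle is thus the individual identification $D(G.\bbG[\rho']) \cong D_{r(\rho')}(G)$. My plan is to reduce to the case $F = \Q_p$ handled in \cite{RR} (cf. \cref{F-uniform-and-RR}, which identifies $\bbG[p^{-h}] = \bbG_h$ in the notation of loc. cit.), then extend to the general $F$-uniform case through the quotient presentation $D_{<r}(G) = D_{<r}(\Res^F_{\Q_p} G) / I_G D_{<r}(\Res^F_{\Q_p} G)$ recorded at the end of \cref{D<r-as-Smith-algebra}, together with the analogous presentation of $D(G.\bbG[\rho'])$ arising from functoriality of the rigid analytic group $\bbG[\rho']$ under restriction of scalars (\cref{F-uniform}). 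The hard technical core is the norm comparison in the $\Q_p$-case: writing an element of either algebra as a formal series in the $\bb$-basis, one checks that the convergence condition defining $D(G.\bbG[\rho'])$ (coming from the $\rho'$-Gauss norm on $\sO(G.\bbG[\rho'])$) coincides with the defining condition of $D_{r(\rho')}(G)$. The identification of exponents is controlled by the Baker--Campbell--Hausdorff framework of \cref{bch-series}--\cref{comultiplication}, which relates the group coordinates $x_i$ to $\frx_i = \log(1+b_i)$, and is precisely what forces the relation $r(\rho')^\kappa = p^{-\rho'/(p-1)}$.
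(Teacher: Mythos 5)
Your overall architecture is sound, and your interleaving argument for the second equality (using the chain of continuous maps $D_r(G) \to D^S_{<r}(G) \to D_{r'}(G)$ for $r' < r$ and cofinality) agrees with what the paper implicitly uses. However, the core technical claim you reduce to is false.

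You assert the ``individual identification'' $D(G.\bbG[\rho']) \cong D_{r(\rho')}(G)$ as topological $E$-algebras. This cannot hold: $D(G.\bbG[\rho'])$ is the solid dual of the Banach space $\sO(G.\bbG[\rho'])$ and is therefore a Smith space (cf.\ \cref{affinoid-hopf}), while $D_{r(\rho')}(G)$ is a Banach space. Beyond the topology, even the underlying algebras differ: elements of $D(G.\bbG[\rho'])$ pair with functions on the closed polydisk of radius $\rho'$, which imposes a \emph{boundedness} condition on coefficients, whereas the $\|\cdot\|_{\overline r}$-completion $D_{r(\rho')}(G)$ imposes the strictly stronger \emph{decay} condition $|d_\alpha|\, r(\rho')^{\kappa|\alpha|} \to 0$. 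The correct statement is $D(G.\bbG[\rho']) \cong D^S_{<r(\rho')}(G)$, i.e.\ the Smith algebra $D_{(h)}(G)$ of \cref{D(h)} when $\rho' = p^{-h}$, not the Banach algebra $D_{r(\rho')}(G)$. With that correction your strategy goes through: $D(\bbG[\rho]^\circ, G) = \varinjlim_{\rho' < \rho} D(G.\bbG[\rho']) = \varinjlim_{r > r(\rho)} D^S_{<r}(G)$ gives the first equality, and your interleaving step then gives the second. For the record, the paper takes a shorter route at the $\Qp$ level: it quotes \cite[corollary 4.18]{RR} directly, which packages the colimit identification $D^{(h^+)}(G) = \varinjlim_{h' > h} D_{(h')}(G)$, rather than isolating and reproving a term-by-term Banach/Smith comparison via the BCH norm estimates you sketch; it then descends to general $F$ exactly as you propose, by passing to quotients modulo $I_G$.
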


\begin{proof} (1) We assume first $F=\Qp$. Recall that the affinoid group denoted by $\bbG_h$ in \cite[definition 4.4]{RR} is equal to the group denoted here $\bbG[p^{-h}]$, cf. \cref{F-uniform-and-RR}. Furthermore, the groups denoted $\bbG_{h^+}$ and $\bbG^{(h^+)}$ in \cite[definition 4.4]{RR} are the groups $\bbG[p^{-h}]^\circ$ and $G.\bbG[p^{-h}]^\circ$ considered here. By \cite[corollary 4.18]{RR} and \cref{D<r} we have,
\[ D(\bbG[p^{-h}]^\circ,G) = D^{(h^+)}(G) = \varinjlim_{h' > h} D_{(h')}(G) = \varinjlim_{r > r(h)} D^S_{< r}(G) = \varinjlim_{r > r(h)} D_r(G), \]
where $r(h) = p^{-\frac{1}{\kappa(p-1)p^h}}$. The stated formula follows if we set $\rho = p^{-h}$. 

\vskip8pt

If $F$ is any finite extension of $\Qp$, let $H = \Res^F_\Qp G$ which is a uniform pro-$p$ group (it is the same as $G$, but considered as a locally $\Qp$-analytic group). Let $\bbH$ be the affinoid group over $\Qp$ associated to $H$. By what we have just shown, statement (1) is true for $H$, i.e., 
\[D(\bbH[p^{-h}]^\circ,H) = \varinjlim_{r > r(h)} D_r(H)\,. \]
Now, $D(\bbG[p^{-h}]^\circ,G) = D(\bbH[p^{-h}]^\circ,H)/I_G D(\bbH[p^{-h}]^\circ,H)$, where $I_G = \ker(D(H) \ra D(G))$ is the ideal of continuous linear forms on $C^\la(H)$ which vanish on $C^\la(G)$. Moreover, as recalled in \cref{distribution-algebra-as-quotient}, we also have $D_r(G) = D_r(H)/I_G D_r(H)$. Therfore, the case of a general finite extension $F/\Qp$ follows from the case $F = \Qp$.

\vskip8pt

(2) We use (1) to observe that 
\[ D(\bbG[p^{-h}]^\dagger,G) = \varprojlim_{h'<h} D(\bbG[p^{-h'}]^\circ,G) = \varprojlim_{r<r(h)} \varinjlim_{r' >r} D_{r'}(G) = \varprojlim_{r<r(h)} D_r(G). \qedhere \]
\end{proof}
\section{Condensed and solid representations} \label{representations}

\subsection{Condensed representations}

\begin{para}\label{underline-functor}
In this section, $G$ denotes a condensed group. For example, if $\cG$ is a topological group, then the functor $\underline{\cG}: \{\mbox{profinite sets}\}^\op \ra \{\mbox{groups}\}$, $S \rightsquigarrow C(S,\cG) = \{f: S \ra \cG \midc f \mbox{ continuous}\}$ is a condensed group \cite[example 1.5]{ScholzeCondensed}. We call $\cG \rightsquigarrow \underline{\cG}$ the {\it underline functor}. 
\end{para}

\begin{definition}
A {\it condensed representation} of $G$ is a condensed vector space $V$ over $E$ equipped with a morphism $G \times V \to V$ of condensed sets such that $G(S) \times V(S) \to V(S)$ is an $E$-linear action of the group $G(S)$ on the vector space $V(S)$ for all profinite sets $S$. 
We write $\Rep(G)$ for the category of condensed representations of $G$ over $E$. We write $\Hom_G(V, W)$ and $\uHom_{E_\solid[G]}(V, W)$ for the external and internal homs, respectively (the former is a vector space, the latter is a condensed vector space). 
\end{definition}

In the definition above, it is sufficient to require that $G(S) \times V(S) \to V(S)$ be a linear action of $G(S)$ on $V(S)$ for all extremally disconnected $S$ \cite[proposition 2.7]{ScholzeCondensed}.

\begin{definition}
We denote by $E[G]$ the condensed algebra that is the sheafification of the presheaf $S \mapsto E[G(S)]$, i.e., the presheaf which assigns to any profinite set $S$ the group algebra $E[G(S)]$ on the group $G(S)$ of sections of $G$ over $S$. 
\end{definition}

\begin{lemma} \label{condensed-representation-lemma}
The following are in natural bijection for any condensed $E$-vector space $V$. 
\begin{enumerate}[(a)]
\item Morphisms $G \times V \to V$ of condensed sets which give $V$ the structure of a condensed representation of $G$ over $E$. 
\item Morphisms $G \to \uAut_E(V)$ of condensed groups. 
\item Morphisms $E[G] \otimes_E V \to V$ of condensed vector spaces which give $V$ the structure of a condensed $E[G]$-module. 
\end{enumerate}
\end{lemma}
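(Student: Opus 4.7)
The plan is to observe that this lemma is essentially formal and follows from standard adjunctions in the closed symmetric monoidal categories of condensed sets and condensed $E$-vector spaces, together with the universal property of $E[G]$ as the free condensed $E$-vector space on the condensed set $G$.

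First I will establish the equivalence of (a) and (b). By the tensor-hom adjunction in condensed sets, morphisms $G \times V \to V$ of condensed sets correspond naturally to morphisms $G \to \uHom(V, V)$ of condensed sets. The extra requirement in (a) that the action $G(S) \times V(S) \to V(S)$ be $E$-linear on each profinite test set $S$ corresponds precisely to the condition that the adjoint morphism factors through the subobject $\uHom_E(V,V) \subseteq \uHom(V,V)$ cut out by $E$-linearity. The further requirement that the action be a group action then corresponds to factoring through the subgroup $\uAut_E(V) \subseteq \uHom_E(V,V)$ of invertible elements.

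Next, for the equivalence of (a) and (c), I will invoke the universal property of the free condensed $E$-vector space functor (see \cref{condensed}), which gives a natural bijection between morphisms $G \to W$ of condensed sets and morphisms $E[G] \to W$ of condensed $E$-vector spaces, for any condensed $E$-vector space $W$. Combining this with the tensor-hom adjunction in the symmetric monoidal category of condensed $E$-vector spaces, applied with $W = \uHom_E(V,V)$, produces a natural bijection between morphisms $G \to \uHom_E(V,V)$ of condensed sets and morphisms $E[G] \otimes_E V \to V$ of condensed $E$-vector spaces. It then remains to verify that the ``group action'' axioms (associativity and unitality of the action of $G$ on $V$) match the ``$E[G]$-module'' axioms under this bijection; this is a direct check using that the algebra structure on $E[G]$ is induced by linearizing the group structure on $G$, so that the multiplication $E[G] \otimes_E E[G] \to E[G]$ is the linear extension of $G \times G \to G$ and the unit of $E[G]$ is the image of $1 \in G$ under $G \to E[G]$.

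The argument is essentially formal and I do not expect any serious obstacle. The only point that deserves care is ensuring that these correspondences and axiom-matchings are sheaf-theoretically coherent, i.e., that they hold uniformly across all profinite test sets $S$. But since $E[G]$ is defined precisely as the sheafification of the presheaf $S \rightsquigarrow E[G(S)]$ whose sections compute the classical group-algebra construction, the relevant bijections and matchings hold at the level of sections (where they reduce to the elementary classical equivalence between representations of an abstract group and modules over its group algebra) and hence pass to $E[G]$ itself without incident.
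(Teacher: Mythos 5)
Your proposal is correct and follows essentially the same approach as the paper: for (a)$\Leftrightarrow$(b) both use the tensor-hom adjunction $\Hom(G\times V,V)=\Hom(G,\uHom(V,V))$ and then restrict to the subobject $\uAut_E(V)$, and for (a)$\Leftrightarrow$(c) both ultimately rest on the universal property of $E[G]$ as the free condensed $E$-vector space on $G$. The only cosmetic difference is that you package (a)$\Leftrightarrow$(c) as a chain of abstract adjunctions ($\Hom_E(E[G]\otimes_E V,V)=\Hom_E(E[G],\uHom_E(V,V))=\Hom(G,\uHom_E(V,V))$) whereas the paper works at the level of sections over profinite $S$ via the sheafification description; these are equivalent reformulations of the same idea.
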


\begin{proof}
Observe that
\[ \Hom(G \times V, V) = \Hom(G, \uHom(V, V)). \]
Suppose that on the left side of this bijection, we have a map $G \times V \to V$ as in (a). Since $G(S) \times V(S) \to V(S)$ is an $E$-linear action of $G(S)$ on $V(S)$ functorially in the profinite set $S$, for any continuous map $T \to S$ of profinite sets we obtain a natural action map $G(S) \times V(T) \to V(T)$. Thus the corresponding map $G \to \uHom(V, V)$ on the right side of the above bijection has the property that the map 
\[ G(S) \to \uHom(V, V)(S) = \Hom(V|_S, V|_S) = \left(\Hom(V(T), V(T)) \right)_{T \to S} \]
on sections over $S$ factors uniquely through a group homomorphism 
\[ G(S) \to \uAut_E(V)(S) = \Aut_E(V|_S) = \left(\Aut_E(V(T)) \right)_{T \to S}. \]
Varying $S$ gives the equivalence of (a) and (b). 

\vskip8pt

The domain of an $E[G]$-module structure map $E[G] \otimes_E V \to V$ is the sheafification of the presheaf tensor product, so such a structure map is equivalent to a collection of $E[G(S)]$-module structure maps maps 
\[ E[G(S)] \otimes_E V(S) \to V(S) \]
functorially in the profinite set $S$. Such a structure map is equivalent to an action map $G(S) \times V(S) \to V(S)$, giving the equivalence of (a) and (c). 
\end{proof}

\begin{corollary} \label{condensed-rep-modules}
The category of condensed representations of $G$ over $E$ is equivalent to the category of condensed $E[G]$-modules. \qed
\end{corollary}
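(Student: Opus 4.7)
The plan is to deduce the corollary as a routine upgrade of \cref{condensed-representation-lemma} from a bijection on structures to an equivalence of categories. On objects, the bijection between descriptions (a) and (c) in the lemma gives, for each condensed $E$-vector space $V$, a natural identification between the set of condensed representation structures of $G$ on $V$ and the set of condensed $E[G]$-module structures on $V$. So only the morphism-level statement remains.

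The key step is thus to check that, for $V$ and $W$ equipped with corresponding structures on both sides, a morphism $f : V \to W$ of condensed $E$-vector spaces is $G$-equivariant if and only if it is $E[G]$-linear. I would argue this sectionwise: for every profinite set $S$ (equivalently, by \cite[proposition 2.7]{ScholzeCondensed}, for every extremally disconnected $S$), the map $f(S) : V(S) \to W(S)$ is $G(S)$-equivariant iff it is $E[G(S)]$-linear, because this is the classical fact that representations of an abstract group are the same as modules over its group algebra. Compatibility with the restriction maps along $T \to S$ is automatic since both actions are defined sheafwise. Sheafifying then shows that $f$ commutes with the $G$-action as in (a) iff it commutes with the $E[G]$-action as in (c).

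Finally, one has to verify that the assignment in \cref{condensed-representation-lemma} is in fact functorial in $V$, i.e., compatible with morphisms in the appropriate sense. But this is immediate from the formulas in the proof of the lemma, which transport a $G(S)$-action on sections to the corresponding $E[G(S)]$-module structure by the universal property of the free $E$-vector space on a set. There is no genuine obstacle: the corollary amounts to packaging \cref{condensed-representation-lemma} with the elementary observation above about intertwining maps.
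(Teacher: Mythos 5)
Your proposal is correct and matches the paper's (implicit) argument: the paper marks this corollary \qed, treating it as an immediate consequence of \cref{condensed-representation-lemma}, and your write-up is exactly the routine unpacking — the lemma handles objects, and the morphism-level check reduces sectionwise to the classical equivalence between $G(S)$-equivariant maps and $E[G(S)]$-linear maps, with compatibility along $T \to S$ automatic and the sheafification issue already dealt with in the proof of the lemma.
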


\begin{theorem} \label{condensed-rep-category}
\begin{enumerate}[(a)]
\item The forgetful functor from $\Rep(G)$ into the category of condensed vector spaces creates limits and colimits. In other words, limits and colimits exist and can be computed in the category of condensed vector spaces.
\item $\Rep(G)$ is a Grothendieck abelian category. 
\item The condensed representations $E[G] \otimes_E E[T]$ for extremally disconnected sets $T$ form a generating family of compact projective objects for $\Rep(G)$.
\end{enumerate}
\end{theorem}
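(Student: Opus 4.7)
\begin{para}
\textbf{Proof proposal.} By \cref{condensed-rep-modules} we identify $\Rep(G)$ with the category of condensed $E[G]$-modules, and we will exploit the standard free/forgetful adjunction
\[
E[G] \otimes_E - \,:\, \Vec_E^{\rm cond} \rightleftarrows \Rep(G) \,:\, U.
\]
All three statements will be transferred from the corresponding properties of $\Vec_E^{\rm cond}$ recalled in \cref{condensed}, in the usual manner for modules over a ring object in a closed symmetric monoidal abelian category.
\end{para}

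\begin{para}
For (a), given a diagram $(V_i)$ in $\Rep(G)$, the plan is to form the limit (resp. colimit) $V$ of the underlying diagram in $\Vec_E^{\rm cond}$ and to lift the module structure from the $V_i$. For limits, the commutation of products with limits of condensed sets supplies a canonical morphism $G \times V \cong \varprojlim(G \times V_i) \to \varprojlim V_i = V$. For colimits, the fact that $E[G] \otimes_E -$ is a left adjoint (hence commutes with the colimit) supplies a canonical morphism $E[G] \otimes_E V \cong \varinjlim(E[G] \otimes_E V_i) \to \varinjlim V_i = V$. In either case one checks routinely that the resulting object solves the universal problem in $\Rep(G)$. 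Assertion (b) then follows easily: kernels, cokernels, and filtered colimits in $\Rep(G)$ are all computed on underlying objects by (a), so abelianness (via the coimage-to-image isomorphism) and exactness of filtered colimits are inherited directly from $\Vec_E^{\rm cond}$; the existence of a generator is supplied by (c).
\end{para}

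\begin{para}
For (c), the adjunction yields
\[
\Hom_G(E[G] \otimes_E E[T],\, V) \;=\; \Hom_{\Vec_E^{\rm cond}}(E[T],\, U(V)) \;=\; V(T)
\]
naturally in $V \in \Rep(G)$ and extremally disconnected $T$. Since $E[T]$ is compact projective in $\Vec_E^{\rm cond}$ (cf. \cref{condensed}) and since $U$ preserves both filtered colimits and epimorphisms by (a), this composite functor is exact and commutes with filtered colimits; hence $E[G] \otimes_E E[T]$ is compact projective in $\Rep(G)$. For the generating property, given $V \in \Rep(G)$ choose an epimorphism $\bigoplus_j E[T_j] \to U(V)$ in $\Vec_E^{\rm cond}$, apply the left adjoint to obtain an epimorphism $\bigoplus_j E[G] \otimes_E E[T_j] \to E[G] \otimes_E U(V)$, and compose with the action map $E[G] \otimes_E U(V) \to V$. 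The latter is an epimorphism in $\Rep(G)$ because the $E$-linear map $v \mapsto 1 \otimes v$ provides a section of its image under $U$, so it is epi in $\Vec_E^{\rm cond}$ and therefore epi in $\Rep(G)$ by (a). No step here is genuinely hard; the only point requiring care is the sheaf-theoretic handling of ``sections over $T$'', which is packaged cleanly by the adjunction once one knows the analogous facts for $\Vec_E^{\rm cond}$.
\end{para}
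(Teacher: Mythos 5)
Your proposal is correct and takes essentially the same approach as the paper's proof: identify $\Rep(G)$ with condensed $E[G]$-modules (\cref{condensed-rep-modules}) and transfer everything from $\Vec_E^{\rm cond}$ via the free/forgetful adjunction, proving (c) by the same adjunction isomorphism $\Hom_G(E[G] \otimes_E E[T], V) = \Hom_E(E[T], V)$. The only difference is cosmetic: the paper delegates (a) and (b) to a general reference on module categories over a sheaf of algebras (Kashiwara--Schapira, theorem 18.1.6), whereas you spell out the underlying standard argument directly.
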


\begin{proof}
The first two statements follow from \cref{condensed-rep-modules} plus generalities about categories of modules for sheaves of algebras on a site \cite[theorem 18.1.6]{KashiwaraSchapiraCategories}. The third follows from the fact that the condensed vector spaces $E[T]$ for extremally disconnected $T$ form a generating family of compact projective objects for the category of condensed vector spaces \cite[proof of theorem 2.2]{ScholzeCondensed}, together with the observation that
\[ \Hom_G(E[G] \otimes_E E[T], V) = \Hom_E(E[T], V) \]
for any condensed representation $V$.
\end{proof}

\subsection{Solid representations}

We continue to assume that $G$ is a condensed group. 

\begin{definition}
A {\it solid representation} of $G$ over $E$ is a condensed representation of $G$ over $E$ whose underlying condensed vector space is solid. We write $\Rep^\solid(G)$ for the full subcategory of $\Rep(G)$ consisting of the solid representations. 
\end{definition}

\begin{lemma} \label{solid-rep-reflective}
Solidification $V \mapsto V^\solid$ defines a left adjoint to the forgetful functor $\Rep^\solid(G) \to \Rep(G)$. In other words, $\Rep^\solid(G)$ is a reflective subcategory of $\Rep(G)$.
\end{lemma}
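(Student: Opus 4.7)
The plan is to construct the left adjoint explicitly: given $V \in \Rep(G)$, I want to endow the solidification $V^\solid$ (of the underlying condensed vector space) with a natural $G$-action so that the unit $\eta_V : V \to V^\solid$ becomes $G$-equivariant and universal among maps from $V$ into solid representations. By \cref{condensed-rep-modules} it suffices to work in the equivalent language of condensed $E[G]$-modules, so I need to show that the solidification of the underlying condensed vector space of a condensed $E[G]$-module carries a canonical compatible $E[G]$-module structure.

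First I would construct the action on $V^\solid$. Let $\mu_V : E[G] \otimes_E V \to V$ be the action map. Composing with $\eta_V : V \to V^\solid$ and transposing via the tensor--hom adjunction yields a map $V \to \uHom_E(E[G], V^\solid)$. The target is solid: the full subcategory $\Vec_E^\solid \subseteq \Vec_E^\cond$ is closed under limits, and $\uHom_E(-, -)$ preserves solidity in the second argument (being representable as a limit indexed by generators of $\Vec_E^\cond$). Hence, by the universal property of $\eta_V$, this map factors uniquely through $V^\solid$, giving $V^\solid \to \uHom_E(E[G], V^\solid)$, which transposes back to a map $\mu_{V^\solid} : E[G] \otimes_E V^\solid \to V^\solid$. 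By construction, $\mu_{V^\solid} \circ (1 \otimes \eta_V) = \eta_V \circ \mu_V$, i.e., $\eta_V$ is $E[G]$-linear.

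Next I would verify that $\mu_{V^\solid}$ satisfies the associativity and unitality axioms. In each case, the two maps to be compared (say, maps $E[G] \otimes_E E[G] \otimes_E V^\solid \to V^\solid$ for associativity) transpose to maps from $V^\solid$ into an internal hom whose target is built from $V^\solid$ and copies of $E[G]$ via $\uHom$, hence is solid. Precomposing with the appropriate occurrences of $\eta_V$ reduces the equality to the corresponding axiom for $\mu_V$, which holds by hypothesis; the universal property of $\eta_V$ then upgrades this to the desired equality. This exhibits $V^\solid$ as an object of $\Rep^\solid(G)$ and $\eta_V$ as a morphism there.

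Finally I would verify adjointness: given a solid representation $W$ and an equivariant morphism $f : V \to W$, the underlying condensed linear map factors uniquely through $\eta_V$ as some $\tilde f : V^\solid \to W$, and I must show $\tilde f$ is equivariant. The two candidate maps $\tilde f \circ \mu_{V^\solid}$ and $\mu_W \circ (1 \otimes \tilde f)$ from $E[G] \otimes_E V^\solid$ to $W$ agree after precomposition with $1 \otimes \eta_V$ (both then equal $f \circ \mu_V$), and transposing via tensor--hom turns them into two maps $V^\solid \to \uHom_E(E[G], W)$ into a solid target that become equal after precomposition with $\eta_V$; uniqueness in the universal property of $\eta_V$ forces them to coincide. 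The main obstacle is conceptual rather than technical: solidification is \emph{a priori} a functor of underlying condensed vector spaces, and it does not obviously commute with tensoring by $E[G]$, so the action on $V^\solid$ cannot simply be obtained by solidifying $\mu_V$; this is circumvented throughout by moving the tensor factor $E[G]$ into an internal hom and using that internal homs into solids are solid.
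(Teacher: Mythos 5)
Your proof is correct, but it takes a genuinely different route from the paper's. The paper uses characterization (b) of Lemma \ref{condensed-representation-lemma}: a condensed representation is a morphism of condensed groups $G \to \uAut_E(V)$, and since solidification is a functor on condensed vector spaces there is an induced morphism $\uAut_E(V) \to \uAut_E(V^\solid)$, so composing gives the desired $G$-action on $V^\solid$; the adjunction is then inherited directly from the vector-space-level adjunction. You instead work with characterization (c), the condensed $E[G]$-module picture, and confront the obstacle you correctly identify at the end: $E[G] \otimes_E -$ does not obviously commute with solidification, so one cannot just solidify the action map. Your workaround --- transpose the action to land in $\uHom_E(E[G], V^\solid)$, observe that internal homs into a solid are solid, and invoke the universal property of $\eta_V$ three separate times (to construct $\mu_{V^\solid}$, to check the module axioms, and to check that $\tilde f$ is $E[G]$-linear) --- is the standard way to transport algebraic structure along a reflection and is carried out correctly. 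What the paper's argument buys is brevity and the avoidance of any explicit manipulation of the $E[G]$-module structure; what yours buys is an explicit, self-contained verification that makes no appeal to the (slightly nontrivial) claim that $V \mapsto \uAut_E(V)$ sends a condensed vector space map to a condensed group homomorphism and that $\uAut_E$ interacts cleanly with the unit $\eta_V$. Both are legitimate; yours is more hands-on and arguably easier to audit, while the paper's is the more conceptual one-liner.
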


\begin{proof}
This is formal in light of \cite[proposition 7.5(i)]{ScholzeCondensed} and \cite[proposition A.17(i)]{BoscoDrinfeldv2}. Since solidification is a functor on the category of condensed vector spaces, we know that there is a natural morphism $\uAut_E(V) \to \uAut_E(V^\solid)$ of condensed groups for any condensed vector space $V$. If $V$ is a condensed representation of $G$, composing with the morphism $G \to \uAut_E(V)$ from \cref{condensed-representation-lemma} gives $V^\solid$ a natural structure of a solid representation. That the resulting functor $\Rep(G) \to \Rep^\solid(G)$ is left adjoint to the forgetful functor follows immediately from the corresponding fact for solidification of condensed vector spaces.
\end{proof}

\begin{theorem} \label{solid-rep-category}
$\Rep^\solid(G)$ is a full abelian subcategory of $\Rep(G)$ stable under extensions, limits, and colimits. The solid representations $(E[G] \otimes_E E[T])^\solid$ for extremally disconnected $T$ form a generating family of compact projective objects for $\Rep^\solid(G)$. In particular, $\Rep^\solid(G)$ is a Grothendieck abelian category. 
\end{theorem}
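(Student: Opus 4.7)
The plan is to bootstrap all assertions from the corresponding facts about $\Rep(G)$ (\cref{condensed-rep-category}) and about $\Vec_E^\solid$ (paragraph \ref{solid}), mediated by the reflection $F : \Rep(G) \to \Rep^\solid(G)$ of \cref{solid-rep-reflective} and its right adjoint $U$ (the inclusion).

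First I would handle the structural assertions. By \cref{condensed-rep-category}(a), kernels, cokernels, limits, and extensions in $\Rep(G)$ are all computed on the underlying condensed vector space. Since $\Vec_E^\solid$ is a full abelian subcategory of $\Vec_E^\cond$ closed under limits and extensions (paragraph \ref{solid}), these operations, applied in $\Rep(G)$ to objects of $\Rep^\solid(G)$, land back in $\Rep^\solid(G)$. Hence $\Rep^\solid(G)$ is a full abelian subcategory of $\Rep(G)$ closed under limits and extensions, and $U$ is exact. Arbitrary colimits exist by the general machinery of reflective subcategories: a colimit in $\Rep^\solid(G)$ is obtained by applying $F$ to the colimit in $\Rep(G)$.

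Next I would produce the generators. Set $W_T = E[G] \otimes_E E[T]$ and $P_T = F(W_T) = W_T^\solid$ for $T$ extremally disconnected. The adjunction yields
\[ \Hom_{\Rep^\solid(G)}(P_T, V) = \Hom_{\Rep(G)}(W_T, V) \]
for every $V \in \Rep^\solid(G)$. Since the $W_T$ generate $\Rep(G)$ by \cref{condensed-rep-category}(c), the same identification shows the $P_T$ generate $\Rep^\solid(G)$. Moreover, any epimorphism in $\Rep^\solid(G)$ is also an epimorphism in $\Rep(G)$, because its cokernel computed in $\Rep(G)$ is already solid and therefore agrees with the cokernel in $\Rep^\solid(G)$; combined with projectivity of $W_T$ in $\Rep(G)$, this forces projectivity of $P_T$ in $\Rep^\solid(G)$ through the adjunction identity above.

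The main obstacle is compactness of $P_T$, which by the same adjunction reduces to the claim that $U$ preserves filtered colimits — equivalently, that a filtered colimit of solid vector spaces computed in $\Vec_E^\cond$ is again solid. I would appeal to the analytic-ring formalism of \cref{solid-modules-analytic-ring}: since $\Vec_E^\solid$ is the category of modules over the analytic ring $(E, T \rightsquigarrow E[T]^\solid)$, it is stable under all colimits in $\Vec_E^\cond$, as follows from \cite[proposition 7.5]{ScholzeCondensed}. Once compactness is secured, the Grothendieck property is automatic: $\Rep^\solid(G)$ is cocomplete and admits a set of compact projective generators, and the standard argument — exactness of $\Hom_{\Rep^\solid(G)}(P_T, -)$, its commutation with filtered colimits, and the fact that the $P_T$ jointly detect zero objects — yields exactness of filtered colimits.
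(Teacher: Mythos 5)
Your proposal is correct and uses the same strategy as the paper: exploit the fact that limits, colimits, and extensions in $\Rep(G)$ are computed on underlying condensed vector spaces, combine this with the corresponding stability properties of $\Vec_E^\solid \subseteq \Vec_E^\cond$, and then deduce the generator/compactness statements from the adjunction isomorphism. The only cosmetic difference is that you initially obtain colimits indirectly via the reflection $F$, and only later (in the compactness step) invoke the stronger fact the paper uses from the start — that $\Vec_E^\solid$ is stable under \emph{all} colimits in $\Vec_E^\cond$, which is actually part of the theorem's claim and makes the detour through $F$ unnecessary.
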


\begin{proof}
An extension of solid vector spaces in the category of condensed vector spaces is again solid (cf. \cite[proposition 7.5(i)]{ScholzeCondensed} and \cite[proposition A.17(i)]{BoscoDrinfeldv2}), so the same is true of representations. Limits and colimits in $\Rep(G)$ can be computed in the category of condensed vector spaces (cf. \cref{condensed-rep-category}), and solid vector spaces are stable under all limits and colimits (cf. \cite[proposition 7.5(i)]{ScholzeCondensed} and \cite[proposition A.17(i)]{BoscoDrinfeldv2}), so $\Rep^\solid(G)$ is stable under all limits and colimits. The final statement follows from the fact that $E[G] \otimes_E E[T]$ form a generating family of compact projective objects for $\Rep(G)$ (cf. \cref{condensed-rep-category}), plus the adjunction isomorphism
\[ \Hom_G((E[G] \otimes_E E[T])^\solid, V) = \Hom_G(E[G] \otimes_E E[T], V) \]
for any solid representation $V$ (cf. \cref{solid-rep-reflective}). 
\end{proof}

We now show compatibility of our definitions with those of \cite{RR}. 

\begin{lemma} \label{solidification-iwasawa}
Suppose $G$ is a profinite group regarded as a condensed group via the underline functor of \cref{underline-functor}. The solidification $E[G]^\solid$ of the condensed algebra $E[G]$ is naturally isomorphic to the Iwasawa algebra $E_\solid[G]$ of \cite[definition 4.1]{RR}. 
\end{lemma}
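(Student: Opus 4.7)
The plan is to reduce the statement to the foundational identity in condensed mathematics that, for any profinite set $S$ presented as $S = \varprojlim_i S_i$ by finite sets $S_i$, the solidification of the free condensed $E$-vector space $E[\underline{S}]$ is canonically $\varprojlim_i E[S_i]$. Applied to $\underline{G}$, this will give the desired isomorphism on the level of solid vector spaces; naturality will then upgrade it to an isomorphism of solid algebras matching the Iwasawa algebra of \cite[definition 4.1]{RR}.

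First I would write $G = \varprojlim_H G/H$ with $H$ ranging over the open normal subgroups of $G$, and note that the underline functor of \cref{underline-functor} preserves this cofiltered limit, since it is a right adjoint (cf.\ \cite[proposition 1.7]{ScholzeCondensed}); thus $\underline{G} = \varprojlim_H \underline{G/H}$ as condensed groups. Forgetting the group structure momentarily, the foundational identity recalled above yields a canonical isomorphism $E[\underline{G}]^\solid \cong \varprojlim_H E[G/H]$ of solid $E$-vector spaces. Each $E[G/H]$ is finite-dimensional and discrete, hence solid, so the inverse limit is naturally a solid $E$-vector space.

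To upgrade this to an isomorphism of solid algebras I would argue as follows. The condensed algebra structure on $E[\underline{G}]$ is built from the group structure on $\underline{G}$ via the strong symmetric monoidal free $E$-module functor, and solidification is strong symmetric monoidal from $(\Vec_E^\cond, \otimes_E)$ to $(\Vec_E^\solid, \sotimes_E)$, being the left adjoint to the fully faithful inclusion that realises the solid category as a reflective localisation (cf.\ \cref{solid}). Hence $E[\underline{G}]^\solid$ is naturally a solid algebra. The right-hand side $\varprojlim_H E[G/H]$ carries the obvious inverse limit algebra structure coming from the finite group algebras $E[G/H]$. By naturality of the isomorphism $E[\underline{S}]^\solid \cong \varprojlim_i E[S_i]$ in the profinite set $S$, applied to the multiplication $\underline{G} \times \underline{G} \to \underline{G}$ and unit maps, the two algebra structures on the two sides coincide.

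Finally, \cite[definition 4.1]{RR} defines $E_\solid[G]$ in the equivalent form $E \sotimes_\Z \Z_\solid[\underline{G}]$ with $\Z_\solid[\underline{G}] = \varprojlim_H \Z[G/H]$; the identification with $\varprojlim_H E[G/H]$ is then immediate, and naturality in $G$ follows from functoriality of every construction involved. The step that will require the most care, and which I expect to be the main obstacle, is the monoidality bookkeeping in the algebra comparison; by contrast, the vector-space identification is essentially built into the definition of solidification.
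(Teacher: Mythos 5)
The proof hinges on a purported ``foundational identity'' $E[\underline{S}]^\solid \cong \varprojlim_i E[S_i]$ for a profinite set $S = \varprojlim_i S_i$, but this identity is false when $E$ is a $p$-adic field. Scholze's result \cite[theorem 5.8]{ScholzeCondensed} gives $\Z[\underline{S}]^\solid \cong \varprojlim_i \Z[S_i]$ with $\Z$-coefficients only; passing to $E$-coefficients introduces a solid tensor product $E \sotimes_\Z -$, and this does \emph{not} commute with the cofiltered limit because $E$ is not finitely generated over $\Z$. Concretely, take $G = \Zp$: then $E[G]^\solid = E \sotimes_\Z \prod_{\bbN}\Z$ is a Smith space (the dual of $C(\Zp,E)$, i.e.\ sequences in $E$ with bounded denominators), whereas $\varprojlim_H E[G/H] = \prod_{\bbN} E$ is the Fr\'echet space of \emph{all} sequences; the two already disagree as abstract $E$-vector spaces, let alone as solid ones. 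Your final sentence, declaring the identification of $E \sotimes_\Z \varprojlim_H \Z[G/H]$ with $\varprojlim_H E[G/H]$ ``immediate,'' is therefore precisely where the argument breaks; the monoidality bookkeeping you single out as the expected difficulty is in fact the routine part.

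The paper's proof avoids the false identity entirely. It writes $E[G] = E \otimes_\Z \Z[G]$, uses the compatibility of solidification with base change to get $(E \otimes_\Z \Z[G])^\solid = E \sotimes_\Z \Z[G]^\solid$ (cf.\ the proof of \cite[theorem 6.2]{ScholzeCondensed}), applies $\Z[G]^\solid = \varprojlim_H \Z[G/H]$ from \cite[theorem 5.8]{ScholzeCondensed}, and then recognizes $E \sotimes_\Z \Z[G]^\solid$ as $\sM_E(G) = E_\solid[G]$ directly from the definition of the analytic ring structure on $E$ and \cite[remark 4.3]{RR} --- never passing through $\varprojlim_H E[G/H]$. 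Your algebra-structure comparison via strong monoidality of solidification is sound and is essentially what the paper relies on implicitly, but it cannot repair the incorrect vector-space identification on which the rest of your argument rests.
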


\begin{proof}
Let $\Z[G]$ be the sheafification of the presheaf $S \mapsto \Z[G(S)]$ on the site of profinite sets, so that 
\[ E[G] = E \otimes \Z[G]. \]
Since $G$ is profinite, the solidification of $\Z[G]$ is 
\[ \Z[G]^\solid = \varprojlim_H \Z[G/H] \]
where $H$ varies over normal open subgroups of $G$ \cite[definition 5.1 and theorem 5.8]{ScholzeCondensed}. Thus, if we let $\sM_E$ be the functor of measures of the analytic ring associated to $E$, we have
\[ E[G]^\solid = (E \otimes \Z[G])^\solid = E \sotimes \Z[G]^\solid = \sM_E(G) = E_\solid[G] \]
where the second isomorphism is from \cite[proof of theorem 6.2]{ScholzeCondensed}, the third holds by definition of $\sM_E$, and the fourth is \cite[remark 4.3\footnotemark]{RR}.
\footnotetext{The group $G$ is assumed to be a {\it compact} $p$-adic Lie group in \cite{RR}, but this assumption is not used for \cite[definition 4.1 and remark 4.3]{RR}. }
\end{proof}

\begin{corollary}[cf. {\cite[lemma 4.19]{RR}}] \label{RR-compatibility}
Suppose $G$ is a profinite group regarded as a condensed group via the underline functor of \cref{underline-functor}. The category of solid representation of $G$ over $E$ is equivalent to the category of solid modules over the Iwasawa algebra $E_\solid[G]$ of \cite[definition 4.1]{RR}.
\end{corollary}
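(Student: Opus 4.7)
The plan is to string together \cref{condensed-rep-modules}, \cref{solidification-iwasawa}, and \cref{solid-modules-analytic-ring} and then check that the bijection on objects promotes to an equivalence of categories. First, I would recall from \cref{condensed-rep-modules} that a solid representation of $G$ over $E$ is the same datum as a condensed $E[G]$-module whose underlying condensed $E$-vector space happens to be solid. So it suffices to show that such $E[G]$-module structures are in natural bijection with solid $E_\solid[G]$-module structures.

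Next, given a solid $E$-vector space $V$, I would use the universal property of solidification to show that a condensed $E[G]$-module structure on $V$ factors uniquely through $E_\solid[G]$. Concretely, an $E[G]$-action is a morphism $E[G] \otimes_E V \to V$ of condensed vector spaces; since $V$ is solid, this factors uniquely through the solidification of the source, which by \cite[proposition A.17(iii)]{BoscoDrinfeldv2} is
\[ (E[G] \otimes_E V)^\solid = E[G]^\solid \sotimes_E V^\solid = E_\solid[G] \sotimes_E V, \]
the second equality by \cref{solidification-iwasawa}. One then verifies, using the universal property, that the associativity and unit diagrams for an $E[G]$-action on $V$ solidify to the corresponding diagrams for a solid $E_\solid[G]$-action, and conversely, that any solid $E_\solid[G]$-module structure restricts along the natural map $E[G] \to E_\solid[G]$ to a condensed $E[G]$-module structure with solid underlying vector space. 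By \cref{solid-modules-analytic-ring}, solid $E_\solid[G]$-modules in the sense of module objects in $\Vec_E^\solid$ coincide with modules over the analytic ring $E_\solid[G]$, so this matches the category considered in \cite{RR}.

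Finally, for the level of morphisms, I would observe that a morphism $f : V \to W$ of solid $E$-vector spaces intertwines the $E[G]$-actions if and only if it intertwines the induced $E_\solid[G]$-actions, since in both cases the square expressing equivariance maps out of either $E[G] \otimes_E V$ or $E_\solid[G] \sotimes_E V$ into the solid vector space $W$, and these squares are identified by the universal property of solidification. This proves that the assignment is an equivalence of categories.

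The only real subtlety is making sure the associativity diagram for an $E[G]$-module structure, which lives in $\Vec_E^\cond$ and involves $E[G] \otimes_E E[G] \otimes_E V$, solidifies correctly to the corresponding diagram in $\Vec_E^\solid$ with $E_\solid[G] \sotimes_E E_\solid[G] \sotimes_E V$; this is the step that uses \cite[proposition A.17(iii)]{BoscoDrinfeldv2} (or the fact that solidification is strong symmetric monoidal on condensed $E$-vector spaces) and is otherwise a routine diagram chase.
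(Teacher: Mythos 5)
Your proposal is correct and takes essentially the same route as the paper: both pass through \cref{condensed-rep-modules} and \cref{solidification-iwasawa}, and then use the universal property of solidification (i.e.\ that $(E[G] \otimes_E V)^\solid = E_\solid[G] \sotimes_E V$ for $V$ solid) to identify $E[G]$-module structures on a solid $V$ with solid $E_\solid[G]$-module structures. The paper phrases this as a chain of identifications of $\Hom$-sets, $\Hom_E(E_\solid[G] \otimes_E V, V) = \cdots = \Hom_E(E[G] \otimes_E V, V)$, while you phrase it via factorization through solidification and a check of the associativity/unit diagrams, but the content is the same; your extra invocation of \cref{solid-modules-analytic-ring} to match the analytic-ring formalism of \cite{RR} is a reasonable addition but not needed, as $\Mod^\solid_{E_\solid[G]}$ is already defined as module objects in $\Vec^\solid_E$.
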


\begin{proof}
Recall that we write $- \otimes_E -$ for the tensor product of condensed vector spaces (ie, before solidification; cf. \cref{condensed}), even when both arguments are solid. If $V$ is a solid vector space, observe that 
\[ \begin{aligned}
\Hom_E(E_\solid[G] \otimes_E V, V) &= \Hom_E(E[G]^\solid \otimes_E V, V) \\
&= \Hom_E((E[G]^\solid \otimes_E V)^\solid, V) \\
&= \Hom_E((E[G] \otimes_E V)^\solid, V) \\
&= \Hom_E(E[G] \otimes_E V, V)
\end{aligned} \]
using \cref{solidification-iwasawa} for the first bijection, the fact that $V$ is solid for the second and fourth, and \cite[proof of theorem 6.2]{ScholzeCondensed} for the third. Under this natural bijection 
\[ \Hom_E(E_\solid[G] \otimes_E V, V) = \Hom_E(E[G] \otimes_E V, V), \]
$E_\solid[G]$-module structure maps on the left side correspond to the $E[G]$-module structure maps on the right side. 
\end{proof}

\subsection{Analytic vectors} \label{analytic-vectors}

\begin{para}
In this subsection we discuss a variety of functors on the category of solid representations. These functors are adaptations of the functor of invariant vectors in smooth representation theory, and are of several (overlapping) types: there are ``classical'' constructions of analytic vectors of representations of the condensed group associated to an analytic group, and then there are two adaptations of this construction.  
\end{para}

\subsubsection{Classical constructions of analytic vectors}\label{an-vectors-classical}

\begin{definition}[cf. {\cite[definition 3.3.1]{EmertonA}, \cite[definition 4.29]{RR}}]
Let $\bbG$ be an affinoid group and let $G = \bbG(F)$. For $V$ be a solid representation of $G$, the tensor product $\sO(\bbG) \sotimes_{E} V$ carries an action of $G \times G \times G$ where the first and second factors act on $\sO(\bbG)$ via the left and right regular actions, respectively, and the third factor acts on $V$. We define 
\[ V^{\bbG\anv} = \uHom_{E_\solid[G]}(E, \sO(\bbG) \sotimes_{E} V), \] 
using the $\star_{1,3}$ action of $G$ on $\sO(\bbG) \sotimes_E V$.\footnote{See \cite[definition 4.24]{RR} for notation.} Explicitly, if we think of an `element' in $\sO(\bbG) \sotimes_{E} V$ as a $V$-valued function on $\bbG$, then the $G$-action is given by $(g \star_{1,3} f)(x) = g \cdot_V f(g^{-1}x)$, where $\cdot_V$ indicates the action of $G$ on $V$. The action of $G$ on this space of invariants is then given by $\star_2$-action, i.e., $(g \ast_2 f)(x) = f(xg)$.
\end{definition}

\begin{definition}[cf. {\cite[definition 3.4.1]{EmertonA}, \cite[definition 4.29]{RR}}]
Suppose $\bbG^\circ$ is a strictly ind-affinoid group and let $\bbG^{(\bullet)}$ be a presentation. 
For $V$ a solid representation of the condensed group $G^\circ = \bbG^\circ(F)$, define \[ V^{\bbG^\circ\anv} = \varprojlim_n V^{\bbG^{(n)}\anv}, \] where $\bbG^{(\bullet)}$ is a presentation of $\bbG^\circ$. This is independent of choice of presentation. 
\end{definition}

\begin{definition}
Suppose $\bbG^\dagger$ is a strictly pro-affinoid group and let $\bbG_{(\bullet)}$ be a presentation. 
For $V$ be a solid representation of the condensed group $G^\dagger = \bbG^\dagger(F)$, define \[ V^{\bbG^\dagger\anv} = \varinjlim_n V^{\bbG_{(n)}\anv}, \] where $\bbG_{(\bullet)}$ is a presentation of $\bbG^\dagger$. This is independent of choice of presentation.
\end{definition}

\subsubsection{Emerton-style adaptations} \label{emerton-style-adaptations}

\begin{definition}
Let $G$ be a condensed group and let $D$ be a solid vector space equipped with commuting left and right actions of $G$. For $V$ a solid representation of $G$, define
\[ V^D = \uHom_{E_\solid[G]}(E, D^\vee \sotimes_E V) \]
using the $\star_{1,3}$ action of $G$ on $D^\vee \sotimes_E V$.\footnote{See \cite[definition 4.24]{RR} for notation.} The right action of $G$ on $D$ induces a left action of $G$ on $V^D$, so $V \rightsquigarrow V^D$ defines a functor from solid representations of $G$ to itself.
\end{definition}

\begin{example}
Suppose $\bbG$ is an affinoid group and $G = \bbG(F)$. Then $D(\bbG)^\vee = \sO(\bbG)$ \cite[lemma 3.10]{RR}, so $V^{D(\bbG)} = V^{\bbG\anv}$ for any solid representation $V$ of $G$. 
\end{example}

\begin{lemma} \label{frechet-ind-affinoid-analytic}
Suppose $\bbG^\circ$ is a strictly ind-affinoid group and that $V$ is a Fr\'echet representation of $G^\circ = \bbG^\circ(F)$. Then $V^{D(\bbG^\circ)} = V^{\bbG^\circ\anv}$.
\end{lemma}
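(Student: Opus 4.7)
The plan is to identify both sides with a common inverse limit along the presentation $\bbG^{(\bullet)}$ of $\bbG^\circ$.

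First, I would observe that since $\sO(\bbG^\circ) = \varprojlim_n \sO(\bbG^{(n)})$ is a Fr\'echet space of compact type, it is reflexive as a solid $E$-vector space by the duality results of Rodrigues Jacinto--Rodr\'iguez Camargo (cf. \cite[theorem 3.40]{RR}). Since $D(\bbG^\circ)$ is defined as $\sO(\bbG^\circ)^\vee$, this yields $D(\bbG^\circ)^\vee \cong \sO(\bbG^\circ)$ compatibly with the commuting left and right $G^\circ$-actions, so
\[ V^{D(\bbG^\circ)} = \uHom_{E_\solid[G^\circ]}(E, \sO(\bbG^\circ) \sotimes_E V). \]
Next, I would use the Fr\'echet hypothesis on $V$ together with the fact that each $\sO(\bbG^{(n)})$ is Banach to apply \cite[3.28(2)]{RR}, yielding $\sO(\bbG^\circ) \sotimes_E V = \varprojlim_n (\sO(\bbG^{(n)}) \sotimes_E V)$. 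Crucially, for every $m$ and every $n \geq m$, the affinoid $\bbG^{(m)}$ is a subgroup of $\bbG^{(n)}$, so $G^{(m)}$ acts on $\sO(\bbG^{(n)})$ through the restriction of the left regular action, making the tail $(n \geq m)$ of the inverse system $G^{(m)}$-equivariant.

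I would then use the identity $\uHom_{E_\solid[G^\circ]}(E, W) = \varprojlim_m \uHom_{E_\solid[G^{(m)}]}(E, W)$, valid since $G^\circ = \varinjlim_m G^{(m)}$ as condensed groups, commute the inner hom past the inverse limit in $n \geq m$, and interchange the two inverse limits. Cofinality of the diagonal (for fixed $n$, the groups $G^{(m)}$ with $m \leq n$ are increasing with union $G^{(n)}$) collapses the double limit into
\[ V^{D(\bbG^\circ)} = \varprojlim_n \uHom_{E_\solid[G^{(n)}]}(E, \sO(\bbG^{(n)}) \sotimes_E V) = \varprojlim_n V^{\bbG^{(n)}\anv} = V^{\bbG^\circ\anv}. \]

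The main technical subtlety is that the $G^\circ$-action on $\sO(\bbG^\circ) \sotimes_E V$ does not descend to an action on a single term $\sO(\bbG^{(n)}) \sotimes_E V$, since left translation by $g \in G^{(m)}$ with $m > n$ need not preserve $\sO(\bbG^{(n)})$. This is what forces the double-limit manipulation rather than a one-step commutation of $\uHom$ with a limit of $G^\circ$-modules, and it is the cofinality argument together with the fact that partial actions by $G^{(m)}$ with $m \leq n$ do extend coherently through the tail of the system, that makes the manipulation legal and collapses the answer onto the diagonal.
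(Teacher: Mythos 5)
Your proof is correct and follows essentially the same route as the paper's: dualize $D(\bbG^\circ)$ to $\sO(\bbG^\circ)$ via \cite[theorem 3.40]{RR}, use the Fr\'echet hypothesis on $V$ to commute the solid tensor product past the projective limit, and then commute $\uHom_{E_\solid[G^\circ]}(E,-)$ past the limit (the paper cites \cite[proposition 1.1.32]{EmertonA} for the tensor step where you cite \cite[3.28(2)]{RR}, but these serve the same purpose). Where you go beyond the paper's terse account is in spelling out the double-limit-plus-cofinality manipulation needed because each $\sO(\bbG^{(n)}) \sotimes_E V$ carries a $G^{(m)}$-action only for $m \leq n$ and not a $G^\circ$-action, so the limit-commutation is not a single-step application; the paper compresses this into the remark that $\uHom$ commutes with limits in the second entry, and your explicit reindexing correctly supplies the missing detail.
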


\begin{proof}
Suppose $\bbG^{(\bullet)}$ is a presentation of $\bbG^\circ$. On the one hand, the definition of $V^{\bbG^\circ\anv}$ says that
\[ V^{\bbG^\circ\anv} = \varprojlim_n V^{\bbG^{(n)}\anv} = \varprojlim_n \uHom_{E_\solid[G^{(n)}]}(E, \sO(\bbG^{(n)}) \sotimes V) \]
On the other hand, we have
\[ V^{D(\bbG^\circ)} = \uHom_{G^\circ}(E, \sO(\bbG^\circ) \sotimes V) = \uHom_{E_\solid[G]}(E, (\varprojlim_n \sO(\bbG^{(n)}))\sotimes V), \]
using the fact that $D(\bbG^\circ)^\vee = \sO(\bbG^\circ)$ \cite[theorem 3.40]{RR}. It follows from \cite[proposition 1.1.32]{EmertonA} plus the fact that $\uHom$ commutes with limits in the second entry that $V^{\bbG^\circ\anv} = V^{D(\bbG^\circ)}$ when $V$ is Fr\'echet.
\end{proof}

\begin{lemma}
Suppose $\bbG^\dagger$ is a strictly pro-affinoid group and that $V$ is a solid representation of $G = \bbG^\dagger(F)$, then $V^{\bbG^\dagger\anv} = V^{D(\bbG^\dagger)}$.
\end{lemma}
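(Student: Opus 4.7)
The plan is to mirror the argument of \cref{frechet-ind-affinoid-analytic} with filtered colimits in place of the projective limits used there, exploiting the duality between the structure sheaf and the distribution algebra on a strictly pro-affinoid group. Fix a presentation $\bbG_{(\bullet)}$. Unfolding definitions on the one side,
\[
V^{\bbG^\dagger\anv} \;=\; \varinjlim_n V^{\bbG_{(n)}\anv} \;=\; \varinjlim_n \uHom_{E_\solid[G^\dagger]}\!\bigl(E,\, \sO(\bbG_{(n)}) \sotimes_E V\bigr),
\]
where the $G^\dagger$-action on $\sO(\bbG_{(n)})$ is the $\star_1$-action transported along the inclusion $G^\dagger \hra G_{(n)}$. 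On the other side, the reflexivity $D(\bbG^\dagger)^\vee = \sO(\bbG^\dagger)$ of \cite[theorem 3.40]{RR} for LS spaces of compact type gives
\[
V^{D(\bbG^\dagger)} \;=\; \uHom_{E_\solid[G^\dagger]}\!\bigl(E,\, \sO(\bbG^\dagger) \sotimes_E V\bigr).
\]

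Next I would invoke the defining identity $\sO(\bbG^\dagger) = \varinjlim_n \sO(\bbG_{(n)})$ for a strictly pro-affinoid space together with the fact that the solid tensor product commutes with filtered colimits to rewrite
\[
\sO(\bbG^\dagger) \sotimes_E V \;=\; \varinjlim_n \bigl(\sO(\bbG_{(n)}) \sotimes_E V\bigr)
\]
as a filtered colimit of solid $E_\solid[G^\dagger]$-modules. At this point the lemma reduces to the claim that the functor $\uHom_{E_\solid[G^\dagger]}(E, -)$ commutes with this particular filtered colimit of $E_\solid[G^\dagger]$-modules.

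The commutation of the invariants functor with the colimit is the step I expect to be the main obstacle, since the right adjoint $\uHom_{E_\solid[G^\dagger]}(E, -)$ does not in general interchange with colimits. The cleanest route is to exhibit $E$ as a compact object of $\Mod^\solid_{E_\solid[G^\dagger]}$: since $G^\dagger = \bigcap_n \bbG_{(n)}(F)$ is a compact $p$-adic Lie group, I would pass to an $F$-uniform open normal subgroup and apply a Chevalley-Eilenberg/Lazard-style resolution along the lines of \cref{chevalley-eilenberg-pro-affinoid} to present $E$ as a bounded complex of finitely generated free $E_\solid[G^\dagger]$-modules; since such free modules are compact projective generators, their bounded complex is perfect and $E$ is compact, which yields the desired commutation. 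A more hands-on alternative would be to exploit that the relative compactness $\bbG_{(n+1)} \Subset \bbG_{(n)}$ makes every transition map $\sO(\bbG_{(m)}) \to \sO(\bbG_{(n)})$ an injective (in fact compact) operator, so that $\sO(\bbG^\dagger) \sotimes V$ is presented as a directed union of closed solid $E_\solid[G^\dagger]$-submodules, for which the commutation of invariants and colimit can be verified directly on sections.
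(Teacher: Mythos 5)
Your proposal is correct and matches the paper's proof essentially step for step: duality $D(\bbG^\dagger)^\vee = \sO(\bbG^\dagger)$, presentation of $\sO(\bbG^\dagger)$ as a filtered colimit, commutation of the solid tensor product with the colimit, and finally the compactness of $E$ in $\Mod^\solid_{E_\solid[G]}$ to pull the colimit past $\uHom_{E_\solid[G]}(E,-)$. The paper simply asserts the last point, whereas you supply a justification; that justification is sound, but note that the resolution establishing $E$ as a perfect complex should be Lazard's finite free resolution over the Iwasawa algebra $E_\solid[G^\dagger]$ (via an open uniform subgroup), not the Chevalley--Eilenberg resolution of \cref{chevalley-eilenberg-pro-affinoid}, which lives over the distribution algebra $D(\bbG^\dagger)$ rather than $E_\solid[G^\dagger]$.
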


\begin{proof}
Observe that $D(\bbG^\dagger)^\vee = \sO(\bbG^\dagger)$ \cite[theorem 3.40]{RR}. If $\bbG_{(\bullet)}$ is a presentation for $\bbG^\dagger$, then
\[ \begin{aligned} V^{D(\bbG^\dagger)} &= \uHom_{E_\solid[G]}(E, \sO(\bbG^\dagger) \sotimes_E V) \\
&= \uHom_{E_\solid[G]}(E, (\varinjlim \sO(\bbG_{(n)})) \sotimes_E V) \\
&= \uHom_{E_\solid[G]}(E, \varinjlim (\sO(\bbG_{(n)}) \sotimes_E V)) \\
&= \varinjlim_n \uHom_{E_\solid[G]}(E, \sO(\bbG_{(n)}) \sotimes_E V) \\
&= \varinjlim_n V^{\bbG_{(n)}\anv},
\end{aligned}\]
using the fact that solid tensor products commute with filtered colimits for the third isomorphism, and the fact that $E$ is a compact object in the category of solid representations of $G$ for the fourth. 
\end{proof}

\begin{proposition}\label{Dr-analytic-Smith} Suppose $V$ is an admissible locally analytic representation of $G$. Then, for all $r \in p^\bbQ \cap [1/p, 1)$, the underlying solid vector space of $V^{D_r(G)}$ is Smith.
\end{proposition}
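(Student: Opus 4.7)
The strategy is to identify $V^{D_r(G)}$ with the continuous dual of a Banach $E$-vector space, and then invoke the principle that such duals are Smith in the solid framework.

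By Schneider-Teitelbaum theory, admissibility of $V$ means that $V^\vee$ is a coadmissible left module over the Fr\'echet-Stein algebra $D(G)$. For any $r \in p^\bbQ \cap [1/p,1)$, the module
\[ M_r := D_r(G) \sotimes_{D(G)} V^\vee \]
is then finitely generated over the noetherian Banach algebra $D_r(G)$ (cf.\ \cref{distribution-algebra-as-quotient}), and hence is itself a Banach space over $E$, being a quotient of $D_r(G)^n$ for some $n$.

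The main step will be to exhibit a natural isomorphism $V^{D_r(G)} \cong M_r^\vee$ of solid $E$-vector spaces. Unwinding the definition in \cref{emerton-style-adaptations},
\[ V^{D_r(G)} = \uHom_{E_\solid[G]}(E, D_r(G)^\vee \sotimes_E V), \]
and applying Hom-tensor adjunction together with the fact that the natural map $D_r(G)^\vee \sotimes_E V \to \uHom_E(D_r(G), V)$ is an isomorphism when $D_r(G)$ is a Banach space, one rewrites this as $\uHom_{E_\solid[G]}(D_r(G), V)$, with $D_r(G)$ carrying its left regular $G$-action. Any $G$-equivariant morphism $D_r(G) \to V$ is automatically $D(G)$-linear by density and continuity of the action, so by the Schneider-Teitelbaum duality between $V$ and $V^\vee$, the space of such morphisms is canonically identified with the continuous dual $M_r^\vee$. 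A parallel comparison of solid structures on the two sides then yields the claimed isomorphism.

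Finally, any surjection $D_r(G)^n \twoheadrightarrow M_r$ dualizes to an injection $M_r^\vee \hookrightarrow (D_r(G)^\vee)^n$. Since $D_r(G)^\vee$ is Smith as the continuous dual of a Banach space in the solid framework (cf.\ \cite[lemma 3.14]{RR}), and closed subspaces of Smith spaces are again Smith \cite[lemma 3.9]{RR}, we conclude that $M_r^\vee$, and hence $V^{D_r(G)}$, is Smith. The principal obstacle is the identification $V^{D_r(G)} \cong M_r^\vee$: although the underlying vector spaces coincide through the Schneider-Teitelbaum correspondence, verifying that the solid structure inherited from $D_r(G)^\vee \sotimes_E V$ agrees with the one coming from $M_r^\vee$ requires careful bookkeeping through the relevant adjunctions in the solid framework.
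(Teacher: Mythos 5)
Your overall strategy is the right one and lands on the same key object as the paper: you reduce to showing that $M_r := D_r(G) \sotimes_{D(G)} V^\vee$ is a finitely generated (hence Banach) $D_r(G)$-module and that $V^{D_r(G)}$ is its dual, and you then conclude using the Banach--Smith duality. The paper proves precisely this. However, the route you take through the middle is looser than the paper's, and the place you flag as the ``principal obstacle'' is indeed a genuine gap rather than mere bookkeeping.

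Specifically, your step ``Any $G$-equivariant morphism $D_r(G) \to V$ is automatically $D(G)$-linear by density and continuity of the action'' is an argument borrowed from the classical topological setting; in the solid framework there is no topology to invoke, and the analogous statement must be proved via the precise input that $D(G)$ is idempotent over the Iwasawa algebra, i.e.\ $D(G) \sotimes^{\bbL}_{E_\solid[G]} D(G) = D(G)$ \cite[5.11]{RR}. The paper avoids your route entirely: starting from $V^{D_r(G)} = \uHom_{E_\solid[G]}(E, D_r(G)^\vee \sotimes_E V)$, it uses the duality statements of \cite[4.30]{RR} (valid because $D_r(G)^\vee$ is Smith and $V$ is $LS$, so the tensor is $LS$) to rewrite this as $(D_r(G) \sotimes_{E_\solid[G]} V^\vee)^\vee$, and then applies the idempotency of $D(G)$ to replace $\sotimes_{E_\solid[G]}$ by $\sotimes_{D(G)}$. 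That is the rigorous surrogate for your ``density'' argument, and you should cite it rather than gesture at a topological heuristic. A second, smaller point: in your closing step you dualize a surjection $D_r(G)^n \twoheadrightarrow M_r$ to an injection $M_r^\vee \hookrightarrow (D_r(G)^\vee)^n$ and invoke stability of Smith spaces under closed subspaces, but you have not checked that the image is closed; for that you would need $M_r$ to be separated, which uses that $D_r(G)$ is noetherian so the kernel is a closed submodule (cf.\ \cite[3.4, 3.6]{ST03}). The paper sidesteps this by simply observing that $M_r$ carries a canonical Banach topology as a finitely generated $D_r(G)$-module and then citing directly that the dual of a Banach space is Smith.

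Finally, note that your rewriting $V^{D_r(G)} = \uHom_{E_\solid[G]}(D_r(G), V)$ is exactly \cref{Dr-and-HomDr-for-compact-ind-limits}; you could cite it instead of re-deriving it. But the paper's proof of \cref{Dr-analytic-Smith} does not in fact use that proposition --- it stays on the Emerton-style/tensor side throughout, which is what makes the appeal to \cite[4.30, 5.11]{RR} so clean.
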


\begin{proof} 
By definition, $V^{D_r(G)} = \uHom_{E_\solid[G]}(E,D_r(G)^\vee \sotimes_E V)$. Since $D_r(G)^\vee$ is a Smith space and $V$ is an $LS$ space, the tensor product $D_r(G)^\vee \sotimes_E V$ is an $LS$ space too. Applying \cite[4.30 (1) and (2)]{RR} we find that 
$$\uHom_{E_\solid[G]}(E,D_r(G)^\vee \sotimes_E V) = \uHom_{E_\solid[G]}(D_r(G) \sotimes_E V^\vee,E)\;.$$ 
Furthermore, $\uHom_{E_\solid[G]}(D_r(G) \sotimes_E V^\vee,E) = (D_r(G) \sotimes_{E_\solid[G]} V^\vee)^\vee$. By \cite[5.11]{RR} we have $D(G) \sotimes^\bbL_{E_\solid[G]} D(G) =  D(G)$ 
and thus $D(G) \sotimes_{E_\solid[G]} D(G) =  D(G)$ too (here we consider the non-derived solid tensor product). This shows that
$$\begin{array}{rcl}D_r(G) \sotimes_{E_\solid[G]} V^\vee & = & D_r(G) \sotimes_{D(G)} \left(D(G) \sotimes_{E_\solid[G]} D(G)\right) \sotimes_{D(G)} V^\vee \\
&&\\
& = & D_r(G) \sotimes_{D(G)} V^\vee
\end{array}$$
As $V^\vee$ is a coadmissible $D(G)$-module, $D_r(G) \sotimes_{D(G)} V^\vee$ is a finitely generated $D_r(G)$-module, and carries a canonical Banach space topology. The dual of this space is therefore a Smith space. 
\end{proof}

\subsubsection{Representable adaptations}

\begin{para} \label{representable-para}
Let $G$ be a condensed group and let $D$ be a solid vector space equipped with commuting left and right actions of $G$. For $V$ a solid representation of $G$, consider
\[ V \mapsto \uHom_{E_\solid[G]}(D, V), \]
using the left action of $G$ on $D$ to take the $G$-equivariant $\uHom$. The right action of $G$ on $D$ induces a left action of $G$ on $\uHom_{E_\solid[G]}(D, V)$, so $V \mapsto \uHom_{E_\solid[G]}(D, V)$ defines a functor from solid representations of $G$ to itself. This functor is representable (by $D$). One can consider this construction with $D$ being any of the distribution algebras discussed in \cref{emerton-style-adaptations} above. 
\end{para}

\begin{proposition}\label{uHomDrV-has-Dr-module-structure}
Suppose that $G$ is a condensed group, that $V$ is a solid representation of $G$, and that $D$ is a solid algebra equipped with a homomorphism $E[G] \to D$ of condensed algebras. Then $D$ is naturally equipped with commuting left and right actions of $G$, and $\uHom_{E_\solid[G]}(D, V)$ is naturally a solid left $D$-module via precomposition, ie, via the map\footnote{The condensed tensor product $D \otimes_E \uHom_{E_\solid[G]}(D, V)$ in this statement can be replaced with the solid tensor product $D \sotimes_E \uHom_{E_\solid[G]}(D, V)$.}
\[ \begin{aligned} 
D \otimes_E \uHom_{E_\solid[G]}(D, V) &\to \uHom_{E_\solid[G]}(D, V)  \\
(\mu, \phi) &\mapsto [x \mapsto \phi(x\mu)].
\end{aligned} \]
\end{proposition}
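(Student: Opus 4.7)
The first claim is immediate: the algebra map $\iota : E[G] \to D$ equips $D$ with two commuting $E[G]$-module structures via $g \cdot x = \iota(g) x$ and $x \cdot g = x\iota(g)$, which commute precisely because multiplication in $D$ is associative. By \cref{condensed-representation-lemma} these correspond to commuting left and right condensed $G$-actions on $D$.

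For the second claim, recall (cf. \cref{constructing-hopf} applied to the strong symmetric monoidal functor $G \rightsquigarrow E_\solid[G]$ on condensed groups) that $E_\solid[G]$ is a solid Hopf algebra, so that the $\otimes$--$\uHom$ adjunction of \cref{hopf-tensor} applies: for any solid vector space $W$, regarded as an $E_\solid[G]$-module with trivial $G$-action, one has
\[
\Hom_G\bigl(W \sotimes_E D,\, V\bigr) \;=\; \Hom_E\bigl(W,\, \uHom_{E_\solid[G]}(D, V)\bigr).
\]
Taking $W = D \sotimes_E \uHom_{E_\solid[G]}(D, V)$ with trivial $G$-action, I define the desired action $\alpha$ to correspond under this bijection to
\[
\alpha' : D \sotimes_E \uHom_{E_\solid[G]}(D, V) \sotimes_E D \longrightarrow V, \qquad d \otimes \phi \otimes x \longmapsto \phi(xd),
\]
built as the composition of the multiplication $D \sotimes_E D \to D$ on the outer $D$-factors (in the order $(x,d)$) with the canonical evaluation $\uHom_{E_\solid[G]}(D, V) \sotimes_E D \to V$. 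The only non-formal ingredient is the $G$-equivariance of $\alpha'$ in the $x$-slot, which reduces to the identity $(gx)d = g(xd)$---precisely the commutation of the left $G$-action with the right $D$-action on $D$ established in the first paragraph.

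The left $D$-module axioms for $\uHom_{E_\solid[G]}(D, V)$ under $\alpha$ translate, through the adjunction, into the associativity and unit identities for the multiplication of $D$ applied inside the evident variants of $\alpha'$; these are formal diagram chases internal to the closed symmetric monoidal category $\Vec_E^\solid$. For the same reason, the construction is automatically a morphism of solid vector spaces, and no separate topological verification is needed. The main---and really only---substantive point is the $G$-equivariance of $\alpha'$, which is why the commutation of the two $G$-actions on $D$ is recorded as the first assertion of the proposition.
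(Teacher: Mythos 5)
Your construction of the $D$-module structure via the Hopf-tensor adjunction is correct and is a genuinely different route from the paper's, which simply notes that $D$ is an $(E[G],D)$-bimodule and cites the standard abstract fact that this makes $\uHom_{E_\solid[G]}(D,V)$ a left $D$-module. Both routes are fine for establishing the \emph{module structure}.

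However, your closing claim---``the construction is automatically a morphism of solid vector spaces, and no separate topological verification is needed''---elides the one non-formal point the proposition is actually asserting, namely that $\uHom_{E_\solid[G]}(D,V)$ \emph{is solid}. Your adjunction
\[
\Hom_G\bigl(W \sotimes_E D, V\bigr) = \Hom_E\bigl(W,\,\uHom_{E_\solid[G]}(D,V)\bigr)
\]
is an identity of \emph{sets}; it does not by itself certify that the object on the right-hand side lies in $\Vec_E^\solid$, and without that the phrase ``solid left $D$-module'' in the statement is not established. The paper spends the bulk of its proof precisely on this: it observes that $\uHom_E(D,V)$ is solid because $D$ and $V$ are, and then exhibits $\uHom_{E_\solid[G]}(D,V)$ as the kernel of the explicit morphism
\[
\uHom_E(D,V)\;\longrightarrow\;\prod_{(S,g)}\uHom_E(E[S],\uHom_E(D,V)),\qquad \phi\mapsto(g\phi-\phi g),
\]
and uses that products and kernels of solid vector spaces are solid. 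You should either reproduce an argument of this kind (realizing the equivariant hom as a limit of solids) or cite a reference showing that $G$-invariants of a solid $G$-module are solid. As it stands the proof has a genuine gap on exactly the point that gives the proposition its content.
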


\begin{proof}
Observe that $\uHom_{E_\solid[G]}(D, V)$ is naturally a left $D$-module as described above since $D$ is an $(E[G], D)$-bimodule (cf. \cite[proposition 7.2.4]{AdkinsWeintraub}). To check that it is solid, observe first that $\uHom_E(D, V)$ is solid since both $D$ and $V$ are (cf. \cref{solid}). It follows that
\[ \uHom_E(E[S], \uHom_E(D, V)) = \uHom_E(E[S]^\solid, \uHom_E(D, V)) \]
for any profinite set $S$, and that this is also solid. Now $\uHom_{E_\solid[G]}(D, V)$ is the kernel of the map
\[ \uHom_E(D, V) \to \prod_{(S, g)} \uHom_E(E[S], \uHom_E(D, V)) \]
given by $\phi \mapsto (g\phi - \phi g)$, where the product is over all ($\kappa$-small) profinite sets $S$ and $g \in G(S)$. Products and kernels of solid vector spaces are solid (cf. \cref{solid}), so we conclude that $\uHom_{E_\solid[G]}(D, V)$ is in fact solid.  
\end{proof}

\begin{para}
In some cases, the functor of \cref{representable-para} coincides with functors discussed previously in \cref{an-vectors-classical,emerton-style-adaptations}. Here are some results along these lines. 
\end{para}

\begin{theorem}\label{Emerton-and-representable-style}
\begin{enumerate}
\item Suppose $\bbG$ is an affinoid group and $V$ is a solid representation of $G = \bbG(F)$. Then 
\[ V^{\bbG\anv} = \uHom_{E_\solid[G]}(D(\bbG), V). \]
\item Suppose $\bbG^\circ$ is a strictly ind-affinoid group and $V$ is a solid representation of $G^\circ = \bbG^\circ(F)$. Then 
\[ V^{\bbG^\circ\anv} = \uHom_{E_\solid[\Go]}(D(\bbG^\circ), V). \]
\end{enumerate}
\end{theorem}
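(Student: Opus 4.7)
The plan for (1) is to rewrite $V^{\bbG\anv}$ using a Smith-space duality identification and then invoke the tensor-hom adjunction. I would start from the definition
\[V^{\bbG\anv} = \uHom_{E_\solid[G]}(E, \sO(\bbG) \sotimes_E V),\]
substitute $\sO(\bbG) = D(\bbG)^\vee$ (\cite[lemma 3.10]{RR}), and then identify $D(\bbG)^\vee \sotimes_E V$ with $\uHom_E(D(\bbG), V)$; this identification should hold because $D(\bbG)$ is a Smith space (\cref{affinoid-hopf}) and the duality formalism of \cite{RR} provides the canonical isomorphism for any solid $V$. Next I would verify that this identification is $G$-equivariant: the $\star_{1,3}$ action on the left puts the left regular action on $\sO(\bbG)$; under the duality this becomes the left regular action on $D(\bbG)$ (one checks $g \cdot \delta_h = \delta_{gh}$), matching the diagonal $G$-action on $\uHom_E(D(\bbG), V)$. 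With this, the tensor-hom adjunction for $G$-modules gives
\[\uHom_{E_\solid[G]}(E, \uHom_E(D(\bbG), V)) = \uHom_{E_\solid[G]}(D(\bbG), V),\]
and the $\star_2$-action on $V^{\bbG\anv}$ is seen to correspond to the right regular action on $D(\bbG)$, giving the stated $G$-action on the right-hand side.

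For (2), I would pass to the limit along a presentation $\bbG^{(\bullet)}$ of $\bbG^\circ$. By definition and part (1),
\[V^{\bbG^\circ\anv} = \varprojlim_n V^{\bbG^{(n)}\anv} = \varprojlim_n \uHom_{E_\solid[G^{(n)}]}(D(\bbG^{(n)}), V).\]
Dualizing the Fr\'echet presentation $\sO(\bbG^\circ) = \varprojlim_n \sO(\bbG^{(n)})$ gives $D(\bbG^\circ) = \varinjlim_n D(\bbG^{(n)})$, so since $\uHom$ sends colimits in the first variable to limits,
\[\uHom_{E_\solid[G^\circ]}(D(\bbG^\circ), V) = \varprojlim_n \uHom_{E_\solid[G^\circ]}(D(\bbG^{(n)}), V).\]
The remaining task is to identify each term with $\uHom_{E_\solid[G^{(n)}]}(D(\bbG^{(n)}), V)$. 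The $G^{(n)}$-action on $D(\bbG^{(n)})$ by left translation is the restriction of the $G^\circ$-action on $D(\bbG^\circ)$ to the subalgebra $D(\bbG^{(n)})$, since $\delta_g \ast \delta \in D(\bbG^{(n)})$ whenever $g \in G^{(n)}$ and $\delta \in D(\bbG^{(n)})$. Conversely, a compatible family of $G^{(n)}$-equivariant maps $\phi_n: D(\bbG^{(n)}) \to V$ glues to $\phi: D(\bbG^\circ) \to V$ which is $G^\circ$-equivariant: given $g \in G^\circ$ and $\delta \in D(\bbG^\circ)$, choose $n$ with $g \in G^{(n)}$ and $\delta \in D(\bbG^{(n)})$, so that $g \cdot \delta \in D(\bbG^{(n)})$, and conclude $\phi(g \cdot \delta) = \phi_n(g \cdot \delta) = g \cdot \phi_n(\delta) = g \cdot \phi(\delta)$.

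The hard part will be justifying the Smith-space duality $D(\bbG)^\vee \sotimes_E V \cong \uHom_E(D(\bbG), V)$ together with its compatibility with the $G$-actions on each factor; both should be essentially formal consequences of the duality results of \cite{RR} and the affinoid-group setup of the preceding sections, but they require some care in tracking the left/right translation actions and how they transport under duality. The remaining steps reduce to standard tensor-hom adjunction and colimit manipulations.
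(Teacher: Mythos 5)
Your proof takes a genuinely different route from the paper's. The paper's entire proof is a citation: ``Both statements follow from \cite[theorem 4.36 part (1)]{RR}.'' That theorem of Rodrigues Jacinto--Rodr\'iguez Camargo is essentially the representability statement you are re-deriving, packaged as a (derived) adjunction between $D(\bbG)\sotimes_E -$ and the functor of analytic vectors.

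Your approach unwinds the definition and reduces everything to the isomorphism
\[
D(\bbG)^\vee \sotimes_E V \;\cong\; \uHom_E(D(\bbG), V)
\]
for arbitrary solid $V$, followed by the Hopf-algebra tensor--hom adjunction of \cref{hopf-tensor} with $M = E$. This strategy is sound, and the equivariance bookkeeping you sketch (the $\star_{1,3}$ action transporting to the left regular action on $D(\bbG)$, the $\star_2$ action to the right regular action) is the correct way to match the two sides. Your treatment of part (2) by passing to the filtered colimit $D(\bbG^\circ) = \varinjlim_n D(\bbG^{(n)})$ and using that $\uHom$ sends colimits in the first variable to limits is also correct and more explicit than the paper's bare citation.

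The one place you should not leave as a hand-wave is exactly the step you flag as ``the hard part'': the identification $D(\bbG)^\vee \sotimes_E V \cong \uHom_E(D(\bbG), V)$ for \emph{arbitrary} solid $V$ is not the same kind of statement as the one used in \cref{Dr-and-HomDr-for-compact-ind-limits}, where $D_r(G)$ is Banach and $V$ is assumed to be LS. Here $D(\bbG)$ is Smith and $V$ is unrestricted, so the mechanism is different: one should invoke the structure theorem $D(\bbG) \cong E \sotimes_\Z \prod_I \Z$ for Smith spaces (cf.\ the proof of \cref{hopf-projective}) together with the solid identity $\uHom_{\Z_\solid}(\prod_I \Z, N) \cong \bigoplus_I N$ for any solid $N$, which is part of the Clausen--Scholze structure theory of $\Vec^\solid$ and is what makes Smith spaces ``internally dualizable'' against arbitrary solid modules. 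If you make that explicit (with the appropriate citation), your proof becomes a self-contained alternative to the paper's appeal to RR theorem 4.36; as written, it quietly re-assumes the very content that RR's theorem encapsulates.
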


\begin{proof}
Both statements follow from \cite[theorem 4.36 part (1)]{RR}.
\end{proof}

\begin{proposition}\label{Dr-and-HomDr-for-compact-ind-limits}
Suppose $G$ is a compact locally $F$-analytic group and $V$ is a solid representation of $G$ whose underlying solid vector space is LS \cite[definition 3.22(2)]{RR} (eg, if $V$ is an admissible locally analytic representation). Then, for any $r \in [1/p, 1)$, we have
\[ V^{D_r(G)} = \uHom_{E_\solid[G]}(D_r(G), V). \]
\end{proposition}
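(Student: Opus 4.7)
The plan is to mirror the proof of the affinoid case in \cref{Emerton-and-representable-style}(1), which rests on a tensor--Hom duality in the solid category. Starting from the definition $V^{D_r(G)} = \uHom_{E_\solid[G]}(E, D_r(G)^\vee \sotimes_E V)$, where $G$ acts on $D_r(G)^\vee \sotimes_E V$ via the $\star_{1,3}$-action (left regular on $D_r(G)^\vee$, representation on $V$), I would construct the natural morphism
\[ \mu: D_r(G)^\vee \sotimes_E V \;\longrightarrow\; \uHom_E(D_r(G), V), \qquad \lambda \otimes v \;\longmapsto\; \bigl(x \mapsto \lambda(x)\cdot v\bigr), \]
of solid vector spaces, and verify it is $G$-equivariant when the target is endowed with the diagonal action using the left regular action on $D_r(G)$ and the representation action on $V$. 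A short computation shows that the $G$-invariants on the target are then exactly the $E_\solid[G]$-linear maps $D_r(G) \to V$.

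The first technical step will be to show that $\mu$ is an isomorphism of solid vector spaces. Here I would use that $D_r(G)$ is a Banach space, so its solid dual $D_r(G)^\vee$ is Smith (cf.\ \cite[lemma 3.8, lemma 3.10]{RR}), together with the tensor--Hom duality statements of \cite[theorem 3.40]{RR} and the adjunction-style identifications of \cite[proposition 4.30]{RR} that are already invoked in the proof of \cref{Dr-analytic-Smith}. Writing the LS space $V$ as a filtered colimit $V = \varinjlim V_n$ of Banach spaces (along injective compact transitions), commuting $- \sotimes_E D_r(G)^\vee$ and $\uHom_E(D_r(G), -)$ with the filtered colimit (both preserve it because $D_r(G)$ and its dual are ``compact'' in the appropriate sense in the solid category), reduces the problem to the pairing of a Smith space with a Banach space, where the identification $S \sotimes_E B \cong \uHom_E(S^\vee, B)$ is standard in the solid formalism.

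The second step is to apply $\uHom_{E_\solid[G]}(E, -)$ to $\mu$ and use the internal tensor--Hom adjunction for solid $E_\solid[G]$-modules:
\[ \uHom_{E_\solid[G]}\!\bigl(E, \uHom_E(D_r(G), V)\bigr) \;=\; \uHom_{E_\solid[G]}\!\bigl(E \sotimes_E D_r(G), V\bigr) \;=\; \uHom_{E_\solid[G]}(D_r(G), V). \]
One also has to check that the induced $G$-action on $V^{D_r(G)}$ (coming from the right regular action, i.e.\ the $\star_2$-action on $D_r(G)^\vee$) matches the $G$-action on $\uHom_{E_\solid[G]}(D_r(G), V)$ coming from the right regular action on $D_r(G)$; this is a direct computation from the definitions, symmetric to the $G$-equivariance check for $\mu$, and uses that the left and right regular actions on $D_r(G)$ commute.

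The main obstacle will be the first step: nailing down the precise solid tensor--Hom identification for the pair (Smith, LS) with the correct $G$-equivariance. All the ingredients are available in \cite{RR}, but the bookkeeping of left-regular versus right-regular actions and of the various duality isomorphisms has to be handled with care, exactly as in the affinoid argument of \cite[theorem 4.36(1)]{RR} that underlies \cref{Emerton-and-representable-style}. Once this duality is in hand, the rest is formal.
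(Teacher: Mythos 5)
Your proposal is correct and takes essentially the same approach as the paper's proof. The paper is more concise: it directly cites \cite[theorem 3.40(2)]{RR} to get the identity $\uHom_E(D_r(G), V) = D_r(G)^\vee \sotimes_E V$ (valid since $D_r(G)$ is Banach and $V$ is LS), and then observes that this identity is $G$-equivariant and so restricts to $G$-linear maps, giving $\uHom_{E_\solid[G]}(D_r(G), V) = \uHom_{E_\solid[G]}(E, D_r(G)^\vee \sotimes_E V) = V^{D_r(G)}$. Your reduction-to-Banach argument in step one is essentially a sketch of the proof of \cite[theorem 3.40(2)]{RR} and can simply be replaced by the citation, which you also give; and your step two (applying $\uHom_{E_\solid[G]}(E,-)$ and using the tensor--Hom adjunction of \cref{hopf-tensor}) is the same move the paper packages as ``taking $G$-linear maps on both sides.'' No gap.
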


\begin{proof}
Observe that
\[ \uHom_E(D_r(G), V) = D_r(G)^\vee \sotimes_E V = \uHom_E(E, D_r(G)^\vee \sotimes_E V) \]
since $D_r(G)$ is Banach \cite[theorem 3.40(2)]{RR}, and taking $G$-linear maps on both sides gives
\[ \uHom_{E_\solid[G]}(D_r(G), V) = \uHom_{E_\solid[G]}(E, D_r(G)^\vee \sotimes_E V) = V^{D_r(G)}. \qedhere \]
\end{proof}

\begin{para}\label{derived-analytic-vectors}{\it Derived $F$-analytic vectors.} In \cref{mixing} we will also consider derived $F$-analytic vectors. These are only defined for solid $G$-representations which are equipped the structure of a $D(G)$-module, cf. \cite[3.1.4]{RRII}. Let $G$ be an $F$-uniform $F$-analytic groups and $\bbG$ its associated rigid analytic group over $F$. Given a solid $D(G)$-module $V$, and $\rho \in (0,1] \cap p^\Q$ one defines 
\[V^{R\bbG[\rho]\anv} = R\uHom_{D(G)}\Big(E,\Big(\cO(G.\bbG[\rho]) \otimes^\bbL_{E_\solid} V\Big)_{\star_{1,3}}\Big) \,,\]
and
\[V^{R\bbG[\rho]^\o\anv} = R\varprojlim_{\rho' < \rho} V^{R\bbG[\rho']\anv} = R\uHom_{D(G)}\Big(E,\cO(G.\bbG[\rho]^\o,V)_{\star_{1,3}}\Big) \,.\]
These are objects in the $\infty$-category $\Mod_{E_\solid}(D(G))$, and their cohomology in degree zero coincides with the analytic vectors we defined in \cref{an-vectors-classical}, cf. the discussion in \cite[3.1.6]{RRII}. By \cite[3.1.8]{RRII} we have 
\[V^{R\bbG[\rho]\anv} = R\uHom_{D(G)}\Big(D(\bbG[\rho],G),V) \,,\]
and
\[V^{R\bbG[\rho]^\o\anv} = R\uHom_{D(G)}\Big(D(\bbG[\rho]^\o,G),V\Big) \,.\]
\end{para}

\begin{proposition}\label{higher-derived-an-vectors} Suppose $G$ is an $F$-uniform locally $F$-analytic group, and let $W$ be an admissible locally $F$-analytic representation of $G$. Then the canonical map 
\[W^{R\bbG[\rho]^\o\anv} \ra W^{\bbG[\rho]^\o\anv}\] 
is a quasi-isomorphism. In other words, all higher derived $\bbG[\rho]^\o$-analytic vectors of $W$ vanish.
\end{proposition}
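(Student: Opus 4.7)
The plan is to rewrite $W^{R\bbG[\rho]^\o\anv}$ as the dual of a derived tensor product involving $W^\vee$, and then invoke flatness of $D(\bbG[\rho]^\o, G)$ over $D(G)$ to conclude that this derived tensor product is concentrated in degree zero. The strategy is the derived analogue of the manipulation used in the proof of \cref{Dr-analytic-Smith}.

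First, recall from \cref{derived-analytic-vectors} that
\[ W^{R\bbG[\rho]^\o\anv} \;=\; R\uHom_{D(G)}\bigl(D(\bbG[\rho]^\o, G),\, W\bigr), \]
so the task is to show that this complex has cohomology only in degree zero. Since $W$ is admissible, its strong dual $W^\vee$ is a coadmissible right $D(G)$-module and Schneider-Teitelbaum duality gives $W = \uHom_E(W^\vee, E)$. Invoking the derived tensor-Hom adjunction in the category of solid $D(G)$-modules, together with the exactness of $\uHom_E(-, E)$ (as $E$ is a field, so this functor agrees with its derived functor), we obtain
\[ R\uHom_{D(G)}\bigl(D(\bbG[\rho]^\o, G), W\bigr) \;=\; \uHom_E\bigl(W^\vee \sotimes^{\bbL}_{D(G)} D(\bbG[\rho]^\o, G),\; E\bigr). \]

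Next, by \cref{distalgs-of-wide-open-and-overconv-gps-and-Dr}(1) one has $D(\bbG[\rho]^\o, G) = \varinjlim_{r > r(\rho)} D_r(G)$. Since $G$ is $F$-uniform and hence compact, $D(G)$ is a Fr\'echet-Stein algebra, so each $D_r(G)$ is flat over $D(G)$ by \cite[theorem 4.5 and section 3]{ST03}. As filtered colimits of flat modules are flat, $D(\bbG[\rho]^\o, G)$ is flat over $D(G)$. Consequently the derived tensor product $W^\vee \sotimes^{\bbL}_{D(G)} D(\bbG[\rho]^\o, G)$ coincides with the underived $W^\vee \sotimes_{D(G)} D(\bbG[\rho]^\o, G)$, which is concentrated in degree zero. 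Its dual is therefore concentrated in cohomological degree zero as well, proving the vanishing of all higher derived $\bbG[\rho]^\o$-analytic vectors of $W$.

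The main technical point is justifying the derived tensor-Hom adjunction and Schneider-Teitelbaum reflexivity in the solid setting, with careful tracking of the left/right module structures (the switch between the two being mediated, where needed, by the antipode of the solid Hopf algebra $D(G)$, cf. \cref{affinoid-hopf}). Both ingredients should follow from the foundational results of \cite{RR} and \cite{RRII}, so no essentially new input is required beyond a careful check that the relevant solid-categorical compatibilities hold at the derived level.
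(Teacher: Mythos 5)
Your proposal has a genuine gap at the crucial step. You claim that $\uHom_E(-,E)$ is exact ``as $E$ is a field, so this functor agrees with its derived functor,'' which would give $W = R\uHom_E(W^\vee,E)$. But this reasoning is only valid in the classical (discrete) category of $E$-vector spaces. In the category of solid $E$-vector spaces, $E$ is \emph{not} injective, and the internal $\uHom_E(-,E)$ is emphatically not exact; the question of whether $W = R\uHom_{E_\solid}(W^\vee, E_\solid)$ for $W$ of compact type is precisely the content of the open conjectures \cite[3.41, 3.42]{RR}, which the paper explicitly flags as intractable (``being of similar type as questions about derived duals of solid Banach spaces''). So your argument silently assumes the very statement the paper is carefully working to avoid.

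The paper's proof circumvents this obstruction with a different idea. Instead of asserting $W = R\uHom_{E_\solid}(W^\vee,E_\solid)$, it shows the weaker but sufficient identity that the derived locally analytic vectors of the derived dual recover $W$, namely
\[ R\uHom_{E_\solid}(W^\vee,E_\solid)^{R\la} = W, \]
established via the chain of equalities in the proof labelled $(\ref{step1})$, using flatness of $D(\bbG[\rho']^\o,G)$ over $D(G)$, the topological Mittag-Leffler theorem \cite[3.27]{RR}, and the compact-type structure of $D(\bbG[\rho']^\o,G)\sotimes_{D(G)} W^\vee$. Then, since $R\uHom_{D(G)}(D(\bbG[\rho]^\o,G),-)$ factors through $(-)^{R\la}$ by \cite[3.2.7]{RRII}, one may drop the $^{R\la}$, and from there the computation proceeds much as in your sketch (adjunction, flatness, Mittag-Leffler again). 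The flatness and colimit arguments in your second paragraph are fine and agree with the paper's use of \cite[4.4]{Lahiri} and \cite[A.11]{EmertonJII}; it is only the initial duality step that fails. To repair your argument, replace ``$W = R\uHom_E(W^\vee,E)$'' by the analytic-vector version of this statement and justify it by the paper's Mittag-Leffler computation.
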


\begin{proof} This statement is analogous to the corresponding statement for admissible Banach space representations in \cite[4.48]{RR}. Set $M := W^\vee$. If $W$ were a Banach space then $M$ would be a Smith space, and one would have $W = R\uHom_{E_\solid}(M,E_\solid)$, by \cite[3.8, 3.10]{RR}. This identity is crucially used in the proof of \cite[4.48]{RR} for admissible Banach space representations. In our situation, however, $W$ is a space of compact type, in which case we do not know that $W \stackrel{?}{=} R\uHom_{E_\solid}(M,E_\solid)$.\footnote{This identity would follow from \cite[Conj. 3.41, 3.42]{RR}. Yet this conjecture seems to be intractable, being of similar type as questions about derived duals of solid Banach spaces.} Instead, we first show that the complex of derived locally analytic vectors $R\uHom_{E_\solid}(M,E_\solid)^{R\la}$ is canonically isomorphic to $W$.\footnote{We thank Juan Esteban Rodr\'iguez Camargo for suggesting this proof which is significantly simpler than our previous proof.} In the following $\rho'$ runs through a set of numbers in $(0,1) \cap p^\Q$ tending to zero. Then we have

\begin{numequation}\label{step1}
\begin{array}{rcl}
R\uHom_{E_\solid}(M,E_\solid)^{R\la} & = & \lim_{\rho' \ra 0} R\uHom_{D(G)}(D(\bbG[\rho']^\o,G),R\uHom_{E_\solid}(M,E_\solid)) \\
&&\\
& = & \lim_{\rho' \ra 0} R\uHom_{D(G)}\Big(D(\bbG[\rho']^\o,G)\otimes^\bbL_{E_\solid} M,E_\solid\Big)\\
&&\\
& = & \lim_{\rho' \ra 0} R\uHom_{E_\solid}\Big(D(\bbG[\rho']^\o,G)\sotimes^\bbL_{D(G)} M,E_\solid\Big) \\
&&\\
& = & \lim_{\rho' \ra 0} R\uHom_{E_\solid}\Big(D(\bbG[\rho']^\o,G)\sotimes_{D(G)} M,E_\solid\Big) \\ 
&&\\
& = & \lim_{\rho' \ra 0} \uHom_{E_\solid}\Big(D(\bbG[\rho']^\o,G)\sotimes_{D(G)} M,E_\solid\Big) \\
&&\\
& = & \lim_{\rho' \ra 0} W^{\bbG[\rho']^\o\anv} = W \,,
\end{array}    
\end{numequation}
where the first equality holds by \cite[3.10]{RRII}. The fourth equality is true because $D(\bbG[\rho']^\o,G)$ is a flat over $D(G)$, cf. \cite[4.4]{Lahiri}, \cite[A.11]{EmertonJII}. Again, by \cite[A.11]{EmertonJII} the space $M_{\rho'} := D(\bbG[\rho']^\o,G)\sotimes_{D(G)} M$ is of compact type, and is thus a countable inductive limit of Smith spaces with transition maps which are injective and of trace class, cf. \cite[Def. 3.34, Cor. 3.38]{RR}. Its dual $M^\vee_{\rho'}$ is thus a projective limit of Banach spaces with transition maps that have dense image. By the topological Mittag-Leffler theorem \cite[3.27]{RR} we find that $R\uHom_{E_\solid}(M_{\rho'},E) = R\uHom_{E_\solid}((M_{\rho'}^\vee)^\vee,E) = M_{\rho'}^\vee = W^{\bbG[\rho']\anv}$, where the last equality holds by \cite[5.2]{Lahiri}, \cite[A.1]{EmertonJII}. This justifies the fifth an sixth equality. Now we can use \ref{step1} to compute the derived $\bbG[\rho]^\o$-analytic vectors as follows:

\[\begin{array}{rcl}
W^{R\bbG[\rho]^\o\anv} & = & R\uHom_{D(G)}(D(\bbG[\rho]^\o,G),W) \\
&&\\
& \stackrel{\ref{step1}}{=} & R\uHom_{D(G)}\Big(D(\bbG[\rho]^\o,G),R\uHom_{E_\solid}(M,E_\solid)^{R\la}\Big)\\
&&\\
& = & R\uHom_{D(G)}\Big(D(\bbG[\rho]^\o,G),R\uHom_{E_\solid}(M,E_\solid)\Big)\\
&&\\
& = & R\uHom_{D(G)}\Big(D(\bbG[\rho]^\o,G)\sotimes^\bbL_E M,E_\solid\Big) \\ 
&&\\
& = & R\uHom_\Es\Big(D(\bbG[\rho]^\o,G)\sotimes^\bbL_{D(G)} M,E_\solid\Big) \\ 
&&\\
& = & R\uHom_\Es\Big(D(\bbG[\rho]^\o,G)\sotimes_{D(G)} M,E_\solid\Big) \\ 
&&\\
& = & \uHom_\Es\Big(D(\bbG[\rho]^\o,G)\sotimes_{D(G)} M,E_\solid\Big) = W^{\bbG[\rho]\anv}\,,\\ 
\end{array}\]  
where the first equality holds by \cite[3.1.8]{RRII}, the third holds by \cite[3.2.7]{RRII}, and the remaining equality signs are justified as above.
\end{proof}

\section{Analytic Schneider-Stuhler complexes}

\begin{para} \label{notation-reductive}
Let $\bG$ be a connected reductive group over $F$ and set $G = \bG(F)$. Denote by $\BT = \BT(\bG/F)$ the {\it semisimple} Bruhat-Tits building of $\bG$ over $F$. Let $\ell$ be the semisimple split rank of $\bG$. For any integer $q \in [0, \ell]$, write $\BT_q$ for the set of $q$-dimensional facets, and $\BT_{(q)}$ for the oriented $q$-dimensional facets. See \cite[sections I.1, II.1]{ScSt97} for an overview. 
\end{para}

\subsection{The groups \texorpdfstring{$U^{(e)}_\sigma$}{U-sigma-(e)}} \label{Uesigma}

\begin{para}
To a facet $\sigma$ of $\BT$ and a non-negative integer $e$ called the {\it level}, Schneider and Stuhler define a compact open subgroup $U_\sigma^{(e)} \subseteq G$ \cite[p. 114]{ScSt97}.
\end{para}

\begin{para}
The goal of this section is the following result \cref{ss-groups-uniform}, which, in combination with the discussion of \cref{F-uniform}, implies that one can canonically associate a variety of analytic groups to $U_\sigma^{(e)}$ when the level $e$ is sufficiently large. This result generalizes the observation of \cite[remark 4.3.4]{PSS1}, which covers the case when $F = \Q_p$ and $G$ is split.
\end{para}

\begin{proposition} \label{ss-groups-uniform}
$U_\sigma^{(e)}$ is $F$-uniform for $e$ sufficiently large. 
\end{proposition}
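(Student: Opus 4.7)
The plan is to relate $U_\sigma^{(e)}$ to Moy-Prasad filtration subgroups for $e$ large and then to verify the conditions of $F$-uniformity using the commutator and $p$-power estimates for those filtrations. If $x$ lies in the interior of $\sigma$, then for an appropriate normalization (cf. \cite[I.2]{ScSt97}, \cite{MoyPrasad}) the group $U_\sigma^{(e)}$ sits inside a Moy-Prasad subgroup $G_{x, r(e)}$ with $r(e) \to \infty$ as $e \to \infty$, and in fact agrees with it for $e$ sufficiently large (up to the conventions fixing the normalization of the valuation and of the Moy-Prasad indexing). Making this identification precise, uniformly in the facet $\sigma$ modulo $G$, is the first task.

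Once this is done, $F$-uniformity breaks into two pieces. First, $U_\sigma^{(e)}$ must be $\Qp$-uniform (i.e., uniform in the sense of \cite[theorem 9.10]{DDMS} after restriction of scalars). For deep Moy-Prasad groups this is standard: the filtration satisfies $[G_{x,r}, G_{x,s}] \sub G_{x, r+s}$ and $G_{x,r}^p \sub G_{x, r + v_F(p)}$, so for $r$ sufficiently large $U_\sigma^{(e)}$ is pro-$p$, torsion-free, and powerful, and the lower $p$-series can be matched with a subsequence of the Moy-Prasad filtration to yield a strictly descending chain with abelian quotients of the correct ranks. Second, the $\Zp$-Lie algebra $L$ attached to $U_\sigma^{(e)}$ via \cite[theorem 9.10]{DDMS} must carry an $\cO_F$-Lie algebra structure. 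This follows from the fact that the Moy-Prasad filtration subgroups are $\cO_F$-points of smooth affine $\cO_F$-group schemes, so that for $e$ large enough the truncated logarithm identifies $U_\sigma^{(e)}$ with an $\cO_F$-sublattice $L_\sigma \sub \frg = \Lie(\bG)$ in an $\cO_F$-equivariant way, and the Baker-Campbell-Hausdorff formula transports the $\cO_F$-Lie bracket on $L_\sigma$ back to the $\Zp$-Lie algebra structure on $L$.

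The main obstacle is quantitative bookkeeping: one must choose the threshold on $e$ explicit enough that all the estimates hold simultaneously --- convergence of $\log$ and $\exp$ on $U_\sigma^{(e)}$, the powerful condition $[L,L] \sub p^\kappa L$ with $\kappa$ as in \cref{kappa}, the $p$-th power condition, and the identification of the $\Zp$-Lie bracket with the restriction of the $\cO_F$-Lie bracket --- while simultaneously tracking the ramification index $e(F/\Qp)$ and the mild extra shift at $p = 2$. In the split case over $\Qp$ the analogous computation is carried out in \cite[remark 4.3.4]{PSS1}, and the present statement is a notationally heavier adaptation of that argument, the new ingredients being the ramification of $F/\Qp$ and the nonsplit structure of $\bG$. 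Both new ingredients are absorbed by the Moy-Prasad formalism, so no conceptual difficulty is expected beyond making the thresholds explicit.
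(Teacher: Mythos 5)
Your proposal is on the right track and uses the same essential technology as the paper --- identification with deep Moy--Prasad filtration subgroups, then filtration estimates --- but the paper takes a cleaner, more streamlined route that entirely bypasses the group-level verification of $\Qp$-uniformity you sketch. Specifically, the paper quotes Vign\'eras's precise identity $U_\sigma^{(e)} = P_{x,e+}$ (no hedging: it holds for all $e$, not just $e \gg 0$), then the Adler--Korman result that $\exp : \frg_{x,r} \to P_{x,r}$ is bijective for $r \gg 0$, so $U_\sigma^{(e)} = \exp(\frg_{x,e+})$ for $e \gg 0$. With this in hand, \cite[section 9.4]{DDMS} reduces everything to checking that the $\cO_F$-Lie lattice $\frg_{x,e+}$ is powerful, which follows from Adler's commutator estimate $[\frg_{x,r+},\frg_{x,s+}] \subseteq \frg_{x,(r+s)+}$ together with the Moy--Prasad shift $\frg_{x,(r+c)+} = \varpi_F\frg_{x,r+}$ (where $c = [L:F^{\nr}]$), yielding the explicit threshold $e \geq \kappa \cdot c \cdot e(F/\Qp)$. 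By contrast, your proposal first verifies $\Qp$-uniformity at the group level using $[G_{x,r},G_{x,s}] \subseteq G_{x,r+s}$ and $G_{x,r}^p \subseteq G_{x,r+v_F(p)}$ and matching the lower $p$-series against the Moy--Prasad filtration, and only then constructs the $\cO_F$-structure via the truncated logarithm. This can be pushed through, but it duplicates work: you need the $\exp$/$\log$ bijectivity anyway for the $\cO_F$-Lie structure, and once you have it the group-level verification of the lower-$p$-series properties becomes redundant --- powerfulness and torsion-freeness of the Lie lattice already imply uniformity of the exponential image. The paper's proof is thus the ``collapsed'' version of your argument, working on one side of the exp/log dictionary throughout rather than both.
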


\begin{proof}
We use a connection with work of Moy and Prasad \cite{MoyPrasad}, which introduces compact open subgroups $P_{x,r}$ for any point $x \in \BT$ and any real number $r \geq 0$ \cite[section 3.8]{MoyPrasad}.\footnote{We follow Vign\'eras in denoting these groups by $P_{x,r}$, instead of $\sP_{x,r}$ as in \cite{MoyPrasad} or $G_{x,r}$ as in \cite{AdlerKorman}.} Vign\'eras shows that 
\begin{equation}\label{SS=MP}
U_\sigma^{(e)} = P_{x,e+} := \bigcup_{r>e} P_{x,r}
\end{equation}
for every $x \in \sigma$ \cite[proposition 1.1]{VignerasSheaves}. Moy and Prasad further define $\cO_F$-Lie algebras $\frg_{x,r}$ and $\frg_{x,r+} := \bigcup_{s>r} \frg_{x,s}$ which are lattices in $\frg = \Lie(G)$ \cite[end of section 3.2]{MoyPrasad}. Moreover, $\exp : \frg_{x,r} \to P_{x,r}$ is bijective for all $r \gg 0$ \cite[end of 8.5]{AdlerKorman}, so
\[ U_\sigma^{(e)} = P_{x,e+} = \bigcup_{r > e} P_{x,r} = \exp \left( \bigcup_{r > e} \frg_{x,r} \right) = \exp(\frg_{x,e+}) \]
for $e \gg 0$. By \cite[section 9.4]{DDMS}, it is therefore sufficient to show that $\frg_{x,e+}$ is powerful for $e \gg 0$. Recall that \begin{equation}\label{commutator}
[\frg_{x,r+},\frg_{x,s+}] \subseteq \frg_{x,(r+s)+}
\end{equation}
by \cite[proposition 1.4.2]{Adler98}, and that \begin{equation}\label{vpi-shift}
\frg_{x,(r+c)+} = \varpi_F \frg_{x,r+},
\end{equation}
where $c = [L:F^\nr]$ is the degree of a certain finite extension $L$ of the maximal unramified extension $F^\nr$ of $F$ \cite[section 3.1 and line 14 of page 399]{MoyPrasad}.\footnote{The extension $L$ is the minimal Galois extension which splits $\bG^{\rm nr} = \bG \times_F F^\nr$, except if $\bG^\nr$ is a triality form of ${}^6D_4$, in which case $L$ is a degree 3 subextension of the Galois splitting field of $\bG^\nr$ of degree 6. The degree $[L:F^\nr]$ is denoted by $\ell$ in \cite{MoyPrasad}.} 
Suppose $e \geq \varepsilon \cdot c \cdot e(F/\Q_p)$, where $e(F/\Q_p)$ is the ramification index and $\kappa$ is as in \cref{kappa}. Then 
\[ [\frg_{x,e+}, \frg_{x,e+}] \subseteq \frg_{x,(2e)+} \subseteq \frg_{x,(e + \varepsilon \cdot c \cdot e(F/\Q_p))+} \subseteq \varpi_F^{\varepsilon \cdot e(F/\Q_p)} \frg_{x,e+} = p^{\varepsilon} \frg_{x,e+}, \]
so $\frg_{x,e+}$ is indeed powerful. 
\end{proof}

\begin{remark} 
The proof above shows that $\frg_{x,e+}$ is powerful for any 
\[ e \geq \varepsilon \cdot c \cdot e(F/\Qp). \]
We expect that any such $e$ should also be large enough to guarantee that $\exp : \frg_{x,e+} \to P_{x,e+}$ is bijective (and thus that $U_\sigma^{(e)}$ is $F$-uniform).
\end{remark}

\subsection{Analytic Schneider-Stuhler complexes} 

\begin{para}
The aim in this section is to construct functors from solid representations to augmented complexes, following the construction of \cite[section II.2]{ScSt97}\footnotemark{} but replacing the invariants that appear there with more general coefficient systems. The type of coefficient systems that we have in mind, and which will be investigated later on, are given by taking analytic vectors of various kinds. This extends the discussion of \cite{LahiriResolutions}.
\footnotetext{The construction of \cite[section II.2]{ScSt97} in the case of the trivial representation also appears in  \cite[proposition 46]{BernsteinLectures} and \cite{PrasadRaghunathan}.}
\end{para}

\begin{para} \label{functors}
Let $A = (A_\sigma)_{\sigma \in \BT}$ be a family of functors from the category of solid representations of $G$ to the category of solid vector spaces, equipped with natural morphisms $A_\tau(V) \to A_\sigma(V)$ whenever $\sigma$ is a face of $\tau$ in $\BT$, and with natural morphisms $A_\sigma(V) \to V$ whenever $\sigma$ is a vertex of $\BT$.
\end{para}

\begin{remark}
Stated differently, for any solid representation $V$ of $G$, the family $(A_\sigma(V))_{\sigma \in \BT}$ is a coefficient system of solid vector spaces on $\BT$ equipped with an augmentation into $V$. 
\end{remark}

\begin{examples}\label{examples-functors-A}
Fix a level $e \gg 0$ such that $U_\sigma^{(e)}$ is $F$-uniform for all facets $\sigma$ of $\BT$. We write simply $U_\sigma$ in place of $U_\sigma^{(e)}$. Fix $\rho \in (0, \rho^*) \cap p^\Q$ and $* \in \{\emptyset, \circ, \dagger\}$, so that we have the analytic groups $\bbU_\sigma[\rho]^*$ as in \cref{F-uniform}. Also, fix $r \in \sR$, where $\sR$ has been introduced in \ref{the-set-sR}. All of the following constructions, discussed above in \cref{analytic-vectors}, are examples of constructions that give the data of \cref{functors}.
\begin{enumerate}[(a)]
\item $A_\sigma(V) = V^{U_\sigma}$.
\item $A_\sigma(V) = V^{\bbU_\sigma[\rho]^*\anv}$. 
\item $A_\sigma(V) = V^{D(\bbU_\sigma[\rho]^*)}$.
\item $A_\sigma(V) = V^{D_r(U_\sigma)}$.

\item $A_\sigma(V) = \uHom_{U_\sigma}(D_\sigma, V)$, for $D_\sigma = D(\bbU_\sigma[\rho]^*)$ or $D_{r}(U_\sigma)$. 
\end{enumerate}
By various results in section \cref{analytic-vectors}, some of these constructions coincide for all solid representations of $G$, or only for certain of those representations.
\end{examples}

\begin{definition} \label{analytic-schneider-stuhler}
With $A$ as in \cref{functors}, we associate an augmented complex
\[ \cS^A_\bullet(V) \to V \]
to any solid representation $V$ of $G$ as follows. Let $\cS^A_\bullet(V)$ be the complex which in homological degree $q$ is given by
\[ \cS^A_q(V) = \left\{ (v_{(\sigma, c)})_{(\sigma, c) \in \BT_{(q)}} \in \bigoplus_{(\sigma, c) \in \BT_{(q)}} A_\sigma(V) : v_{(\sigma, -c)} = -v_{(\sigma, c)} \text{ if } q \geq 1 \right \}\footnote{This notation is abusive in that it abbreviates an equalizer of solid vector spaces.}  \]
for all $0 \leq q \leq \ell$ and $\cS^A_q(V) = 0$ otherwise. The differentials $d : \cS^A_{q+1}(V) \to \cS^A_q(V)$ are given by
\[ d(v) = \left( \sum_{\substack{(\sigma', c') \in \BT_{(q+1)} \\ \sigma' \; \subseteq \; \overline{\sigma} \\ \partial_{\sigma'}^\sigma(c) = c'}} v_{(\sigma', c')} \right)_{(\sigma, c) \in \BT_{(q)}}. \]
The natural maps $A_\sigma(V) \to V$ for $\sigma \in \BT_{(0)} = \BT_0$ induce the augmentation $\cS^A_0(V) \to V$ which is $G$-equivariant for the action of $G$ on $\cS^A_0(V)$ as described below. 
\end{definition}

\begin{para}{\it The group action on $\cS^A_q(V)$.} Note that for $g \in G$ and $v_{(\sigma,c)} \in A_\sigma(V)$ one has $g.v_{(\sigma,c)} \in A_{g.\sigma}$. Therefore, $G$ acts as follows on $\cS^A_q(V)$: for $v=(v_{(\sigma, c)})_{(\sigma, c) \in \BT_{(q)}} \in \cS^A_q(V)$ and $g \in G$ we set $g.v = (g.v_{(g^{-1}.\sigma,g^{-1}.c)})_{(\sigma,c)\in \BT_{(q)}}$. We want to describe this representation more concretely. To this end, when $q \ge 1$, choose for every $\sigma \in \BT_q$ an orientation $c_\sigma$, and denote the opposite orientation by $c'_\sigma$. Then the map
\begin{numequation}\label{c-ind-description-1}
\bigoplus_{\sigma \in \BT_q} A_\sigma(V) \lra \cS^A_q(V)
\end{numequation}
which sends $(v_\sigma)_{\sigma \in \BT_q}$ to $(v_{(\sigma,c)})_{(\sigma,c) \in \BT_{(q)}}$, where 
\[v_{(\sigma,c)} = \left\{\begin{array}{lcl} v_{\sigma,c_\sigma} & , & c = c_\sigma \\ -v_{\sigma,c_\sigma} & , & c = c'_\sigma \end{array} \right. \;,\]
is an isomorphism of solid $E$-vector spaces. Denote by $\Psd$ the stabilizer of the facet $\sigma$ (this group may not stabilize $\sigma$ pointwise). We define a character
\[\chi_\sigma: \Psd \lra \{1,-1\} \;, \;\;  h \mapsto \left\{\begin{array}{lcl} 1 & , & h.c_\sigma = c_\sigma \\ -1 & , & h.c_\sigma = c'_\sigma \end{array} \right. \;.\]
We can then define an action by $\Psd$ on $A_\sigma(V)$ by $h.v = \chi_\sigma(h) \cdot h(v)$, where $h(v)$ refers to the action of $h$ on $v$ induced by the action of $G$ on $V$. The map \ref{c-ind-description-1} becomes then an isomorphism of $G$-representations  
\begin{numequation}\label{c-ind-description-2}
\begin{tikzcd}
\bigoplus_{\sigma \in G\bksl \BT_q} \cind^G_\Psd A_\sigma(V) \ar[r,"\simeq"] & \cS^A_q(V) \;.    
\end{tikzcd}
\end{numequation}
Is it this description which we will often use in the following.  
\end{para}

\begin{question}
Fix a choice of $A = (A_\sigma)_{\sigma \in \BT}$ as in \cref{functors}. For which solid representations $V$ of $G$ is it the case that $\cS^A(V) \to V$ is a quasi-isomorphism? 
\end{question}

\begin{para}
We summarize what we know about the above question.
\begin{enumerate}[(a)]
\item If $A_\sigma(V) = V^{U_\sigma}$ is the $U_\sigma$-invariants of $V$, one of the main results of \cite{ScSt97} is precisely that $\cS^A(V) \to V$ is a quasi-isomorphism for smooth $V$. 
\item It is shown in \cite[theorem 1.1.2]{LahiriResolutions} that, if $V$ is a locally analytic principal series representation of $G = \GL_2(F)$, and $A_\sigma(V) = V^{\bbU_\sigma[1]\anv}$, then $\cS^A(V) \to V$ is a quasi-isomorphism for all sufficiently small $\rho$ depending on $V$. 
\item In \cref{universal} below, we show that $\cS^A(V) \to V$ is a quasi-isomorphism for $V = V^\univ$ a certain ``universal representation'' and $A_\sigma(V) = \uHom_{U_\sigma}(D_r(U_\sigma),V)$.
\end{enumerate}
\end{para}

\begin{remarks}\label{remarks-exactness-of-analytic-ScSt} (1) Denote by $\Sigma_0 = \bbP^{1,\rig}_F \setminus \bbP^1(F)$ the $p$-adic upper half plane over $F$, and let $\pr_m: \Sigma_m \ra \Sigma_0$ be the $m^{\rm th}$ Drinfeld covering. Denote by $\cO(\Sigma_m)^\chi$ the $\chi$-isotypic component for an irreducible representation $\chi$ of the finite covering group of $\Sigma_m/\Sigma_0$. Then $V = \left[(\pr_m)_* \cO(\Sigma_m)^\chi\right]'$ is a locally analytic representation of $G = \GL_2(F)$. We then expect that the analytic Schneider-Stuhler complex of $V$ for the coefficient system $A_\sigma(V) = V^{\bbU_\sigma[\rho]^\circ\anv}$ can be related to the \v{C}ech complex of the sheaf $(\pr_m)_* \cO(\Sigma_m)^\chi$ for the covering of $\Sigma_0$ given by the Bruhat-Tits tree. The radius $\rho$ of the groups $\bbU_\sigma[\rho]$ for which one obtains a resolution will depend on the representation $\chi$. 
\vskip8pt
(2) One may expect that certain features of the theory of the analytic Schneider-Stuhler complex, as defined above, resemble those that arise in the study of the Schneider-Stuhler complex for smooth representations on vector spaces over fields of characteristic $p$. In this setting it is known that the Schneider-Stuhler complex is not always a resolution \cite[remark 3.2, item (3)]{OllivierSchneiderGorenstein}. On the other hand, it is known that the Schneider-Stuhler complex for mod $p$ representations is a resolution for the universal representation, and for principal series representations of $\GL_n(F)$ \cite[Introduction]{OllivierResolutions}.
\end{remarks}

\begin{remark}
One can also form a ``derived variant'' of the analytic Schneider-Stuhler complex, roughly by replacing analytic vectors with derived analytic vectors. Exploration of this direction of variation is work in progress.  
\end{remark}

\subsection{Universal representation} \label{universal}

\begin{para}  \label{universal-convention}
Fix $r \in \sR$, where $\sR$ has been defined in \ref{the-set-sR}. In this section, we consider the family $A = (A_\sigma)_{\sigma \in \BT}$ of functors as in \cref{functors} where 
\[ A_\sigma(V) = \uHom_{U_\sigma}(D_r(U_\sigma), V). \]
\end{para}

\begin{para} \label{distributions-coset}
Let $U$ be an open compact $F$-uniform subgroup of $G$. For a (left) coset $s = gU \in G/U$, define
\[ D_r(s) = D_r(gU) := \delta_g D_r(U) \]
regarded as a subspace of $D_r(G)$.
\end{para}

\begin{para} \label{free-cocompletion-category}
Regard the set $G/U$ of cosets of $U$ in $G$ as a discrete category (i.e., a category whose objects are the elements of $G/U$ and with no non-identity morphisms), and let $\sC$ be the free cocompletion of this category. Explicitly, $\sC$ is a category whose objects are families $\bbT = (T_s)_{s \in G/U}$ of sets (i.e., where $T_s$ is a small set for any $s \in G/U$), and whose morphisms are families of set maps indexed by $s \in G/U$. The universal property of free cocompletion implies that the $s \mapsto D_r(s)$ construction of \cref{distributions-coset}, regarded as a functor on $G/U$, extends uniquely to $\sC$ cocontinuously. Explicitly, for $\bbT = (T_s)_{s \in G/U} \in \sC$, we have 
\[ D_r(\bbT) = \bigoplus_{s \in G/U} \bigoplus_{t \in T_s} D_r(s). \]
\end{para}

\begin{para}
Let $\bbS = (S_s)_{s \in G/U} \in \sC$ be the object where $S_s = *$ (i.e., $S_s$ is a singleton set) for all $s \in G/U$. The {\it universal representation} of interest in this section is
\[ V^\univ = D_r(\bbS) = \bigoplus_{s \in G/U} D_r(s). \]
This representation is universal in the sense that it (internally) represents the functor $W \mapsto \uHom_U(D_r(U), W)$ on the category of solid representations of $G$. 
\end{para}

\begin{para} \label{generated-by-analytic-vectors}
Increasing the level $e$ further if necessary, we can assume that there exists a vertex $\sigma \in \BT_0$ such that $U_\sigma \subseteq U$. This implies that the image of $A_\sigma(V^\univ)$ in $V^\univ$ generates $V^\univ$ as a solid $G$-representation. Indeed, observe that clearly $D_r(U)$ generates 
\[ V^\univ = \bigoplus_{gU \in G/U} \delta_g D_r(U), \]
so it is sufficient to show that the inclusion of $D_r(U)$ into $V^\univ$ factors through $A_\sigma(V^\univ) \to V^\univ$. By \cref{Dr-functoriality} there is a morphism of solid $E$-algebras $D_r(U_\sigma) \to D_r(U)$. This gives rise to the map
\[ D_r(U) \to \uHom_{U_\sigma}(D_r(U_\sigma), D_r(U)) \,,\; \mu \mapsto [\delta \mapsto \delta \mu] \,.\]
Composing this map with
\[\begin{tikzcd} \uHom_{U_\sigma}(D_r(U_\sigma), D_r(U)) \ar{r} & \uHom_{U_\sigma}(D_r(U_\sigma), V^\univ) = A_\sigma(V^\univ) \ar{r} & V^\univ, \end{tikzcd} \]
where the first map is functoriality of $\uHom_{U_\sigma}(D_r(U_\sigma), -)$ and second map is evaluation at 1, is the natural inclusion of $D_r(U)$ as a direct summand of $V^\univ$. Thus $D_r(U)$ factors through $A_\sigma(V^\univ) \to V^\univ$, as claimed. 
\end{para}

\begin{theorem} \label{universal-rep}
$\cS^A_\bullet(V^\univ) \to V^\univ$ is a quasi-isomorphism.
\end{theorem}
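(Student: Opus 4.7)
My plan is to adapt the proof of the corresponding statement for the smooth universal representation $\cind^G_U \mathbf{1}$ from \cite[II.3.1]{ScSt97}.

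I would start from the decomposition $V^\univ = \bigoplus_{gU \in G/U} \delta_g D_r(U)$, under which the $G$-action permutes summands by left multiplication on $G/U$ and acts internally on each $\delta_g D_r(U)$ via the appropriate $D_r$-algebra conjugation. For each pair $(\sigma, gU)$ with $U_\sigma \subseteq gUg^{-1}$, the conjugation $g^{-1}(-)g : U_\sigma \hookrightarrow U$ is a morphism of $F$-uniform groups, so \cref{Dr-functoriality} induces a continuous algebra morphism $D_r(U_\sigma) \to D_r(U)$. This equips $\delta_g D_r(U)$ with a compatible $D_r(U_\sigma)$-module structure and produces a natural inclusion $\delta_g D_r(U) \hookrightarrow A_\sigma(V^\univ)$.

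The heart of the argument is to show that $\cS^A_\bullet(V^\univ) \to V^\univ$ reduces, compatibly with the $G$-action, to augmented oriented chain complexes of contractible subcomplexes of the Bruhat--Tits building. For each coset $gU$ one has the subcomplex $\BT(gU) := \{\sigma \in \BT : U_\sigma \subseteq gUg^{-1}\}$ of $\BT$, and each $\BT(gU)$ is a convex subcomplex of the $\mathrm{CAT}(0)$ space $\BT$, hence contractible (compare the argument of \cite[I.2.4]{ScSt97}). The exactness of $\cS^A_\bullet(V^\univ) \to V^\univ$ should then follow by reducing, fiberwise over $G/U$, to the contractibility of these subcomplexes, just as in the smooth case.

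The main obstacle is the rigorous translation of this geometric argument into the solid/analytic setting. Specifically, one must carefully compute $A_\sigma(V^\univ) = \uHom_{U_\sigma}(D_r(U_\sigma), V^\univ)$ and verify that the assembly of $\cS^A_\bullet(V^\univ)$ via compact induction from $P^\dag_\sigma$ to $G$ respects the desired decomposition. A key technical ingredient is that $\sO(\bbU_\sigma[r]) = D_r(U_\sigma)^\vee$ has no non-trivial idempotents --- since the polydisk $\bbU_\sigma[r]$ is connected --- which controls how a solid morphism $D_r(U_\sigma) \to V^\univ$ can distribute across the direct-sum summands of $V^\univ$, and is the analytic counterpart of the finite-support argument used in \cite{ScSt97}.
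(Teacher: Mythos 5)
Your overall strategy is sound and essentially parallels the paper's: compute $A_\sigma(V^\univ)$, organize $\cS^A_\bullet(V^\univ)$ fiberwise over $G/U$, and use contractibility of the subcomplexes $\BT(gU) = \{\sigma : U_\sigma \subseteq gUg^{-1}\}$ (the paper's $\BT_s$). You are also right that the main obstacle is the computation of $A_\sigma(V^\univ)$; this is exactly the paper's Proposition \ref{analytic-vectors-computation}, which shows $A_\sigma(V^\univ) = D_r(\bbS_\sigma) = \bigoplus_{s : U_\sigma gU \subseteq gU} D_r(s)$. A notable difference in organization: you propose to directly decompose $\cS^A_\bullet(V^\univ) \cong \bigoplus_{s\in G/U} D_r(s) \sotimes_E C^{\mathrm{or}}_\bullet(\BT_s)$ and invoke exactness of $D_r(s) \sotimes_E(-)$ and contractibility of $\BT_s$, whereas the paper instead builds a double complex out of cosimplicial objects $\bbS^{\sqcup\bullet}_{(\sigma)}$ (iterated pushouts over $\bbS_\sigma$) and runs two spectral sequences, which requires an extra contractibility statement for the iterated pushouts $\BT \sqcup_{\BT_s} \cdots \sqcup_{\BT_s} \BT$ (Corollary \ref{pushout-contractible}). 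Modulo careful tracking of orientations and differentials, your direct route would be somewhat leaner.

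The genuine gap, however, is the mechanism you offer for the key computation. Your suggestion that absence of nontrivial idempotents in $\sO(\bbU_\sigma[r]) = D_r(U_\sigma)^\vee$ controls how a $U_\sigma$-equivariant morphism $\lambda: D_r(U_\sigma) \to V^\univ$ distributes over the summands of $V^\univ$ does not do the required work. First, as a small point, $D_r(U_\sigma)$ is the Schneider--Teitelbaum Banach distribution algebra, not the dual of $\sO(\bbU_\sigma[\rho])$ for any single affinoid $\bbU_\sigma[\rho]$ (cf.\ \cref{distalgs-of-wide-open-and-overconv-gps-and-Dr}). More seriously, $\lambda$ is only a module map, not an algebra map, and on each $U_\sigma$-orbit $V^\univ_{\overline{h}}$ finiteness of support is automatic; the real issue is showing that the component $\mu_g$ of $\lambda(1)$ in $\delta_g D_r(U)$ must vanish unless $g^{-1}U_\sigma g \subseteq U$. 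The paper's argument is substantially more delicate: it restricts $\lambda$ to $U(\frg)$ to obtain $\phi(w) = \operatorname{Ad}(g^{-1})(w)\mu_g$, invokes coadmissibility (hence closedness) of the left ideal $D_r(U)\mu_g$ and the fact that $D_r(U)$ is an integral domain (from \cite[5.2.1]{OrlikStrauchIRR}) to invert multiplication by $\mu_g$, concluding that $\operatorname{Ad}(g^{-1})$ maps the closure $U_r(\frg, U_\sigma)$ of $U(\frg)$ in $D_r(U_\sigma)$ into $U_r(\frg,U)$, and then uses the nontrivial Lemma \ref{closure-of-U} identifying $U \cap U_r(\frg,U) = \exp_U(\varpi^\ell \Lie_{\Z_p}(U))$ to deduce $\operatorname{Ad}(g^{-1})(\Lie_{\Z_p}(U_\sigma)) \subseteq \Lie_{\Z_p}(U)$. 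None of this is captured by a connectedness/idempotent argument, so as stated your proposal would not close the gap you correctly flag as the main obstacle.
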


\begin{para}
The remainder of this subsection is a proof of \cref{universal-rep}. To begin, for $\sigma \in \BT$, let $\bbS_\sigma \subseteq \bbS$ be the subobject of cosets $gU \in G/U$ such that $U_\sigma g U \subseteq gU$ and let ${}_\sigma \bbS$ be its complement.\footnote{More precisely, $\bbS_\sigma = (S_{\sigma, s})_{s \in G/U}$ has $S_{\sigma,s} = *$ if and only if $s = gU$ where $U_\sigma gU \subseteq gU$, and $S_{\sigma,s} = \emptyset$ otherwise. For ${}_\sigma \bbS = ({}_\sigma S_s)_{s \in G/U}$, the conditions are reversed.}
\end{para}

\begin{proposition}\label{analytic-vectors-computation}
We have, for every facet $\sigma$ of $\BT$,
\[ \uHom_{U_\sigma}(D_r(U_\sigma), V^\univ) =  \uHom_{U_\sigma}(D_r(U_\sigma), D(\bbS)) = D_r(\bbS_\sigma)\,. \]
\end{proposition}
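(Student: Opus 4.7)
The plan is to construct a natural map $\Phi \colon D_r(\bbS_\sigma) \to \uHom_{U_\sigma}(D_r(U_\sigma), V^\univ)$ and verify it is an isomorphism by analyzing $V^\univ$ orbit by orbit under the $U_\sigma$-action on $G/U$. For the construction, given $s = gU \in \bbS_\sigma$, the defining condition $U_\sigma gU \subseteq gU$ means $U_\sigma \subseteq gUg^{-1}$, so conjugation by $g^{-1}$ defines a homomorphism $U_\sigma \to U$. By \cref{Dr-functoriality} this extends to a continuous algebra map $D_r(U_\sigma) \to D_r(U)$, and composing with left multiplication endows $D_r(s) = \delta_g D_r(U)$ with the structure of a left $D_r(U_\sigma)$-module. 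For each $\mu \in D_r(s)$, the map $\phi_\mu \colon D_r(U_\sigma) \to D_r(s) \subseteq V^\univ$, $\delta \mapsto \delta \cdot \mu$, is then $U_\sigma$-equivariant; assembling over $s \in \bbS_\sigma$ yields $\Phi$, and evaluation at $1$ gives a left inverse, whence injectivity.

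For surjectivity I would decompose $V^\univ = D_r(\bbS_\sigma) \oplus \bigoplus_{O} V^\univ_O$ as a $U_\sigma$-representation, where the inner sum runs over non-singleton $U_\sigma$-orbits $O \subseteq {}_\sigma\bbS$ and $V^\univ_O = \bigoplus_{s \in O} D_r(s)$. On the $\bbS_\sigma$-summand, density of $E[U_\sigma]$ in $D_r(U_\sigma)$ together with the module structure above gives $\uHom_{U_\sigma}(D_r(U_\sigma), D_r(s)) = D_r(s)$ for each $s \in \bbS_\sigma$. The essential remaining claim is the vanishing
\[ \uHom_{U_\sigma}(D_r(U_\sigma), V^\univ_O) = 0 \qquad \text{for every non-trivial orbit } O. \]

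To establish this, fix $s_0 = g_0 U \in O$ with stabilizer $H = U_\sigma \cap g_0 U g_0^{-1}$, which is a proper open subgroup of $U_\sigma$. Given a $U_\sigma$-equivariant $\phi \colon D_r(U_\sigma) \to V^\univ_O$, set $v = \phi(1)$ and write $v = \sum_k v_k$ along coset representatives $u_k$ of $U_\sigma/H$, with $v_k \in D_r(u_k s_0)$. Since $\phi$ is determined on $E[U_\sigma]$ by $\phi(\delta_u) = u \cdot v$, continuity forces each coordinate function $u \mapsto (u \cdot v)_j \in D_r(u_j s_0)$ to extend to a solid continuous map out of the Banach algebra $D_r(U_\sigma)$, so it must be the restriction to $U_\sigma$ of a rigid analytic function on the affinoid polydisk underlying $U_\sigma$ at parameter $r$. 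However, this coordinate function is supported on a proper clopen subset of $U_\sigma$, namely the finite union of translated cosets of $H$ on which some $v_k$ contributes to the $j$-th coordinate, so it vanishes on a nonempty open subset and therefore vanishes identically. Consequently all $v_k = 0$, so $v = 0$ and $\phi = 0$.

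The main obstacle is making this ``$r$-analytic versus smooth'' incompatibility rigorous within the solid framework: the $U_\sigma$-action on $V^\univ_O$ mixes an $r$-analytic $H$-action inside each summand with a locally constant permutation of summands governed by the finite quotient $U_\sigma/H$, and the latter is fundamentally incompatible with the rigid analytic structure encoded in $D_r(U_\sigma)$. An alternative route is via explicit norm estimates: for $u \in U_\sigma \setminus H$ and $k \geq 0$ one has $\|(u-1)^k\|_r = r^{\kappa k} \to 0$ in the Banach norm on $D_r(U_\sigma)$, while the non-trivial permutation action prevents $\|(u-1)^k \cdot v\|_{V^\univ_O}$ from decaying at this rate for any non-zero $v$, contradicting continuity of $\phi$. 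Either route delivers the required vanishing, and combined with the orbit decomposition and the analysis of the $\bbS_\sigma$-summand this completes the verification that $\Phi$ is an isomorphism.
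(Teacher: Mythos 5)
Your structural setup is sound and matches the paper: both you and the authors decompose $V^\univ$ according to the $U_\sigma$-orbits on $G/U$ (equivalently, the double cosets $U_\sigma\backslash G/U$), construct the inclusion $D_r(\bbS_\sigma) \hookrightarrow \uHom_{U_\sigma}(D_r(U_\sigma), V^\univ)$ by the same left-multiplication/conjugation argument via \cref{Dr-functoriality}, and reduce the reverse inclusion to proving that $\uHom_{U_\sigma}(D_r(U_\sigma), V^\univ_O) = 0$ whenever $O$ is a non-singleton orbit. Up to that point you are exactly where the paper is. The gap is the vanishing claim itself: you have not actually proved it, and you say so yourself. Neither of your two proposed routes closes the argument. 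For the ``$r$-analytic vs.\ smooth'' route, the assertion that ``the coordinate function $u \mapsto (u\cdot v)_j$ must be the restriction of a rigid analytic function on the affinoid polydisk at parameter $r$'' does not follow merely from continuity of a linear map out of the Banach space $D_r(U_\sigma)$: $D_r(U_\sigma)$ is neither $\sO(\bbU_\sigma[\rho])^\vee$ nor $D_{<r}(U_\sigma)$ for any single radius, and identifying its dual with functions on a connected affinoid (so that vanishing on an open subset propagates) requires precisely the kind of delicate information about norms on $D_r$ that ends up being the content of the paper's proof. For the norm-estimate route, the claim that the permutation action ``prevents $\|(u-1)^k\cdot v\|$ from decaying at rate $r^{\kappa k}$'' is not established and in fact is not obviously true: since the double coset is finite, $u$ has some finite order $d$ modulo the stabilizer, and a roots-of-unity computation shows the permutation contribution to $\|(u-1)^k\cdot v\|$ decays like $p^{-k/((p-1)p^{v-1})}$ for appropriate $v$; whether this is slower than $r^{\kappa k}$ depends on the relative sizes of $d$ and the parameter $m$ in \cref{the-set-sR}, so there is no clean contradiction in general.

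The paper's actual proof is algebraic rather than ``soft-analytic.'' Given $\lambda$ with $\lambda(1) = \sum \delta_{g_1}\mu_{g_1}$ and a nonzero component $\mu_g$, the authors build the continuous map $\phi = \iota_1 \circ \pr_g \circ \lambda : D_r(U_\sigma) \to D_r(U)$, observe that it sends $w \in U(\frg)_E$ to $\Ad(g^{-1})(w)\mu_g$, and then exploit three nontrivial inputs: (i) $D_r(U)$ is an integral domain (so right multiplication by $\mu_g \neq 0$ is injective, hence a topological isomorphism onto its image), (ii) the principal ideal $D_r(U)\mu_g$ is closed, because it is a finitely generated submodule of a coadmissible module, and (iii) the explicit description $U \cap U_r(\frg,U) = \exp(\vpi^\ell\Lambda)$ from \cref{closure-of-U}. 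These combine to show $\Ad(g^{-1})(\Lie_\Zp(U_\sigma)) \subseteq \Lie_\Zp(U)$, i.e.\ $g^{-1}U_\sigma g \subseteq U$, which is exactly the statement that the orbit is trivial. It is this package of inputs — none of which appears in your proposal — that does the work you are hoping to get from an incompatibility between analyticity and local constancy; you would need to supply something of comparable strength to make either of your routes rigorous.
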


\begin{proof} Denote by $\sS \sub G$ a system of representatives for $\bbS = G/U$, so that $V^\univ = \bigoplus_{g \in \sS} \delta_g D_r(U)$. Given a double coset $\overline{h} = U_\sigma h U$ with $h \in G$, we set 
\[V^\univ_\ovh := \bigoplus_{g \in \overline{h}/U} \delta_g D_r(U) = \bigoplus_{g \in \overline{h} \cap \sS} \delta_g D_r(U)\,.\] 
This is a Banach subspace which is stable by the action of $U_\sigma$, and $V^\univ = \bigoplus_{\overline{h} \in U_\sigma \bksl G/U} V^\univ_\ovh$. \vskip8pt

By \cite[3.32]{RR} we have 
\[\uHom_{U_\sigma}(D_r(U_\sigma), V^\univ) =  \bigoplus_{\overline{h} \in U_\sigma \bksl G/U} \uHom_{U_\sigma}\Big(D_r(U_\sigma), V^\univ_\ovh\Big) \,.\]

As $D_r(U_\sigma)$ and $V^\univ_\ovh$ are both Banach spaces (considered here in the first place as solid vector spaces), and because the category of Fr\'echet spaces over $E$ embeds fully faithfully into $\Vec^\solid_E$ \cite[Rem. A.32]{BoscoDrinfeldv2}, we will from now on consider $D_r(U_\sigma)$ and $V^\univ_\ovh$ as ordinary Banach spaces, and compute $\Hom^{\rm cont}_{U_\sigma}\Big(D_r(U_\sigma), V^\univ_\ovh\Big)$. Because $E[U_\sigma]$ is dense in $D_r(U_\sigma)$, an element $\lambda \in \Hom^{\rm cont}_{U_\sigma}\Big(D_r(U_\sigma), V^\univ_\ovh\Big)$ is determined by $\lambda(1) \in V^\univ_\ovh$.

\vskip8pt

(i) After these preliminaries, we show first that $D_r(\bbS_\sigma) \subset \uHom_{U_\sigma}(D_r(U_\sigma),  V^\univ)$. If $g^{-1}U_\sigma g \sub U$, then the multiplication map $E[U_\sigma] \times \delta_g D_r(U) \ra \delta_g D_r(U)$, $(\delta_h,\delta_g \mu) \mapsto \delta_h \delta_g \mu  = \delta_g \delta_{g^{-1}hg} \mu$, is well-defined and by \cref{Dr-functoriality} extends continuously to $D_r(U_\sigma) \times \delta_g D_r(U) \ra \delta_g D_r(U)$ by the formula $(\delta,\delta_g \mu) \mapsto \delta_g {\rm Ad}(g^{-1})(\delta)\mu$. Any $\delta_g \mu \in \delta_g D_r(U)$ gives thus rise to a continuous $U_\sigma$-linear map $\lambda: D_r(U_\sigma) \ra \delta_g D_r(U)$  by $\lambda(\delta) = \delta_g {\rm Ad}(g^{-1})(\delta)\mu$. This proves $D_r(\bbS_\sigma) \subset \uHom_{U_\sigma}(D_r(U_\sigma),  V^\univ)$.

\vskip8pt

(ii) Now we show that $\uHom_{U_\sigma}(D_r(U_\sigma),  V^\univ) \subset D_r(\bbS_\sigma)$. We will show that 
\[\uHom_{U_\sigma}(D_r(U_\sigma),  V^\univ_\ovh) = 0\,,\] 
unless $\ovh = U_\sigma hU = hU$ in which case $\uHom_{U_\sigma}(D_r(U_\sigma),  V^\univ_\ovh) = V^\univ_\ovh = \delta_h D_r(U)$, as we have seen in (i).

\vskip8pt

Consider an element $\lambda \in \uHom_{U_\sigma}(D_r(U_\sigma),  V^\univ_\ovh)$ and set $\mu := \lambda(1) = \sum_{g_1 \in \overline{h} \cap \sS} \delta_{g_1} \mu_{g_1} \in V^\univ_\ovh$ with $\mu_{g_1} \in D_r(U)$ for all $g_1 \in ovh \cap \sS$.  Fix $g \in \overline{h} \cap \sS$. \vskip8pt

{\bf Claim.} If $\mu_g \neq 0$ then $g^{-1}U_\sigma g \subset U$. 
\vskip8pt

{\it Proof of the claim.} Composing $\lambda$ with the continuous projection ${\rm pr}_g: V_\ovh \ra \delta_g D_r(U)$, and then with the isometry $\iota_1: \delta_g D_r(U) \ra D_r(U), \lambda \mapsto \delta_{g^{-1}}\lambda$, gives a continuous map 
\[\phi = \iota_1 \circ {\rm pr}_g \circ \lambda: D_r(U_\sigma) \lra D_r(U) \,.\]
For $u \in U_\sigma \cap gUg^{-1}$ we then have $\lambda(u) = u\lambda(1) = u\mu = \sum_{g_1 \in \overline{h} \cap \sS} \delta_{g_1} {\rm Ad}(g_1^{-1})(u)\mu_{g_1}$. Because $g_1 = u_1g$ for $u_1 \in U_\sigma$ we have $g_1^{-1}ug_1 \in U$ since $u \in U_\sigma \cap gUg^{-1} = U_\sigma \cap g_1Ug_1^{-1}$. And thus ${\rm Ad}(g_1^{-1})(u)\mu_{g_1} \in D_r(U)$. If we then apply $\iota_1 \circ {\rm pr}_g$, we get 
$$\phi(u) = \iota_1\left({\rm pr}_g\left(\sum_{g_1 \in \overline{h} \cap \sS} \delta_{g_1} {\rm Ad}(g_1^{-1})(u)\mu_{g_1}\right)\right) = \iota_1(\delta_g {\rm Ad}(g^{-1})(u)\mu_g) = {\rm Ad}(g^{-1})(u)\mu_g \,.$$
If then $\frx \in \frg$ one has 
$$\phi(\frx) = \phi\left(\lim_{t\ra 0} \frac{1}{t}(\exp(t\frx)-1)\right) = \lim_{t\ra 0} \frac{1}{t}(\exp(t{\rm Ad}(g^{-1})(\frx)\mu_g-\mu_g) = {\rm Ad}(g^{-1})(\frx)\mu_g$$
It follows that for all $w \in U(\frg)_E$ one has similarly $\phi(w) = {\rm Ad}(g^{-1})(w)\mu_g$. As $\phi$ is a continuous map of Banach spaces, there is $C>0$ such that for all $\delta \in D_r(U_\sigma)$ one has $\|\phi(\delta)\|_{D_r(U)} \le C \cdot \|\delta\|_{D_r(U_\sigma)}$. In particular, if $(w_n)_n$ is a Cauchy sequence in $U(\frg)_E$ for the $\|\cdot\|_r$-norm on $D_r(U_\sigma)$, then $(\phi(w_n))_n$ is a Cauchy sequence in $D_r(U)$, and all of its terms lie in the principal left ideal $D_r(U)\mu_g$. As a finitely generated submodule of a coadmissible $D_r(U)$-module, namely $D_r(U)$ itself, $D_r(U)\mu_g$ is itself coadmissible \cite[3.4 (iv)]{ST03} and hence closed \cite[3.6]{ST03}. The sequence thus has a limit in $D_r(U)\mu_g$. Denote by $U_r(\frg,U_\sigma)_E$ the closure of $U(\frg)_E$ in $D_r(U_\sigma)$. It follows that $\phi(U_r(\frg,U_\sigma)_E) \subset D_r(U)\mu_g$. \vskip8pt

We now assume that $\mu_g$ is non-zero. By \cite[5.2.1 and its proof]{OrlikStrauchIRR}, $D_r(U)$ is an integral domain, and the map $D_r(U) \ra D_r(U)\mu_g$, $\delta \mapsto \delta\mu_g$, is thus injective. As a continuous bijective map of Banach spaces it is thus an isomorphism of topological vector spaces. Let $\iota_2: D_r(U)\mu_g \ra D_r(U)$ be the inverse map and set
\[\psi = \iota_2 \circ \phi|_{U_r(\frg,U_\sigma)_E}: U_r(\frg,U_\sigma)_E \lra D_r(U) \,.\]
By what we have observed earlier, we have $\psi(w) = {\rm Ad}(g^{-1})(w) \in U(\frg)_E$ for all $w \in U(\frg)_E$, and thus $\im(\psi) \subset U_r(\frg,U)_E$. Moreover, if $\frx \in \Lie_\Zp(U_\sigma)$ is such that $\exp(\frx) = \sum_{k \ge 0} \frac{1}{k!}\frx^k$ converges in $D_r(U_\sigma)$, hence in $U_r(\frg,U_\sigma)_E$, then $\psi(\exp(\frx))$ will be equal to $\sum_{
k \ge 0} \frac{1}{k!}{\rm Ad}(g^{-1})(\frx)^k$, and this series will converge in $U_r(\frg,U)_E$, and it is equal to ${\rm Ad}(g^{-1})(\exp(\frx))$, which is then an element of $g^{-1}U_\sigma g$. By the lemma below we have $U \cap U_r(\frg,U)_E = \exp(\varpi^\ell \Lie_\Zp(U))$, with $\ell$ as in \ref{closure-of-U}. The analogous statement we have for $U_\sigma$. Using that $\exp: \Lie_\Zp(U_\sigma) \ra U_\sigma$ is bijective (and similarly for $U$), we thus see that $\varpi^\ell{\rm Ad}(g^{-1})(\Lie_\Zp(U_\sigma)) \subset \varpi^\ell \Lie_\Zp(U)$. This in turn implies that ${\rm Ad}(g^{-1})(\Lie_\Zp(U_\sigma)) \subset \Lie_\Zp(U)$, which gives $g^{-1}U_\sigma g \subset U$. This completes the proof of the claim. \qed

\vskip8pt

We now finish the proof of the proposition. If $g^{-1}U_\sigma g$ is contained in $U$, then the inclusion of group algebras $E[g^{-1}U_\sigma g] \subset E[U]$ extends continuously to an injective homomorphism of algebras $D_r(g^{-1}U_\sigma g) \subset D_r(U)$, by \ref{Dr-functoriality}. Then we see that for $\delta \in D_r(U_\sigma)$ and any $\mu_g \in D_r(U)$ one has $\delta \delta_g \mu_g = \delta_g \cdot (\delta_{g^{-1}} \delta \delta_g) \mu_g \in \delta_g D_r(U)$. This proves that all of $\delta_g D_r(U)$ is contained in $\uHom_{U_\sigma}(D_r(U_\sigma),V^\univ)$, if $g^{-1}U_\sigma g \subset U$. 
\end{proof}

\begin{lemma}\label{closure-of-U} Let $U$ be an $F$-uniform group and fix $r \in \sR$, cf. \ref{the-set-sR}. Set $\Lambda = \Lie_\Zp(U)$ (which is an $\cO_F$-module) and $\frg = \Lambda \ot_{\cO_F} F$. Denote by $U_r(\frg,U)$ the closure of $U(\frg)_E$ in $D_r(U,E)$. Then $U \cap U_r(\frg,U) = \exp_U(\vpi^\ell\Lambda)$ where $\ell$ only depends on $r$. More precisely, if we set 
\begin{numequation}\label{definition-h}
h = \min\{k \in \Z_{\ge 0} \;:\; r^\kappa < p^{-\frac{1}{(p-1)p^k}}\}\,,    
\end{numequation}
then $\ell = \min\{m \ge 0 \; : \: |\vpi|^m p^h r^{\kappa p^h} < p^{-\frac{1}{p-1}}\}$. 
\end{lemma}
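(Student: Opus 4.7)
The plan is to prove both inclusions using a single key computation, namely the identification $\|\frx_i\|_r = p^h r^{\kappa p^h}$ for each PBW generator $\frx_i = \log(g_i)$ of $\Lambda$, together with multiplicativity $\|\frx^\alpha\|_r = (p^h r^{\kappa p^h})^{|\alpha|}$ on PBW monomials.

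To establish this norm identity, I would first expand $g_i^{p^h} - 1 = \sum_{k=1}^{p^h} \binom{p^h}{k} b_i^k$ with $\|b_i\|_r = r^\kappa$. Each term has $r$-norm $|\binom{p^h}{k}| r^{\kappa k} = p^{v_p(k) - h} r^{\kappa k}$, and comparing adjacent terms at $k = p^j$ via the ratio $p \cdot r^{\kappa p^j (p-1)}$, together with the minimality-of-$h$ inequalities $r^{\kappa p^{h-1}} \geq p^{-1/(p-1)}$ (when $h \geq 1$) and $r^{\kappa p^h} < p^{-1/(p-1)}$, shows the maximum is attained at $k = p^h$. Hence $\|g_i^{p^h} - 1\|_r = r^{\kappa p^h} < p^{-1/(p-1)}$, so the $\log$-series for $p^h \frx_i = \log(g_i^{p^h})$ converges isometrically, giving $\|p^h \frx_i\|_r = r^{\kappa p^h}$ and $\|\frx_i\|_r = p^h r^{\kappa p^h}$. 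The multiplicativity on PBW monomials follows from $r > 1/p$, which ensures the commutator corrections from PBW reordering are strictly $r$-norm dominated by the leading terms.

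The inclusion $\exp_U(\vpi^\ell \Lambda) \subseteq U \cap U_r(\frg,U)$ is then immediate: for $\frx = \vpi^\ell \fry \in \vpi^\ell \Lambda$, one has $\|\frx\|_r \leq |\vpi|^\ell p^h r^{\kappa p^h} < p^{-1/(p-1)}$ by the defining inequality for $\ell$, so $\sum_{k \geq 0} \frx^k/k!$ converges in $D_r(U,E)$. Passing through the smaller Banach algebra $D_s(U,E)$ with $s = r^{p^h}$, in which $\|\frx\|_s < p^{-1/(p-1)}$ and the series evaluates classically to $\delta_{\exp_U(\frx)}$, and using injectivity of $D_r(U,E) \to D_s(U,E)$ from the Fr\'echet-Stein structure, identifies the $D_r$-limit with $\delta_{\exp_U(\frx)} \in U_r(\frg,U) \cap U$.

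For the reverse inclusion, suppose $\delta_u \in U_r(\frg,U)$ with $u = \exp_U(\fry)$ and $\fry = \sum_i b_i \frx_i \in \Lambda$, and pick $i$ attaining $\max_j |b_j|$. In $D_s(U,E)$ with $s = r^{p^h}$, every element has a unique convergent PBW expansion by \cite[1.4.2]{Kohlhaase_Distributions}, and the PBW coefficient $c_{ke_i}$ of $\frx_i^k$ in $\delta_u = \sum_{k' \geq 0} \fry^{k'}/k'!$ receives a top-degree contribution $b_i^k/k!$ from $\fry^k/k!$, while any contribution from $\fry^{k'}/k'!$ with $k' > k$ requires $k' - k$ commutator reductions to reach $\frx_i^k$, each picking up a factor in $p^\kappa \Lambda$ by powerfulness. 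Since $\rho^* = p^{\kappa - 1/(p-1)} > 1$ and $|b_j| \leq 1$ for all $j$, these corrections are ultrametrically strictly smaller than $|b_i^k/k!|$, forcing $|c_{ke_i}| = |b_i|^k / |k!|$. The same PBW coefficients describe $\delta_u$ inside $U_r(\frg,U) \hookrightarrow D_s(U,E)$, so the convergence requirement $|c_{ke_i}| (p^h r^{\kappa p^h})^k \to 0$ applied along $k = p^n$, using $|1/(p^n)!| = p^{(p^n - 1)/(p-1)}$, forces $|b_i| p^h r^{\kappa p^h} < p^{-1/(p-1)}$, hence $|b_i| \leq |\vpi|^\ell$ by minimality of $\ell$, proving $\fry \in \vpi^\ell \Lambda$.

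The main obstacle is the coefficient comparison in the reverse inclusion: one must verify that the commutator corrections arising from higher powers $\fry^{k'}/k'!$ stay ultrametrically \emph{strictly} smaller than the leading term $b_i^k/k!$, so that the equality $|c_{ke_i}| = |b_i|^k/|k!|$ (not merely an upper bound) obtains and can be propagated through the convergence condition to yield the needed bound on $|b_i|$.
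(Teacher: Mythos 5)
Your overall strategy differs from the paper's in the reverse inclusion, and that difference is where the trouble lies. The paper normalizes $\fry = \vpi^a\frx$ with $\frx \in \Lambda\setminus\vpi\Lambda$, extends $\frx$ to an $\cO_F$-basis of $\Lambda$, and observes that the exponential series $\sum_k \vpi^{ak}\frx^k/k!$ is \emph{already} in PBW form in that adapted basis — no reordering, no commutator corrections, the coefficient of $\frx^k$ is exactly $\vpi^{ak}/k!$. Combined with norm-power-multiplicativity of $\frx$ (established via the graded ring of $D_r(U,E)$, see below), this gives the convergence criterion directly. Your version keeps a fixed basis, writes $\fry = \sum_j b_j\frx_j$, and must control the commutator contributions from $\fry^{k'}/k'!$ with $k'>k$ to the coefficient $c_{ke_i}$; this is precisely the ``main obstacle'' you flag, and the paper's change of basis dissolves it.

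On the obstacle itself: as stated, your claim that the commutator corrections are strictly smaller than $|b_i^k/k!|$ is \emph{false for general $k$}. For instance at $k = p^n - 1$, the contribution from $\fry^{p^n}/(p^n)!$ (one commutator, a factor in $p^\kappa\Lambda$) compares to the main term with ratio of size $|b_i|\,p^{-\kappa}/|p^n| = |b_i|\,p^{n-\kappa}$, which blows up. You do impose $k = p^n$ in the last step, which is exactly where the comparison is saved — at $k = p^n$, Legendre's formula gives $v_p(k'!) - v_p((p^n)!) \le (k'-p^n)/(p-1)$, so each correction from $\fry^{k'}$ is bounded by $(|b_i|(\rho^*)^{-1})^{k'-p^n}$ times the main term, strictly less than $1$ since $\rho^*>1$. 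But the paragraph asserting $|c_{ke_i}| = |b_i|^k/|k!|$ reads as if it holds for all $k$, which it does not; you should state up front that the coefficient equality is only claimed along $k = p^n$.

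A separate gap: your norm identity $\|\frx_i\|_r = p^h r^{\kappa p^h}$ (and its powers) implicitly uses $\|b_i^k\|_{\ovr} = r^{\kappa k}$, i.e.\ that powers of $b_i$ have multiplicative norm in the \emph{quotient} norm on $D_r(U,E)$. This is automatic when $F = \Qp$ because $\|\cdot\|_r$ is a Gauss norm by definition, but for $F \ne \Qp$ the norm on $D_r(U,E)$ is a quotient of a Gauss norm, and quotient norms of Gauss norms need not be power-multiplicative. The paper proves this via the explicit description of $\gr^\bullet_r D_r(U,E)$ from \cite{SchmidtAUS}: the principal symbol $\sigma(b_{1,j})$ maps to a non-nilpotent element (a variable, up to a unit) in a polynomial ring, whence power-multiplicativity. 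Your remark that ``$r>1/p$ ensures the commutator corrections from PBW reordering are strictly $r$-norm dominated'' addresses a different phenomenon (comparing $\frx_a\frx_b$ to $\frx_b\frx_a + [\frx_a,\frx_b]$), not this quotient-norm issue. You need the graded-ring input, or something equivalent, to conclude power-multiplicativity; without it, both the norm computation for $\frx_i$ and the convergence criterion used in the reverse inclusion are unsupported.
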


\begin{proof} We follow the notation of \cite{SchmidtAUS}\footnote{With some exceptions: the group $G$ in \cite{SchmidtAUS} is our group $U$ and the group $G_0$ in \cite{SchmidtAUS} is denoted $\Res^{F}_\Qp U$ here.}, as we will make use of results of this paper. We note right away that if $r \in \sR$, then $r^\kappa$ is non-critical (i.e., for any $k \ge 0$ the number $r^{\kappa p^k}$ is not equal to
$p^{-\frac{1}{p-1}}$), and the number $m$ in \ref{the-set-sR} is equal to the number $h$ in \ref{definition-h}. 

\vskip8pt

Let $d = \dim_F(U)$ and $n = [F:\Qp]$. Given $\frx \in \Lambda \setminus \vpi\Lambda$, we can extend it to a $\cO_F$-basis $(\frx_j)_{j=1}^d$ of $\Lambda$ with $\frx_1 = \frx$. Let $(v_i)_{i=1}^n$ be a $\Zp$-basis of $\cO_F$ with $v_1 = 1$, and set $\frx_{i,j} = v_i\frx_j$, $h_{i,j} = \exp_U(\frx_{i,j})$, and $b_{i,j} = h_{i,j}-1$. We equip the locally $F$-analytic distribution algebra $D(U,E)$ with the filtration defined by the quotient norm $\|\cdot\|_\ovr$ (using the notation of \cite{SchmidtAUS}). Sending the principal symbol $\sigma(b_{i,j})$ in the graded ring ${\rm gr}^{\sbt}_r D_r(U,E)$ to a variable $X_{i,j}$ gives an isomorphism
$${\rm gr}^{\sbt}_r D_r(U,E) \xrightarrow{\;\simeq\;} ({\rm gr}^{\sbt}_r F)[X_{1,1}, \ldots, X_{n,d}]/(X_{i,j}^{p^h} - \ovv_i X_{1,j}^{p^h} \;:\; 1 \le j \le d, 2 \le i \le n)\,,$$
where $h$ is defined by \ref{definition-h}, cf. \cite[5.6, proof of 5.3]{SchmidtAUS}. Here $\ovv_i$ is the class of $v_i$ in the residue field $k_F$ of $F$. It follows from 
$${\rm gr}^{\sbt}_r D_r(U,E) \simeq {\rm gr}^{\sbt}_r D_r(\Res^{F}_\Qp U,E)/{\rm gr}^{\sbt}_r I_r(\Res^{F}_\Qp U,E) \,,$$
cf. \cite[p. 47]{SchmidtAUS}, where $\Res^{F}_\Qp U$ is denoted by $U_0$, that elements in degree $r^\kappa$ in ${\rm gr}^{\sbt}_r D_r(U,E)$ are mapped to homogeneous linear polynomials, as follows from the definition of the norm $\|\cdot\|_r$ on $D_r(\Res^{F}_\Qp U,E)$, cf. \cite[p. 40]{SchmidtAUS}. 
\vskip8pt
Let $\ovw_i \in k_F$ be such that $\ovw_i^{p^h} = \ovv_i$. Then the map $X_{i,j} \mapsto \ovw_i X_{1,j}$ induces a morphism of algebras over ${\rm gr}^{\sbt}_r F$
$$({\rm gr}^{\sbt}_r F)[X_{1,1}, \ldots, X_{n,d}]/(X_{i,j}^{p^h} - \ovv_i X_{1,j}^{p^h} \;:\; 1 \le j \le d, 2 \le i \le n)  \lra ({\rm gr}^{\sbt}_r F)[X_{1,1}, X_{1,2},\ldots, X_{1,d}] \,.$$
This shows that $\sigma(b_{1,j})$ is not nilpotent in ${\rm gr}^{\sbt}_r D_r(U,E)$, which implies that $b_{1,j}$ is `norm power multiplicative', by which we mean that for all $k \ge 0$ one has $\|b_{1,j}^k\|_\ovr = \|b_{1,j}\|_\ovr^k$. Moreover, since $\|b_{1,j}\|_r = r^\kappa$ in $D_r(\Res^{F}_\Qp U,E)$, and by the remark above, we also have for all $k \ge 0$ that $\|b_{1,j}^k\|_\ovr = r^{\kappa k}$ for the quotient norm $\|\cdot\|_\ovr$ on $D_r(U,E)$. 
\vskip8pt
Set $g = h_{1,1} = \exp_U(\frx)$ and $b = b_{1,1} = g-1$. Then $\frx = \log(1+b) = -\sum_{k >0} \frac{(-1)^k}{k}b^k$. As we assume that $r^\kappa$ is non-critical, we find that  
$$\|\frx\|_r = \max_{k>0}\left\{\left\|\frac{1}{k}b^k\right\|_\ovr\right\} = \max_{k>0}\left\{\frac{1}{|k|_p}r^{\kappa k}\right\} = \max_{k \ge 0}\left\{\frac{1}{|p^k|_p}r^{\kappa p^k}\right\} \,,$$ 
(the last equality is well-known and easy to prove). By the definiton of $h$ and since $r^\kappa$ is non-critical we have 
$$p^{-\frac{1}{(p-1)p^{h-1}}} < r^\kappa < p^{-\frac{1}{(p-1)p^h}}$$
and thus 
$$p^{h-1}r^{\kappa p^{h-1}} < p^h r^{\kappa p^k} \; \mbox{ and } \;  p^h r^{\kappa p^h} > p^{h+1}r^{\kappa p^{h+1}} \,.$$
Hence $\|\frx\|_r = p^h r^{\kappa p^h}$. We also note that $\frx$ is norm power multiplicative, because for the principal symbol $\sigma(\frx)$ we have $\sigma(\frx) = \sigma(p^h)\sigma(b)$, and because this element is not nilpotent in ${\rm gr}^{\sbt}_r D_r(U,E)$, it follows that $\frx$ is norm power multiplicative, so that for all $k \ge 0$ one has $\|\frx^k\|_\ovr = p^{kh} r^{\kappa kh}$. Therefore, $\exp_U(\vpi^m \frx) = \sum_{k \ge 0} \frac{\vpi^{m k}}{k!} \frx^k$ converges in $D_r(U,E)$ if and only if  
$\|\vpi^m \frx\|_\ovr = |\vpi|^m p^h r^{\kappa p^h} < p^{-\frac{1}{p-1}}$, and the smallest $m \ge 0$ with this property is precisely $\ell$, as defined above. 
\vskip8pt
Finally, given any non-trivial $g \in U$ we write $g = \exp_U(\fry)$ for a unique non-zero $\fry \in \Lambda$. Write $\fry = \vpi^a \frx$ with $\frx \in \Lambda \setminus \vpi \Lambda$ and $a \ge 0$. Then $g = \exp_U(\fry)$ is in $U_r(\frg,U)$ if and only if the exponential series $\exp(\fry) = \sum_{k \ge 0} \frac{1}{k!}\fry^k$ converges in $D_r(U,E)$. We have $\|\fry^k\|_\ovr = |\vpi|^{ak} \|\frx^k\|_\ovr$, which shows that $\fry$ is norm power multiplicative too. Hence the exponential series converges if and only if $\|\fry\|_\ovr = |\vpi|^a \|\frx\|_\ovr = |\vpi|^a p^h r^{\kappa p^h} < p^{-\frac{1}{p-1}}$, and this is the case if and only if $a \ge \ell$, and thus $\fry \in \vpi^\ell \Lambda$. 
\end{proof}

\begin{para}
If $A$ is the coefficient system given by $A_\sigma(V) = \uHom_{U_\sigma}(D_r(U_\sigma),V)$, then 
\[ \cS_q^A(V^\univ) = \bigoplus_{\sigma \in \BT_q} \uHom_{U_\sigma}(D_r(U_\sigma),V^\univ) \;\stackrel{\ref{analytic-vectors-computation}}{=} \; \bigoplus_{\sigma \in \BT_q} D_r(\bbS_\sigma) \]
for $q = 0, 1, \dotsc, \ell$.
\end{para}

\begin{para}
Consider the $(k+1)$-fold pushout $\bbS^{\sqcup k}_{(\sigma)}$ of $\bbS$ over $\bbS_\sigma$, i.e., 
\[\bbS^{\sqcup k}_{(\sigma)} = \underbrace{\bbS \sqcup_{\bbS_\sigma} \ldots \sqcup_{\bbS_\sigma} \bbS}_{k+1 \text{ times}}. \]
This pushout exists since $\sC$ is cocomplete. Explicitly, it is the family $(T_s)_{s \in G/U}$ where $T_s = *$ for every $s = gU \in G/U$ such that $U_\sigma gU \subseteq gU$, and $T_s = \{0, 1, \dotsc, k\}$ for all other $s$. Observe that $\bbS^{\sqcup 0}_{(\sigma)} = \bbS$. 
\end{para}

\begin{para} \label{contractibility}
As $k$ varies, the sets $\bbS^{\sqcup k}_{(\sigma)}$ assemble into a cosimplicial set $\bbS^{\sqcup\bullet}_\sigma$ equipped with a map $\bbS_\sigma \to \bbS_{(\sigma)}^{\sqcup \bullet}$, as in \cite[tag \href{https://stacks.math.columbia.edu/tag/016N}{016N}]{stacks-project}.
\begin{equation} \label{cosimplicial} \begin{tikzcd} 
\bbS_\sigma \ar[hookrightarrow]{d} \\
\bbS \ar[equals]{r} & \bbS^{\sqcup 0}_{(\sigma)} \ar[shift left=0.2em]{r} \ar[shift right=0.2em]{r} & \bbS^{\sqcup 1}_{(\sigma)}  \ar{r} \ar[shift left=0.4em]{r} \ar[shift right=0.4em]{r} & \bbS^{\sqcup 2}_{(\sigma)} \ar[shift right=0.2em]{r} \ar[shift right=0.6em]{r} \ar[shift left=0.2em]{r} \ar[shift left=0.6em]{r} & \cdots 
\end{tikzcd} \end{equation}
Since ${}_\sigma \bbS$ splits the inclusion $\bbS_\sigma \to \bbS$, the map $\bbS_\sigma \to \bbS_{(\sigma)}^{\sqcup \bullet}$ is a homotopy equivalence of cosimplicial objects of $\sC$ \cite[tag \href{https://stacks.math.columbia.edu/tag/019Z}{019Z}]{stacks-project}.
\end{para}

\begin{para} \label{contractibility-2}
Applying the functor $D_r(-)$ of \cref{free-cocompletion-category} to this cosimplicial object of $\sC$, the map $D_r(\bbS_\sigma) \to D_r(\bbS_{(\sigma)}^{\sqcup \bullet})$ is a homotopy equivalence of cosimplicial solid vector spaces. Regarding $D_r(\bbS_{(\sigma)}^{\sqcup \bullet})$ as a complex via the Dold-Kan correspondence \cite[tag \href{https://stacks.math.columbia.edu/tag/019H}{019H}]{stacks-project}, we see that $D_r(\bbS_\sigma) \to D_r(\bbS_{(\sigma)}^{\sqcup \bullet})$ is a homotopy equivalence of complexes. In other words, the complex
\[ \begin{tikzcd}
D_r(\bbS) \ar{r} & D_r(\bbS_{(\sigma)}^{\sqcup 1}) \ar{r} & D_r(\bbS_{(\sigma)}^{\sqcup 2}) \ar{r} & \cdots 
\end{tikzcd} \]
in degrees $[0, \infty)$ has cohomology only in degree 0, where the cohomology is $D_r(\bbS_\sigma)$. 
\end{para}

\begin{para} \label{double-complex}
Varying $\sigma \in \BT$, we obtain a second-quadrant double complex as follows.
\[ \begin{tikzcd}
& & \vdots & & \vdots \\ 
\cdots \ar{r} & 0 \ar{r} & \displaystyle \bigoplus_{\sigma \in \BT_\ell} D_r(\bbS^{\sqcup 2}_{(\sigma)}) \ar{r} \ar{u} & \cdots \ar{r} & \displaystyle \bigoplus_{\sigma \in \BT_0} D_r(\bbS^{\sqcup 2}_{(\sigma)}) \ar{u} \\
\cdots \ar{r} & 0 \ar{r} & \displaystyle \bigoplus_{\sigma \in \BT_\ell} D_r(\bbS^{\sqcup 1}_{(\sigma)}) \ar{r} \ar{u} & \cdots \ar{r} & \displaystyle \bigoplus_{\sigma \in \BT_0} D_r(\bbS^{\sqcup 1}_{(\sigma)}) \ar{u} \\
\cdots \ar{r} & 0 \ar{r} & \displaystyle \bigoplus_{\sigma \in \BT_\ell} D_r(\bbS) \ar{r} \ar{u} & \cdots \ar{r} & \displaystyle \bigoplus_{\sigma \in \BT_0} D_r(\bbS) \ar{u}
\end{tikzcd} \]
\end{para}

\begin{para} \label{vertical-differentials-conclusion}
It follows from the contractibility of \cref{contractibility-2} that, if one considers the spectral sequence starting with vertical differentials on the $\mbox{}^v E_0$ page, then the $\mbox{}^v E_1$ page is concentrated in the row $j = 0$, where it is given by the analytic Schneider-Stuhler complex of $V^\univ = D_r(\bbS)$. 
\[ \mbox{}^v E_1^{i,j} = \begin{cases} \displaystyle \bigoplus_{\sigma \in \BT_{-i}} D_r(\bbS_{\sigma}) & \text{if } i \leq 0 \text{ and } j = 0 \\ 0 & \text{otherwise} \end{cases} \]
Thus this spectral sequences collapses on page 2 and shows that the total complex of the double complex of \cref{double-complex} is quasi-isomorphic to $\cS^A_\bullet(V^\univ)$. 
\end{para}

\begin{para}
We now compute the cohomology of the total complex using the spectral sequence $\mbox{}^h E_\bullet$ that starts with horizontal differentials. For a coset $s = gU \in G/U$, we consider the subcomplex
\[ \BT_s = \{\sigma \in \BT \midc U_\sigma g U \subseteq g U\}. \]
\end{para}

\begin{proposition} \label{subcomplex-contractible}
$\BT_s$ is contractible. 
\end{proposition}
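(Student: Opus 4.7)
My plan is to exhibit $|\BT_s|$ as a non-empty, closed, convex subset of the CAT(0) metric space $|\BT|$; contractibility then follows from geodesic retraction to any chosen basepoint. First I rewrite the defining condition: $U_\sigma g U \subseteq gU$ is equivalent to $g^{-1} U_\sigma g \subseteq U$, so $\BT_s = \{\sigma \in \BT : U_\sigma \subseteq K\}$ where $K := g U g^{-1}$ is a compact open subgroup of $G$. The monotonicity $U_\tau \subseteq U_\sigma$ whenever $\tau \subseteq \overline{\sigma}$, recalled in the introduction, shows that $\BT_s$ is closed under taking faces, hence is a subcomplex of $\BT$. Non-emptiness follows from the hypothesis of paragraph \cref{generated-by-analytic-vectors}: there is a vertex $\sigma_0 \in \BT_0$ with $U_{\sigma_0} \subseteq U$, whence $U_{g \sigma_0} = g U_{\sigma_0} g^{-1} \subseteq K$ and $g \sigma_0 \in \BT_s$.

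The heart of the proof is the convexity claim. Using Vign\'eras's identification \eqref{SS=MP}, $U_\sigma = P_{x, e+}$ for any $x \in \sigma$, the problem reduces to showing: for $x_0, x_1 \in |\BT|$ with $P_{x_0, e+}, P_{x_1, e+} \subseteq K$, chosen in a common apartment (which exists by the Bruhat--Tits building axioms), and any $x$ on the geodesic segment from $x_0$ to $x_1$, one has $P_{x, e+} \subseteq K$. I will extract this from the Moy--Prasad root group decomposition: in the split case, $P_{x, e+}$ is generated by a torus part independent of $x$ together with affine root components $U_\alpha \cap P_{x, e+}$, and each such component depends monotonically on the value $\alpha(x)$. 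Because $\alpha$ restricts to an affine function along the segment, $\alpha(x)$ lies between $\alpha(x_0)$ and $\alpha(x_1)$, so the component $U_\alpha \cap P_{x, e+}$ is contained in whichever of $U_\alpha \cap P_{x_i, e+}$ is the larger, and in particular in $K$. Since $K$ is a group and $P_{x, e+}$ is generated by these components, $P_{x, e+} \subseteq K$, proving convexity. The general non-split case follows from the analogous Bruhat--Tits decomposition after descent from the maximal unramified extension.

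With convexity in hand, the conclusion is immediate: $|\BT_s|$ is a non-empty closed convex subset of the complete CAT(0) space $|\BT|$, so geodesic retraction to any chosen basepoint is a deformation retraction to a point. The main obstacle is really the convexity step; however, this is precisely the geometric content of the corresponding argument in the smooth case of Schneider--Stuhler, so one may invoke their result (compare the proof of \cite[II.3.1]{ScSt97} and the geometric lemmas preceding it) directly instead of re-deriving the root group decomposition here.
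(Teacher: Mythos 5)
Your high-level strategy — realize $|\BT_s|$ as a non-empty convex subcomplex of the CAT(0) space $|\BT|$ and retract geodesically — is exactly the one the paper uses, and your preliminary reductions (rewriting the condition as $U_\sigma \subseteq K := gUg^{-1}$, showing $\BT_s$ is closed under faces via $U_\tau \subseteq U_\sigma$ for $\tau \subseteq \overline\sigma$, and non-emptiness from $U_{\sigma_0} \subseteq U$) are all correct and largely implicit in the paper. Where you diverge is the convexity step: you attempt a first-principles argument through the Moy--Prasad root-group decomposition, showing each affine-root component of $P_{x,e+}$ interpolates between those of $P_{x_0,e+}$ and $P_{x_1,e+}$ along a geodesic in a common apartment. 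That argument is sound in outline (monotonicity in $\alpha(x)$, torus part independent of $x$, $K$ a group), but it is only carried through in the split case, with the general case waved off to ``descent from $F^{\mathrm{nr}}$.'' The paper instead quotes a single sharp lemma: for any vertex $\omega$ on the geodesic between vertices $\sigma,\tau$, one has $U_\omega \subseteq \langle U_\sigma, U_\tau\rangle$, citing Schneider--Stuhler \cite[proposition I.3.1]{ScSt97} (or Vign\'eras \cite[1.28 Lemma]{VignerasSheaves}); since $K$ is a group, this gives convexity immediately for arbitrary $\bG$. You gesture at this alternative in your last paragraph, but your reference is off: the relevant statement is \cite[proposition I.3.1]{ScSt97}, a geometric lemma in Part~I, not the representation-theoretic theorem \cite[II.3.1]{ScSt97}. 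So: your route buys a self-contained, explicit picture of why convexity holds in the split case; the paper's citation buys brevity and covers the general reductive case cleanly. If you keep the root-group argument you should close the non-split case carefully (or just invoke Moy--Prasad directly, which is already formulated for general connected reductive groups rather than by descent from the split case); otherwise, replace it with the citation to I.3.1.
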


\begin{proof}
It suffices to show that $\BT_s$ is geodesically convex \cite[p. 184]{RonanBuildings}. To see this, assume that $\sigma$ and $\tau$ are vertices belonging to $\BT_s$. Then for any vertex $\omega$ on the path between $\sigma$ and $\tau$ the group $U_\omega$ is contained in the subgroup generated by $U_\sigma$ and $U_\tau$ (cf. \cite[proposition I.3.1]{ScSt97} or \cite[1.28 Lemma]{VignerasSheaves}). Hence $\omega \in \BT_s$. 
\end{proof}

\begin{corollary} \label{pushout-contractible}
The $(j+1)$-fold pushout \[ \BT \sqcup_{\BT_s} \BT \sqcup_{\BT_s} \cdots \sqcup_{\BT_s} \BT \] is contractible. 
\end{corollary}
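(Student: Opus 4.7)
My plan is to reduce the contractibility of the iterated pushout to a wedge-sum computation by exploiting cofibrancy of subcomplex inclusions.

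First I would check that $\BT_s$ is a subcomplex of $\BT$. If $\sigma \in \BT_s$ and $\tau$ is a face of $\sigma$, then $\tau \subseteq \overline{\sigma}$ gives $U_\tau \subseteq U_\sigma$ (the Schneider-Stuhler groups $U_\sigma$ behave contravariantly with respect to passing to faces, as recalled in the paragraph preceding \cref{ind-affinoid-mixed}), hence $U_\tau gU \subseteq U_\sigma gU \subseteq gU$, so $\tau \in \BT_s$. In particular, the inclusion $\BT_s \hookrightarrow \BT$ is a closed cofibration of CW complexes.

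Next, write $P_j$ for the $(j+1)$-fold pushout in the statement, and let $Q_j$ denote the wedge of $j+1$ copies of $\BT/\BT_s$, one for each copy of $\BT$. There is a tautological collapse map $P_j \to Q_j$ sending the common copy of $\BT_s$ to the wedge point. Since $\BT_s$ is contractible by \cref{subcomplex-contractible} and sits inside $P_j$ as a cofibration, this collapse map is a homotopy equivalence. The classical fact that Bruhat-Tits buildings are contractible, combined with contractibility of $\BT_s$ and cofibrancy of its inclusion into $\BT$, forces $\BT/\BT_s$ to be contractible; hence $Q_j$ is a finite wedge of pointed contractible CW complexes at nondegenerate basepoints, and is therefore contractible. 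This yields the claim.

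The only potential obstacle is verifying that the various inclusions are cofibrations, but this is automatic because every space in sight is a simplicial complex and all inclusions are of subcomplexes. Should one wish to bypass the wedge decomposition, the same conclusion can be reached inductively in $j$ by using van Kampen to show simple-connectivity and Mayer-Vietoris to show acyclicity of $P_j$, followed by an appeal to the Hurewicz and Whitehead theorems.
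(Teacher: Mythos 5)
Your proof is correct, but it takes a genuinely different route from the paper. The paper argues by induction on $j$: the base case $P_0 = \BT$ is classical, and the inductive step shows $P_{j-1} \sqcup_{\BT_s} \BT$ is contractible by first noting the pushout is a homotopy pushout (since the subcomplex inclusions are cofibrations), then establishing trivial $\pi_1$ via Seifert--van Kampen, acyclicity via the Mayer--Vietoris distinguished triangle, and concluding via the combination of Hurewicz and Whitehead that you relegate to your final remark. Your main argument bypasses the induction and the Hurewicz step entirely: since $\BT_s \hookrightarrow P_j$ is a cofibration and $\BT_s$ is contractible (by \cref{subcomplex-contractible}), the collapse $P_j \to P_j/\BT_s \cong \bigvee_{i=0}^{j}(\BT/\BT_s)$ is a homotopy equivalence; each factor $\BT/\BT_s$ is contractible because both $\BT$ and $\BT_s$ are, and the inclusion is a cofibration; and a finite wedge of well-pointed contractible CW complexes is contractible. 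Both proofs rest on the same two pillars — contractibility of $\BT$ and of $\BT_s$, and the cofibration property of subcomplex inclusions — but your wedge decomposition is arguably cleaner and non-inductive. Your preliminary check that $\BT_s$ is a subcomplex (using $U_\tau \subseteq U_\sigma$ for $\tau \subseteq \overline{\sigma}$) is a useful observation that the paper takes for granted, though your citation is slightly off: this containment is stated in the introduction of the paper, not near \cref{ind-affinoid-mixed}, and comes from Schneider--Stuhler \cite{ScSt97}.
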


\begin{proof}
The case $j = 0$ is known. It suffices by induction to show that $X \sqcup_{\BT_s} \BT$ is contractible whenever $X$ is a contractible simplicial complex containing $\BT_s$ as a subcomplex. Then $\BT_s$ is a neighborhood deformation retract of both $X$ and $\BT$. In particular, the closed inclusions $\BT_s \to X$ and $\BT_s \to \BT$ are both cofibrations and $X \sqcup_{\BT_s} \BT$ is a homotopy pushout. 

Observe that $X \sqcup_{\BT_s} \BT$ is contractible if and only if $\pi_1(X \sqcup_{\BT_s} \BT) = 1$ and $H_i(X \sqcup_{\BT_s} \BT) = 0$ for all $i$ \cite[corollary 4.33]{Hatcher}. Since $\BT_s$ is contractible by \cref{subcomplex-contractible}, it is path-connected; as we noted above, it is also a neighborhood deformation retract of each of the contractible spaces $X$ and $\BT$, so the fact that $\pi_1(X \sqcup_{\BT_s} \BT) = 1$ follows from the Seifert-van Kampen theorem. It therefore suffices to show that $H_i(X \sqcup_{\BT_s} \BT) = 0$ for all $i$. Since $X \sqcup_{\BT_s} \BT$ is a homotopy pushout, we have a distinguished triangle
\[ \begin{tikzcd} C_\bullet(\BT_s) \ar{r} & C_\bullet(X) \oplus C_\bullet(\BT) \ar{r} & C_\bullet(X \sqcup_{\BT_s} \BT) \ar{r}{+} & \mbox{} \end{tikzcd} \]
so it suffices to show that $H_i(\BT_s) \to H_i(X) \oplus H_i(\BT)$ is an isomorphism for all $i$. This again follows from contractibility of $X, \BT$, and $\BT_s$, and so we are done. Note that we have used \cref{subcomplex-contractible} again here. 
\end{proof}

\begin{para} \label{decomp}
Fix $j \geq 0$. For $q = 0, 1, \dotsc, \ell$, observe that the coproduct 
\[ \bigsqcup_{\sigma \in \BT_q} \bbS^{\sqcup j}_{(\sigma)} \]
in $\sC$ is the family $(T_s)_{s \in G/U}$ where $T_s$ is the set of $q$-simplices in 
\[ \underbrace{\BT \sqcup_{\BT_s} \sqcup \ldots \sqcup_{\BT_s} \BT}_{(j+1)-\text{fold}}. \]
\end{para}

\begin{para}
Thus, we have 
\[ D_r \left(\bigsqcup_{\sigma \in \BT} \bbS^{\sqcup j}_{(\sigma)} \right) = \bigoplus_{s \in G/U} D_r(s) \sotimes_E C_\bullet \left( \underbrace{\BT \sqcup_{\BT_s} \sqcup \ldots \sqcup_{\BT_s} \BT}_{(j+1)-\text{fold}} \right) \]
as chain complexes, where the left-hand side is the chain complex occurring in row $j$ of the the double complex in \cref{double-complex}. 
\end{para}

\begin{para}
We know from \cref{pushout-contractible} the iterated pushout $\BT \sqcup_{\BT_s} \sqcup \ldots \sqcup_{\BT_s} \BT$ is contractible, so $C_\bullet(\BT \sqcup_{\BT_s} \sqcup \ldots \sqcup_{\BT_s} \BT)$ is quasi-isomorphic to $E$. The functor 
\[ D_r(s) \sotimes_E - \]
preserves quasi-isomorphisms \cite[propositions A.31 and A.28]{BoscoDrinfeldv2}, so 
\[ D_r \left(\bigsqcup_{\sigma \in \BT} \bbS^{\sqcup j}_{(\sigma)} \right) = \bigoplus_{s \in G/U} D_r(s) = D_r(\bbS). \]
\end{para}

\begin{para}
Returning to the spectral sequence $\mbox{}^h E_\bullet$ whose $\mbox{}^h E_0$ page is the horizontal differentials of the double complex of \cref{double-complex}, we see that the $\mbox{}^h E_1$ page is concentrated in the column $i = 0$. 
\[ \mbox{}^h E_1^{i,j} = \begin{cases} D_r(\bbS) & \text{if } i = 0 \text{ and } j \geq 0 \\ 0 & \text{ otherwise} \end{cases} \]
\end{para}

\begin{lemma}
For any $j \geq 0$, the vertical differential $\mbox{}^h E_1^{0,j} \to \mbox{}^h E_1^{0,j+1}$ is 0 if $j$ is even and the identity if $j$ is odd. 
\end{lemma}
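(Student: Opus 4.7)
My plan is to unwind the identification $\mbox{}^h E_1^{0,j} \cong D_r(\bbS)$ obtained in \cref{decomp}, and then observe directly that each coface map of the cosimplicial object $\bbS^{\sqcup \bullet}_{(\sigma)}$ induces the identity on this horizontal cohomology. The result will follow from an immediate alternating-sum computation.

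First, I would make the identification of $\mbox{}^h E_1^{0,j}$ explicit. By \cref{decomp} and the discussion preceding it, the $j$-th row of the double complex in \cref{double-complex} identifies with $\bigoplus_{s \in G/U} D_r(s) \sotimes_E C_\bullet(P_s^{(j)})$, where $P_s^{(j)} := \underbrace{\BT \sqcup_{\BT_s} \cdots \sqcup_{\BT_s} \BT}_{(j+1)\text{-fold}}$ is contractible by \cref{pushout-contractible}. The augmentation $C_0(P_s^{(j)}) \to E$ sending $\sum a_v [v]$ to $\sum a_v$ thus computes $H_0$, and presents $\mbox{}^h E_1^{0,j}$ as $\bigoplus_{s \in G/U} D_r(s) = D_r(\bbS)$.

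Next, I would identify the vertical differential on this cohomology. The cosimplicial object $\bbS^{\sqcup\bullet}_{(\sigma)}$ of \cref{contractibility} has $j+2$ coface maps $d^0, \ldots, d^{j+1} : \bbS^{\sqcup j}_{(\sigma)} \to \bbS^{\sqcup j+1}_{(\sigma)}$, each of which inserts a copy of $\bbS$ along $\bbS_\sigma \hookrightarrow \bbS$ into the $i$-th position of the pushout. After applying $D_r$ and summing over $\sigma \in \BT_q$, $d^i$ is induced by a simplicial map $P_s^{(j)} \to P_s^{(j+1)}$ which sends vertices to vertices and therefore commutes with the augmentations to $E$. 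Hence each $d^i$ induces the identity on $H_0(P_s^{(j)}) = E$, and therefore also on $\mbox{}^h E_1^{0,j} = D_r(\bbS)$. The vertical differential, which via Dold-Kan is $\partial^j = \sum_{i=0}^{j+1}(-1)^i d^i$, thus equals
\[ \partial^j \;=\; \left( \sum_{i=0}^{j+1} (-1)^i \right) \mathrm{id}_{D_r(\bbS)}. \]

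The alternating sum on the right has $j+2$ terms, so it equals $0$ when $j$ is even and $1$ when $j$ is odd, which is the claim. The only point requiring care in writing this up is tracking the identifications: although $\bbS^{\sqcup j}_{(\sigma)}$ genuinely depends on $\sigma$, after the passage to $\bigsqcup_{\sigma \in \BT_q}$ and then to horizontal cohomology, this dependence collapses into a single copy of $D_r(\bbS)$ on which every coface acts by the identity. There is no substantive obstacle beyond this unwinding.
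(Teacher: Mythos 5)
Your proof is correct and takes essentially the same approach as the paper's: identify the map $\mbox{}^h E_0^{0,j} \to \mbox{}^h E_1^{0,j} = D_r(\bbS)$ with the augmentation coming from the collapsing map $\bbS^{\sqcup j}_{(\sigma)} \to \bbS$ (equivalently $C_0(P_s^{(j)}) \to E$), observe that every coface commutes with this identification and hence induces the identity on $D_r(\bbS)$, and conclude by the alternating-sum count of the $j+2$ cofaces. You spell out slightly more explicitly than the paper why each coface induces the identity, but the key ideas — the augmentation identification and the parity of $j+2$ — are the same. (One small slip: for the $i=0$ column you should sum over $\sigma \in \BT_0$, not $\BT_q$.)
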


\begin{proof}
The map $D_r(\bbS_{(\sigma)}^{\sqcup j}) = \mbox{}^h E_0^{0,j} \to \mbox{}^h E_1^{0,j} = D_r(\bbS)$ is induced by the natural map $\bbS_{(\sigma)}^{\sqcup j} \to \bbS$ which maps each copy of $\bbS$ in $\bbS_{(\sigma)}^{\sqcup j}$ onto itself via the identity. The sign alternation of the differentials in the Dold-Kan complex associated to a cosimplicial solid vector space (cf. \cref{contractibility}) induces a corresponding sign alternation in the vertical differentials of the $\mbox{}^h E_1$ page, and an even (resp. odd) alternating sum of identity map is the zero (resp. identity) map. 
\end{proof}

\begin{para}
It follows that the $\mbox{}^h E_2$ page has only one nonzero term: $\mbox{}^h E_2^{0,0} = D_r(\bbS)$. Combined with the observations of \cref{vertical-differentials-conclusion}, this concludes the proof of \cref{universal-rep}. \qed
\end{para}

\section{Mixing Schneider-Stuhler and Chevalley-Eilenberg complexes}\label{mixing}

We continue to use the notation introduced in \cref{notation-reductive}. 

\begin{para}{\it The representations in the analytic Schneider-Stuhler complex are in general not projective solid locally analytic representations.}
To motivate the constructions below we start with the following observation. Let $A$ be any of the coefficient systems in \cref{examples-functors-A}. Let $V$ be an irreducible smooth representation with a central character $\chi$.  Then the representations in the analytic Schneider-Stuhler complex $\cS^A(V)$  of \cref{analytic-schneider-stuhler} are projective in the category of smooth representations with central character $\chi$, by \cite[II.2.2]{ScSt97}. However, the representations $\cS^A(V)$ (if they are not zero, which we can assume by taking the level $e$ large enough) are in general no longer projective in the category of solid locally analytic representations with fixed central character, as we will explain in the following example.  
\end{para}

\begin{example}\label{counterexample-projective} 
Let $M(0)$ denote the Verma module with weight 0 in category $\cO$ for $\frg = \frs\frl_2$. Let $\alpha$ be the unique positive root for the Borel subalgebra of upper triangular matrices. Then we have an exact sequence
$$0 \lra M(-\alpha) \lra M(0) \lra L(0) \lra 0 \,,$$
where $M(-\alpha)$ is the Verma module with weight $-\alpha$ and $L(0)$ is the trivial one-dimensional representation. Taking the dual $M \rightsquigarrow M^\vee$ in the BGG category $\cO$ we obtain the exact sequence
$$0 \lra  L(0) \lra M(0)^\vee \lra M(-\alpha)  \lra 0 \,.$$
Let $G = \SL_2(F)$ and $B \sub G$ the subgroup of upper triangular matrices. Applying the functor $\cF(-) = \cF^G_B(-)$ from  \cite{OrlikStrauchJH} to this exact sequence  gives the exact sequence
\[\begin{tikzcd}
0 \ar{r} & \cF(M(-\alpha)) \ar{r}{\phi} & \cF(M(0)^\vee) \ar{r}{\psi} & \cF(L(0)) = \ind^G_B({\bf 1}) \ar{r} & 0 \,,    
\end{tikzcd}\]
where $V := \ind^G_B({\bf 1})$ denotes the smooth induction of the trivial character. For any of the coefficient systems $A$ in \cref{examples-functors-A} the representation $\cS^A_0(V)$ is smooth, and it surjects onto $V$ via the augmentation $\vep: \cS^A_0(V) \ra V$ if the level $e$ is large enough, what we will assume in the following. If $\cS^A_0(V)$ would be a projective object in the category of solid representations of $G$ (or in the category of solid locally analytic representations of $G$), then this would imply that $\vep$ lifts to a $G$-homomorphism $\tau: \cS^A_0(V) \ra \cF(M(0)^\vee)$. Then $\im(\tau)$ is a smooth subrepresentation of $\cF(M(0)^\vee)$ which surjects onto $V$ via $\psi$. We thus find that the space $\cF(M(0)^\vee)^\frg$ annihilated by $\frg$ is non-zero. Passing to dual spaces, we see that for the space of $\frg$-coinvariants we have 
\begin{numequation}\label{coinvariants}
D(G) \otimes_{D(\frg,B)} M(0)^\vee/\frg \Big(D(G) \otimes_{D(\frg,B)} M(0)^\vee\Big) \; \neq \; 0 \;.    
\end{numequation}
However, 
$$\frg \Big(D(G) \otimes_{D(\frg,B)} M(0)^\vee\Big)=  D(G) \otimes_{D(\frg,B)} \frg M(0)^\vee  = D(G) \otimes_{D(\frg,B)} M(0)^\vee$$
since $\frg M(0)^\vee = M(0)^\vee$, as one can easily show. This contradicts \ref{coinvariants}. Therefore, $\cS^A_0(V)$ is not a projective object in the category of solid (or solid locally analytic) representations of $G$. 
\end{example}

\subsection{Wall complexes with analytic vectors} 

\begin{para}\label{fixtures-mixing}
The discussion in \ref{counterexample-projective} prompts us to further resolve the representations $\cS_q^A(V)$ defined in \ref{analytic-schneider-stuhler}, which is what we will be doing in the following. For the remainder of \cref{mixing} we fix the following objects and will make the following assumptions:
\begin{enumerate}
\item $\rho \in (0,1] \cap p^\bbQ$ (cf. \ref{kappa}) and $r \in \left(\frac{1}{p},p^{-\frac{\rho}{\kappa(p-1)}}\right] \cap \sR$ (cf. \ref{the-set-sR}). For example, one may take $\rho=1$ and $r \in \Big(p^{-\frac{1}{\kappa(p-1)}-\frac{1}{\kappa e(F/\Qp)}},p^{-\frac{1}{\kappa (p-1)}}\Big)$. 
\item A Schneider-Stuhler level $e$ so that for all facets $\sigma$ of $\BT$ the group $\Us = U^{(e)}_\sigma$ is $F$-uniform (cf. \cref{ss-groups-uniform}). 
\item The functor $A$ which is given by $A_\sigma(V) = \uHom_\Us(\DrUs,V)$. This is a solid $D_r(\Us)$-module, cf. \cref{uHomDrV-has-Dr-module-structure}. 
\end{enumerate}
\end{para}

\begin{para} \label{H-sigma} {\it The groups $\Hs$ and $\bbHso$.}
Given a facet $\sigma$ of $\BT$ and a vertex $x \in \overline{\sigma}$ the group $U_x$ has the integral powerful\footnote{in the sense of \cite[sec. 9.4]{DDMS}} Lie algebra $\Lie_\Zp(U_x)$ which is an $\cO_F$-lattice in $\frg = \Lie(G)$. The intersection $\bigcap_{x \in \overline{\sigma} \cap \BT_0} \Lie_\Zp(U_x)$ is again a powerful $\Zp$-Lie algebra and an $\cO_F$-lattice in $\frg$. We then let $\Hs \sub G$ be the unique $F$-uniform compact open subgroup with that integral Lie algebra. It follows from \cite[I.2.11]{ScSt97} that $\Hs$ is contained in any of the groups $\Ut$ with $\tau$ a facet in $\overline{\sigma}$. To the fixed real number $\rho$ above we then consider the strictly ind-affinoid group
\[\bbHso := \bbHsro \;,\]
and we caution the reader that this group depends on $\rho$, but this is not visible in the notation (so as to lighten the notation somewhat). By \cref{distalgs-of-wide-open-and-overconv-gps-and-Dr} and \ref{Dr-functoriality} there are canonical morphisms of solid distribution algebras
\begin{numequation}\label{from-bbHo-to-Dr}
D(\bbHso) \lra D(\bbHso,H_\sigma) \lra D_r(H_\sigma) \lra D_r(\lra D_r(\Ut),
\end{numequation}
whenever $\tau$ is contained in $\ovsigma$. 
\end{para}

\begin{para}\label{the-group-C}{\it The group $C$ and the group $\wHso$.} We fix once and for all a finitely generated free abelian group $C \sub \bZ_\bG(F)$ of rank equal to the $F$-split rank of the center $\bZ_\bG$ of $\bG$ such that $\bZ_\bG(F)/C$ is compact. This group has trivial intersection with any compact subgroup of $G$. Moreover, as the center acts trivially on the semisimple Bruhat-Tits building $\BT$, the group $C$ is contained in any of the groups $\Psd$\footnote{$\Psd$ denotes the stabilizer of the facet $\sigma$ as in \cite[I.1.7, p. 105]{ScSt97}, which is not to be confused with the pointwise stabilizer of $\sigma$.} and $\Psd/C$ is compact. We also set $\Hso = \bbHso(F)$ and $\wHso = C\Hso \cong C \x \Hso$. Then $(\bbHso,\wHso)$ and $(\bbHso,\Psd)$ are both analytic group pairs, and they are of the type considered in \cref{ind-affinoid-mixed-generalized}. Moreover, using \ref{from-bbHo-to-Dr} and the fact that $\Psd$ normalizes $\Us$, we see that the $D_r(\Us)$-module $A_\sigma(V)$ is also a module over $\DHPsd$.
    
\end{para}

\begin{para}\label{using-CE-resolutions}{\it Using Chevalley-Eilenberg type resolutions.}
Let $V$ be a solid $G$-representation. 
Denote by 
$$(\CE_j(\bbHso,\Psd), d_{\sigma,j})_{j \ge 0}$$ 
the complex which gives a resolution of the trivial one-dimensional representation $E$ as a module over $\DHPsd$, as explained in \cref{ind-affinoid-mixed-generalized}. This is a homological complex of free finitely generated $\DHPsd$-modules in degrees $j \ge 0$ which resolves $E$. The differential $d_{\sigma,0}$ is the zero map. We take the solid tensor product of this complex with $A_\sigma(V)$ and equip this with the 'diagonal' module structure using that $\DHPsd$ is a solid Hopf algebra. The resulting complex is denoted by $(\CE_\bullet(\bbHso,\Psd) \sotimes_E A_\sigma(V), d_{\sigma,\bullet}$). We denote by $\vep_\sigma$ the augmentation map, so that we obtain a resolution 
\begin{numequation}\label{mixed-CE-resolution}
\begin{tikzcd} 
\CE_0(\bbHso,\Psd) \sotimes_E A_\sigma(V) = \DHPsd \sotimes_E A_\sigma(V) \ar{r}{\vep_\sigma} & A_\sigma(V) \ar{r} & 0 \,. \end{tikzcd} 
\end{numequation}
As we have fixed the coefficient system $A$ in \cref{fixtures-mixing}, we write $\cS_q(V)$ instead of $\cS_q^A(V)$ in the following. Let $\cR(\BT_q) \sub \BT_q$ be a complete system of representatives for the orbits of $G$ on $\BT_q$, of which there are only finitely many. Recall from \ref{c-ind-description-2} that
\[\cS_q(V) = \bigoplus_{\sigma \in \cR(\BT_q)}  \cind^G_\Psd \left(A_\sigma(V)\right) \,.\]
Passing from \ref{mixed-CE-resolution} to compact induction we obtain a complex
\[ \left(\cS^\CE_{\sigma,j}(V) := \cind^G_\Psd \Big(\CE_j(\bbHso,\Psd) \sotimes_E A_\sigma(V)\Big), \dG_{\sigma,j} = \cind^G_\Psd (d_{\sigma,j}) \right)_{j \ge 0} \] 
which has an augmentation
\[ \begin{tikzcd} 
\cind^G_\Psd \left(\CE_\bullet(\bbHso,\Psd) \sotimes_E A_\sigma(V)\right) \ar{r}{\vep^G_\sigma} & \cind^G_\Psd \left(A_\sigma(V)\right) \ar{r} & 0,
\end{tikzcd} \]
which give a resolution of $\cind^G_\Psd \left(A_\sigma(V)\right)$. Taking direct sums of these maps over $\sigma \in \cR(\BT_q)$ gives a complex
\begin{numequation}\label{res-of-Sq}
\left(\cS^\CE_{q,\bullet}(V) = \bigoplus_{\sigma \in \cR(\BT_q)}  \cS^\CE_{\sigma,\bullet}(V)\;, \;\; d_{q,j}^G = \bigoplus_{\sigma \in \cR(\BT_q)} \;\; d_{\sigma,j}^G\right)_{j \ge 0}
\end{numequation}
which has an augmentation 
\begin{numequation}\label{res-of-Sq-k}
\begin{tikzcd} \cS^\CE_{q,0}(V) \ar{r}{\vep^G_q} & \cS_q(V) \ar{r} & 0 \,, \end{tikzcd}
\end{numequation}
which gives a resolution of $\cS_q(V)$. The differentials in the complex $\cS_\bullet(V)$ are denoted by $d^\cS_q$, and we let $\vep^\cS: \cS_0(V) \ra V$ be the augmentation map to $V$.
\end{para}

\begin{theorem}\label{thm-mixed-res}  
Assume that for all facets $\sigma$ of $\BT$\footnote{Equivalently, for every representative for $G\bksl BT_q$ and every $q = 0, \ldots, \ell$.} the solid vector space $A_\sigma(V)$ is a Smith space.\footnote{For example, this is the case when $V$ is an admissible locally analytic representation, by \ref{Dr-and-HomDr-for-compact-ind-limits} and \ref{Dr-analytic-Smith}.}  Then there are morphisms of solid $G$-representations
$$d^{(k)}_{q,j}: \cS^\CE_{q,j}(V) \lra \cS^\CE_{q-k,j+k-1}(V) \,,\; 0 \le k \le q,$$
such that, if we set 
$$\cS^\CE_n(V) = \bigoplus_{\scalebox{.7}{$\begin{array}{c} 0 \le q \le \ell\,, \; 0 \le j \\
q+j = n \end{array}$}} \cS^\CE_{q,j}(V)$$ 
and $\Delta_n = \sum_{q+j=n} \sum_{k=0}^{q} d^{(k)}_{q,j}: \cS^\CE_n(V) \ra \cS^\CE_{n-1}(V)$, one has $\Delta_{n-1} \circ \Delta_n = 0$, and 
$$\mbox{for all } n \ge 0: \; h_n(\cS^\CE_\bullet(V),\Delta_\bullet) = h_n(\cS_\bullet(V),d^\cS) \;.$$ 
In particular, if $\cS_\bullet(V) \xrightarrow{\vep^\cS} V$ is a resolution of $V$, then so is $\cS^\CE_\bullet(V) \xrightarrow{\vep^\cS \, \circ \, \vep^G_0} V$.
\end{theorem}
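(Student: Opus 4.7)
\begin{para}
The plan is to apply the Wall complex formalism of \cite[V.3.1]{LazardGroupesAnalytiques} (cf. also \cite[Theorem 6.1]{Kohlhaase_Cohomology}) to the family of augmented resolutions $\cS^\CE_{q,\bullet}(V) \xrightarrow{\vep^G_q} \cS_q(V)$, glued together by the Schneider-Stuhler differential $d^\cS$ on the augmentations. The first input is projectivity: by \cref{ind-affinoid-mixed-generalized}, $\CE_j(\bbHso,\Psd)$ is finitely generated free over $\DHPsd$, and since $A_\sigma(V)$ is a Smith space by hypothesis and $\DHPsd$ is a solid Hopf algebra, \cref{hopf-projective} shows that $\CE_j(\bbHso,\Psd) \sotimes_E A_\sigma(V)$ is a projective solid $\DHPsd$-module. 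Compact induction $\cind^G_{\Psd}$, being a left adjoint, preserves projectivity, so each $\cS^\CE_{q,j}(V)$ is a projective object in the appropriate category of $G$-equivariant solid modules.
\end{para}

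\begin{para}
I then set $d^{(0)}_{q,j} := d^G_{q,j}$; by \cref{using-CE-resolutions}, each column $(\cS^\CE_{q,\bullet}(V), d^{(0)})$ is a projective resolution of $\cS_q(V)$. To construct $d^{(1)}_{q,j} : \cS^\CE_{q,j}(V) \to \cS^\CE_{q-1,j}(V)$ lifting $d^\cS_q$, I use that $\Hs \subseteq H_\tau$ whenever $\tau \subseteq \ovsigma$, which is immediate from the defining intersection $\Hs = \bigcap_{x \in \ovsigma \cap \BT_0} U_x$ and the inclusion of vertex sets $\overline{\tau} \cap \BT_0 \subseteq \ovsigma \cap \BT_0$. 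This yields compatible morphisms between the mixed distribution algebras that, together with the coefficient-system maps $A_\sigma(V) \to A_\tau(V)$ induced by $D_r(\Ut) \to D_r(\Us)$, produce via Frobenius reciprocity a $G$-equivariant chain map $d^{(1)}$ whose augmentation equals $d^\cS_q$. For $k \geq 2$, the $d^{(k)}$ are built inductively by the standard Wall-complex procedure: given $d^{(0)}, \ldots, d^{(k-1)}$ satisfying the Wall identities through order $k-1$, the obstruction $\sum_{0 < i < k} d^{(i)} d^{(k-i)}$ anti-commutes with $d^{(0)}$ by the induction hypothesis, and projectivity of $\cS^\CE_{q,j}(V)$ combined with acyclicity of $\cS^\CE_{q-k,\bullet}(V)$ above its augmentation provides the required lift $d^{(k)}$. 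Assembling $\Delta_n = \sum_k d^{(k)}$ then yields a differential satisfying $\Delta^2 = 0$.
\end{para}

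\begin{para}
To identify the homology, I filter $\cS^\CE_\bullet(V)$ by the Schneider-Stuhler degree $q$. The $E^0$-differential of the associated spectral sequence is $d^{(0)}$, so, using the acyclicity of each column, the $E^1$-page is concentrated in the row $j=0$ with entries $\cS_q(V)$, and the induced $E^1$-differential is precisely $d^\cS_q$ by the augmentation property of $d^{(1)}$. The spectral sequence therefore degenerates at $E^2$, giving $h_n(\cS^\CE_\bullet(V)) \cong h_n(\cS_\bullet(V))$; in particular, if $\cS_\bullet(V) \xrightarrow{\vep^\cS} V$ is a resolution, so is $\cS^\CE_\bullet(V) \xrightarrow{\vep^\cS \circ \vep^G_0} V$. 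The hard part will be the construction of $d^{(1)}$: setting it up as a $G$-equivariant chain map lifting $d^\cS$ requires compatible morphisms of the mixed distribution algebras across face relations, and this is precisely why the groups $\Hs$ were introduced in place of $\Us$ — the latter satisfy the reversed inclusion $\Us \supseteq \Ut$ when $\tau \subseteq \ovsigma$, and would not produce Hopf-algebra maps in the direction needed to lift $d^\cS$. Once $d^{(1)}$ is in place, the existence of the higher homotopies $d^{(k)}$ for $k \geq 2$ is a formal consequence of projectivity.
\end{para}
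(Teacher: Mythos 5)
Your overall plan—run the Wall-complex construction over the family of augmented resolutions $\cS^\CE_{q,\bullet}(V)\xrightarrow{\vep^G_q}\cS_q(V)$ and then filter the total complex by $q$ to identify its homology—is the right one, and your spectral-sequence identification in the last step is correct. The gap is in the justification for the existence of the $d^{(k)}$. You argue that, since $\CE_j(\bbHso,\Psd)\sotimes_E A_\sigma(V)$ is a projective solid $\DHPsd$-module and $\cind^G_\Psd$ is a left adjoint, $\cS^\CE_{q,j}(V)$ is "projective in the appropriate category of $G$-equivariant solid modules." This does not follow: Proposition~\ref{hopf-projective} gives projectivity over the algebra $\DHPsd$, whereas compact induction preserves projectivity of solid $\Psd$-representations, i.e.\ projective solid $E_\solid[\Psd]$-modules. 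Restriction of scalars along $E_\solid[\Psd]\to\DHPsd$ does not preserve projectives in general, and the paper nowhere claims that $\cS^\CE_{q,j}(V)$ is projective in $\Rep^\solid(G)$ (in light of Example~\ref{counterexample-projective} one should be suspicious of any such claim). The actual argument never leaves the level of $\DHPsd$-modules: Frobenius reciprocity reduces the construction of each $d^{(k)}_{q,j}$ to a lifting problem for $\DHPsd$-linear maps out of the generator $T_{\sigma,0,j}=\CE_j(\bbHso,\Psd)\sotimes_E A_\sigma(V)$, and projectivity is invoked only there.

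Carrying this out requires bookkeeping your proposal omits and which is where most of the work lies. After Frobenius reciprocity the target of the lift is, a priori, just the restriction to $\Psd$ of $\cS^\CE_{q-k,j+k-1}(V)$; one must exhibit inside it a distinguished sub-$\DHPsd$-module $T_{\sigma,k,j+k-1}$ (spanned by the $\delta_g.\CE_{j+k-1}(\bbHto,\wPt)\sotimes_E A_\tau(V)$ with $g.\tau\subset\ovsigma$), show that the obstruction $\partial=\sum_{h<k}d^{(k-h)}d^{(h)}$ lands in it, and show that $\partial$ restricted to $T_{\sigma,0,j}$ is $\DHPsd$-linear — these are the inductive hypotheses Hyp-1 and Hyp-2 in the paper's proof, and they are not automatic. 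Your claim that $d^{(1)}$ arises as a $G$-equivariant chain map from "compatible morphisms between the mixed distribution algebras" is also not substantiated: $\Psd$ and $\wPt$ are not nested, so there is no natural morphism $\DHPsd\to D(\bbHto,\wPt)$; the inclusion $\Hs\subseteq H_\tau$ (which you correctly identify as essential) is used only to make the $\DHPsd$-module structure on $T_{\sigma,k,j+k-1}$ meaningful, and the paper in fact constructs just $d^{(1)}_{q,0}$ explicitly, feeding the remaining $d^{(1)}_{q,j}$ and all $d^{(k)}_{q,j}$, $k\geq 2$, into the same inductive lifting scheme. Finally, your lifting step invokes "acyclicity of $\cS^\CE_{q-k,\bullet}(V)$ above its augmentation," but this fails in homological degree $0$; the edge case $j+k-2=0$ must be handled separately by composing with the augmentation $\vep^G$ and using the identities $\vep^G_{q-1}\circ d^{(1)}_{q,0}=d^\cS_q\circ\vep^G_q$ and $d^\cS_{q-1}\circ d^\cS_q = 0$.
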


\begin{proof} By the general formalism of the Wall complex, cf. \cite[ch. V, 3]{LazardGroupesAnalytiques}, we only need to show the existence of the $G$-homomorphisms $d^{(k)}_{q,j}$. It is helpful to sketch the arrangement of the representations $\cS^\CE_{q,j}$ as follows:
$$\begin{tikzcd}[sep=large]
&  \ar[d,"d^{(0)}_{q,2}"] &  \ar[d] & \ar[d,"d^{(0)}_{0,2}"] \\
 \ar[r] & \cS^\CE_{q,1} \arrow{r}{}[swap]{d^{(1)}_{q,1}} \ar[d,"d^{(0)}_{q,1}"] \ar[rru,"d^{(2)}_{q,1}" near start, crossing over] & \cS^\CE_{q-1,1} \ar[r,"d^{(1)}_{q-1,1}"] \ar[d] & \cS^\CE_{q-2,1} \ar[d,"d^{(0)}_{q-2,1}"] \ar[r,"d^{(1)}_{q-2,1}"] & \cdots  \ar[r,"d^{(1)}_{1,1}"] & \cS^\CE_{0,1} \ar[d,"d^{(0)}_{0,1}"]\\
 \ar[r] &  \cS^\CE_{q,0} \arrow{r}{}[swap]{d^{(1)}_{q,0}}  \ar[d,"\vep^G_{q}"] \ar[rru,"d^{(2)}_{q,0}" near start, crossing over] &  \cS^\CE_{q-1,0} \ar[r,"d^{(1)}_{q-1,0}"]  \ar[d,"\vep^G_q-1"] & \cS^\CE_{q-2,0} \ar[d,"\vep^G_{q-2}"]  \ar[r,"d^{(1)}_{q-2,0}"] & \cdots \ar[r,"d^{(1)}_{1,0}"] & \cS^\CE_{0,0} \ar[d,"\vep^G_0"] \\
  \ar[r] &  \cS_{q} \ar[r,"d^\cS_q"] &  \cS_{q-1} \ar[r,"d^\cS_{q-1}"] & \cS_{q-2} \ar[r,"d^\cS_{q-2}"] &  \cdots \ar[r,"d^\cS_1"] & \cS_0 \ar[r,"\vep^\cS"] & V  
\end{tikzcd}$$
In order not to overload the diagram with arrows, we did not depict any maps $d^{(k)}_{q,j}$ with $k \ge 3$. The idea of the proof is to construct these maps inductively so that the condition $\Delta_{n-1} \circ \Delta_n = 0$ is fulfilled. 
\vskip8pt

\noindent {\it Step 1. The vertical maps $d^{(0)}_{q,j}$ and the horizontal maps $d^{(1)}_{q,0}$.} We set for all $q,j \ge 0$ 
$$d^{(0)}_{q,j} = d^G_{q,j}: \cS^\CE_{q,j} \lra \cS^\CE_{q,j-1} \;, \, \mbox{ and in particular } \; d^{(0)}_{q,0} = 0 \;.$$ 
These are the vertical maps and the vertical complexes ({\it complexes fibres} in \cite{LazardGroupesAnalytiques}). Now we assume $q \ge 1$ and we show the existence of $d^{(1)}_{q,0}: \cS^\CE_{q,0} \ra \cS^\CE_{q-1,0}$ such that 
\begin{numequation}\label{zeroth-diagram}
\begin{tikzcd}[sep=large]
 \cS^\CE_{q,0} \ar[r,"d^{(1)}_{q,0}",dashrightarrow]  \ar[d,"\vep^G_q",twoheadrightarrow] & \cS^\CE_{q-1,0} \ar[d,"\vep^G_{q-1}",twoheadrightarrow] \\
 \cS_q \ar[r,"d^\cS_q"] & \cS_{q-1}
\end{tikzcd}    
\end{numequation}
commutes. By Frobenius reciprocity we only need to show that $\vep^G_{q-1} \circ d^{(1)}_{q,0} = d^\cS_q \circ \vep^G_q$ when these maps are restricted to any of the subspaces 
$$T_{\sigma,0,0}\footnote{The notation will be generalized below and will become more transparent then.} := \CE_0(\bbHso,\Psd) \sotimes_E A_\sigma(V) \sub \cS^\CE_{q,0}\;,$$ 
for $\sigma \in \cR(\BT_q)$. 
We have 
$$(d^\cS_q \circ \vep^G_q)\Big(T_{\sigma,0,0}\Big) \subset \bigoplus_{\scalebox{.7}{$\begin{array}{c}\tau \in \cR(\BT_{q-1}) \\ g \in G/\widetilde{P}_\tau \\
g.\tau \subset \ovsigma \end{array}$}} \delta_g \,.\, A_\tau(V) \;.$$
We set 
$$T_{\sigma,1,0} := \bigoplus_{\scalebox{.7}{$\begin{array}{c}\tau \in \cR(\BT_{q-1}) \\ g \in G/\widetilde{P}_\tau \\
g.\tau \subset \ovsigma \end{array}$}} \delta_g \,.\, \CE_0(\bbHto,\wPt) \sotimes_E A_\tau(V)\;.$$
If $g.\tau \subset \ovsigma$, then $g^{-1}\bbHso g = \bbH^\o_{g^{-1}.\sigma} \sub \bbHto$, and $\delta_g.\CE_0(\bbHto,\wPt) \sotimes_E A_\tau(V)$ is naturally a module over $D(\bbHso)$. Moreover, for $h \in \Psd$ and $g.\tau \subset \ovsigma$ we have $h.(g.\tau) \sub h.\ovsigma = \ovsigma$, and we see that the space $T_{\sigma,1,0}$
is stable under the action of $\Psd$. It is thus a solid module over $\DHPsd$. The existence of a commutative diagram \ref{zeroth-diagram} is therefore implied by the existence of a commutative diagram
\begin{numequation}\label{zeroth-diagram-T}
\begin{tikzcd}[sep=large]
T_{\sigma,0,0} \ar[r,"d^{(1)}_{q,0}",dashrightarrow]  \ar[d,"\vep^G_q",twoheadrightarrow] & T_{\sigma,1,0} \ar[d,"\vep^G_{q-1}",twoheadrightarrow] \\
A_\sigma(V) \ar[r,"d^\cS_q"] &  \bigoplus_{\scalebox{.7}{$\begin{array}{c}\tau \in \cR(\BT_{q-1}) \\ g \in G/\widetilde{P}_\tau \\
g.\tau \subset \ovsigma \end{array}$}} \delta_g \,.\, A_\tau(V)
\end{tikzcd}    
\end{numequation}
We note that all spaces in this diagram are $\DHPsd$-modules, and that the maps are $\DHPsd$-linear. Now we use that $T_{\sigma,0,0}$ is a projective solid $\DHPsd$-module. We can therefore complete the digram \ref{zeroth-diagram-T} by a map $d^{(1)}_{q,0}$ which is a $\DHPsd$-module homomorphism, and which gives rise to $d^{(1)}_{q,0}: \cS^\CE_{q,0} \ra \cS^\CE_{q-1,0}$ (by Frobenius reciprocity) which makes the diagram \ref{zeroth-diagram} commutative. We also set $d^{(1)}_{0,j} = 0$ for all $j \ge 0$. 
\vskip8pt
\noindent {\it Step 2. Induction hypotheses.} 
By induction we assume that all $d^{(k)}_{q,j}$ with $q+j<n$ and $0 \le k \le q$ have already been defined. We also assume that for any $\sigma \in \cR(\BT_q)$ we have
\begin{numequation}\label{Hyp-1}\tag{Hyp-1}
\begin{array}{c} \hskip-200pt d^{(k)}_{q,j}\Big(\CE_j(\bbHso,\Psd) \sotimes_E A_\sigma(V) \Big)  \\
\\
\hskip60pt \subset  \;\; T_{\sigma,k,j+k-1} := \bigoplus_{\scalebox{.7}{$\begin{array}{c}\tau \in \cR(\BT_{q-k}) \\ g \in G/\widetilde{P}_\tau \\
g.\tau \subset \ovsigma \end{array}$}} \delta_g \,.\, \CE_{j+k-1}(\bbHto,\wPt) \sotimes_E A_\tau(V) \;.
\end{array}
\end{numequation}
This is clearly the case for $k=0$, as then we actually have 
\begin{numequation}\label{Hyp-1-d0}
d^{(0)}_{q,j}\Big(\CE_j(\bbHso,\Psd) \sotimes_E A_\sigma(V) \Big) \subset \CE_{j-1}(\bbHso,\Psd) \sotimes_E A_\sigma(V) \;.
\end{numequation}
Condition \ref{Hyp-1} is also satisfied for the maps $d^{(1)}_{q,0}$ constructed in Step 1. We now strengthen condition \ref{Hyp-1} by requiring (in addition to \ref{Hyp-1}) that each map
\begin{numequation}\label{Hyp-2}\tag{Hyp-2}
d^{(k)}_{q,j}|_{T_{\sigma,0,j}}: \; T_{\sigma,0,j} \lra T_{\sigma,k,j+k-1} \;\; \mbox{ is $\DHPsd$-linear.}     
\end{numequation}
Again, this is the case for the maps $d^{(0)}_{q,j}$ as they are induced by the differentials in the Chevalley-Eilenberg complex for the resolution of the trivial module over $\DHPsd$. And it is true for the maps $d^{(1)}_{q,0}$ of Step 1, by construction. 
\vskip8pt
\noindent {\it Step 3. The existence of the maps $d^{(k)}_{q,j}$: generalities.}
Furthermore, $\Delta_{n-1} \circ \Delta_n = 0$ is equivalent to 
\begin{numequation}\label{induction-identity}
\sum_{h=0}^k d^{(k-h)}_{q-h,j+h-1} \circ d^{(h)}_{q,j} = 0 \;.    
\end{numequation} We thus fix $k \in \{1, \ldots, q\}$, and we can assume that $d^{(h)}_{q,j}$ has been defined for $0 \le h < k \le q$. If we set $\partial = \sum_{h=0}^{k-1} d^{(k-h)}_{q-h,j+h-1} \circ d^{(h)}_{q,j}$ we must have $d^{(0)}_{q-k,j+k-1} \circ d^{(k)}_{q,j} + \partial = 0$. Consider the following diagram
\begin{numequation}\label{first-diagram}
\begin{tikzcd}[sep=large]
 & \cS^\CE_{q-k,j+k-1} \ar[d,"d^{(0)}_{q-k,j+k-1}"] \\
 \cS^\CE_{q,j} \ar[r,"-\partial"] \ar[ru,"d^{(k)}_{q,j}",dashrightarrow] & \cS^\CE_{q-k,j+k-2} \ar[d,"d^{(0)}_{q-k,j+k-2}"] \\
 & \cS^\CE_{q-k,j+k-3}
\end{tikzcd}    
\end{numequation}
where the existence of the dashed arrow $d^{(k)}_{q,j}$ is to be established. We first note that 
$$\begin{array}{cl}  & d^{(0)}_{q-k,j+k-2} \circ \partial\\ 
&\\
= & \sum_{h=0}^{k-1} d^{(0)}_{q-k,j+k-2} \circ d^{(k-h)}_{q-h,j+h-1} \circ d^{(h)}_{q,j}  \\
&\\
= & -\sum_{h=0}^{k-1} \Big[\sum_{\ell =0}^{k-h-1} d^{(k-h-\ell)}_{q-h-\ell,j+h+\ell-2} \circ d^{(\ell)}_{q-h,j+h-1}\Big] \circ d^{(h)}_{q,j} \\
&\\
= & -\sum_{m=1}^{k-1} d^{(k-m)}_{q-m,j+m-2} \circ \Big[\sum_{h =0}^m d^{(m-h)}_{q-h,j+h-1} \circ d^{(h)}_{q,j}\Big] = 0\\
\end{array}$$
where the second equality comes from applying \ref{induction-identity} to the case when $n$ is replaced by $n-1$ and $k$ by $k-h$, and the third equality is obtained by setting $m = h+\ell$ and rearranging terms. The last equality sign comes again from the identity \ref{induction-identity} for $m$, which is smaller than $k$. This proves that $\im(\partial) \sub \ker(d^{(0)}_{q-k,j+k-2})$. As the `vertical' complex $(\cS^\CE_{q-k,\bullet}, d^{(0)}_{q-k,\bullet})$ is exact in positive degrees, this shows that $\im(\partial) \sub \im(d^{(0)}_{q-k,j+k-1})$ if $j+k-2 > 0$. 
\vskip8pt
\noindent If $j+k-2 = 0$, then we have $(j,k)=(1,1)$ or $(j,k)=(0,2)$, as we assume here $k>0$. When $(j,k)=(1,1)$ then $\partial = d^{(1)}_{q,0} \circ d^{(0)}_{q,1}$, and we find that 
$$\vep^G_{q-1} \circ \partial = \vep^G_{q-1} \circ d^{(1)}_{q,0} \circ d^{(0)}_{q,1} = d^\sC_q \circ \vep^G_q \circ d^{(0)}_{q,1} = 0 \;,$$
and thus $\im(\partial) \subset \im(d^{(0)}_{q-1,1})$. Similarly, if $(j,k) = (0,2)$ then $\partial = d^{(1)}_{q-1,0} \circ d^{(1)}_{q,0}$, and using again the properties of the maps $d^{(1)}_{q,0}$ we find that $\vep^G_{q-2} \circ \partial = 0$, so that $\im(\partial) \sub \im(d^{(0)}_{q-2,1})$.  
\vskip8pt
\noindent Now that we have established that we always have $\im(\partial) \sub \im(d^{(0)}_{q-k,j+k-1})$, we can replace the diagram \ref{first-diagram} by 
\begin{numequation}\label{second-diagram}
\begin{tikzcd}[sep=large]
 & \cS^\CE_{q-k,j+k-1} \ar[d,"d^{(0)}_{q-k,j+k-1}",twoheadrightarrow] \\
 \cS^\CE_{q,j} \ar[r,"-\partial"] \ar[ru,"d^{(k)}_{q,j}",dashrightarrow] & \im(d^{(0)}_{q-k,j+k-1})
\end{tikzcd}    
\end{numequation}
\vskip8pt
\noindent {\it Step 4. The existence of the maps $d^{(k)}_{q,j}$: Frobenius reciprocity.} When we apply the induction hypothesis \ref{Hyp-1} to the maps which make up $\partial$ we find that 
\begin{numequation}\label{induction-hyp-facets-partial}
\begin{array}{c} \hskip-200pt \partial\Big(\CE_j(\bbHso,\Psd) \sotimes_E A_\sigma(V) \Big)  \\
\\
\hskip70pt \subset  \; T_{\sigma,k,j+k-2} = \bigoplus_{\scalebox{.7}{$\begin{array}{c}\tau \in \cR(\BT_{q-k}) \\ g \in G/\widetilde{P}_\tau \\
g.\tau \subset \ovsigma \end{array}$}} \delta_g \;.\; \CE_{j+k-2}(\bbHto,\wPt) \sotimes_E A_\tau(V) \;.
\end{array}
\end{numequation}
By Frobenius reciprocity, we see that the existence of $d^{(k)}_{q,j}
$ follows from the identity 
$$d^{(0)}_{q-k,j+k-1} \circ d^{(k)}_{q,j}
= -\partial$$ 
when restricted to any of the generating subrepresentations $T_{\sigma,0,j} = \CE_j(\bbHso,\Psd) \sotimes_E A_\sigma(V)$, as $\sigma$ runs through $\cR(\BT_q)$. We fix $\sigma \in \cR(\BT_q)$ and consider the corresponding diagram of maps when restricted to $T_{\sigma,0,j}$, namely 
\begin{numequation}\label{second-diagram-T}
\begin{tikzcd}[sep=large]
 & T_{\sigma,k,j+k-1} \ar[d,"d^{(0)}_{q-k,j+k-1}",twoheadrightarrow] \\
 T_{\sigma,0,j} \ar[r,"-\partial"] \ar[ru,"d^{(k)}_{q,j}",dashrightarrow] & d^{(0)}_{q-k,j+k-1}\Big(T_{\sigma,k,j+k-1}\Big)
\end{tikzcd}    
\end{numequation}
By induction hypothesis \ref{Hyp-2} the maps $-\partial$ and $d^{(0)}_{q,j+k-1}$ are $\DHPsd$-linear. 
By \cref{hopf-projective}, $T_{\sigma,0,j}$ is a projective $\DHPsd$-module, and we can thus choose $d^{(k)}_{q,j}|_{T_{\sigma,0,j}}$ as a $\DHPsd$-linear map making the diagram \ref{second-diagram-T} commutative. Any such choice, made for every $\sigma \in \cR(\BT_q)$, gives rise to a $G$-linear map which maps $T_{\sigma,0,j}$ to $T_{\sigma,k,j+k-1}$, and whose restriction to $T_{\sigma,0,j}$ is a $\DHPsd$-linear map, by construction. Therefore, $d^{(k)}_{q,j}$ satisfies Hyp-1 and Hyp-2. This completes the induction step.
\end{proof}

\begin{para}\label{using-CE-resolution-truncated} {\it Using truncated Chevelley-Eilenberg type resolutions.} Set $\bd := \dim(\bG) + \rk_F(\bZ(\bG))$ where $\rk_F(\bZ(\bG))$ is the $F$-split rank of the center of $\bG$. Instead of working with the Chevalley-Eilenberg resolution $\CE_\bullet(\bbHso,\Psd)$ of $E$ as a $\DHPsd$-module, cf. \cref{ind-affinoid-mixed-generalized}, we can also use the canonical (or `good') truncation $\tau_{\le \bd}\CE_\bullet(\bbHso,\Psd)$ which is a complex of finitely generated projective $\DHPsd$-modules which resolves $E$ and is concentrated in degrees $0 \le j \le \bd$. In degree $j \le \bd$ it is equal to $\CE_\bullet(\bbHso,\Psd)$, and thus consists of free $\DHPsd$-modules, whereas the term in degree $\bd$ is projective and possibly not free.

Let $V$ be again a solid $G$-representation, and denote by $\cS_\bullet(V)$ the analytic Schneider-Stuhler complex associated to the coefficient system $A_\sigma(V) = \uHom_\Us(D_r(\Us),V)$ as before. We can then construct solid $G$-representations 
\[\tau\cS^\CE_{\sigma,j} := \cind^G_\Psd \Big(\tau_{\le \bd}\CE_j(\bbHso,\Psd) \sotimes_E A_\sigma(V)\Big)\]
as before, except that we take the truncated Chevalley-Eilenberg complex here. We then set 
\[\tau\cS^\CE_{q,j}(V) = \bigoplus_{\sigma \in \cR(\BT_q)} \tau\cS^\CE_{\sigma,j}(V)\]
and $\tau\cS^\CE_n(V) = \bigoplus_{q+j =n} \tau\cS^\CE_{q,j}(V)$, where summation is here only over those pairs $(q,j)$ for which $0 \le q \le \ell$ and $0 \le j \le \bd$. The existence of the differentials $d^{(k)}_{q,j}$ is shown exactly as in the proof of \cref{thm-mixed-res}. The only point where the argument differs slightly from the previous one is that $T_{\sigma,0,j} = \tau_{\le \bd}\CE_j(\bbHso,\Psd) \sotimes_E A_\sigma(V)$ is for $j = \bd$ not of the form $(\mbox{free } \DHPsd\mbox{-module}) \sotimes_E A_\sigma(V)$, but is rather of the form 
\[(\mbox{direct summand of f.g. free } \DHPsd\mbox{-module}) \sotimes_E A_\sigma(V) \,,\] 
but this is still a projective $\DHPsd$-module by \cref{hopf-projective}. The existence of the maps $d^{(k)}_{q,j}$ is then established exactly as in step 4 of the proof of \cref{thm-mixed-res}. We summarize these findings as
\end{para}

\begin{theorem}\label{thm-mixed-res-truncated}  
Assume that for all facets $\sigma$ of $\BT$ the solid vector space $A_\sigma(V)$ is a Smith space. Then there are differentials of solid $G$-representations $\tau\Delta_n: \tau\cS^\CE_n(V) \ra \tau\cS^\CE_{n-1}(V)$ such that $(\tau\cS^\CE_\bullet(V), \tau\Delta_\bullet)$ is a complex and 
$$\mbox{for all } n \ge 0: \; h^n(\tau\cS^\CE_\bullet(V),\tau\Delta_\bullet) = h^n(\cS_\bullet(V),d^\cS) \;.$$ 
In particular, if $\cS_\bullet(V) \xrightarrow{\vep^\cS} V$ is a resolution of $V$, then so is $\tau\cS^\CE_\bullet(V) \xrightarrow{\vep^\cS \, \circ \, \vep^G_0} V$.
\end{theorem}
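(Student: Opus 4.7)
The strategy is to imitate the proof of \cref{thm-mixed-res} verbatim, replacing $\CE_\bullet(\bbHso,\Psd)$ by its canonical truncation $\tau_{\le \bd}\CE_\bullet(\bbHso,\Psd)$ throughout. As explained in \cref{ind-affinoid-mixed-generalized}, the truncated complex is still an exact resolution of $E_\triv$ over $\DHPsd$, now by finitely generated projective (rather than free) $\DHPsd$-modules, concentrated in degrees $0 \le j \le \bd$. Upon compactly inducing and reindexing we obtain the candidate double family $\tau\cS^\CE_{q,j}(V)$, and the augmentation $\tau\cS^\CE_{q,0}(V) \twoheadrightarrow \cS_q(V)$ together with the vertical differentials $d^{(0)}_{q,j}$ make each column $\tau\cS^\CE_{q,\bullet}(V) \to \cS_q(V)$ into a resolution.

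The horizontal and diagonal maps $d^{(k)}_{q,j}$ with $k \ge 1$ are constructed by the same double induction as in \cref{thm-mixed-res}: after constructing $d^{(1)}_{q,0}$ by lifting $d^\cS_q \circ \vep^G_q$ through the surjection $\vep^G_{q-1}$, one proceeds by induction on $n = q+j$ (and, within a fixed $n$, on $k$), using the identity \ref{induction-identity} to produce a well-defined map $\partial: \tau\cS^\CE_{q,j} \to \tau\cS^\CE_{q-k,j+k-2}$ landing in the image of $d^{(0)}_{q-k,j+k-1}$, and then lifting $-\partial$ along the surjection $d^{(0)}_{q-k,j+k-1}$ summand-by-summand indexed by $\sigma \in \cR(\BT_q)$. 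By Frobenius reciprocity it suffices to lift on the generating subspace $T_{\sigma,0,j} = \tau_{\le \bd}\CE_j(\bbHso,\Psd) \sotimes_E A_\sigma(V)$, and both $-\partial$ and $d^{(0)}_{q-k,j+k-1}$ are $\DHPsd$-linear on that subspace by the analogues of hypotheses \ref{Hyp-1}, \ref{Hyp-2}.

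The one place the argument deviates from the untruncated case is in degree $j = \bd$: the module $\tau_{\le \bd}\CE_\bd(\bbHso,\Psd)$ is only a direct summand of a finitely generated free $\DHPsd$-module, not free. This is where the key observation enters: by our standing hypothesis $A_\sigma(V)$ is a Smith space, so \cref{hopf-projective} guarantees that $T_{\sigma,0,\bd}$ remains a projective solid $\DHPsd$-module. Since projectivity is exactly what Frobenius reciprocity needs to produce the required lift, the induction step in top degree goes through unchanged. I expect this projectivity check to be the only real point requiring comment; everything else is a repackaging.

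Finally, the homology identification $h^n(\tau\cS^\CE_\bullet(V), \tau\Delta_\bullet) = h^n(\cS_\bullet(V), d^\cS)$ is a formal consequence of the Wall complex machinery \cite[V.3.1]{LazardGroupesAnalytiques}: each column $\tau\cS^\CE_{q,\bullet}(V) \to \cS_q(V)$ is acyclic, so the total complex of the Wall complex computes the homology of the augmentation row, which is $\cS_\bullet(V)$. The final assertion that $\tau\cS^\CE_\bullet(V) \to V$ is a resolution whenever $\cS_\bullet(V) \to V$ is, is then immediate.
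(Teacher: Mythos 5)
Your proposal is correct and matches the paper's argument essentially verbatim: the paper likewise replaces $\CE_\bullet(\bbHso,\Psd)$ by its truncation $\tau_{\le \bd}\CE_\bullet(\bbHso,\Psd)$, runs the identical Wall-complex induction from \cref{thm-mixed-res}, and isolates the same single point of difference at $j = \bd$, where the module is only a direct summand of a finitely generated free $\DHPsd$-module and one invokes \cref{hopf-projective} (using the Smith hypothesis on $A_\sigma(V)$) to retain projectivity of $T_{\sigma,0,\bd}$.
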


\begin{remark}
Instead of working with mixed distribution algebras of the type $\DHPsd$, where $\bbHso$ is a strictly ind-affinoid group, one could also also work with strictly pro-affinoid groups $\bbHsd = \bbH_\sigma[\rho]^\dagger$ and their distribution algebras $D(\bbHsd,\Psd)$, provided $r< p^{-\frac{\rho}{\kappa (p-1)}}$. The latter condition is necessary for having a canonical homomorphisms $D(\bbHsd) \ra D_r(\Us)$, cf. \cref{distalgs-of-wide-open-and-overconv-gps-and-Dr}. In this setting we also have resolutions $\CE_j(\bbHsd,\Psd)$ of Chevalley-Eilenberg type of the trivial one-dimensional module $E$ as $D(\bbHsd,\Psd)$-module, cf. \cref{pro-affinoid-mixed-generalized}. These can then be used to construct representations of the form 
\[\cS^{\CE,\dagger}_{\sigma,j}(V) := \cind^G_\Psd\Big(\CE_j(\bbHsd,\Psd) \sotimes_E A_\sigma(V)\Big) \;.\]
For these one can prove the existence of a Wall complex as before, and one obtains a total complex $(\cS^{\CE,\dagger}_\bullet(V), \Delta^\dagger_\bullet)$ whose homology is equal to that of $(\cS_\bullet(V),d^\cS_\bullet)$. There is also a version $\tau\cS^{\CE,\dagger}_\bullet(V)$ of this complex when one starts with the truncated complex $\tau_{\le \bd}\CE_j(\bbHsd,\Psd)$.
\end{remark}

\subsection{Ext groups of admissible locally analytic representations}

We keep the setup of the previous section, cf. \cref{fixtures-mixing}. Recall that the $\DHPsd$-modules in the Chevalley-Eilenberg resolution of the trivial one-dimensional module are finitely generated free $\DHPsd$--modules of the form
$$\CE_j(\bbHso,\Psd) =  \DHPsd \sotimes_E M_{\sigma,j}$$
with an explicit finite-dimensional $E$-vector space $M_{\sigma,j}$, cf. paragraphs \ref{wall-complex} and \ref{ind-affinoid-mixed-generalized}. 

\begin{theorem}\label{Ext-for-admissible} Let $V,W$ be admissible locally $F$-analytic representations of $G = \bG(F)$. Assume conjecture \cite[3.41]{RR}. Furthermore, assume that the augmented complex $\cS_\bullet(V) = \cS^A_\bullet(V)$ is a resolution of $V$. 

\begin{enumerate}
\item $\uExt^n_G(V,W)$ is isomorphic to the $n^{\rm th}$ cohomology group of a complex $(\cE^\bullet(V,W), \partial^\bullet)$, where 
$$\cE^n(V,W) = \bigoplus_{\scalebox{.7}{$\begin{array}{c} 0 \le q \le \ell,\, 0 \le j \\
q+j = n \end{array}$}} \bigoplus_{\sigma \in \cR(\BT_q)} \uHom_E\Big(M_{\sigma,j} \sotimes_E V^{D_r(U_\sigma)},W^{\bbHso\anv}\Big) \;.$$
\item The cohomology of this complex vanishes in degrees $n > \ell + \bd = \dim(\bG) + \rk_F(\bG)$. 
\end{enumerate}\end{theorem}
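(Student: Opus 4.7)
The plan is to apply $R\uHom_G(-, W)$ to the resolution $\cS^\CE_\bullet(V) \to V$ provided by \cref{thm-mixed-res} (which applies because, by assumption, $\cS^A_\bullet(V) \to V$ is a resolution). Once one checks that for each $n$ the object $R\uHom_G(\cS^\CE_n(V), W)$ is concentrated in cohomological degree zero, the Ext groups $\uExt^n_G(V,W)$ are computed by the honest cochain complex whose $n^{\mathrm{th}}$ term equals $\uHom_G(\cS^\CE_n(V), W)$, and the job reduces to identifying this complex with $\cE^\bullet(V,W)$.

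For the term-by-term computation I would fix a facet $\sigma$ and handle the summand $\cind^G_{\Psd}\bigl(\CE_j(\bbHso,\Psd) \sotimes_E A_\sigma(V)\bigr)$. Frobenius reciprocity reduces its $R\uHom_G(-, W)$ to $R\uHom_{\Psd}\bigl(\CE_j(\bbHso,\Psd) \sotimes_E A_\sigma(V), W\bigr)$. Because $\CE_j(\bbHso,\Psd) = \DHPsd \sotimes_E M_{\sigma,j}$ is a free $\DHPsd$-module and $A_\sigma(V) = V^{D_r(\Us)}$ is Smith by \cref{Dr-analytic-Smith}, the Hopf-algebraic trivialization \cref{hopf-tensor-trivial-free} gives an isomorphism of $\DHPsd$-modules
\[ \CE_j(\bbHso,\Psd) \sotimes_E A_\sigma(V) \;\cong\; \DHPsd \sotimes_E \bigl(M_{\sigma,j} \sotimes_E A_\sigma(V)\bigr), \]
with $\DHPsd$ acting only on the leftmost factor. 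Tensor-hom adjunction then rewrites the Ext as
\[ R\uHom_E\bigl(M_{\sigma,j} \sotimes_E A_\sigma(V),\; R\uHom_{\Psd}(\DHPsd, W)\bigr). \]

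The crucial input is then the identification $R\uHom_{\Psd}(\DHPsd, W) = W^{R\bbHso\anv}$. Using the decomposition $\DHPsd = \bigoplus_{g \in \Psd/\Hso} \delta_g D(\bbHso)$ from \cref{ind-affinoid-mixed-generalized}, $\DHPsd$ is the (solid) compact induction of $D(\bbHso)$ from $\Hso$ to $\Psd$, so adjunction reduces the computation to $R\uHom_{\Hso}(D(\bbHso), W)$; by the derived analogue of \cref{Emerton-and-representable-style}(2), this is exactly $W^{R\bbHso\anv}$. Now \cref{higher-derived-an-vectors} and admissibility of $W$ give $W^{R\bbHso\anv} = W^{\bbHso\anv}$ concentrated in degree zero, so each $R\uHom_G(\cS^\CE_{q,j}(V), W)$ lives in degree zero and equals $\bigoplus_{\sigma \in \cR(\BT_q)} \uHom_E\bigl(M_{\sigma,j} \sotimes_E A_\sigma(V), W^{\bbHso\anv}\bigr)$. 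Assembling these across $q+j = n$ yields $\cE^n(V,W)$, with differentials inherited from the Wall complex differentials $\Delta_\bullet$ of \cref{thm-mixed-res}; this proves (1).

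For (2) I would rerun the argument with the truncated resolution $\tau \cS^\CE_\bullet(V) \to V$ from \cref{thm-mixed-res-truncated}, whose terms vanish outside $0 \le q \le \ell$, $0 \le j \le \bd$. Since $\ell + \bd = \dim(\bG) + \rk_F(\bG)$, the resulting Ext-computing complex has length at most this bound, giving the desired vanishing. The main technical obstacle I anticipate is the identification $R\uHom_{\Psd}(\DHPsd, W) = W^{R\bbHso\anv}$: one must carefully handle the fact that $\Psd$ is noncompact with infinite central subgroup $C$, so $\DHPsd$ is the ``generalized'' mixed distribution algebra of \cref{ind-affinoid-mixed-generalized} rather than the simpler finite-quotient variant, and one must confirm that the derived version of \cref{Emerton-and-representable-style} together with the solid form of Frobenius reciprocity behave correctly in this mixed (analytic, abstract noncompact) setting—this is also presumably where the hypothesis \cite[Conj.~3.41]{RR} enters through the input needed in \cref{higher-derived-an-vectors}.
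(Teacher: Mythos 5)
Your strategy for part (1) is essentially the paper's: apply $R\uHom_G(-,W)$ to $\cS^\CE_\bullet(V)$, identify each column $R\uHom_G(\cS^\CE_n(V),W)$ term by term via Frobenius reciprocity, the Hopf-trivialization of \cref{hopf-tensor-trivial-free}, the identity $\DHPsd = \cind^\Psd_\Hso D(\bbHso)$, the derived representability of analytic vectors from \cite[theorem 4.36]{RR}, and finally \cref{higher-derived-an-vectors} plus the Smith-space observation to kill all higher cohomology. The only cosmetic difference is the order in which you peel off the $E$-factor $M_{\sigma,j}\sotimes_E A_\sigma(V)$ versus passing from $\Psd$ to $\Hso$: you do tensor-hom adjunction first and then compute $R\uHom_\Psd(\DHPsd,W)$, while the paper does the second Frobenius reciprocity step before invoking \cite[theorem 4.36]{RR}. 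Both routes are legitimate and give the same formula.

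For part (2), however, your one-sentence plan to ``rerun the argument'' with $\tau\cS^\CE_\bullet(V)$ has a gap you should address. The computation in part (1) uses at a crucial point that $\CE_j(\bbHso,\Psd)=\DHPsd\sotimes_E M_{\sigma,j}$ is a \emph{free} $\DHPsd$-module, since \cref{hopf-tensor-trivial-free} (whose hypothesis is freeness) and the second Frobenius-reciprocity step both rely on this shape. In the truncated complex, the term in degree $j=\bd$ is the cycle module, which is finitely generated projective over $\DHPsd$ but not free, so you cannot literally rerun the argument for that term. The paper closes this gap by observing that $\tau_{\le\bd}\CE_\bd(\bbHso,\Psd)$ is a direct summand of the free module $\CE_{\bd+1}(\bbHso,\Psd)$, hence $\tau\cS^\CE_{\sigma,\bd}(V)$ is a direct summand of $\cS^\CE_{\sigma,\bd+1}(V)$, and then the $\uExt^k_G$-acyclicity for $k>0$ established in part (1) for the free term transfers to the summand. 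You should include this direct-summand step to make part (2) complete.
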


\begin{proof} (1) Because we assume that $\cS_\bullet(V)$ is a resolution of $V$, the complex $\cS^\CE_\bullet(V)$ is a resolution of $V$ too, by \cref{thm-mixed-res}. It is here that we use that $V$ is admissible, so as to satisfy the assumption made in \cref{thm-mixed-res}. The Ext group $\uExt^n_G(V,W)$ is thus the $n^{\rm th}$ cohomology of the hypercomplex $R\uHom_G(\cS^\CE_\bullet(V),W)$. We will show that for each $n \ge 0$ the complex $R\uHom_G(\cS^\CE_n(V),W)$ is concentrated in degree zero. While showing this, we will derive a formula for this vector space and thereby prove the statement. 
\vskip8pt
\noindent In the following, the summation conditions on the pair $(q,j)$ are as in the statement of the theorem. We have 
$$\begin{array}{cl} & R\uHom_G(\cS^\CE_n(V),W) \\
&\\ 
= & \bigoplus_{q+j=n} R\uHom_G(\cS^\CE_{q,j}(V),W)\\
&\\ 
= & \bigoplus_{q+j=n} \bigoplus_{\sigma \in \cR(\BT_q)} R\uHom_G\Big(\cind^G_\Psd\Big(\CE_j(\bbHso,\Psd) \sotimes_W A_\sigma(V)\Big),W\Big)\\
&\\ 
= & \bigoplus_{q+j=n} \bigoplus_{\sigma \in \cR(\BT_q)} R\uHom_\Psd\Big(\CE_j(\bbHso,\Psd) \sotimes_E A_\sigma(V),W\Big)\\
&\\ 
= & \bigoplus_{q+j=n} \bigoplus_{\sigma \in \cR(\BT_q)} R\uHom_\Psd\Big(\Big(\DHPsd \sotimes_E M_{\sigma,j}\Big) \sotimes_E A_\sigma(V),W\Big)\\
\end{array}$$
where the third equality holds by Frobenius reciprocity. 
Now we use that the `diagonal' module structure on $\Big(\DHPsd \sotimes_E M_{\sigma,j}\Big) \sotimes_E A_\sigma(V)$ is the same as that on 
\[\Big(\DHPsd \sotimes_E M_{\sigma,j}\Big) \sotimes_E A_\sigma(V)_\triv = \DHPsd \sotimes_E \Big(M_{\sigma,j} \sotimes_E A_\sigma(V)_\triv\Big) \,,\] 
cf. \cref{hopf-tensor-trivial-free}, and we obtain that the previous solid vector space is 
\[\bigoplus_{q+j=n} \bigoplus_{\sigma \in \cR(\BT_q)} R\uHom_\Psd\Big(\DHPsd \sotimes_E \Big(M_{\sigma,j} \sotimes_E A_\sigma(V)_\triv\Big),W\Big) \;.\]
Furthermore, as module over $E_\solid[\Psd]$ we have $\DHPsd = \cind^\Psd_\Hso D(\bbHso)$. Applying Frobenius reciprocity again we find that the latter solid vector space is equal to
$$\bigoplus_{q+j=n} \bigoplus_{\sigma \in \cR(\BT_q)} R\uHom_\Hso \Big(D(\bbHso) \sotimes_E \Big(M_{\sigma,j} \sotimes_E A_\sigma(V)_\triv\Big),W\Big)$$
Using \cite[4.36]{RR} we obtain that the previous solid vector space is 
\[\bigoplus_{q+j=n} \bigoplus_{\sigma \in \cR(\BT_q)} R\uHom_E \Big(M_{\sigma,j} \sotimes_E A_\sigma(V),W^{R\bbHso\anv}\Big)\,,\]
where we have dropped the subscript `triv' here, as $A_\sigma(V)$ is here only considered as a solid vector space. Using that $W$ is admissible and our assumption that \cite[3.41]{RR} holds, we can apply \cref{higher-derived-an-vectors} and use that the higher analytic vectors of $W$ vanish, which finally shows that the latter solid vector space is
\[\begin{array}{cl}  & \bigoplus_{q+j=n} \bigoplus_{\sigma \in \cR(\BT_q)} R\uHom_E \Big(M_{\sigma,j} \sotimes_E A_\sigma(V),W^{\bbHso\anv}\Big)\\
&\\ 
= & \bigoplus_{q+j=n} \bigoplus_{\sigma \in \cR(\BT_q)} \uHom_E \Big(M_{\sigma,j} \sotimes_E A_\sigma(V), W^{\bbHso\anv}\Big) \,,\\
\end{array}\]
where the last equality holds because $M_{\sigma,j} \sotimes_E A_\sigma(V)$ is a Smith space, cf. \cref{Dr-analytic-Smith}. And since $V$ is admissible we find that $A_\sigma(V) = \uHom_\Us(D_r(\Us),V) = V^{D_r(\Us)}$, by 
\cref{Dr-and-HomDr-for-compact-ind-limits}. 

\vskip8pt

(2) Instead of working with the complex $\cS^\CE_\bullet(V)$ we now consider the complex $\tau\cS^\CE_\bullet(V)$, cf. \cref{using-CE-resolution-truncated}. Using \cref{thm-mixed-res-truncated}, we see that statement (2) follows, once we show that $\uExt^k_G(\tau\cS^\CE_n(V),W)$ vanishes for all $n \ge 0$ and all $k>0$. We have $\tau\cS^\CE_n(V) = \bigoplus_{q+j=n} \tau\cS^\CE_{q,j}(V)$, where $\tau\cS^\CE_{q,j}(V) =  \bigoplus_{\sigma \in \cR(\BT_q)} \tau\cS^\CE_{\sigma,j}$ with
\[\tau\cS^\CE_{\sigma,j}(V) = \cind^G_\Psd\Big(\tau_{\le \bd} \CE_j(\bbHso,\Psd) \sotimes_W A_\sigma(V)\Big) \,.\]
This representation vanishes for $j>\bd$ and is equal to $\cS^\CE_{\sigma,j}(V)$ for $j<\bd$. When $j = \bd$ the module $\tau_{\le \bd} \CE_\bd(\bbHso,\Psd)$ is a direct summand of $\CE_{\bd+1}(\bbHso,\Psd)$, and we can write
\[\CE_{\bd+1}(\bbHso,\Psd) = \tau_{\le \bd} \CE_\bd(\bbHso,\Psd) \oplus \CE'_\bd(\bbHso,\Psd) \,,\]
where $\CE'_\bd(\bbHso,\Psd)$ is another finitely generated $\DHPsd$-module. It follows that 
\[\cS^\CE_{\sigma,\bd+1}(V) = \tau\cS^\CE_{\sigma,\bd}(V) \oplus \cind^G_\Psd\Big(\CE'_\bd(\bbHso,\Psd) \sotimes_E A_\sigma(V)\Big) \,,\] 
and therefore $\uExt^k_G(\tau\cS^\CE_{\sigma,\bd}(V),W)$ is a direct summand of $\uExt^k_G(\tau\cS^\CE_{\sigma,\bd+1}(V),W)$ which vanishes for all $k>0$. 
\end{proof}

\begin{corollary}\label{cor-Ext-for-admissible} Let the assumptions be as in \ref{Ext-for-admissible}. 
\begin{enumerate}
\item For all $n > \dim(\bG) + \rk_F(\bG)$ one has $\uExt^n_G(V,W) = 0$.
\item If for all facets $\sigma$ of $\BT$\footnote{Equivalently, for every $q = 0, \ldots, \ell$ and every representative for $G\bksl BT_q$.} one has $W^{\bbHso\anv} = 0$, then $\uExt^n_G(V,W) = 0$ for all $n \ge 0$.
\item If for all facets $\sigma$ of $\BT$ one has $W^{\bbHso\anv} = 0$, then for all $n \ge 0$ the cohomology group $\underline{H}^n(G,W) = \uExt^n_G(E,W)$  vanishes.
\end{enumerate} 
\end{corollary}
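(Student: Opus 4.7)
The proof plan is short: all three parts follow by direct inspection of the complex $(\cE^\bullet(V,W), \partial^\bullet)$ produced by Theorem \ref{Ext-for-admissible}, so the corollary amounts to bookkeeping on the explicit formula for $\cE^n(V,W)$. I do not anticipate a genuine obstacle; the only subtlety is ensuring the hypotheses of the theorem transfer correctly in each part.

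For part (1), I plan to combine parts (1) and (2) of Theorem \ref{Ext-for-admissible} immediately. Part (1) identifies $\uExt^n_G(V,W)$ with the $n^{\rm th}$ cohomology of $\cE^\bullet(V,W)$, and part (2) asserts that this cohomology vanishes for $n > \dim(\bG) + \rk_F(\bG)$. Nothing further is required.

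For part (2), I will invoke only the explicit formula from Theorem \ref{Ext-for-admissible}(1), which expresses $\cE^n(V,W)$ as a finite direct sum of terms of the form $\uHom_E(M_{\sigma,j} \sotimes_E V^{D_r(U_\sigma)}, W^{\bbHso\anv})$, indexed by pairs $(q,j)$ with $q+j = n$ and by $\sigma \in \cR(\BT_q)$. Under the hypothesis $W^{\bbHso\anv} = 0$ for every facet $\sigma$ of $\BT$ (equivalently, for every $\sigma$ in a system of representatives), each internal Hom has zero codomain and hence vanishes, so $\cE^n(V,W) = 0$ for all $n$ and its cohomology is zero in all degrees.

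Part (3) is just the specialization of part (2) to $V = E_\triv$. The trivial one-dimensional representation is automatically admissible, and the assumption that $\cS^A_\bullet(E_\triv) \to E_\triv$ is a resolution is inherited from the standing hypotheses of Theorem \ref{Ext-for-admissible} (applied now to $V = E_\triv$). With those hypotheses in place, the conclusion $\uH^n(G,W) = \uExt^n_G(E_\triv,W) = 0$ is immediate from part (2).
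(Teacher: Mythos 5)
Your handling of parts (1) and (2) matches the paper's proof exactly: part (1) is an immediate combination of Theorem \ref{Ext-for-admissible}(1) and (2), and part (2) follows by inspecting the terms of $\cE^\bullet(V,W)$ and noting that every summand has codomain $W^{\bbHso\anv} = 0$.

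Part (3), however, contains a small but genuine gap in how you justify the resolution hypothesis for $E_\triv$. You write that the assumption that $\cS^A_\bullet(E_\triv) \to E_\triv$ is a resolution ``is inherited from the standing hypotheses of Theorem \ref{Ext-for-admissible} (applied now to $V = E_\triv$).'' But the corollary's standing assumption is that the analytic Schneider-Stuhler complex of the given $V$ is a resolution; when you swap $V$ out for $E_\triv$, that assumption does not automatically transfer. The point of stating (3) separately is precisely that for $E_\triv$ the resolution property is \emph{unconditionally} known: the trivial representation is smooth, hence $A_\sigma(E_\triv) = E_\triv^{U_\sigma} = E_\triv$, so the analytic Schneider-Stuhler complex coincides with the original smooth one, which is a resolution of $E_\triv$ by Schneider--Stuhler \cite[II.3.1]{ScSt97} (the trivial representation is generated by its $U_{\sigma_0}$-invariants). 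This is exactly the observation the paper's proof makes, and it is what makes (3) a genuine corollary rather than a conditional restatement of (2). Replace ``inherited'' with this argument and your proof is complete.
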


\begin{proof} Statements (1) and (2) follow directly from \cref{Ext-for-admissible}. Statement (3) is true because the trivial 1-dimensional representation is smooth (and of course admissible), and the Schneider-Stuhler complex $\cS_\bullet(E)$ is a resolution of $E$.
\end{proof}

\bibliographystyle{alphaurl}
\bibliography{mybib}

\end{document}